  \def\({}
  \def\){}
  \def\cite{}
\patchcmd{\epigraph}{\@epitext{#1}}{\itshape\@epitext{#1}}{}{}
\newtheorem{theorem}{Theorem}[section]
\newaliascnt{conj}{theorem}
\newaliascnt{cor}{theorem}
\newaliascnt{lemma}{theorem}
\newaliascnt{fact}{theorem}
\newaliascnt{claim}{theorem}
\newaliascnt{prop}{theorem}
\newaliascnt{defn}{theorem}
\newaliascnt{assump}{theorem}
\newaliascnt{question}{theorem}
\newaliascnt{notation}{theorem}
\newaliascnt{convention}{theorem}
\newtheorem{cor}[cor]{Corollary}
\newtheorem{lemma}[lemma]{Lemma}
\newtheorem{claim}[claim]{Claim}
\newtheorem{prop}[prop]{Proposition}
\newtheorem{defn}[defn]{Definition}
\newtheorem{question}[question]{Question}
\numberwithin{equation}{subsection}
\numberwithin{theorem}{subsection}
\numberwithin{figure}{subsection}
\let\oldtheequation\theequation
\renewcommand\tagform@[1]{\maketag@@@{\ignorespaces#1\unskip\@@italiccorr}}
\renewcommand\theequation{(\oldtheequation)}
\theoremstyle{remark}
\newaliascnt{rmk}{theorem}
\newtheorem{remark}[rmk]{Remark}
\theoremstyle{remark}
\newaliascnt{exam}{theorem}
\def\sek~{\S{}}
\DeclareMathOperator{\inward}{in}
\DeclareMathOperator{\outward}{out}
\DeclareMathOperator{\ind}{ind}
\newcommand{\D}{\mathbb{D}}
\newcommand{\R}{\mathbb{R}}
\newcommand{\ve}{\varepsilon}
\newcommand{\s}{\vskip.1in}
\newcommand{\n}{\noindent}
\newcommand{\p}{\partial}
\newcommand{\bdry}{\partial}
\newcommand{\be}{\begin{enumerate}}
\newcommand{\ee}{\end{enumerate}}
\newcommand{\op}{\operatorname}
\newcommand{\cg}{\color{green}}
\newcommand{\cbu  }{\color{blue}}
\newcommand{\opm}{\mathbin{\mathpalette\make@circled\pm}}
\newcommand{\make@circled}[2]{%
  \ooalign{$\m@th#1\smallbigcirc{#1}$\cr\hidewidth$\m@th#1#2$\hidewidth\cr}%
}
\newcommand{\smallbigcirc}[1]{%
  \vcenter{\hbox{\scalebox{1}{$\m@th#1\bigcirc$}}}%
}
\newcommand{\set}[2]{\mleft\{\,#1~\middle|~#2\,\mright\}}
\begin{document}

\title{The Giroux correspondence in arbitrary dimensions}

\author{Joseph Breen}
\address{University of Iowa, Iowa City, IA 52240}
\email{joseph-breen-1@uiowa.edu} \urladdr{https://sites.google.com/view/joseph-breen/home}

\author{Ko Honda}
\address{University of California, Los Angeles, Los Angeles, CA 90095}
\email{honda@math.ucla.edu} \urladdr{http://www.math.ucla.edu/\char126 honda}

\author{Yang Huang}
\address{Somewhere On Earth}
\email{hymath@gmail.com} \urladdr{https://sites.google.com/site/yhuangmath}

\thanks{JB was partially supported by NSF Grant DMS-2003483 and NSF Grant DMS-2038103. KH was partially supported by NSF Grant DMS-2003483. YH was partially supported by the grant KAW 2016.0198 from the Knut and Alice Wallenberg Foundation.}

\begin{abstract}
We establish the Giroux correspondence in arbitrary dimensions.  As corollaries we (i) give an alternate proof of a result of Giroux-Pardon that states that any Weinstein domain is Weinstein homotopic to one which admits a Weinstein Lefschetz fibration and (ii) prove that any two Weinstein Lefschetz fibrations whose Weinstein domain structures are Weinstein homotopic are related by the Weinstein Lefschetz fibration moves, affirming a conjecture of Giroux-Pardon.
\end{abstract}

\maketitle

\tableofcontents

\section{Introduction} \label{sec:intro}

Around twenty years ago in the influential paper~\cite{Gi02}, Emmanuel Giroux formulated the equivalence of {\em contact structures} and {\em open book decompositions} with Weinstein pages up to stabilization.  For contact $3$-manifolds the Weinstein condition is superfluous and there is a precise equivalence between contact structures and open book decompositions up to stabilization.\footnote{Although, to the authors' knowledge the only publicly available account of the stabilization equivalence is in Williams' thesis~\cite{Wi18}.} In higher dimensions, the existence of an open book decomposition with Weinstein pages follows without much difficulty from the technology of Donaldson \cite{Don96}, Ibort, Martinez-Torres, and Presas \cite{IMP00}, and Mohsen \cite{Moh01,Moh19}, as explained by Giroux in \cite{Gi02}. On the other hand, the stabilization equivalence, due to Giroux-Mohsen \cite{GM} and as explained below, is a much less complete result.  

The aim of this paper is to apply the recent advances in convex hypersurface theory to ameliorate the situation. 

\subsection{Main result} \label{subsection: main result}

Let $(M,\xi)$ be a closed cooriented contact manifold of dimension $2n+1$, i.e., $\xi=\ker \alpha$ such that $\alpha\wedge (d\alpha)^n>0$. Here $n\geq 1$.  The $n=1$ case is often slightly different (and easier) and will be treated separately when a difference arises.

\begin{defn}
A contact manifold $(M,\xi)$ is {\em supported by an open book decomposition $(B,\pi: M\setminus B\to S^1)$} (abbreviated OBD) if 
\be
\item the {\em binding} $B^{2n-1}$ is a codimension $2$ closed contact submanifold,
\item $\pi:M\setminus B\to S^1$ is a fibration which agrees with the angular coordinate $\theta$ on a neighborhood $B\times D^2$ of $B=B\times\{0\}$, and
\item there exists a Reeb vector field $R_\alpha$ for $\xi$ which is everywhere transverse to all the pages $\pi^{-1}(\theta)$
$\Leftrightarrow$ all the {\em pages} $\pi^{-1}(\theta)$ are Liouville.
\ee
\end{defn}

In this paper we will view a page as either a Liouville domain or its completion and will use the same notation $(W,\lambda)$, since there is no substantial difference.

\begin{defn}
A Liouville domain/completion $(W,\lambda)$ is {\em Weinstein} or {\em $0$-Weinstein} (resp.\ {\em $1$-Weinstein; $2$-Weinstein}) if its Liouville vector field $X_\lambda$ is gradient-like for some function $g:W\to \R$ which only has Morse type (resp.\ Morse and birth-death type; Morse, birth-death, and swallowtail type) critical points. 
\end{defn}

We denote an $i$-Weinstein domain/completion by $(W,\lambda,g)$, or simply $(W,\lambda)$ if some $g$ is understood.

\begin{defn}
A supporting OBD is  
\begin{itemize}
\item {\em strongly Weinstein} if all of its pages are $1$-Weinstein; and 
\item {\em weakly Weinstein} if at least one page is Weinstein. 
\end{itemize}
For convenience we always assume that $\pi^{-1}(0)$ is Weinstein.
\end{defn}

\begin{defn}
Two strongly/weakly Weinstein OBDs $(B_t,\pi_t:M\setminus B_t\to S^1)$, $t=0,1$, for $(M,\xi)$ are:
\begin{itemize}
\item {\em strongly Weinstein homotopic} if there is a $1$-parameter family of OBDs $(B_t,\pi_t:M\setminus B_t\to S^1)$, $t\in[0,1]$, interpolating between $(B_0,\pi_0)$ and $(B_1,\pi_1)$, all of whose pages are $2$-Weinstein, and
\item {\em weakly Weinstein homotopic} if there is a $1$-parameter family of OBDs $(B_t,\pi_t)$, $t\in[0,1]$, interpolating between $(B_0,\pi_0)$ and $(B_1,\pi_1)$ such that $\pi_t^{-1}(0)$ is $1$-Weinstein for all $t\in[0,1]$.
\end{itemize}   
\end{defn}

We can view a strongly/weakly Weinstein OBD $(B,\pi)$ as a relative mapping torus of $(W,\phi)$, where $W$ is a Weinstein domain that is a slight retraction of $\pi^{-1}(0)$ and $\phi\in \op{Symp}(W,\bdry W)$. 

\begin{defn}
Let $L_0\subset W$ be a regular Lagrangian disk with Legendrian boundary $\subset \bdry W$.  Then either of $(W\cup h, \phi\circ \tau_L)$ or $(W\cup h, \tau_L\circ\phi)$ is called a {\em (positive) stabilization of $(W^{2n},\phi)$ along $L_0$}, where: 
\be
\item $h$ is a Weinstein $n$-handle with core Lagrangian disk $L_1$ such that $\bdry L_0=\bdry L_1$ and 
\item $\tau_{L}$ is the (positive symplectic) Dehn twist about $L=L_0\cup L_1$. 
\ee
\end{defn}

\begin{remark}
Given $\phi\in \op{Symp}(W,\bdry W)$, it is not clear whether $(W,\phi)$ gives rise to a strongly Weinstein OBD. However, one can verify that a positive stabilization of a strongly/weakly Weinstein OBD along a regular $L_0$ is strongly/weakly Weinstein.
\end{remark}

\begin{defn}
Two strongly/weakly Weinstein OBDs of $(M,\xi)$ are {\em strongly/weakly stably equivalent} if they are related by a sequence of positive stabilizations and destabilizations, conjugations, and strong/weak Weinstein homotopies.
\end{defn}

The main result of this paper is the following:

\begin{theorem}[Stabilization equivalence]\label{theorem: stabilization equivalence}  Any two strongly Weinstein OBDs of $(M,\xi)$ are strongly stably equivalent.
\end{theorem}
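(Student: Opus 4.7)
The plan is to deploy convex hypersurface theory (CHT) as a bridge between strongly Weinstein OBDs. The underlying idea is that an OBD is a particularly symmetric type of convex decomposition of $(M,\xi)$, and that local CHT moves translate into the allowed OBD moves of stabilization, destabilization, conjugation, and strong Weinstein homotopy.

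To begin, I would associate to each strongly Weinstein OBD $(B,\pi)$ a convex hypersurface $\Sigma_\pi := \pi^{-1}(0)\cup_B\pi^{-1}(\pi)\subset M$, with dividing set the binding $B$ and with positive/negative regions the two Weinstein pages. A strongly Weinstein OBD is thereby repackaged as a convex hypersurface together with the filling Weinstein data on each side; conversely, a convex hypersurface whose dividing set is a connected contact submanifold and whose complementary halves are Weinstein should unfold into an OBD. Each half of $M \setminus \Sigma_\pi$ is then a trivial mapping cylinder over an interval, built from a Weinstein page, and the monodromy of the OBD is encoded by gluing the two halves.

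Next I would establish a dictionary between the allowed OBD moves and CHT moves on $\Sigma_\pi$. Positive stabilization along a regular Lagrangian disk $L_0$ with Legendrian boundary in $B$ should correspond to a higher-dimensional bypass attachment to $\Sigma_\pi$ along a Legendrian arc dual to $\partial L_0$; the key point to verify is that the resulting change of monodromy is precisely the Dehn twist $\tau_L$. Strong Weinstein homotopy corresponds to an isotopy of $\Sigma_\pi$ through convex hypersurfaces whose halves remain $2$-Weinstein, accommodating the birth-death and swallowtail transitions of the guiding function $g$. Conjugation corresponds to re-choosing the reference page in the mapping torus description. With this dictionary in hand, I would invoke existence and uniqueness results from CHT: any two convex hypersurfaces $\Sigma_{\pi_0}$, $\Sigma_{\pi_1}$ arising from strongly Weinstein OBDs of $(M,\xi)$ can be connected by a finite sequence of bypass attachments and convex isotopies. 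Translating each step back via the dictionary yields the desired chain of OBD moves, i.e., strong stable equivalence.

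The principal obstacle is the dictionary step in higher dimensions, together with ensuring that the CHT moves preserve the OBD structure throughout. One must verify (i) that every bypass-modified hypersurface in the sequence remains the splitting hypersurface of an OBD, (ii) that the induced monodromy change agrees exactly with $\tau_L$ rather than some other element of $\op{Symp}(W,\partial W)$, and (iii) that the Weinstein data on both halves can be controlled through the homotopy, particularly through the swallowtail transitions, which are genuinely new features of the higher-dimensional setting. Managing the gradient-like function together with its birth-death and swallowtail singularities in a way that matches the $2$-Weinstein requirement on pages is expected to be the main source of technical difficulty, and likely forces the use of the full flexibility afforded by the $2$-Weinstein (as opposed to $0$- or $1$-Weinstein) framework in the interpolating family.
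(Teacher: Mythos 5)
Your plan correctly identifies the CHT bridge and the bypass--stabilization dictionary, but the key step --- ``invoke existence and uniqueness results from CHT: any two convex hypersurfaces $\Sigma_{\pi_0}$, $\Sigma_{\pi_1}$ arising from strongly Weinstein OBDs of $(M,\xi)$ can be connected by a finite sequence of bypass attachments and convex isotopies'' --- is circular. No such uniqueness result exists off the shelf; proving the needed statement in its necessary \emph{two}-parametric form (comparing two sequences of bypasses describing $(\Sigma \times I, \xi)$ up to far commutativity and trivial bypass insertion, \autoref{theorem: sigma times interval}) is the technical heart of the paper, via the codimension-$2$ retrogradient analysis (\autoref{prop:codimension_2_bifurcations}) and the parallel strands bypass lemma (\autoref{lemma:parallel_strands_bypass}). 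Moreover your dictionary must be restricted: only \emph{trivial} bypasses give OBD stabilizations (\autoref{lemma:trivial_bypass_lemma}), while a generic bypass in an interpolating family is non-trivial, so the statement that the differences can always be arranged into trivial bypasses is exactly what has to be proved.

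There are two further structural gaps. First, $\Sigma_{\pi_0}$ and $\Sigma_{\pi_1}$ need not be smoothly isotopic, since the pages of the two OBDs can have different topology; no sequence of bypass attachments changes the smooth isotopy class of the hypersurface. The paper instead interpolates via a $1$-Morse family of smooth functions and separately handles birth-death singularities of index $(n,n+1)$ (\autoref{theorem: birth-death}), which is exactly where the page topology changes. Second, in a generic convex interpolating family the intermediate hypersurfaces do not split $M$ into two trivial mapping cylinders, so at intermediate times there is no OBD and your dictionary has nothing to translate. The paper sidesteps this by working with three-part $\theta$-decompositions $M = H_0 \cup (\Sigma\times[0,1]) \cup H_1$ rather than a single splitting hypersurface: the bypasses are confined to the designated patty while the buns remain model contact handlebodies, and an additional wrinkling step (\autoref{thm: layer between N and N' general case}) is needed to upgrade smoothly isotopic buns to contact isotopic ones before the patty analysis can be applied.
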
 

What Giroux~\cite{Gi02} and Giroux-Mohsen~\cite{GM} had already shown was that:

\begin{theorem} \label{theorem: Giroux Mohsen} $\mbox{}$
	\begin{enumerate}
		\item Any $(M,\xi)$ is supported by a strongly Weinstein OBD.
		\item Any two strongly Weinstein OBDs of $(M,\xi)$ obtained by the ``Donaldson construction'' are strongly stably equivalent.
	\end{enumerate}
\end{theorem}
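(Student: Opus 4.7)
The plan is to reduce the stabilization equivalence theorem to a comparison problem for convex hypersurfaces in $(M,\xi)$ and then solve that problem using the recent higher-dimensional convex hypersurface theory (CHT) alluded to in the introduction. The first step is to rephrase each strongly Weinstein OBD as a convex hypersurface decomposition: given $(B,\pi)$, gluing two antipodal pages of the open book along the binding produces a closed hypersurface $\Sigma$ that can be smoothed into a convex hypersurface of $(M,\xi)$ whose dividing set is $B$ and whose two sides are half-open-books encoding the mapping torus description $(W,\phi)$. Conversely, any such convex hypersurface with Weinstein sides (and a symmetric contact neighborhood) recovers an OBD up to conjugation, so the theorem reduces to showing that any two convex presentations of $(M,\xi)$ are related by a controlled set of CHT moves.

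Given two strongly Weinstein OBDs $(B_i,\pi_i)$ with corresponding convex hypersurfaces $\Sigma_i$, I would build a generic $1$-parameter family $\Sigma_t$ interpolating between them. Applying convex genericity and uniqueness results from CHT, $\Sigma_t$ is convex for all but finitely many $t$; the exceptional events are of two types: (a) a \emph{trivial bypass} attachment or detachment that alters the dividing set and modifies the Weinstein structure on the page by a cancelling handle pair, and (b) a birth-death or swallowtail event in the Weinstein function on the page, which is precisely why the definition of strong Weinstein homotopy allows $2$-Weinstein pages.

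The heart of the argument is the dictionary matching these elementary moves with the operations in the definition of strong stable equivalence. Type (b) events produce strong Weinstein homotopies directly. For type (a), one must show that a trivial bypass attachment along a disk with Legendrian boundary modifies $(W,\phi)$ to $(W\cup h,\tau_L\circ\phi)$ for $L=L_0\cup L_1$ as in the definition of a positive stabilization, i.e.\ realizes a stabilization of the OBD. Conjugations arise from the freedom of choosing the basepoint page along $S^1$ when recovering the OBD from its convex hypersurface. Chaining the resulting moves expresses $(B_0,\pi_0)$ and $(B_1,\pi_1)$ as strongly stably equivalent.

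I expect the main obstacle to be establishing this dictionary for type (a) events: one must verify that the new critical points introduced into the page's Weinstein function are of cancelling type, that the monodromy changes by exactly the prescribed Dehn twist, and that intermediate pages stay at worst $2$-Weinstein throughout. The swallowtail tolerance in the definition of strong Weinstein homotopy is essential here, since a homotopy of strongly Weinstein OBDs is effectively a $2$-parameter family of pages, and generic such families unavoidably encounter codimension-$2$ singularities of the page function that cannot be removed by perturbation.
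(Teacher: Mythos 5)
\autoref{theorem: Giroux Mohsen} is a cited result, not one this paper proves: part (1) is Giroux's (via Donaldson-type technology), part (2) is unpublished Giroux--Mohsen, and the paper's own contribution is the stronger \autoref{theorem: stabilization equivalence}. The paper gives only a CHT \emph{sketch} of part (1) via $\theta$-decompositions in \autoref{subsection: review}. Your proposal does not address existence at all, and for part (2) it in effect targets \autoref{theorem: stabilization equivalence} rather than the Donaldson-specific claim, which would be fine if the argument worked.

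The decisive gap is the reduction to interpolating a single convex hypersurface. An OBD with page $W$ does determine a convex $\Sigma = \partial(W\times[-\epsilon,\epsilon])$, but this dictionary does not interpolate: for intermediate $\Sigma_t$ the two pieces of $M\setminus\Sigma_t$ are not (generalized) contact handlebodies, so there is no OBD to read off, and the ``converse'' you assert is false for generic $\Sigma$. The paper's object of comparison is instead the $\theta$-decomposition $M = H_0 \cup (\Sigma\times[0,1]) \cup H_1$ together with a bypass decomposition of the patty --- in effect a contact \emph{handle decomposition} of $(M,\xi)$, which is also what yields part (1) --- and the Cerf theory runs on a Morse family $f_t:M\to\R$, not directly on a hypersurface family. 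Your exceptional events are also misdiagnosed: a generic $1$-parameter family of hypersurfaces crosses \emph{arbitrary} bypasses (retrogradients), not trivial ones; trivial bypasses and far commutativity only arise when comparing two bypass sequences for a fixed patty (\autoref{theorem: sigma times interval}), which requires the $2$-parametric analysis of \autoref{prop:codimension_2_bifurcations}. You correctly note that the $2$-Weinstein tolerance in strong Weinstein homotopy reflects $2$-parameter families of pages, but a swallowtail is codimension $2$ and does not appear as an exceptional event in a single $1$-parameter family of $\Sigma_t$. Finally the proposal omits the flexibilization of buns by wrinkling Legendrians (\autoref{thm: layer between N and N' general case}), which is what lets one match buns that are smoothly but not contact isotopic.
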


Strictly speaking, the details of \autoref{theorem: Giroux Mohsen}(2) have not appeared yet.  Also \autoref{theorem: Giroux Mohsen}(2) does not say that any two strongly Weinstein OBDs that support $(M,\xi)$ are strongly stably equivalent, making it very hard to use in practice. In particular, it is difficult to tell whether a give OBD has been obtained by the Donaldson construction because the latter depends on a ``sufficiently large'' integer $k$. 

We are still left with the following fundamental question:

\begin{question}
Is a weakly Weinstein OBD of $(M,\xi)$ weakly stably equivalent to a strongly Weinstein OBD?
\end{question}

\begin{remark}
In general we cannot expect an OBD with Liouville pages to be stable equivalent to a weakly or strongly Weinstein OBD since there exist $2n$-dimensional Liouville domains $X$ with nontrivial $H_k(X)$ for $k\gg n$ (see for example \cite[Theorem C]{MNW13}, which gives $X$ diffeomorphic to $[0,1]$ times an odd-dimensional closed manifold), the pages of a positive stabilization are obtained by attaching $n$-handles, and Weinstein domains have the type of a $\leq n$-dimensional CW complex.  (The authors learned about this from Otto van Koert.) 
\end{remark}

\begin{remark}
    There is also work of Licata-V\'ertesi~\cite{LV} in dimension $3$ which uses the same basic outline as ours. In fact V\'ertesi, as early as 2010, insisted that there should be a proof along the lines that were ultimately carried out in this paper (in those days we simply did not have enough convex (hyper)-surface technology).
\end{remark}

\subsection{Relative version} \label{subsection: relative version}

Next we describe a relative version of our main result.  We first review {\em Weinstein partial open book decompositions} (POBDs), more or less as presented in \cite{HH19}. 

\begin{defn}
Given a Weinstein domain $W$, a \emph{cornered Weinstein subdomain} $S \subset W$ is a (possibly empty) codimension $0$ submanifold with corners which satisfies the following properties:
\begin{itemize} 
\item[(CW1)] There exists a decomposition $\p S = \p_{\inward} S \cup \p_{\outward} S$ such that
\be
\item $\p_{\inward} S$ and $\p_{\outward} S$ are compact manifolds with smooth boundary that intersect along their boundaries;
\item $\p(\p_{\inward} S) = \p(\p_{\outward} S)$ is the codimension $1$ corner of $\p S$; and
\item $\p_{\outward} S = S\cap \p W$ and is a proper subset of each component of $\bdry W$.
\ee
\item[(CW2)] The Liouville vector field $X_{\lambda}$ on $W$ is inward-pointing along $\p_{\inward} S$ and outward-pointing near $\p_{\outward} S$.
\item[(CW3)] $W\setminus S$ is a Weinstein domain after smoothing.
\end{itemize}
The region $S$ (without reference to the space $W$) is called a {\em cornered Weinstein cobordism}.
\end{defn}

\begin{defn} \label{defn: POBD}
A {\em strongly Weinstein POBD} for a compact cooriented contact manifold $(M^{2n+1},\xi)$ with Weinstein convex boundary is a pair $(W\times[0,\tfrac{1}{2}], S\times[\tfrac{1}{2},1])$, where:
\be
\item $W_t:=W\times \{t\}$ is a $1$-Weinstein domain for each $t\in [0,\tfrac{1}{2}]$ and $S_t:=S\times\{t\}$ is a $1$-Weinstein cornered Weinstein cobordism for each $t\in[\tfrac{1}{2},1]$;
\item $S_{1/2}$ is a cornered Weinstein subdomain of $W_{1/2}$ and there is an embedding $\phi: S_1\hookrightarrow W_0$ that identifies $S_1$ with a cornered Weinstein subdomain of $W_0$;
\item there exists a contact form $\alpha$ on the glued manifold which restricts to the $1$-Weinstein Liouville forms on each $W_t$ and $S_t$ and whose Reeb vector field $R_\alpha$ is given by $\bdry_t$ on $(\bdry W)\times [0,\tfrac{1}{2}]$ and $(\bdry S)\times [\tfrac{1}{2},1]$; and
\item $(M,\xi)$ is obtained by filling in $D^2 \times \p W$ with a ``standard" contact structure on a neighborhood of the contact submanifold $\bdry W$ and smoothing some corners.
\ee
We may also denote a strongly Weinstein POBD by $(W, \phi: S\to W)$ with the understanding that all the pages $W_t$ and $S_t$ are $1$-Weinstein.
\end{defn}

The definitions of Weinstein homotopic POBDs is analogous to the closed case, and the definition of a positive stabilization of a POBD is as follows:\footnote{In previous papers, they were called {\em partial} positive stabilizations.}  

\begin{defn}
Let $L_0\subset W$ be a regular Lagrangian disk with Legendrian boundary $\subset \bdry W$.  Then either of the following two POBDs, 
\begin{gather*}
    (W\cup h,   \tau_L\circ (\phi\cup\op{id}_h) : S\cup h \to W\cup h),\\
    (W\cup h, (\phi\cup \op{id}_h) \circ \tau_L: \tau^{-1}_L(S) \cup N(L_0)  \to W\cup h),
\end{gather*}
is called a {\em (positive) stabilization of $(W, \phi:S\to W)$ along $L_0$}, where: 
\be
\item $h$ is a Weinstein $n$-handle with core Lagrangian disk $L_1$ such that $\bdry L_0=\bdry L_1$, 
\item $N(L_0)$ is the Weinstein $n$-handle neighborhood with cocore $L_0$, attached to $W\setminus N(L_0)$, and 
\item $\tau_L$ is a (positive symplectic) Dehn twist about $L=L_0\cup L_1$. 
\ee
The two POBD stabilizations will be called the {\em(TB1) positive stabilization} and the {\em(TB2) positive stabilization}, respectively, for consistency with the language used in \autoref{subsection:trivial_bypasses_and_POBD_stabilization} (where we also explain how these stabilizations arise).
\end{defn}

As in the absolute case, a positive stabilization of a strongly/weakly Weinstein POBD along a regular $L_0$ is strongly Weinstein.

\begin{theorem}[Stabilization equivalence for POBDs] \label{theorem: relative version} 
Let $(M^{2n+1},\xi)$ be a compact cooriented contact manifold with Weinstein convex boundary. Then:
	\begin{enumerate}
		\item $(M,\xi)$ is supported by a strongly Weinstein POBD.
		\item Any two strongly Weinstein POBDs of $(M,\xi)$ are strongly stably equivalent.
	\end{enumerate}
\end{theorem}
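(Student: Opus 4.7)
The plan is to establish both parts of \autoref{theorem: relative version} by adapting the convex hypersurface techniques behind \autoref{theorem: stabilization equivalence} to the relative setting. The general principle is to embed $(M,\xi)$ into a closed contact manifold $(\hat M, \hat \xi)$ by gluing a standard contact cap along the Weinstein convex boundary $\partial M$, apply the absolute results, and then descend back to $M$ while tracking the boundary data.

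For part (1), I would enlarge $(M,\xi)$ to a closed contact manifold $(\hat M, \hat \xi)$ by gluing a standard cap along $\partial M$; this is possible because the Weinstein structure on the dividing set of $\partial M$ provides enough data to build a model contact filling of a collar neighborhood. Applying \autoref{theorem: stabilization equivalence} together with \autoref{theorem: Giroux Mohsen}(1) to $\hat M$ produces a strongly Weinstein OBD of $\hat M$. Using a strong Weinstein homotopy one arranges this OBD to intersect the cap in a product-like standard form; restricting to $M$ then yields the desired strongly Weinstein POBD.

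For part (2), given two strongly Weinstein POBDs of $(M,\xi)$, I would cap each to produce two strongly Weinstein OBDs of $\hat M$ that agree over the cap. By \autoref{theorem: stabilization equivalence} these capped OBDs are strongly stably equivalent via a finite sequence of stabilizations, destabilizations, conjugations, and strong Weinstein homotopies. The key task is to refine this sequence so that it preserves the cap throughout: moves occurring in the interior of $M$ descend to interior stabilizations of the POBD along regular Lagrangian disks $L_0 \subset W$, while moves that interact with $\partial M$ must be rearranged into one of the two POBD-specific forms, namely the (TB1) and (TB2) stabilizations introduced above.

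The hard part is precisely this rearrangement, which demands a relative version of the convex hypersurface arguments underlying \autoref{theorem: stabilization equivalence}. Concretely, one must show that every ``boundary-interacting'' stabilization can be expressed, after further Weinstein homotopy, as a sequence of trivial bypass attachments concentrated near $\partial M$, together with moves supported in the interior; one then must show that such trivial bypasses correspond exactly to (TB1) and (TB2) POBD stabilizations. This identification between trivial bypasses and POBD moves — signaled by the ``(TB)'' labeling in the definition — together with the bookkeeping required to localize the absolute sequence away from the cap, is where the bulk of the technical work will lie.
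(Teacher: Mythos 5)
Your approach is genuinely different from the paper's, but I think it has a gap that undermines the reduction. The paper's proof of part (1) is a citation of \cite{HH19}, and its proof of part (2) is a \emph{direct} relative version of the argument for \autoref{theorem: stabilization equivalence}: one interpolates two relative contact Morse functions rel $\partial M$, produces a family of relative $\theta$-decompositions, and runs Steps 1--3 (homotopies of patties, wrinkling, birth-deaths) keeping everything fixed near the boundary. All of the convex hypersurface technology already comes in a rel-boundary version (\autoref{theorem:family genericity} is stated for $\Sigma\times[0,1]$ with fixed ends, and the proof of POBD existence in \cite{HH19} is itself relative), so the paper's ``identical proof'' claim is genuine.

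Your capping strategy has two problems. First, the cap is not canonical and its construction is essentially circular. A Weinstein convex boundary $\Sigma=R_+\cup\Gamma\cup R_-$ does not come with a preferred compact contact filling of $-\Sigma$: a contact handlebody over $\overline{R_\pm}$ only glues if $R_+$ and $R_-$ are Weinstein homotopic, which need not hold, and the filling that \emph{does} exist is built from a POBD $(W,\phi:S\to W)$ by extending $\phi$ over $W\setminus S$ --- that is, it presupposes part (1). Worse, two different POBDs of $M$ give different pages $W_1,W_2$ and hence a priori different caps, so you do not even get two OBDs of the same closed manifold $\hat M$ to feed into \autoref{theorem: stabilization equivalence}. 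Second, even with a fixed cap, the absolute theorem gives no control over where the stabilization sequence is supported: the $\theta$-decompositions produced in the proof of \autoref{theorem: stabilization equivalence} have no reason to respect the cap, and the intermediate OBDs may have binding and pages that wander through it in an essential way. Your ``hard part'' --- localizing the sequence away from the cap and rewriting boundary-interacting moves as (TB1)/(TB2) stabilizations --- is not a bookkeeping refinement of the absolute theorem; proving it would require redoing the full convex hypersurface analysis rel boundary, i.e.\ the direct relative argument. The correspondence between trivial bypasses and (TB1)/(TB2) POBD stabilizations is correct and is established in \autoref{subsection:trivial_bypasses_and_POBD_stabilization}, but it lives \emph{inside} the relative argument rather than providing a bridge from the absolute one.

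In short: you should run the $\theta$-decomposition/bypass machinery directly on $(M,\xi)$ rel $\partial M$, rather than try to detour through a closed cap.
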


Part (1) was proved in \cite{HH19}. The proof of Part (2) is identical to that of \autoref{theorem: stabilization equivalence} and is left to the reader.

\subsection{Some corollaries} \label{subsection: some corollaries}

We now state some corollaries of \autoref{theorem: stabilization equivalence}. The first is a result due to Giroux-Pardon, originally proved using Donaldson's technology:

\begin{cor}[Giroux-Pardon~\cite{GP17}] \label{cor: proof of GP}
Any Weinstein domain is $1$-Weinstein homotopic to one which admits a Weinstein Lefschetz fibration.
\end{cor}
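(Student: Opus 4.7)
The plan is to use Theorem \autoref{theorem: stabilization equivalence} to produce a strongly Weinstein OBD of the contact boundary $(M,\xi) = \p(W,\lambda)$ whose monodromy factors as a product of positive Dehn twists about Lagrangian spheres in the page and whose associated Weinstein total space is $1$-Weinstein homotopic to $W$. Once such an OBD is in hand, the standard dictionary between OBDs with positive Dehn-twist monodromy and Weinstein Lefschetz fibrations yields the desired structure: starting from $F\times D^2$ (with $F$ the page) and attaching Weinstein $n$-handles along Legendrian lifts of the vanishing cycles at distinct angles on $\p D^2$ produces a Weinstein Lefschetz fibration $\pi:W'\to D^2$ whose total space $W'$ is the desired model.

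The first step is to use the Weinstein Morse function $g:W\to\R$ together with a judicious choice of regular level set and the convex hypersurface technology developed in the body of the paper to construct an initial strongly Weinstein OBD $(B,\pi)$ of $\p W$ whose associated filling is $1$-Weinstein homotopic to $W$. The monodromy $\phi$ of this OBD encodes the critical handle attachments of $W$ but need not \emph{a priori} be a product of positive Dehn twists. One then applies Theorem \autoref{theorem: stabilization equivalence} to strongly stably equivalence $(B,\pi)$ to a new strongly Weinstein OBD whose monodromy is such a product $\tau_{V_k}\circ\cdots\circ\tau_{V_1}$. Each positive stabilization introduces a Dehn twist about a fresh Lagrangian sphere $L=L_0\cup L_1$ in the enlarged page, so the flexibility provided by stabilization equivalence is exactly what is needed to realize an arbitrary monodromy as such a product.

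Crucially, every move allowed in the strong stabilization equivalence --- positive stabilization and destabilization, conjugation, and strong Weinstein homotopy --- corresponds on the filling side to a move preserving the $1$-Weinstein homotopy class: a positive stabilization is realized by attaching a cancelling Weinstein $(n-1)$/$n$-handle pair to the filling, while strong Weinstein homotopies of the OBD lift to $1$-Weinstein homotopies of the filling that introduce only birth-death singularities. Consequently, the filling $W'$ built from the factorized OBD is $1$-Weinstein homotopic to $W$, and is by construction a Weinstein Lefschetz fibration. The principal obstacle is precisely this translation: systematically reducing the monodromy to a product of positive Dehn twists via the strong stabilization moves while maintaining the bookkeeping that each intermediate filling remains $1$-Weinstein equivalent to $W$. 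This Weinstein-level lift of the contact-level stabilization equivalence is the technical heart of the corollary.
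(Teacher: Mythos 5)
Your proposal contains a genuine gap at the step ``apply Theorem \ref{theorem: stabilization equivalence} to strongly stably equivalence $(B,\pi)$ to a new strongly Weinstein OBD whose monodromy is a product $\tau_{V_k}\circ\cdots\circ\tau_{V_1}$.'' The stabilization equivalence theorem relates any \emph{two given} strongly Weinstein OBDs of the same contact manifold; it is not a device that takes one OBD and produces another with a prescribed monodromy factorization. To invoke it you must already exhibit a target OBD with positive-Dehn-twist monodromy and a filling $1$-Weinstein homotopic to $W$ --- but that is precisely what the corollary asserts exists. Relatedly, the notion of the ``associated filling'' of an arbitrary strongly Weinstein OBD of $\partial W$ is undefined: only OBDs whose monodromy factors as a product of positive Dehn twists arise as boundary OBDs of WLFs, so the bookkeeping you describe (tracking that each intermediate filling remains $1$-Weinstein homotopic to $W$) cannot be carried out across a chain of moves that begins at an OBD with no associated filling.

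The paper sidesteps this by working not with $\partial W$ but with the boundary of the subcritical part. Using \cite{Cie02}, a neighborhood $W^{(n-1)}$ of the $(n-1)$-skeleton is $1$-Weinstein homotopic to $W^{\flat}\times D^2$, whose boundary carries the trivial OBD induced by the AWLF $(W^{\flat};\varnothing)$. The attaching Legendrians $\tilde{\Lambda}$ of the critical $n$-handles live on $\partial(W^{\flat}\times D^2)$, and by \cite[Corollary 1.3.3]{HH19} there is \emph{some} OBD of $\partial(W^{\flat}\times D^2)$ carrying $\tilde{\Lambda}$ on a page. Now Theorem \ref{theorem: stabilization equivalence} is applied to compare \emph{these two specific OBDs} of the boundary of the subcritical part: stabilizations applied to the trivial OBD are exactly AWLF stabilizations of $(W^{\flat};\varnothing)$, hence produce an AWLF $(W^{\natural};\mathcal{L}')$ whose total space is still $1$-Weinstein homotopic to $W^{\flat}\times D^2$, and whose induced OBD contains $\tilde{\Lambda}$ on a page after a strong Weinstein homotopy. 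Projecting $\tilde{\Lambda}$ to Lagrangian vanishing cycles $\mathcal{L}''$ then gives the AWLF $(W^{\natural};\mathcal{L}'\cup\mathcal{L}'')$ with total space $1$-Weinstein homotopic to $W$. The essential move your proposal is missing is this shift from ``reduce the monodromy of an OBD of $\partial W$'' --- which has no well-defined target --- to ``compare the trivial OBD of $\partial(W^{\flat}\times D^2)$ with one carrying the critical attaching spheres on a page.''
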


\begin{defn}\label{def: WLF}
A {\em Weinstein Lefschetz fibration} $(p: X^{2n}\to D^2,\lambda)$ (abbreviated WLF) satisfies the following:
\be
\item $X$ is a $2n$-dimensional compact domain with corners and $\lambda$ is a Weinstein structure on the smoothing $X^{sm}$ of $X$;
\item $p$ is a smooth map which is a fibration except at a finite number $\op{Crit}(p)$ of critical points in $\op{int}(X)$;
\item there exist local holomorphic coordinates $z_1,\dots,z_n$ about each $x\in \op{Crit}(p)$ with respect to which $p(z_1,\dots,z_n)= p(x)+ z_1^2+\dots +z_n^2$ and $\lambda= i\sum_{j=1}^n(z_j d\overline{z}_j -\overline{z}_j dz_j)$ (and hence $d\lambda$ is K\"ahler with respect to these local holomorphic coordinates);
\item $\bdry X$ is the union of two faces $\bdry_v X= p^{-1}(\bdry D^2)$ and $\bdry_h X= \cup_{y\in D^2} \bdry( p^{-1}(y))$ meeting at a codimension two corner and $p|_{\bdry _vX}: \bdry_v X\to \bdry D^2$ and $p|_{\bdry_h X}:\bdry_h X\to D^2$ are fibrations;
\item $d\lambda$ is nondegenerate on each $\ker dp(x)$ for each $x\in X$; and 
\item the restriction of $\lambda$ to each nonsingular fiber is Weinstein and $\lambda|_{\bdry(p^{-1}(y))}$ is independent of $y\in D^2$.
\ee
A WLF often comes with a basepoint $*\in D^2$, which we take to be a regular value. We call $p^{-1}(*)$ the {\em distinguished fiber}. 
\end{defn} 

There is a corresponding combinatorial object called an {\em abstract WLF $(W^{\flat};\mathcal{L}=( L_1,\dots, L_m))$}, where $W^{\flat}$ is a ($2n-2$)-dimensional Weinstein domain and $L_1,\dots, L_m$ are exact parametrized Lagrangian spheres in $W^{\flat}$. A WLF  $p:X\to D^2$ can be constructed from an abstract WLF by attaching critical Weinstein handles along Legendrians $\tilde L_j\subset W^{\flat}\times S^1\subset \bdry (W^{\flat}\times D^2)$ near $2\pi j/m \in S^1$ obtained by lifting $L_j$. Here $W^{\flat}$ becomes the distinguished fiber of the WLF. We refer the reader to \cite[Section 6]{GP17} for more details on how to go back and forth between WLFs and abstract WLFs.

The following moves applied to $(W^{\flat}; \mathcal{L}=( L_1,\dots, L_m))$ are called the {\em abstract WLF moves}:
\be
\item (Deformation) Simultaneous Weinstein deformation of $W^{\flat}$ and exact Lagrangian isotopy of $\mathcal{L}$.
\item (Cyclic permutation) Replace $\mathcal{L}$ by $(L_2,\dots, L_m, L_1)$.
\item (Hurwitz move) Replace $\mathcal{L}$ by $(L_2, \tau_{L_2}L_1, L_3,\dots, L_m)$ or $(\tau^{-1}_{L_1}(L_2),L_1,L_3,\dots, L_m)$.
\item (Stabilization) Given a regular Lagrangian disk $D_0\subset W^{\flat}$ with Legendrian boundary, replace $W^{\flat}$ by $W^{\flat}\cup h$, where $h$ is a critical Weinstein ($n-1$)-handle with core Lagrangian disk $D_1$ such that $\bdry D_0=\bdry D_1$, and replace $(L_1,\dots, L_m)$ by $(L'=D_0\cup D_1, L_1,\dots, L_m)$.
\ee

Our second corollary was conjectured by Giroux-Pardon~\cite[p.967]{GP17}:

\begin{cor} \label{cor: conj of GP}
Any two abstract WLFs whose Weinstein domain structures are $1$-Weinstein homotopic are related by the abstract WLF moves.
\end{cor}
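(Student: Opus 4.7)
The plan is to translate the statement into the language of open book decompositions and apply the stabilization equivalence theorem of Section 1.1. The key bridge, implicit in the paragraph constructing a WLF from an abstract WLF, is the following dictionary: to an abstract WLF $\mathcal{F} = (W^\flat;\, L_1, \ldots, L_m)$ I associate its total space $X_\mathcal{F}$, a Weinstein $2n$-domain obtained by attaching critical Weinstein $n$-handles to $W^\flat \times D^2$ along Legendrian lifts of the $L_j$, and the induced strongly Weinstein OBD on its contact boundary, $\mathcal{O}_\mathcal{F} = (W^\flat,\, \tau_{L_m} \circ \cdots \circ \tau_{L_1})$.

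I would then verify move-by-move that the abstract WLF moves correspond to elementary OBD equivalences on $\mathcal{O}_\mathcal{F}$: the deformation move gives a strong Weinstein homotopy; the cyclic permutation gives conjugation of the monodromy by $\tau_{L_1}$; the Hurwitz move leaves $\mathcal{O}_\mathcal{F}$ unchanged (up to Hamiltonian isotopy) via the identity $\tau_{\tau_{L_2}L_1} \circ \tau_{L_2} = \tau_{L_2} \circ \tau_{L_1}$; and WLF stabilization of $\mathcal{F}$ along $D_0$ is precisely positive stabilization of $\mathcal{O}_\mathcal{F}$ along the regular Lagrangian disk $D_0 \subset W^\flat$.

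Now, given two abstract WLFs $\mathcal{F}_0, \mathcal{F}_1$ whose Weinstein total spaces $X_{\mathcal{F}_0}, X_{\mathcal{F}_1}$ are $1$-Weinstein homotopic, the homotopy induces a contactomorphism of the contact boundaries, so $\mathcal{O}_{\mathcal{F}_0}$ and $\mathcal{O}_{\mathcal{F}_1}$ are strongly Weinstein OBDs on the same closed contact manifold. The main stabilization equivalence theorem then produces a finite sequence of positive (de)stabilizations, conjugations, and strong Weinstein homotopies connecting them. The bulk of the work is to lift each elementary OBD move in the chain back to a sequence of abstract WLF moves on $\mathcal{F}$: positive stabilization lifts directly to WLF stabilization (possibly preceded by cyclic permutations to position the new Dehn twist at the end of the factorization); strong Weinstein homotopy lifts to WLF deformation, tracking the Lagrangian spheres $L_j$ as exact Lagrangian isotopies via parallel transport; and conjugation is absorbed into a combination of cyclic permutations and deformations.

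The main obstacle is destabilization. One must show that whenever $\mathcal{O}_\mathcal{F}$ positively destabilizes along a regular Lagrangian disk $L_0 \subset W^\flat$, the abstract WLF $\mathcal{F}$ can, after Hurwitz moves and deformations, be placed in a normal form whose final vanishing cycle is $L' = L_0 \cup L_1$, with $L_1$ the cocore of a cancellable critical handle. This amounts to a uniqueness statement for Dehn-twist factorizations of $1$-Weinstein symplectomorphisms of $W^\flat$ modulo Hurwitz moves, Hamiltonian isotopy, and conjugation --- a higher-dimensional analogue of the Loi-Piergallini theorem. A natural approach is to induct on the number of critical handles of $X_\mathcal{F}$, using the convex hypersurface technology developed earlier in the paper to isotope $L_0$ inside $W^\flat$ until the cancellation between a new $(n-1)$-handle on $W^\flat$ and the attaching data of the $m$-th critical handle of $X_\mathcal{F}$ becomes manifest, at which point WLF destabilization applies.
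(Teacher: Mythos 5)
Your proposal correctly identifies the dictionary between AWLF moves and OBD moves (deformation $\leftrightarrow$ Weinstein homotopy, cyclic permutation $\leftrightarrow$ conjugation, Hurwitz move $\leftrightarrow$ Hamiltonian isotopy of monodromy, WLF stabilization $\leftrightarrow$ positive OBD stabilization), and this dictionary is indeed real and useful. But your strategy of passing directly to the boundary OBDs $\mathcal{O}_{\mathcal{F}_i}$ on $\partial|W_i^\flat;\mathcal{L}_i|$, applying \autoref{theorem: stabilization equivalence} there, and then \emph{lifting} each OBD move back to AWLF moves has a gap that you yourself flag but then treat too lightly: the lifting of OBD \emph{destabilization} back to AWLF destabilization is exactly the higher-dimensional Loi--Piergallini problem, i.e., uniqueness of positive Dehn-twist factorizations up to Hurwitz equivalence, and there is no proof of that statement in the paper or elsewhere. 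Your final paragraph proposes to handle it "by induction on the number of critical handles, using convex hypersurface technology to isotope $L_0$... until the cancellation becomes manifest," but this is precisely the theorem you would need to prove and no mechanism for the induction is given; the proposal as written assumes what it would need to establish.

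The paper's proof deliberately sidesteps this obstruction by never applying \autoref{theorem: stabilization equivalence} to $\partial|W^\flat;\mathcal{L}|$. Instead it introduces the notion of an AWLF \emph{compatible} with a Weinstein structure $(W,\lambda,\phi)$ (\autoref{def:CAWLF}), where the vanishing cycles are decomposed as $\mathcal{L}'\cup\mathcal{L}''$ with $(W^\natural;\mathcal{L}')$ a multistabilization of $(W^\flat;\varnothing)$ encoding a neighborhood $W^{(n-1)}$ of the $(n-1)$-skeleton, and $\mathcal{L}''$ lifting to the attaching spheres of the $n$-handles. The stabilization equivalence theorem is only ever applied to OBDs of $\partial(W^\flat\times D^2)$, i.e., to the contact boundary of the \emph{subcritical} part, where both OBDs being compared are multistabilizations of the same trivial OBD $(W^\flat;\varnothing)$ and the move equivalence is elementary (\autoref{claim:move_eq_multistab'}). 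The generic $1$-Weinstein homotopy between the total spaces is then sliced into strict, handleslide, and birth-death pieces (\autoref{def:weinstein_homotopies'}), and one proves move equivalence of compatible AWLFs across each slice separately (\autoref{lemma:move_eq_strict'}, \autoref{lemma:move_eq_bd_hslide'}). This keeps all the contact-topological input confined to the well-controlled subcritical part and replaces the Loi--Piergallini-type lifting problem with explicit handle-level bookkeeping.
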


The proofs of both corollaries will be given in \autoref{section: proofs of Lefschetz}. The point is that, given a WLF $p:X \to D^2$, the restriction of $p$ to $\bdry X$ gives a strongly Weinstein OBD of $\bdry X$, where $p^{-1}(\op{int}(D^2))$ is a neighborhood of the binding and $p^{-1}(e^{i\theta})$, $e^{i\theta}\in \bdry D^2$, are the pages. From the point of view of an abstract WLF $(W^{\flat}; \mathcal{L} = (L_1, \dots, L_m))$, the page of the OBD is $W^{\flat}$ and the monodromy is given by positive Dehn twists around $L_1, \dots, L_m$. The abstract WLF moves can then be described in terms of the boundary OBD.

\subsection{Review and reformulation of \cite{HH19}} \label{subsection: review}

This paper is an extension of the following results from \cite{HH19}:  

\begin{theorem} \label{theorem:genericity}
	Any closed hypersurface in a contact manifold can be $C^0$-approxi\-mated by a Weinstein convex one.
\end{theorem}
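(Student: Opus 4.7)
The plan is to put the contact form into a normal form adapted to a tubular neighborhood of $\Sigma$, establish a transversality statement that produces a $C^0$-close hypersurface with a well-behaved dividing set, and then exploit the flexibility of Liouville structures inside the tubular neighborhood to upgrade the resulting pieces to Weinstein.

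In more detail, fix a contact form $\alpha$ for $\xi$ on a tubular neighborhood $N = \Sigma \times (-\ve, \ve)$ with normal coordinate $t$, and decompose $\alpha = \beta_t + u_t\, dt$ for $\beta_t \in \Omega^1(\Sigma)$ and $u_t \in C^\infty(\Sigma)$. Recall that $\Sigma$ is $\xi$-convex if and only if there is a contact vector field transverse to $\Sigma$; after straightening this vector field to $\partial_t$, the pair $(\beta_t, u_t)$ becomes $t$-independent, the dividing set is $\Gamma := \{u = 0\}$, and the regions $R_\pm := \{\pm u > 0\}$ inherit a Liouville $1$-form $\beta|_{R_\pm}$ whose Liouville vector field is transverse to $\Gamma$. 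Weinstein convexity demands further that each $(R_\pm, \beta|_{R_\pm})$ be Weinstein.

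I would then perform a two-stage $C^0$-small perturbation of $\Sigma$ inside $N$, replacing it by the graph of a small function $f: \Sigma \to (-\ve, \ve)$. A first transversality step places $\Sigma$ in a position where the resulting contact Hamiltonian of $\partial_t$ is Morse with $0$ a regular value, so that $\Gamma$ is a smooth codimension-one submanifold and $R_\pm$ are smooth manifolds with common boundary $\Gamma$; this is a standard genericity statement in the space of graphs of $C^0$-small functions. A second stage then modifies $f$ further, essentially fixing $\Gamma$, in order to simplify the skeleton of the Liouville vector field on each $R_\pm$ until it is gradient-like for a Morse (or at worst Morse plus birth-death) function.

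The main obstacle is this second stage. As the authors note later citing \cite{MNW13}, there exist Liouville domains that are not of Weinstein homotopy type, so the Liouville-to-Weinstein upgrade cannot be performed purely abstractly on the pieces $R_\pm$. The key observation, which is the technical content of convex hypersurface theory in \cite{HH19}, is that graphical perturbations inside $N$ give access to a sufficiently rich family of deformations of the induced Liouville structure on $R_\pm$ — richer than what is available intrinsically on an abstract Liouville domain — to run a relative skeleton simplification procedure with $\Gamma$ held fixed. Non-Weinstein configurations then form a positive-codimension subset of the admissible perturbation space, so for generic small $f$ both $R_+$ and $R_-$ simultaneously become Weinstein, completing the $C^0$-approximation.
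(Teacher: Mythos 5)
The paper does not reprove this statement; it is quoted verbatim from \cite{HH19} (\autoref{subsection: review} makes this explicit), so the comparison has to be against the proof there, which is a substantial convex hypersurface theory argument. Your sketch does not match that proof and, more importantly, your ``first transversality step'' is where the real work lives, and it cannot be disposed of as a ``standard genericity statement.''

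The claim that one can place $\Sigma$ in a position where $\partial_t$ is a contact vector field transverse to $\Sigma$ with Morse contact Hamiltonian and $0$ a regular value is precisely the statement that $\Sigma$ is convex with smooth dividing set, and in dimension $\geq 5$ this is \emph{not} achievable by a $C^\infty$-small perturbation, let alone by transversality in a Banach space of graphs. The correct framework is the characteristic foliation $\Sigma_\xi$. One first perturbs so that $\Sigma_\xi$ is gradient-like for a Morse function with signed singularities; convexity is then equivalent to the absence of \emph{retrogradient} trajectories from negative to positive singularities, and the obstruction to removing these by small perturbations is genuine: a retrogradient from a negative index-$n$ singularity to a positive index-$n$ singularity is a robust, codimension-$0$ phenomenon in dimension $\geq 5$. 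The mechanism in \cite{HH19} for killing such trajectories is the installation of $Y$-shaped \emph{plugs}, which are $C^0$-small but $C^1$-large modifications of the hypersurface (the paper alludes to exactly this in the proof sketch of \autoref{prop: extension of theorem: sigma times interval}). Your framework has no slot for such a $C^1$-large, non-transversality-theoretic move, so the argument stalls before the Weinstein upgrade even begins.

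Your second stage is also misaligned with the actual mechanism. You treat the Liouville-to-Weinstein upgrade of $(R_\pm,\beta|_{R_\pm})$ as a separate flexibility problem to be solved by a ``relative skeleton simplification'' after the dividing set is fixed. In \cite{HH19} the Weinstein condition is not an afterthought: once $\Sigma_\xi$ is arranged to be gradient-like for a Morse function with no retrogradients, the restrictions of $\Sigma_\xi$ to $R_\pm$ \emph{are already} gradient-like Liouville vector fields for the Morse function, so $R_\pm$ are Weinstein by construction. The Weinstein-ness and the convexity are produced simultaneously by the same control over $\Sigma_\xi$, not sequentially. Your appeal to ``non-Weinstein configurations forming a positive-codimension subset of the perturbation space'' is unsupported and, given the \cite{MNW13}-type obstructions you yourself cite, would need a real argument; the actual proof avoids this issue entirely by never passing through a non-Weinstein Liouville stage.
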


\begin{theorem} \label{theorem:family genericity}
	Let $\xi$ be a contact structure on $\Sigma \times [0,1]$ such that the hypersurfaces $\Sigma \times \{0,1\}$ are Weinstein convex. Then, up to a boundary-relative contact isotopy, there exists a finite sequence $0<s_1<\dots<s_N<1$ such that the following hold:
	\begin{itemize}
		\item[(B1)] $\Sigma \times \{s\}$ is Weinstein convex if $s\neq s_i$ for any $1 \leq i \leq N$.
		\item[(B2)] For each $i$, there exists a small $\epsilon>0$ such that $\xi$ restricted to $\Sigma \times [s_i-\epsilon, s_i+\epsilon]$ is contactomorphic to a bypass attachment.
	\end{itemize}
\end{theorem}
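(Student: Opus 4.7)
The plan is to prove this by combining a parametric version of \autoref{theorem:genericity} with a classification of the codimension-one bifurcations of Weinstein convex structures as bypass attachments. First I would establish (B1) by running the proof of \autoref{theorem:genericity} with the parameter $s \in [0,1]$ included throughout, working relative to the already-convex endpoints $\Sigma \times \{0,1\}$. Weinstein convexity of a hypersurface $H$ requires simultaneously choosing a transverse contact vector field, a smooth dividing set, and gradient-like Morse functions on each half-page whose stable/unstable manifolds intersect transversely; each of these ingredients is $C^0$-generic on a single hypersurface, so their simultaneous failure should be codimension at least one. Upgrading each step of the non-parametric argument to a parametric, boundary-relative statement and applying parametric transversality, I expect to conclude that after a boundary-relative contact isotopy the non-convex locus in $[0,1]$ is $0$-dimensional, yielding a finite set $\{s_1, \dots, s_N\}\subset(0,1)$.

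Next I would establish (B2) by analyzing the codimension-one degenerations at each $s_i$. Generic codimension-one failures of the Weinstein convex data on $\Sigma \times \{s\}$ come in two flavors: a birth-death of a pair of critical points of the gradient-like Morse function on a half-page, or a non-transverse intersection of the stable and unstable manifolds of two critical points whose indices differ by one (a handle-slide, or retrogradient connection in the parlance of Honda's $3$-dimensional theory). In a neighborhood of $s_i$, I would write down a standard local model for the contact slab describing the bifurcation, and then identify this slab, by a contactomorphism rel $\Sigma\times\{s_i\pm\epsilon\}$, with the local model of a bypass attachment built from a pair of contact handles of adjacent indices supported near the dividing set. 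A compactness and reparametrization argument then packages the local models into a global statement on $\Sigma \times [0,1]$.

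The main obstacle is this last identification in arbitrary dimensions. In dimension $3$ the characteristic foliation is one-dimensional and its generic bifurcations are classical, so the identification reduces to Honda's original argument. In higher dimensions the relevant bifurcation theory is that of gradient-like Liouville structures on the dividing hypersurface, and one must carefully translate abstract bifurcations of such structures into contact handle decompositions of the slab. This is precisely where the convex hypersurface technology of \cite{HH19} does the main work, via explicit Weinstein handle calculus and the openness of Weinstein convexity established in \autoref{theorem:genericity}.
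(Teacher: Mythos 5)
This theorem is not proved in the present paper; it is cited verbatim from \cite{HH19}, and the paper only sketches its parametric analogue in the proof of Proposition~\ref{prop: extension of theorem: sigma times interval}. Your high-level outline (parametrize the genericity argument, then identify the codimension-one bifurcations with bypasses) matches the skeleton of the argument, but your identification of the relevant bifurcations is wrong, and the wrongness is fatal to step (B2).

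The codimension-one failure of convexity that is detected by a bypass attachment is a \emph{retrogradient} trajectory: a single, transversely cut out flow line of the characteristic foliation from a \emph{negative} nondegenerate singularity of index $n$ to a \emph{positive} nondegenerate singularity of index $n$ (see \autoref{prop:bypass_bifurcation_correspondence} and the definition of $k$-Morse$^+$ above \autoref{lemma:2_morse_convex}). This is not a handle-slide, and it is not a non-transverse intersection of stable and unstable manifolds of critical points whose indices differ by one; both singularities have the same middle index $n$, and the generic retrogradient is cut out transversely. By contrast, the two bifurcations you list do not break convexity at all: a birth-death on a half-page makes the hypersurface $1$-Weinstein convex rather than $0$-Weinstein convex, and a handle-slide between same-sign critical points leaves the characteristic foliation still separating $R_+$ from $R_-$. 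As the paper records, a $1$-Morse$^+$ hypersurface is convex \cite{HH19}; the $+$ (no trajectories from negative to positive singularities) is the only thing that can genuinely fail in a one-parameter family. Your (B2) therefore analyzes the wrong degenerations and would not produce a bypass.

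There is also a substantial technical gap in your (B1). Achieving the needed genericity relative to the boundary is not a soft application of parametric transversality: the proof in \cite[Section 9]{HH19}, as sketched here in the proof of \autoref{prop: extension of theorem: sigma times interval}, requires installing and uninstalling $Y$-shaped plugs to block cycles and to force all trajectories to begin and end at singular points, followed by local graphical perturbations $U\times\{a\}\rightsquigarrow\{s=f(x)\}$ along the non-singular part to arrange transversality of stable and unstable manifolds in the family. These plugs are a contact-geometric construction with no analogue in the abstract transversality framework you invoke, and without them the ``$0$-dimensional non-convex locus'' conclusion does not follow. So both halves of your argument need the machinery of \cite{HH19} imported explicitly rather than gestured at.
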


We say that $\xi$ on $\Sigma\times[0,1]$ admits a {\em bypass decomposition}  if there exist $0<s_1<\dots<s_N<1$ for some $N$ such that (B1) and (B2) hold. Bypass attachments will be reviewed in \autoref{section: preliminaries}. 

The technical heart of \autoref{theorem: stabilization equivalence} is the $1$-parametric version of \autoref{theorem:family genericity} and the case-by-case analysis of all the possibilities that arise. As such, the following remark is essential:

\begin{remark} \label{rmk: parametric}
	A key feature of the proofs of Theorems~\ref{theorem:genericity} and \ref{theorem:family genericity} is that they can be done {\em parametrically}, i.e., in any number of parameters. 
\end{remark}

\begin{defn}$\mbox{}$
\be
\item A {\em contact handlebody over a Weinstein domain $(W,\lambda)$} is a contact manifold contactomorphic to $$(W\times [-C,C]_t, \xi|_{W\times[-C,C]}= \ker(dt+\lambda)),$$ 
	where $C>0$.  
\item A {\em generalized contact handlebody} is a contact manifold which is contactomorphic to 
$$(W\times [f_-(x),f_+(x)]_t, \xi|_{W\times[f_-(x),f_+(x)]}= \ker(dt+\lambda)),$$ 
	where $f_-,f_+: W\to \R$ are smooth functions with $f_-(x)<f_+(x)$ for all $x\in W$, and is foliated by $1$-Weinstein domains of graphical type (i.e, of the form $t=f(x)$).
\item A (generalized) contact handlebody with $W$ a flexible Weinstein domain is a {\em flexible (generalized) contact handlebody}. 

\ee
\end{defn}

\begin{remark}
	 A (generalized) contact handlebody has a natural isotopy class of smoothings.  We will often abuse notation and also call the smoothing a (generalized) contact handlebody.
\end{remark}

We briefly sketch the convex hypersurface theory proof of \autoref{theorem: Giroux Mohsen}(1), that is, the existence of supporting strongly Weinstein OBDs. Given a closed contact manifold $(M,\xi)$ of dimension $2n+1$, we first choose a self-indexing Morse function $f:M\to \R$ so that $\Sigma:= f^{-1}(n+\tfrac{1}{2})$ is a smooth hypersurface which divides $M$ into two components $M-\Sigma=H'_0\cup H'_1$.  Using Gromov's $h$-principle, we can realize some deformation retraction $H_i$ of $H'_i$, $i=0,1$, as a contact handlebody.

\begin{defn}[$\theta$-decomposition]
	A {\em $\theta$-decomposition}\footnote{Informally called a ``mushroom burger"; the $H_i$ are the buns and $\Sigma\times[0,1]$ is the patty.} of a closed contact manifold $(M,\xi)$ is a pair consisting of two decompositions $M= H_0\cup (\Sigma\times[0,1])\cup H_1$ and $\Delta$, where:
	\be\item $H_0$ and $H_1$ are contact handlebodies; 
	\item $\bdry H_0\simeq \Sigma\times\{0\}$ and $\bdry H_1\simeq -\Sigma\times\{1\}$ are Weinstein convex hypersurfaces;
	\item $\Delta$ is a bypass decomposition of $(\Sigma\times [0,1],\xi)$.
	\ee
\end{defn}

\autoref{theorem:family genericity} implies the existence of a $\theta$-decomposition $(M= H_0\cup (\Sigma\times[0,1])\cup H_1,\Delta)$.
Since a bypass is a smoothly canceling (but contact-topologically nontrivial) pair of contact handle attachments of indices $n$ and $n+1$, a $\theta$-decompo\-sition gives a contact handle decomposition of $(M, \xi)$, which in turn implies \autoref{theorem: Giroux Mohsen}(1).

\begin{remark}
	We will work with $\theta$-decompositions instead of {\em contact Morse functions $g:M\to \R$} even though a $\theta$-decomposition gives rise to a contact Morse function. This is because convex hypersurfaces transverse to the gradient of a contact Morse function are not necessarily Weinstein convex (i.e., the Weinstein data can be lost). 
\end{remark}

\n
{\em Organization of the paper.} In \autoref{section: preliminaries} we review and rework some preliminaries, including the theory of contact handles, Legendrian Kirby calculus, bypass attachments, and stabilizations of (P)OBDs.  Then in \autoref{section: outline of proof of stabilization equivalence} we outline the proof of \autoref{theorem: stabilization equivalence}, the main stabilization equivalence result.  The analysis of all the cases that arise in the $1$-parametric version of \autoref{theorem:family genericity} is carried out in \autoref{sec: sigma times interval}; as indicated earlier, this is the technical heart of the proof of \autoref{theorem: stabilization equivalence}.  The remaining steps of the proof of \autoref{theorem: stabilization equivalence} are given in \autoref{section: proof of theorem layer} and \autoref{section: proof of theorem: birth-death} and the proofs of Corollaries~\ref{cor: proof of GP} and \ref{cor: conj of GP} are given in \autoref{section: proofs of Lefschetz}.

\s\n
{\em Acknowledgements.} JB would like to thank Austin Christian for many valuable conversations over the course of 2022 and 2023. KH thanks Otto van Koert and Kevin Sackel for helpful correspondence and conversations. He is also grateful to Hirofumi Sasahira and Kyushu University for their hospitality during his sabbatical in AY 2022--2023. KH also thanks Matthias Scharitzer for helpful email and Vera V\'ertesi for discussions many many years ago.

\section{Preliminaries} \label{section: preliminaries}

In this section we review the theory of contact handles, Legendrian Kirby calculus, bypass attachments, and stabilizations of OBDs, primarily from the point of view of \cite{HH18,HH19}; see also \cite{Sac19,ding2009handle,casals2019legendrianfronts}. We will also introduce a new ingredient --- decorations on Legendrian surgery diagrams --- to more effectively describe contact ($n+1$)-handles. 

Recall that the ambient contact manifold $(M,\xi)$ has dimension $2n+1$, where $n\geq 1$.  The $n=1$ and $n>1$ cases will be treated in parallel in this paper.  Since the $n=1$ case is simpler, we will mostly discuss the $n> 1$ case and comment on the $n=1$ case when appropriate.

\subsection{Higher-dimensional Legendrian front projections}

We begin by describing the Legendrian surgery diagrams that we use when working with higher-dimensional Legendrians, i.e., when $\Lambda$ is a Legendrian and $\dim \Lambda > 1$. 

\begin{remark}
Although $\dim M=2n+1$, many of our diagrams represent ($n-1$)-dimensional Legendrians in a codimension-$2$ contact submanifold, like the dividing set of a convex hypersurface. Dimensions should be clear from context. 
\end{remark}

The nature of Legendrian front singularities and Legendrian Reidemeister moves in high dimensions is complicated, and in general cannot be comprehensively described by bootstrapping the behavior of Legendrian knots. In \cite{casals2019legendrianfronts}, Casals and Murphy work with a class of Legendrians in arbitrary dimensions that exhibit spherical symmetry, and thus, with specified conventions, can be diagrammatically represented by curves. Legendrian Reidemeister moves that respect this symmetry extend accordingly. For our purposes, which are generally local, we can afford the luxury of working with similar conventions as \cite{casals2019legendrianfronts}. 

As such, our diagrams are drawn using curves, although they implicitly describe Legendrian fronts in arbitrary dimensions roughly according the conventions established in Section 2.4 of \cite{casals2019legendrianfronts}. For convenience, we summarize them here; the reader is also encouraged to consult the reference in question.

Given a Legendrian front curve $\Lambda\subseteq \R^2_{(z,x_1)}$, we generate a Legendrian front $\tilde{\Lambda}^{n-1} \subseteq \R^{n}_{(z,x_1, \dots, x_{n-1})}$ representing a local Legendrian submanifold in $(\R^{2n-1}, \xi_{\mathrm{st}})$ as follows:
\begin{itemize}
    \item We include the curve and plane 
    \[
    \Lambda\subseteq \R^2_{(z,x_1)} \subseteq \R^{n}_{(z,x_1, \dots, x_{n-1})}
    \]
    in the natural way and consider $z$ the ``vertical'' direction. 
    
    \item Let $\Lambda_0\subseteq \Lambda$ be a curve corresponding to a single component of the corresponding Legendrian link. We extend $\Lambda_0$ to $\tilde{\Lambda}_0^{n-1}\subseteq \R^n$ by local $S^{n-2}$-symmetry around a vertical axis $\ell_{\Lambda_0}$ which depends on the component $\Lambda_0$; see, for example, \autoref{fig:spin_fronts}.  
    
    \item Isotropic ($n-2$)-dimensional spheres --- for example, those representing attaching spheres of $2n$-dimensional subcritical Weinstein ($n-1$)-handles --- are drawn as the union of two $2$-disks, or sometimes just one $2$-disk. The higher-dimensional projection is spun around a choice of vertical axis, which in the presence of two $2$-disks will coincide with the bisecting axis. This spun region is $S^{n-2} \times \D^2$, and represents the projection of the attaching region $S^{n-2} \times \D^{n+1}$ of the $2n$-dimensional subcritical Weinstein ($n-1$)-handle.
    
    \item When possible, the axis $\ell_{\Lambda_0}$ will be chosen to respect reflective symmetry of the Legendrian front $\Lambda_0$. In particular, local pieces of Legendrian fronts represented by parabolas are spun around the axis passing through their extreme point. See, for example, the dark blue Legendrian in \autoref{fig:spin_fronts}.
\end{itemize}

We further impose the following convention, distinct from \cite{casals2019legendrianfronts}: 

\begin{itemize}
    \item All of our cone singularities, that is, those obtained by a rotation of a transverse double point through an axis passing through the intersection point, are obtained as the result of the $\uplus$ operation (which includes handleslides), and so we do not adopt the thick dot convention of \cite{casals2019legendrianfronts} to distinguish cone singularities from genuine transverse intersections. That is, any regular double point in a Legendrian diagram that does not arise from a $\uplus$ operation is not spun to produce a cone singularity, but rather spun around an axis disjoint from the crossing. With the arguments below and in particular in Section \ref{sec: sigma times interval}, there should be no confusion. See, for example, the black Legendrian in \autoref{fig:spin_fronts}.
    
\end{itemize}

\begin{figure}[ht]
	\begin{overpic}[scale=0.43]{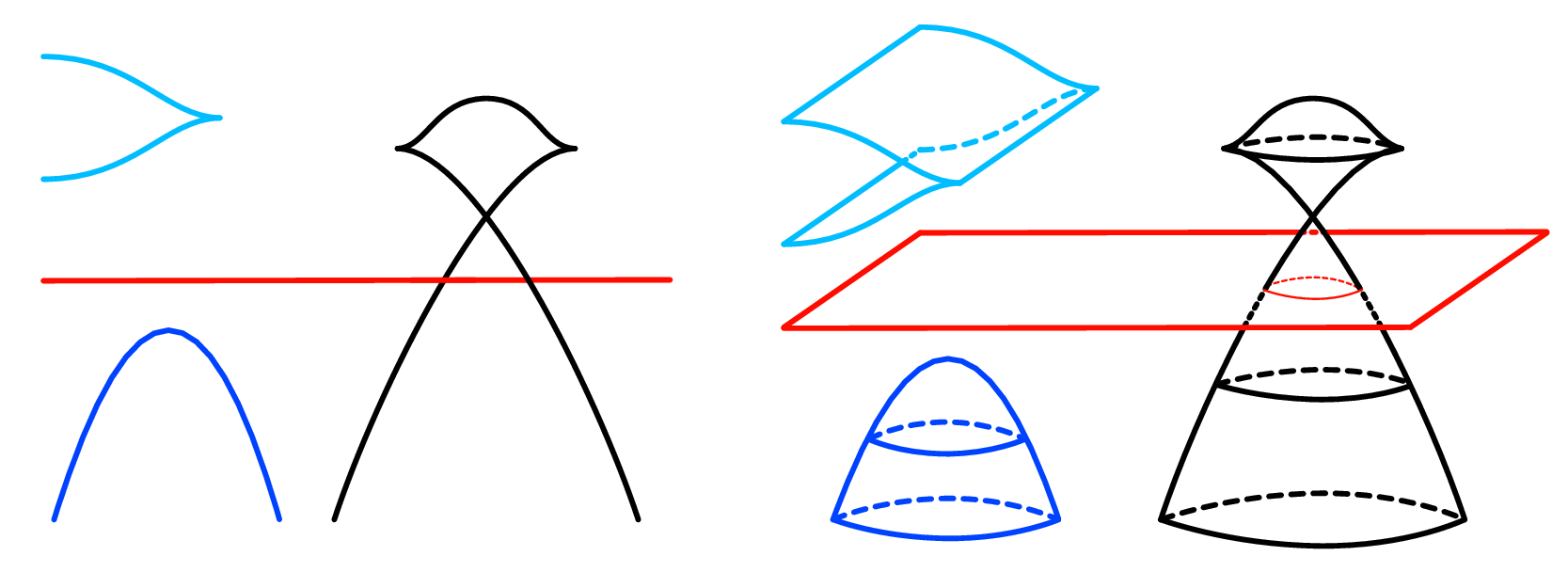}
	   
	\end{overpic}
	\caption{Legendrian front curves on the left that imply the (local) higher-dimensional Legendrian front projections on the right. From context, the black Legendrian arose from a $\uplus$ or handleslide operation and thus produces a cone singularity.}
	\label{fig:spin_fronts}
\end{figure}

\subsection{Contact \(n\)- and \((n+1)\)-handle attachments}

We review the theory of contact $n$-handles and ($n+1$)-handles, in particular highlighting the necessary attachment data. For more details, consult \cite{HH18} and \cite{Sac19}. 

In what follows, $(M^{2n+1}, \xi)$ is a contact manifold with convex boundary $\Sigma^{2n} = \partial M$. Fix an outward pointing contact vector field on $M$, let $\Gamma^{2n-1} \subseteq \Sigma$ be the corresponding dividing set of $\Sigma$, viewed as a contact submanifold of $(M, \xi)$, and let $R_{\pm}$ be the positive and negative regions of $\Sigma$, viewed as Liouville manifolds with ideal contact boundary $\Gamma$. We also write $\overline R_\pm = R_\pm \cup \Gamma$.  

The following two propositions from \cite{HH18} summarize the effect of attaching a critical contact handle (that is, either an $n$-handle or an $(n+1)$-handle) to $M$. 

\begin{remark}
For the sake of brevity, we will generally not distinguish between (finite-type) Liouville domains and their completions. 
\end{remark}

\begin{prop}[Contact $n$-handle attachment, Proposition 3.1.3 \cite{HH18}]\label{proposition:contact_n_handle}
Let $\Lambda^{n-1} \subseteq (\Gamma^{2n-1}, \xi|_{\Gamma})$ be an embedded Legendrian sphere in the dividing set. A contact $n$-handle can be attached to $M$ along $\Lambda$ to produce a new contact manifold $(M', \xi')$ with convex boundary $\Sigma' = R_+' \cup \Gamma' \cup R_-'$ such that:
\begin{enumerate}
    \item $R_{\pm}'$ is obtained from $R_{\pm}$ by attaching a Weinstein $n$-handle along $\Lambda \subseteq \partial R_{\pm}$.
    \item $\Gamma'$ is obtained from $\Gamma$ by performing a contact $(-1)$-surgery along $\Lambda$. 
\end{enumerate}
\end{prop}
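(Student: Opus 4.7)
The approach is to construct an explicit local model of a contact $n$-handle $H$ with convex boundary and show that this model can be attached to $M$ along a standard neighborhood of any Legendrian sphere $\Lambda \subseteq \Gamma$. The key observation driving the construction is that the dividing set on the lower boundary of $H$ will cut $H$ into two halves, each of which realizes a Weinstein $n$-handle on the respective side of the dividing set, from which both (1) and (2) will follow.

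The first step is to produce a standardized neighborhood of $\Lambda$ in $M$. Since $\Lambda$ is Legendrian in the codimension-$2$ contact submanifold $(\Gamma, \xi|_\Gamma)$, the Legendrian neighborhood theorem gives a contactomorphism from a neighborhood of $\Lambda$ in $\Gamma$ to a neighborhood of the zero section in $J^1(S^{n-1})$. The contact neighborhood theorem for codimension-$2$ contact submanifolds, applied to $\Gamma \subseteq \Sigma$, extends this to a standard neighborhood of $\Lambda$ in the convex hypersurface $\Sigma$. Finally, the convex hypersurface normal form (using the outward-pointing contact vector field on $M$) extends this to a standard neighborhood of $\Lambda$ in $M$.

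The second step is to write down the model contact $n$-handle explicitly as $H \cong D^n \times D^{n+1}$ with a contact form designed so that: (a) the lower boundary $\partial_- H$ is convex with dividing set matching the standardized neighborhood of $\Lambda$ from the previous step; (b) a chosen transverse contact vector field on $H$ restricts to a Liouville vector field on each of the regions $H \cap \{$upper half$\}$ and $H \cap \{$lower half$\}$ that realizes a Weinstein $n$-handle; and (c) the upper boundary $\partial_+ H$ is convex. One then glues $H$ to $M$ using the standardization of the first step.

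The third step is verification. Effect (1) is immediate from the decomposition of the model into its $R_+$ and $R_-$ halves, each of which is a standard Weinstein $n$-handle attached to $R_\pm$ along $\Lambda$. Effect (2) then follows from (1) together with the standard fact that attaching a Weinstein $n$-handle to a Liouville domain along a Legendrian sphere in its contact-type boundary modifies that boundary by contact $(-1)$-surgery along the Legendrian; alternatively one can verify it directly in the model by observing that the new dividing curve on $\partial_+ H$ is obtained by the surgery operation. The main technical obstacle is the honest construction in step two: one must choose the contact form on $H$ so that the induced contact vector field is transverse to both the lower and upper boundaries, so that the cores and cocores are Legendrian, and so that the characteristic foliations behave as advertised. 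This is a careful but routine local computation, and the resulting model coincides with that built in \cite[Section 3.1]{HH18}.
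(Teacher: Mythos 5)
The paper does not give a proof of this proposition; it cites it directly from \cite[Proposition 3.1.3]{HH18}, and your proposal follows precisely the approach of that cited source: construct an explicit model contact $n$-handle whose dividing set splits it into a positive and a negative Weinstein $n$-handle, glue it along a standardized neighborhood of $\Lambda$, and read off (1) and (2) from the model. One small imprecision in your Step 1: the phrase ``contact neighborhood theorem for codimension-$2$ contact submanifolds, applied to $\Gamma \subseteq \Sigma$'' does not quite parse, since $\Sigma$ is a hypersurface and not itself a contact manifold. What you actually want is either (i) the normal form for a convex hypersurface near its dividing set, under which a collar of $\Gamma$ in $\Sigma$ is identified with $\Gamma \times (-\epsilon,\epsilon)$ with characteristic foliation directed by the Liouville vector field of the symplectization of $(\Gamma, \xi|_\Gamma)$, or (ii) the contact submanifold neighborhood theorem applied to $\Gamma \subseteq M$ followed by intersecting with the $I$-invariant neighborhood of $\Sigma$. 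Either route yields the standardization you need, and the remainder of your outline agrees with the construction in \cite{HH18}.
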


\begin{prop}[Contact ($n+1$)-handle attachment, Proposition 3.2.6 \cite{HH18}]\label{proposition:contact_n+1_handle}
Let $\Lambda^{n-1} \subseteq (\Gamma^{2n-1}, \xi|_{\Gamma})$ be an embedded Legendrian sphere in the dividing set and let $D_{\pm}\subseteq R_{\pm}$ be regular, asymptotically cylindrical, open Lagrangian disks that asymptotically fill $\Lambda$. A contact ($n+1$)-handle can be attached to $M$ along $D_+ \cup \Lambda \cup D_-$ to produce a new contact manifold $(M', \xi')$ with convex boundary $\Sigma' = R_+' \cup \Gamma' \cup R_-'$ such that:
\begin{enumerate}
    \item $R_{\pm}'$ is obtained from $R_{\pm}$ by removing a standard neighborhood of $D_{\pm}$ from $R_{\pm}$.
    \item $\Gamma'$ is obtained from $\Gamma$ by performing a contact $(+1)$-surgery along $\Lambda$. 
\end{enumerate}
\end{prop}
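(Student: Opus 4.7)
The strategy is to construct a local model of the contact $(n+1)$-handle attachment, use neighborhood theorems to reduce to that model, and then read off the convex boundary structure from the model.

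\emph{Step 1: Local model.} I would start by building a standard Darboux-type model for the attaching region. By \autoref{proposition:contact_n_handle} and the Weinstein neighborhood theorem, a neighborhood of $\Lambda \subseteq \Gamma$ is contactomorphic to a standard Legendrian-sphere neighborhood in $J^1(S^{n-1})$, extended through $\Sigma$ using the Reeb coordinate. The regular asymptotically cylindrical hypothesis on $D_\pm \subseteq R_\pm$ means they look like $\Lambda \times [0,\infty)$ near the ideal boundary, so by a Moser-type argument (using the Lagrangian neighborhood theorem at the base of each $D_\pm$ and compatibility with the cylindrical end along $\Lambda$) a neighborhood of $D_+ \cup \Lambda \cup D_-$ in $\Sigma$ is contactomorphic to a standard neighborhood of a Legendrian $S^n = D_+ \cup_\Lambda D_-$ that is ``split'' by the dividing set. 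Once this standard neighborhood is identified, the contact $(n+1)$-handle is the obvious Weinstein $(n+1)$-handle attached to $M \times (-\varepsilon, 0]$ across $\Sigma$, carrying the contact form $dt + \lambda$ of the local model, with Liouville vector field pointing inward along the attaching $D^{n+1}\times S^{n-1}$ and outward along the belt region $S^n \times D^n$.

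\emph{Step 2: Uniqueness of the attachment.} Having fixed the model, I would argue that any two handle attachments realizing the same triple $(\Lambda, D_+, D_-)$ are contactomorphic rel boundary. This again uses the standard neighborhood theorem for Legendrians in $\Gamma$ together with the Lagrangian neighborhood theorem for $D_\pm$ in $R_\pm$, glued via a relative Moser argument along $\Lambda$ to produce a contactomorphism of a neighborhood of the attaching sphere. The contact $(n+1)$-handle is then defined up to contact isotopy, so the resulting $(M', \xi')$ is well-defined.

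\emph{Step 3: Reading off the convex boundary.} The payoff is to check the two claimed equalities inside the local model. The new convex hypersurface $\Sigma' \subseteq M'$ is the result of replacing $N(D_+ \cup \Lambda \cup D_-) \subseteq \Sigma$ by the side $D^{n+1} \times S^{n-1}$ of the handle. In the model, the outward contact vector field on $M$ extends across the handle so that:
\begin{itemize}
\item On the positive/negative regions: the attaching locus of the handle contains the Lagrangian hemispheres $D_\pm$, so after attachment these are absorbed into the handle and the surviving part of $R_\pm$ is $R_\pm$ minus a standard Weinstein neighborhood of $D_\pm$. This is exactly statement (1).
\item On the dividing set: the new dividing set $\Gamma'$ is the locus where the extended contact vector field is tangent to $\Sigma'$. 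A direct computation in the model shows $\Gamma'$ is obtained from $\Gamma$ by removing a standard neighborhood of $\Lambda$ and gluing in the $S^{n-1}$-family of arcs produced by the handle's belt, which is precisely contact $(+1)$-surgery on $\Lambda$ (the $(+1)$ rather than $(-1)$ appearing because now $\Lambda$ is being \emph{capped off} by Lagrangian disks $D_\pm$ rather than stabilized). This is statement (2).
\end{itemize}

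\emph{Main obstacle.} The technical heart is Step 3, in particular verifying that the local Liouville data on the handle really does produce contact $(+1)$-surgery on $\Lambda$ (rather than some other contact surgery) and that the resulting $\Gamma'$ fits together compatibly with the new $R_\pm'$. The contrast with \autoref{proposition:contact_n_handle}, where an $n$-handle produces contact $(-1)$-surgery, is the cleanest sanity check: the $n$-handle extends $\Lambda$ into the handle as the core of a Weinstein $n$-handle in $R_\pm$, while the $(n+1)$-handle caps $\Lambda$ off with Lagrangian disks from both sides, and these two operations are related by handle cancellation, consistent with $(-1)$- and $(+1)$-surgeries being inverse.
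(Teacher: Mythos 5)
This proposition is cited verbatim from~\cite{HH18} (Proposition 3.2.6) and is not proved in the present paper, so there is no ``paper's own proof'' here to compare against; the proof lives in the referenced work. Your outline --- local model via the Legendrian and Lagrangian neighborhood theorems, uniqueness by a relative Moser argument, then reading $R_{\pm}'$ and $\Gamma'$ off the model --- is the right skeleton and is broadly the strategy used in~\cite{HH18}, where an explicit coordinate model for the contact $(n+1)$-handle is built and the effect on $R_{\pm}$ and $\Gamma$ is computed directly.

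That said, what you have written is a plan rather than a proof. The place where the actual work happens, which you yourself flag as the ``main obstacle,'' is exactly the point you defer: verifying that the belt region of the handle, when reattached to $\Gamma \setminus N(\Lambda)$, realizes contact $(+1)$-surgery rather than some other regluing. Asserting ``a direct computation in the model shows'' without specifying the model's contact form and tracing the Reeb flow on the new boundary is not yet a proof, and the heuristic that $n$- and $(n+1)$-handles cancel so $(\pm 1)$-surgeries should be inverse is a sanity check, not a derivation. One further point your sketch elides: the regularity hypothesis on $D_{\pm}$ is doing real work in conclusion (1), since it is what guarantees that $R_{\pm} \setminus N(D_{\pm})$ is again a Liouville (Weinstein) domain; for non-regular Lagrangian disks the complement need not carry a compatible Liouville structure, and the statement would fail. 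You should say where regularity enters rather than invoking it only to apply the neighborhood theorem.
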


\n
{\em Summary of attachment data.} To motivate our system of decorations on Legendrian diagrams, we highlight the attaching data for critical contact handles. 
\begin{itemize}
    \item A contact $n$-handle attachment to $(M^{2n+1}, \xi)$ is specified by an embedded Legendrian sphere $\Lambda^{n-1} \subseteq (\Gamma^{2n-1}, \xi|_{\Gamma})$ in the dividing set. 
    
    \item A contact ($n+1$)-handle attachment to $(M^{2n+1}, \xi)$ is specified by an embedded Legendrian sphere $\Lambda^{n-1} \subseteq (\Gamma^{2n-1}, \xi|_{\Gamma})$ in the dividing set, together with (regular, asymptotically cylindrical, open) Lagrangian disks $D_{\pm}^n\subseteq R_{\pm}$ that (asymptotically) fill $\Lambda$, i.e., informally $\partial D_{\pm} = \Lambda$. We call $\Lambda$ the {\em equator} of the handle attachment.
\end{itemize}
\noindent To keep track of and manipulate contact $n$-handles, Legendrians in the dividing set are sufficient. To keep track of all data involving contact ($n+1$)-handles via Legendrians in the dividing set, it is necessary to further describe Lagrangian disk fillings of Legendrians in the positive and negative regions. 

\s\n
{\em Decorated Legendrian diagrams.} We introduce a system of decorations to Legendrian surgery diagrams of dividing sets for the purpose of keeping track of Lagrangian disks with Legendrian boundary. As many of our arguments are completely local, our surgery diagrams will be local, drawn in the front projection, and typically enclosed by a rectangular frame.

Our decorated local Legendrian diagrams obey the following conventions:
\begin{enumerate}
    \item[(D0)] In the lower right corner of the frame we indicate the manifold represented by the empty diagram, and at the bottom center below the frame we indicate the manifold obtained after performing any indicated surgeries in the diagram.
    
    \item[(D1)] A $(-1)$-surgery coefficient (sometimes drawn outside the frame) means that a contact $n$-handle has been attached along the indicated Legendrian (as usual). 
    
    \item[(D2)] A Legendrian with a $\oplus$ node indicates that there is a preferred or specified Lagrangian disk in $R_+$ filling the Legendrian. The precise location of the node has no significance. Likewise, a Legendrian with a $\ominus$ node indicates a preferred or specified Lagrangian disk filling in $R_-$. A Legendrian may be decorated by any combination of $\oplus$ and $\ominus$ nodes; see any of the diagrams in \autoref{fig:decorations}. 
    
    \item[(D3)] A contact ($n+1$)-handle attachment is indicated by a Legendrian decorated by both a $\oplus$ and $\ominus$ node, together with a $(+1)$-surgery coefficient; see the top right diagram in \autoref{fig:decorations}.
    
    \item[(D4)] Let $\Lambda_0\subseteq \Gamma = \partial R_{\pm}$ be a Legendrian with a $(-1)$-surgery coefficient, indicating the attaching sphere of a contact $n$-handle to produce $\Gamma' = \partial R_{\pm}'$. Let $\Lambda$ be another Legendrian decorated by a $\oplus$ or $\ominus$ node, indicating a Lagrangian disk $D_{\pm}$ filling $\Lambda$ in $R_{\pm}'$. If $D_{\pm}$ intersects the cocore  of the Weinstein $n$-handle attached to $R_{\pm}$ along $\Lambda_0$ transversely and exactly once, then we indicate this with a solid node on $\Lambda_0$ and a dashed edge connecting the solid node to the $\oplus$ or $\ominus$ node on $\Lambda$; see the bottom diagram in \autoref{fig:decorations}. 
\end{enumerate}

\begin{remark}
We emphasize that the decorations described above are not sufficient to uniquely describe Lagrangian disk fillings of Legendrians; a Legendrian sphere could be filled by distinct disks, and our decorations cannot in general make the distinction. The nodes merely indicate that there is a chosen and preferred disk, and primarily offer a convenient way to keep track of what needs to be analyzed. 
\end{remark}

\begin{figure}[ht]
	\begin{overpic}[scale=0.65]{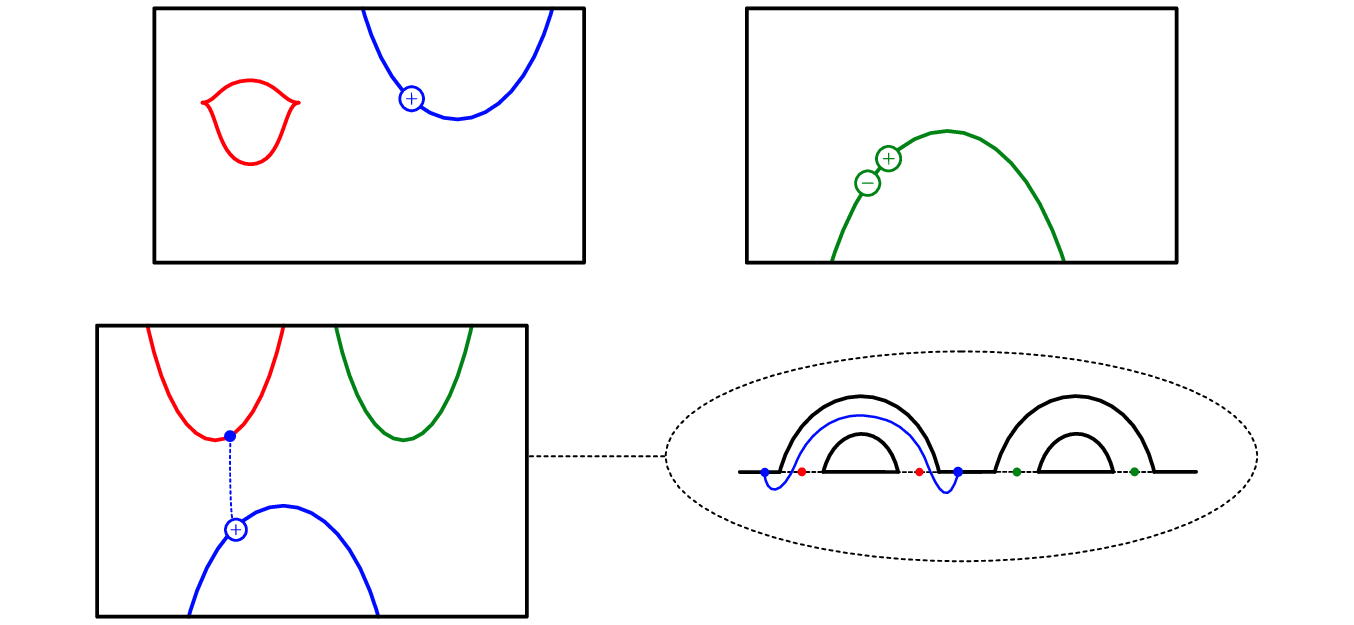}
	    \put(22.5,-1.5){\small $\Gamma'$}
	    \put(36.5,1.5){\small $\Gamma$}
	    \put(26.75,24.5){\small $\Gamma'$}
	    \put(40.75,27.5){\small $\Gamma$}
	    \put(70.75,24.5){\small $\Gamma'$}
	    \put(84.75,27.5){\small $\Gamma$}
	    \put(43.5,36.5){\tiny {\color{red}  $(-1)$}}
	    \put(39.25,18){\tiny {\color{red}  $(-1)$}}
	    \put(39.25,16){\tiny \textcolor{Green}{$(-1)$}}
	    \put(87.5,33.5){\tiny \textcolor{Green}{$(+1)$}}
	\end{overpic}
	\caption{Three different decorated local Legendrian surgery diagrams that exhibit the basic features of (D1)--(D4). In the top left, a contact $n$-handle is attached along the red Legendrian, while the blue Legendrian is filled by a Lagrangian disk in $R_+$. In the top right, the green Legendrian is filled by Lagrangians disks in both $R_{\pm}$ and a contact ($n+1$)-handle is attached along their union. The bottom diagram exhibits (D4). In particular, contact $n$-handles are attached along both the red and green Legendrians, and the blue Legendrian is filled by a disk in $R_+$ that intersects the cocore of the red Weinstein $n$-handle. To the right is an $n=1$ version for the sake of clarity.}
	\label{fig:decorations}
\end{figure}

\begin{remark}
An informal consequence of (D4) is that if a $(-1)$-surgered Legendrian does \textit{not} have any solid nodes, then the associated contact $n$-handle can be removed without affecting any Lagrangian disk fillings of other Legendrians. 
\end{remark}

\subsection{Handleslides}

Here we collect some handlesliding lemmas that will be frequently used. We begin with two that describe the effect of sliding {\em Legendrian-Lagrangian pairs} $(\Lambda; D)$, by which we mean a Legendrian $\Lambda \subseteq \Gamma$ and a Lagrangian disk in $R_{\pm}$ that fills it, across contact $n$- and ($n+1$)-handles. These are essentially reformulations of the Legendrian handleslide lemma (Lemma 5.2.3) from \cite{HH18}; see also Proposition 2.14 of \cite{casals2019legendrianfronts}.

In both lemmas, we let $(M^{2n+1},\xi)$ be a contact manifold with convex boundary $\Sigma = R_+\cup \Gamma \cup R_-$. Let $\Lambda, \Lambda_0\subseteq \Gamma$ be Legendrians in the dividing set that intersect $\xi|_{\Gamma}$-transversely at one point. When $n=1$, $\Lambda,\Lambda_0\simeq S^0$ and ``intersecting $\xi|_{\Gamma}$-transversely at one point" means intersecting at a common point $p$. Let $D_{\pm}\subseteq R_{\pm}$ be Lagrangian disk fillings of $\Lambda$, and $D_{0,\pm}\subseteq R_{\pm}$ be Lagrangian disk fillings of $\Lambda_0$.  The Legendrian sum operation $\Lambda\uplus \Lambda_0$ and the Legendrian boundary sum operation $D_\pm \uplus_b D_{0,\pm}$ are as defined in \cite[Section 4]{HH18}.  When $n=1$ we set $\Lambda\uplus \Lambda_0=\Lambda_0\uplus \Lambda=(\Lambda\cup \Lambda_0)\setminus \{p\}$ and $D_\pm \uplus_b D_{0,\pm}=D_\pm \cup D_{0,\pm}\cup \{p\}$. Finally let $\Lambda^{\ve,\Gamma}$ be the time-$\ve$ pushoff of $\Lambda$ with respect to some Reeb vector field on $\Gamma$ and let $D_\pm^{\ve,\Gamma}$ be the induced Hamiltonian pushoff of $D_\pm$.  We will often omit the superscript $\Gamma$ when it is understood from the context.

\begin{lemma}[Sliding over $n$-handles]\label{lemma:legendrian_handleslide_n_handle}
Let $(M', \xi')$ with convex boundary $\Sigma' = R_+' \cup \Gamma' \cup R_-'$ be obtained by attaching a contact $n$-handle $h_n$ to $M$ along $\Lambda_0$, so that $\Gamma'$ is obtained by ($-1$)-surgery along $\Lambda_0$. Let $K$ be the Legendrian core of $h_n$, which can be viewed as the Lagrangian core of the corresponding Weinstein handles attached to $R_{\pm}$.
\begin{enumerate}
    \item[(i)] Sliding $\Lambda^{-\ve}$ up over the ($-1$)-surgery along $\Lambda_0$ produces new Legendrian-Lagrangian pairs
    \[
    \left((\Lambda\uplus \Lambda_0)^{\ve,\Gamma} ; \, (D_{\pm}\uplus_b K)^{\ve,\Gamma} \right).
    \]
    \item[(ii)] Sliding $\Lambda^{\ve}$ down over the ($-1$)-surgery along $\Lambda_0$ produces new Legendrian-Lagrangian pairs 
    \[
    \left((\Lambda_0\uplus \Lambda)^{-\ve,\Gamma} ; \, (K\uplus_b D_{\pm})^{-\ve,\Gamma} \right).
    \]
\end{enumerate}
Here the time $\pm \ve$-pushoff is taken with respect to $\Gamma$ so that $(\Lambda\uplus \Lambda_0)^{\ve,\Gamma},(\Lambda_0\uplus \Lambda)^{-\ve,\Gamma} \subset \Gamma'$.
\end{lemma}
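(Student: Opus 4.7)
The plan is to upgrade the Legendrian handleslide lemma of \cite{HH18} (Lemma 5.2.3) to carry along the Lagrangian disk data. Since the statement is entirely local near the single transverse intersection point $p = \Lambda \cap \Lambda_0$ in $\Gamma$ and its neighborhoods in $R_\pm$, I would first reduce to a standard Darboux-type model for a contact $n$-handle attachment. Concretely, I would set up a contact chart in which $\Lambda_0$ appears as the attaching sphere of a model contact $n$-handle, $\Lambda$ appears as an $\xi|_\Gamma$-transverse Legendrian crossing $\Lambda_0$ once at $p$, and the Lagrangian fillings $D_\pm \subseteq R_\pm$ appear, near $p$, as standard half-disks abutting $\Lambda$; by \autoref{proposition:contact_n_handle}, on the $\Gamma$ level the attachment produces a $(-1)$-surgery along $\Lambda_0$, and on the $R_\pm$ level it produces Weinstein $n$-handles whose Lagrangian cores are precisely $K \cap R_\pm$.

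Next I would quote the Legendrian part of the conclusion from \cite[Lemma 5.2.3]{HH18}: sliding $\Lambda^{-\varepsilon}$ up over the $(-1)$-surgery produces $(\Lambda \uplus \Lambda_0)^{\varepsilon,\Gamma}$, and sliding $\Lambda^{\varepsilon}$ down produces $(\Lambda_0 \uplus \Lambda)^{-\varepsilon,\Gamma}$. What needs to be added is the behavior of $D_\pm$. The key point is that the handleslide can be realized by an ambient contact isotopy of $(M',\xi')$ supported in a small neighborhood of $K \cup \Lambda$ that fixes $\Lambda$ outside that neighborhood. Because this ambient isotopy restricts, on a neighborhood of each positive/negative region $R_\pm'$, to an exact symplectic isotopy (which one can arrange to be the Hamiltonian time-$\varepsilon$ flow of the Reeb vector field on $\Gamma'$ extended by a cutoff in the $R_\pm'$ direction), it carries each Lagrangian filling $D_\pm$ of $\Lambda$ to a new Lagrangian filling of the slid Legendrian. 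In the standard model, one sees directly that the image of $D_\pm$ is obtained by removing a small half-disk near $p$, running along a thin strip parallel to the cocore of the $n$-handle, and capping off via the core disk $K \cap R_\pm'$ of the corresponding Weinstein $n$-handle. This is, by definition, the Lagrangian boundary sum $D_\pm \uplus_b K$, pushed off by the same parameter $\varepsilon$.

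The step I expect to be the most delicate is the verification that the contact isotopy implementing the handleslide on $\Lambda$ does in fact produce the boundary sum $D_\pm \uplus_b K$ (as opposed to some other Lagrangian filling of $(\Lambda \uplus \Lambda_0)^\varepsilon$), and that this can be arranged compatibly on both the $R_+$ and $R_-$ sides at the same time. This is essentially a local model computation: in appropriate Weinstein coordinates on $R_\pm$ near the handle, the slid filling is the image of $D_\pm \cup (K \cap R_\pm')$ under a small Hamiltonian isotopy that smooths the corner at $p$, and this smoothing is precisely what the $\uplus_b$ operation was defined to produce in \cite[Section 4]{HH18}. Once this identification is made in the model, gluing it to the unchanged piece of $D_\pm$ outside a neighborhood of $p$ yields the required filling; the $n=1$ case is easier and is handled by direct inspection using the conventions $D_\pm \uplus_b D_{0,\pm} = D_\pm \cup D_{0,\pm} \cup \{p\}$. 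Case (ii) is formally identical up to exchanging the roles of the $\pm \varepsilon$ pushoffs and the order of the sum, reflecting the two sides over which one can slide.
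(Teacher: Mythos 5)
The paper offers no proof of this lemma: it is stated, illustrated by a figure, and immediately followed by the next lemma, with only the preceding remark that these are ``essentially reformulations of the Legendrian handleslide lemma (Lemma 5.2.3) from \cite{HH18}.'' There is therefore no argument in the paper to compare against in detail. Your outline --- localize near the unique intersection point $p$, quote \cite{HH18} for the Legendrian statement, and transport the Lagrangian fillings $D_\pm$ along the ambient contact isotopy of $(M',\xi')$ that implements the slide --- is the natural way to make the paper's ``reformulation'' precise and is consistent with its intent. Two small imprecisions are worth noting: (a) the parenthetical identification of the implementing isotopy with a cut-off ``time-$\varepsilon$ Reeb flow of $\Gamma'$'' is not quite right, since the total pushoff from $\Lambda^{-\varepsilon}$ to $(\Lambda\uplus\Lambda_0)^{\varepsilon,\Gamma}$ is $2\varepsilon$ and the pushoffs in the lemma are measured with respect to $\Gamma$ rather than $\Gamma'$ (so the isotopy near the surgered region is more subtle than a pure Reeb shift); and (b) you do not carry out the local-model verification that the slid filling is Hamiltonian isotopic to $D_\pm\uplus_b K$ rather than some other filling of $(\Lambda\uplus\Lambda_0)^{\varepsilon,\Gamma}$. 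But both of these are precisely the points you flag as delicate, and the paper's treatment of the lemma as essentially known makes the sketch acceptable as written.
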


\begin{figure}[ht]
	\begin{overpic}[scale=0.53]{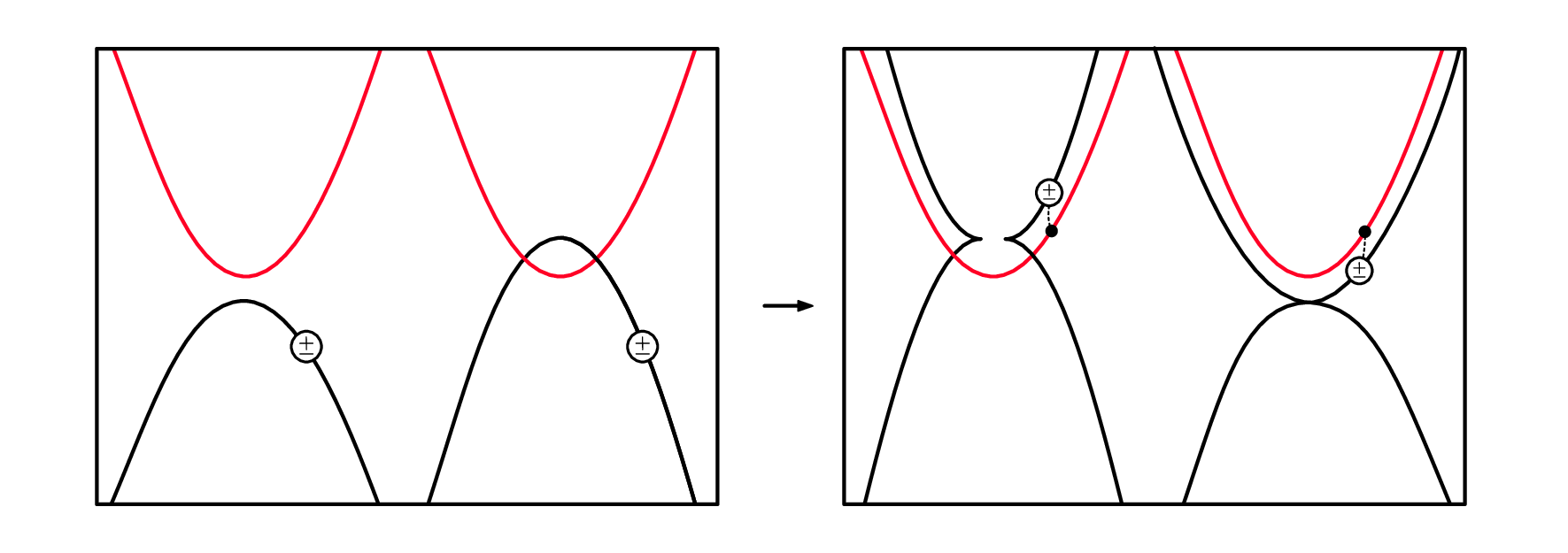}
	    \put(25,0.5){\small $\Gamma'$}
	    \put(10.5,8){\small $\Lambda^{-\ve}$}
	    \put(30,8){\small $(\Lambda')^{\ve}$}
	    \put(31,25){\small {\color{red}  $\Lambda_0'$}}
	    \put(11,25){\small {\color{red}  $\Lambda_0$}}
	    \put(46.5,28){\tiny {\color{red}  $(-1)$}}
	    \put(46.5,26){\tiny {\color{red}  $(-1)$}}
	    \put(73,0.5){\small $\Gamma'$}
	    \put(57.5,8){\small $(\Lambda\uplus \Lambda_0)^{\ve}$}
	    \put(78.5,8){\small $(\Lambda_0' \uplus \Lambda')^{-\ve}$}
	    \put(94,28){\tiny {\color{red}  $(-1)$}}
	    \put(94,26){\tiny {\color{red}  $(-1)$}}
	\end{overpic}
	\caption{The statement of \autoref{lemma:legendrian_handleslide_n_handle}, sliding across contact $n$-handles.}
	\label{fig:sliden}
\end{figure}

\begin{lemma}[Sliding over ($n+1$)-handles]\label{lemma:legendrian_handleslide_n+1_handle}
Let $(M', \xi')$ with convex boundary $\Sigma' = R_+' \cup \Gamma' \cup R_-'$ be obtained by attaching a contact ($n+1$)-handle $h_{n+1}$ to $M$ along $D_{0,+}\cup D_{0,-}$, so that $\Gamma'$ is obtained by ($+1$)-surgery along $\Lambda_0$. 
\begin{enumerate}
    \item[(i)] Sliding $\Lambda^{-\ve}$ up over the ($+1$)-surgery along $\Lambda_0$ produces new Legendrian-Lagrangian pairs 
    \[
    \left((\Lambda_0\uplus \Lambda)^{\ve} ; \, (D_{0,\pm} \uplus_b D_{\pm})^{\ve} \right).
    \]
    \item[(ii)] Sliding $\Lambda^{\ve}$ down over the ($+1$)-surgery along $\Lambda_0$ produces new Legendrian-Lagrangian pairs 
    \[
    \left((\Lambda\uplus \Lambda_0)^{-\ve} ; \, (D_{\pm} \uplus_b D_{0,\pm})^{-\ve} \right).
    \]
\end{enumerate}
\end{lemma}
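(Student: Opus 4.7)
The plan is to mirror the proof strategy of \autoref{lemma:legendrian_handleslide_n_handle}, which itself reformulates the Legendrian handleslide lemma (Lemma 5.2.3) of \cite{HH18}. The Legendrian portion of the statement---namely that sliding $\Lambda^{\mp\ve}$ across the $(+1)$-surgery produces $(\Lambda_0 \uplus \Lambda)^{\ve}$ or $(\Lambda \uplus \Lambda_0)^{-\ve}$---follows directly from \cite[Lemma 5.2.3]{HH18} applied to $(+1)$-surgery. The order of the Legendrian sum is swapped relative to \autoref{lemma:legendrian_handleslide_n_handle} precisely because the surgery coefficient changed sign, so I would first state this as a direct translation and then concentrate on the new content, which is to track the Lagrangian disk fillings $D_\pm$ through the handleslide.

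First I would install a standard local model near the attaching data $D_{0,+}\cup \Lambda_0 \cup D_{0,-}$ for the contact $(n+1)$-handle. By \autoref{proposition:contact_n+1_handle}, $R_\pm'$ is obtained from $R_\pm$ by removing a standard (Weinstein) neighborhood $N(D_{0,\pm})$, which I can parametrize as a cotangent-type neighborhood of $D_{0,\pm}$ whose boundary meets $\Gamma'$ in the result of $(+1)$-surgery along $\Lambda_0$. After a Hamiltonian perturbation in $R_\pm$ keeping the Legendrian boundary on $\Gamma$ fixed, I may assume that $D_\pm$ meets $N(D_{0,\pm})$ in a standard tubular neighborhood of the transverse intersection point $D_\pm \pitchfork D_{0,\pm}$ dictated by the transverse intersection $\Lambda \pitchfork \Lambda_0 \subset \Gamma$. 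Then $D_\pm \cap R_\pm'$ is no longer closed off by $\Lambda^{-\ve}$ along $\Gamma'$, but rather terminates along a new Legendrian that I then identify with the handleslide image.

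Next I would realize the new filling as $(D_{0,\pm} \uplus_b D_\pm)^{\ve}$ (or $(D_\pm \uplus_b D_{0,\pm})^{-\ve}$). Working in the local model, I take the boundary sum of $D_{0,\pm}$ and $D_\pm$ near their common intersection point on $\Gamma$, producing a Lagrangian disk in $R_\pm$ whose boundary is $\Lambda_0 \uplus \Lambda$. Pushing off by $+\ve$ along a Reeb vector field on $\Gamma$ and its Hamiltonian extension to $R_\pm$ moves this disk off of $D_{0,\pm}$, so the part of the disk lying inside $N(D_{0,\pm})$ can be removed and the disk reinterpreted as lying in $R_\pm' = R_\pm \setminus N(D_{0,\pm})$; the boundary now lies on $\Gamma'$ and coincides with $(\Lambda_0 \uplus \Lambda)^{\ve}$. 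Case (ii) follows from case (i) by reversing the pushoff direction, which exchanges the order of the Legendrian and Lagrangian sums (consistent with the $\uplus$ conventions of \cite[Section 4]{HH18}); when $n=1$ both cases reduce to the trivial juxtaposition of $\Lambda,\Lambda_0$ and of the corresponding disks.

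The main obstacle I expect is verifying that the boundary-summed disk remains regular and asymptotically cylindrical after the handle attachment, since $D_{0,\pm}$ itself is only partially present in $R_\pm'$. I would handle this by performing the boundary sum and pushoff on the level of $R_\pm$ \emph{before} removing $N(D_{0,\pm})$, and then observing that the $+\ve$-pushoff places the resulting disk in the complement of $N(D_{0,\pm})$ by construction; regularity and the asymptotic cylindrical structure are preserved because both operations occur inside a Weinstein neighborhood in which the relevant Liouville and contact data are explicit. This is the same local Hamiltonian calculation that underlies \autoref{lemma:legendrian_handleslide_n_handle}, with the role of the $n$-handle core $K$ replaced by the pair $D_{0,\pm}$ of $(n+1)$-handle co-cores.
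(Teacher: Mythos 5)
The paper does not actually give a proof of \autoref{lemma:legendrian_handleslide_n+1_handle}: both it and \autoref{lemma:legendrian_handleslide_n_handle} are stated as ``essentially reformulations of the Legendrian handleslide lemma (Lemma 5.2.3) from \cite{HH18}'' and the details of tracking the Lagrangian fillings are left implicit. Your proposal is a sensible attempt to fill in that gap, and the overall strategy---import the Legendrian-level statement from \cite{HH18}, then track $D_\pm$ through a local Weinstein normalization near $N(D_{0,\pm})$, perform the boundary sum $\uplus_b$, and use a $\pm\ve$ Reeb/Hamiltonian pushoff to land the result in $R_\pm' = R_\pm \setminus N(D_{0,\pm})$---is the one the authors appear to have in mind, and the ordering swap $\Lambda_0\uplus\Lambda$ vs.\ $\Lambda\uplus\Lambda_0$ relative to the $n$-handle case is correctly attributed to the sign change of the surgery coefficient.

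One place where you should be careful is the phrase ``the transverse intersection point $D_\pm \pitchfork D_{0,\pm}$ dictated by the transverse intersection $\Lambda \pitchfork \Lambda_0 \subset \Gamma$.'' The boundary condition $\Lambda \cap \Lambda_0 = \{p\}$ does not force an interior transverse intersection of the open Lagrangian disks $D_\pm$ and $D_{0,\pm}$; in general position they can be made disjoint in the interior of $R_\pm$, and the relevant geometric feature is the \emph{corner} at the ideal boundary point $p$. The operation $\uplus_b$ of \cite[Section~4]{HH18} is precisely a corner sum there, not a sum at an interior double point (compare the $n=1$ convention $D_\pm\uplus_b D_{0,\pm}=D_\pm\cup D_{0,\pm}\cup\{p\}$). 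This does not break your argument---after normalizing, $D_\pm$ still enters $N(D_{0,\pm})$ through $\Lambda\cap N(\Lambda_0)$, the removal of $N(D_{0,\pm})$ truncates $D_\pm$ near $p$, and completing the truncated disk via the corner sum with $D_{0,\pm}$ followed by the $\ve$-pushoff yields $(D_{0,\pm}\uplus_b D_\pm)^\ve$ in $R_\pm'$---but the ``interior intersection point'' framing would be misleading if left as written. With that adjustment the proposal reproduces the intended argument.
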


\begin{figure}[ht]
	\begin{overpic}[scale=0.53]{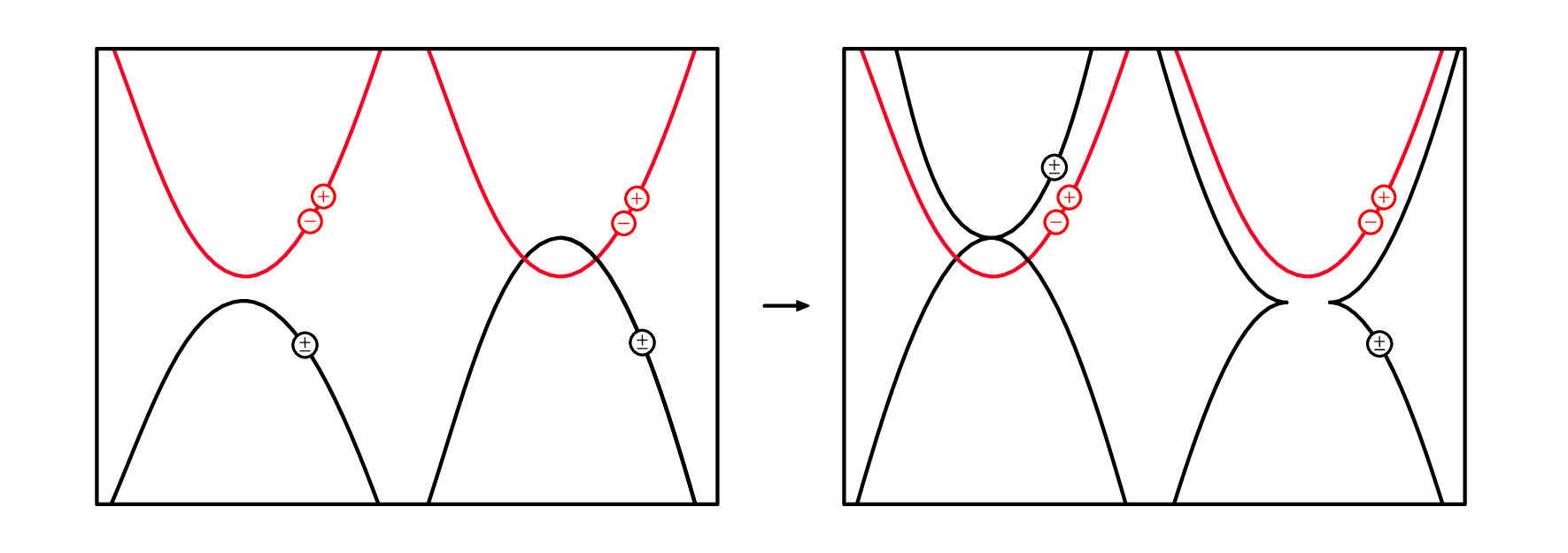}
	    \put(25,0.5){\small $\Gamma'$}
	    \put(10.5,8){\small $\Lambda^{-\ve}$}
	    \put(30,8){\small $(\Lambda')^{\ve}$}
	    \put(31,25){\small {\color{red}  $\Lambda_0'$}}
	    \put(11,25){\small {\color{red}  $\Lambda_0$}}
	    \put(46.5,28){\tiny {\color{red}  $(+1)$}}
	    \put(46.5,26){\tiny {\color{red}  $(+1)$}}
	    \put(73,0.5){\small $\Gamma'$}
	    \put(57.5,8){\small $(\Lambda_0\uplus \Lambda)^{\ve}$}
	    \put(78.5,8){\small $(\Lambda' \uplus \Lambda_0')^{-\ve}$}
	    \put(94,28){\tiny {\color{red}  $(+1)$}}
	    \put(94,26){\tiny {\color{red}  $(+1)$}}
	\end{overpic}
	\caption{The statement of \autoref{lemma:legendrian_handleslide_n+1_handle}, sliding across contact ($n+1$)-handles.}
	\label{fig:sliden1}
\end{figure}

It is often useful to diagrammatically identify the Lagrangian disk that is the cocore of a Weinstein $n$-handle attached along a $(-1)$-surgered Legendrian. 

\begin{lemma}[Cocores in decorated Legendrian diagrams]\label{lemma:cocore_lemma}
Let $\Sigma^{2n} = R_+ \cup \Gamma \cup R_-$ be the convex boundary of $(M^{2n+1}, \xi)$. Let $h_n$ be a contact $n$-handle attached to $M$ along $\Lambda_0 \subseteq \Gamma$, and let $h_n^{\pm}$ denote the corresponding Weinstein $n$-handles attached to $R_{\pm}$. The belt sphere and cocore pair $(\Lambda; D_{\pm})$ of $h_n^{\pm}$, after applying suitable Hamiltonian isotopies, can be described in a decorated Legendrian diagram by any of the following in the $n>1$ case; see the right side of \autoref{fig:cocorelemma}.
\begin{enumerate}
    \item[(i)] The Legendrian $\Lambda^{\delta}$, decorated with a $\opm$-node and a dashed edge connected to a solid node on $\Lambda_0$. Here $\delta$ is a positive or negative number and $\Lambda^\delta$ is obtained by applying a time-$\delta$ Reeb flow in the surgered dividing set $\Gamma'$ to $\Lambda$ so that $\Lambda^\delta\cap h_n=\varnothing$.
    
    \item[(ii)] A standard $\mathrm{tb} = -1$ Legendrian $\Lambda'$, linked once with $\Lambda_0$ and decorated with a $\opm$-node.
\end{enumerate}
The contact isotopy on $\Gamma'$ taking $\Lambda$ to $\Lambda^\delta$ or $\Lambda'$ extends to a Hamiltonian isotopy of $D_\pm$.
\end{lemma}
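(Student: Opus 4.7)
The plan is to reduce both claims to the standard local model of a Weinstein $n$-handle attached along $\Lambda_0$, where the belt sphere and cocore have a completely explicit description, and then translate between this standard picture and the decorated-diagram pictures via Reeb/Hamiltonian isotopies. Recall that in the standard model, the Weinstein $n$-handle $h_n^\pm$ is attached so that its core is a Lagrangian disk in $R_\pm'$ with boundary $\Lambda_0$, and the cocore $D_\pm$ is a transverse Lagrangian disk whose boundary $\Lambda = \partial D_\pm$ is the standard belt sphere sitting on the upper boundary of the handle. By construction, $\Lambda$ and the core meet transversely in a single point, and $\Lambda$ lies in $\Gamma'$.

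For (i), the belt sphere $\Lambda$ initially lies in the handle region, so it cannot yet be used in a decorated diagram as specified by (D4). To move it out of $h_n$, I would flow it along the Reeb vector field of $\Gamma'$ for time $\delta$, producing $\Lambda^\delta$ disjoint from the handle region. Because the Reeb vector field on $\Gamma'$ extends to a contact vector field on a neighborhood of $\Gamma'$ in $M'$, this flow restricts to a Hamiltonian isotopy of $R_\pm'$ (the Hamiltonian is just the generating function of the Reeb flow at infinity), carrying $D_\pm$ to a Lagrangian disk $D_\pm^\delta$ with $\partial D_\pm^\delta = \Lambda^\delta$. Since the isotopy is an ambient Hamiltonian isotopy in $R_\pm'$, it preserves the property that the disk intersects the cocore of $h_n^\pm$ transversely in a single point; diagrammatically this is exactly what is recorded by a solid node on $\Lambda_0$ joined by a dashed edge to the $\opm$-node on $\Lambda^\delta$, verifying (i).

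For (ii), I would observe that inside the standard handle model, a meridional disk of the attaching region bounds a small Legendrian $\Lambda'\subset \Gamma'$ that links $\Lambda_0$ once and has Thurston--Bennequin invariant $-1$ (this is the higher-dimensional spun analog of the classical $3$-dimensional picture: the belt sphere of a Weinstein $n$-handle becomes the ``meridian'' of the attaching sphere after surgery). Since $\Lambda^\delta$ and $\Lambda'$ are both small pushoffs of the standard belt sphere, they are Legendrian isotopic in $\Gamma'$ by an isotopy supported in an arbitrarily small neighborhood of the handle; this isotopy is again generated by a contact Hamiltonian on $\Gamma'$ which extends to a Hamiltonian on $R_\pm'$, carrying $D_\pm^\delta$ to a filling of $\Lambda'$ and thereby producing the diagrammatic description in (ii). The only nontrivial point is verifying that contact isotopies of $\Gamma'$ extend to Hamiltonian isotopies of $D_\pm$ in $R_\pm'$ throughout both steps; this is a standard consequence of the correspondence between contact vector fields on the ideal contact boundary and Hamiltonian vector fields on the Liouville completion, so no genuinely new work is needed beyond checking that the generating Hamiltonians can be chosen with compact support away from the rest of the diagram.
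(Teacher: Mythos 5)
Your proof of (i) is essentially the same as the paper's: flow $\Lambda$ along the Reeb vector field of $\Gamma'$ for time $\delta$, observe that this extends to a Hamiltonian isotopy of $R'_\pm$ carrying the cocore $D_\pm$ along, and invoke (D4). One small caution: the sentence ``it preserves the property that the disk intersects the cocore of $h_n^\pm$ transversely in a single point'' is circular as stated --- before the flow the disk \emph{is} the cocore, so there is no transverse intersection to preserve; the intersection is \emph{created} by the pushoff, and the count of one transverse point has to be read off directly from the standard handle model (the paper's ``nonzero intersection number'' is similarly terse). This does not affect the conclusion, but the step deserves a short explicit local computation rather than an appeal to preservation.

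For (ii) you take a genuinely different route. You identify the $\mathrm{tb}=-1$ meridian $\Lambda'$ directly in the local handle model and assert that it is Legendrian isotopic to $\Lambda^\delta$ by a small isotopy near the handle. The paper instead derives (ii) from (i) by a single diagrammatic move: take the negative Reeb pushoff $\Lambda^\delta$ (with $\delta<0$) produced by (i) and slide it up over the $(-1)$-surgery on $\Lambda_0$; the result is the $\mathrm{tb}=-1$ meridian, and the compatibility of the Lagrangian filling under the slide is exactly what the Legendrian handleslide lemma packages. Your version is geometrically transparent and correct, provided you actually supply the small-isotopy argument (as written it asserts rather than proves the isotopy). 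The paper's version has the practical advantage of staying entirely inside the decorated-diagram calculus, making it visible that diagrams (A), (B), (C) of \autoref{fig:cocorelemma} are related by Kirby moves --- which is precisely the form in which the lemma is applied repeatedly in \autoref{sec: sigma times interval}.
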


The $n=1$ case is given on the left side of \autoref{fig:cocorelemma}.

\begin{proof}
The time-$\delta$ Reeb flow on $\Gamma'$ described in (i) extends to a Hamiltonian isotopy $\phi$ of $D_{\pm}$ in $R'_{\pm}$, obtained from $R_\pm$ by attaching $h_n^\pm$. The isotoped disk $\phi(D_{\pm})$ has nonzero intersection number with $D_{\pm}$, and hence the diagrammatic description (i) follows from decoration convention (D4). 

The second description (ii) is obtained by sliding $\Lambda^{\delta}$ with $\delta<0$, obtained originally via (i), up over the $(-1)$-surgery along $\Lambda$. 
\end{proof}

\begin{remark}
Observe that, by a sequence of Reidemeister moves, the cusps of the belt sphere in the bottom middle diagram on the right-hand side of \autoref{fig:cocorelemma} may also be positioned below the surgered strand.
\end{remark}

\begin{figure}[ht]
	\begin{overpic}[scale=0.54]{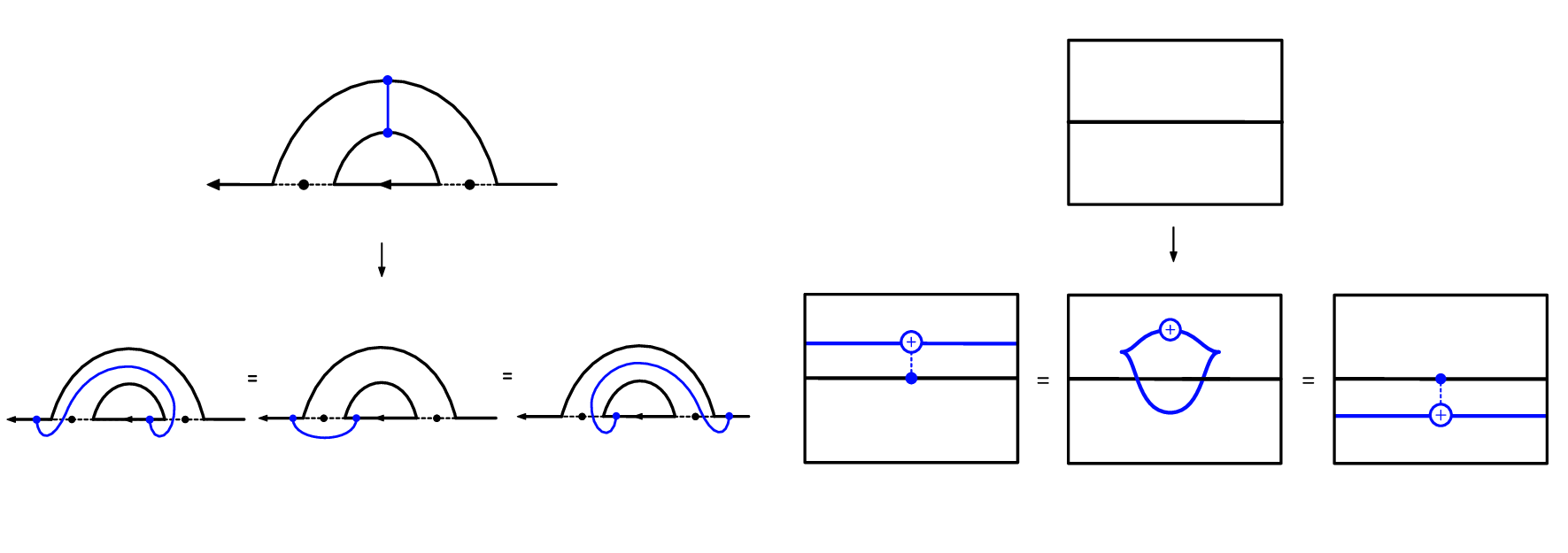}
	    \put(94.5,12.5){\tiny $(-1)$}
	    \put(77.25,9.5){\tiny $(-1)$}
	    \put(60.75,9.5){\tiny $(-1)$}
	    \put(77.25,25.5){\tiny $(-1)$}
	    \put(57.2,3.5){\tiny (A)} \put(74,3.5){\tiny (B)} \put(91.1,3.5){\tiny (C)}
	    \put(7.3,5){\tiny (A)} \put(23.3,5){\tiny (B)} \put(39.9,5){\tiny (C)}
	\end{overpic}
	\caption{On the right half, identifying isotopic pushoffs of a cocore of a Weinstein $n$-handle attached to $R_+$ as described by \autoref{lemma:cocore_lemma}. On the left half is the $n=1$ case. (A) is a positive Reeb pushoff, (B) is a nonintersecting pushoff, and (C) is a negative Reeb pushoff.  The same figures hold for handles attached to $R_-$.}
	\label{fig:cocorelemma}
\end{figure}

\subsection{Bypass attachments} A bypass attachment is a special type of smoothly canceling pair of contact $n$- and ($n+1$)-handles that may or may not be canceling at the level of contact structures. Here we review the basic theory from \cite{HH18}. We also introduce some new language and make use of decorations to clarify the diagrammatic description of a bypass. 

\begin{defn}
Let $\Sigma^{2n} = R_+ \cup \Gamma \cup R_-$ be the convex boundary of $(M^{2n+1}, \xi)$. A tuple of {\em bypass attachment data} is a quadruple $(\Lambda_{-}, \Lambda_+; D_{-}, D_+)$ where each of $(\Lambda_{\pm}; D_{\pm})$ is a Legendrian-Lagrangian pair in $R_{\pm}$ such that $\Lambda_-$ and $\Lambda_+$ intersect $\xi_{\Gamma}$-transversely at one point.  
\end{defn}

The following theorem is essentially Theorem 5.1.3 from \cite{HH18}.

\begin{theorem}[Bypass attachment]\label{theorem:bypass_attachment}
Let $\Sigma^{2n} = R_+ \cup \Gamma \cup R_-$ be the convex boundary of $(M^{2n+1}, \xi)$ with bypass attachment data $(\Lambda_{-}, \Lambda_+; D_{-}, D_+)$. Then a pair of smoothly canceling contact $n$- and ($n+1$)-handles, called a {\em bypass attachment,} can be attached according to either of the following models:
\begin{enumerate}
    \item[($R_+$)] Attach a contact $n$-handle along $\Lambda_- \uplus \Lambda_+$ and a contact ($n+1$)-handle along $\tilde{D}_- \cup \Lambda_+^{-\ve} \cup D_+^{-\ve}$, where $\tilde{D}_-$ is obtained by sliding $D_-^{\ve}$ down across the $(-1)$-surgery along $\Lambda_- \uplus \Lambda_+$. 
    \item[($R_-$)] Attach a contact $n$-handle along $\Lambda_- \uplus \Lambda_+$ and a contact ($n+1$)-handle along $D_-^{\ve} \cup \Lambda_-^{\ve} \cup \tilde{D}_+$, where $\tilde{D}_+$ is obtained by sliding $D_+^{-\ve}$ up across the $(-1)$-surgery along $\Lambda_- \uplus \Lambda_+$. 
\end{enumerate}
The two models, called the {\em $R_{\pm}$-centric models,} respectively, are identified by a handleslide of the ($n+1$)-handle across the $n$-handle; see \autoref{fig:bypass_attachment}. 
\end{theorem}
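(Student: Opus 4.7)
The plan is to realize each of the two models as an honest pair of contact handle attachments via Propositions~\ref{proposition:contact_n_handle} and~\ref{proposition:contact_n+1_handle}, verify that the two handles smoothly cancel, and then identify the two models via a Legendrian-Lagrangian handleslide using \autoref{lemma:legendrian_handleslide_n+1_handle}.

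To construct the ($R_+$)-model, I would first attach the contact $n$-handle $h_n$ along the Legendrian sum $\Lambda_-\uplus \Lambda_+\subset \Gamma$; this is a valid attaching sphere because $\Lambda_-$ and $\Lambda_+$ intersect $\xi|_\Gamma$-transversely at a single point. By \autoref{proposition:contact_n_handle}, the modified dividing set $\Gamma''$ is the $(-1)$-surgery of $\Gamma$ along $\Lambda_-\uplus\Lambda_+$, and $R_\pm$ are enlarged to $R_\pm''$ by attaching Weinstein $n$-handles with Lagrangian cores which we denote $K_\pm$. To attach the $(n+1)$-handle I need an equator Legendrian in $\Gamma''$ together with Lagrangian fillings in both $R_\pm''$. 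I take the equator to be $\Lambda_+^{-\ve}$, pushed off into the unsurgered region of $\Gamma''$: it is trivially filled by $D_+^{-\ve}$ in $R_+''$. To produce the filling in $R_-''$, I start from the pushoff $(\Lambda_-^{\ve};D_-^{\ve})$ in $R_-$ and apply \autoref{lemma:legendrian_handleslide_n_handle}(ii): sliding down across the $(-1)$-surgery yields the pair $\left(((\Lambda_-\uplus \Lambda_+)\uplus \Lambda_-)^{-\ve};\, (K_-\uplus_b D_-)^{-\ve}\right)$, which after cancellation of the two copies of $\Lambda_-$ is Legendrian-isotopic to $(\Lambda_+^{-\ve}; \tilde{D}_-)$. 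I then attach the $(n+1)$-handle along $\tilde{D}_-\cup \Lambda_+^{-\ve}\cup D_+^{-\ve}$.

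For smooth cancellation, I observe that $\Lambda_+^{-\ve}$ meets the belt sphere of $h_n$ transversely at a single point: the relevant intersection number equals the signed count of $\Lambda_+\cap(\Lambda_-\uplus \Lambda_+)$, which by definition of the Legendrian sum is $|\Lambda_+\cap \Lambda_-|=1$. The ($R_-$)-model is constructed symmetrically, or equivalently by applying \autoref{lemma:legendrian_handleslide_n_handle}(i) to slide $(\Lambda_+^{-\ve}; D_+^{-\ve})$ up across the same surgery. The final claim that the two models are related by a handleslide then follows from \autoref{lemma:legendrian_handleslide_n+1_handle}: sliding the $(n+1)$-handle's equator $\Lambda_+^{-\ve}$ along with its fillings $\tilde{D}_-$ and $D_+^{-\ve}$ up across the $(-1)$-surgery along $\Lambda_-\uplus\Lambda_+$ exchanges the roles of $R_+$ and $R_-$ and produces exactly the ($R_-$)-centric attachment data. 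The $n=1$ case is handled identically using the conventions for $\uplus$ and $\uplus_b$ laid out earlier.

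The main technical obstacle will be the cancellation calculation in the $R_-''$-filling step: verifying that $((\Lambda_-\uplus \Lambda_+)\uplus \Lambda_-)^{-\ve}$ simplifies to $\Lambda_+^{-\ve}$ after Legendrian isotopy, and that the corresponding boundary sum of disks $(K_-\uplus_b D_-)^{-\ve}$ is the disk $\tilde D_-$ asserted in the statement. This requires careful bookkeeping of the iterated Legendrian sum operation across the transverse intersection $\Lambda_-\cap \Lambda_+$ and of how the Lagrangian core $K_-$ of the Weinstein $n$-handle interacts with the original filling $D_-$; all other steps are direct applications of the handle attachment propositions and handleslide lemmas already established.
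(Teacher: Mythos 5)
The paper does not give its own proof of this theorem: the sentence immediately preceding it reads ``The following theorem is essentially Theorem 5.1.3 from \cite{HH18},'' and no argument is supplied. So there is no in-paper proof against which to compare your attempt directly; the most that can be said is whether your reconstruction is a plausible version of the argument that must appear in \cite{HH18}.

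On that score your outline is reasonable and engages the right tools: \autoref{proposition:contact_n_handle} and \autoref{proposition:contact_n+1_handle} to realize the two handle attachments, and Lemmas~\ref{lemma:legendrian_handleslide_n_handle} and~\ref{lemma:legendrian_handleslide_n+1_handle} to produce $\tilde D_-$ (resp.\ $\tilde D_+$) and to pass between the $R_\pm$-centric models. However, the step you flag as ``the main technical obstacle'' is genuinely the crux, not a bookkeeping detail to be deferred: the Legendrian sum $\uplus$ is defined relative to a specified transverse intersection point, and it is not an associative or cancellative operation, so the assertion that $\bigl((\Lambda_-\uplus\Lambda_+)\uplus\Lambda_-\bigr)^{-\ve}$ is Legendrian isotopic to $\Lambda_+^{-\ve}$ needs its own proof (it is, in effect, the content of the handleslide picture in \autoref{fig:bypass_attachment}). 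Without it, you have not actually exhibited the $(n+1)$-handle attaching sphere as $\tilde D_-\cup\Lambda_+^{-\ve}\cup D_+^{-\ve}$, and the smooth cancellation count you give --- ``the relevant intersection number equals the signed count of $\Lambda_+\cap(\Lambda_-\uplus\Lambda_+)$'' --- is also stated too loosely: the belt sphere of $h_n$ in $\Gamma''$ is not literally $\Lambda_-\uplus\Lambda_+$, and you should instead run the intersection count of the attaching sphere of the $(n+1)$-handle against the cocore of $h_n$ in the full $\Sigma''$. Both of these calculations are carried out in \cite{HH18}, and your proposal would become a correct proof once you import them (or reprove them via the cocore description of \autoref{lemma:cocore_lemma}, which is the device this paper uses for exactly this kind of bookkeeping in later arguments).
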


\begin{figure}[ht]
	\begin{overpic}[scale=0.55]{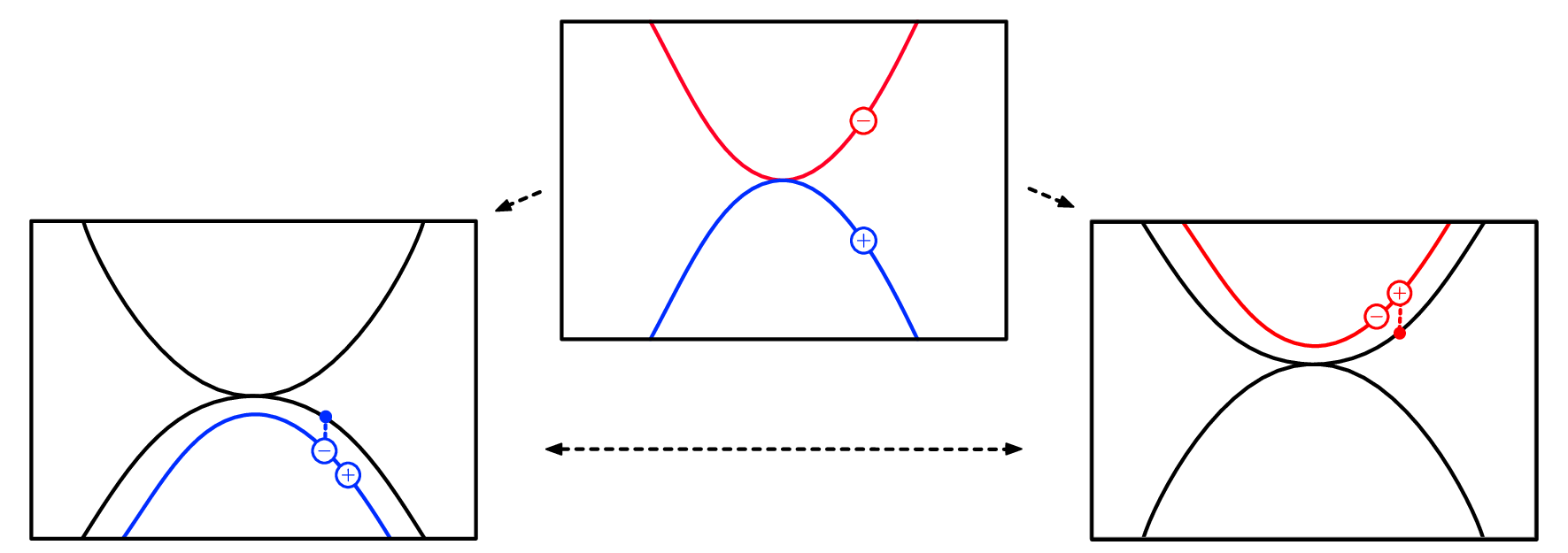}
	    \put(38.75,35){\small (Bypass attachment data)}
	    \put(49.5,12){\tiny $\Gamma$}
	    \put(62.5,14.5){\tiny $\Gamma$}
	    \put(45,29.5){\small {\color{red}  $\Lambda_-$}}
	    \put(45,18){\small \textcolor{blue}{$\Lambda_{+}$}}
	    \put(78,22){\small ($R_-$-centric)}
	    \put(83,-1){\tiny $\Gamma'$}
	    \put(96,1.5){\tiny $\Gamma$}
	    \put(92.75,12){\tiny $(-1)$}
	    \put(92.75,15){\tiny {\color{red}  $(+1)$}}
	    \put(79.5,17){\small {\color{red}  $\Lambda_-^{\ve}$}}
	    \put(79,8){\small $\Lambda_- \uplus \Lambda_+$}
	    \put(10.5,22){\small ($R_+$-centric)}
	    \put(15,-1){\tiny $\Gamma'$}
	    \put(28.5,1.5){\tiny $\Gamma$}
	    \put(25,10){\tiny $(-1)$}
	    \put(25,7){\tiny \textcolor{blue}{$(+1)$}}
	    \put(12,3.5){\small \textcolor{blue}{$\Lambda_+^{-\ve}$}}
	    \put(11.5,13.5){\small $\Lambda_- \uplus \Lambda_+$}
		\put(45,3.75){\small Handleslide}	
	\end{overpic}
	\caption{The two equivalent models of a bypass attachment according to \autoref{theorem:bypass_attachment}. In the lower figures, $\Lambda_-^{\ve}$ and $\Lambda_{+}^{-\ve}$ are isotopic via a handleslide across $\Lambda_- \uplus \Lambda_+$.}
	\label{fig:bypass_attachment}
\end{figure}

\begin{remark}
While the $R_{\pm}$-centric models are equivalent via a handleslide, we distinguish them in this paper for the purpose of decorated Legendrian calculations. It is often convenient to use one model over the other when manipulating multiple bypass attachments. Furthermore, we call them the $R_{\pm}$-centric models because they each give a natural description of $R_{\pm}'$, respectively, in terms of Weinstein handles attached to and removed from $R_{\pm}$. 
\end{remark}

\subsection{Trivial bypasses and stabilizations of POBDs}\label{subsection:trivial_bypasses_and_POBD_stabilization}

Informally, a trivial bypass is a bypass attachment which also cancels on the level of contact structures, not just smoothly. For convenience we review the definition from \cite{HH18}. 

\begin{defn}\label{def: trivial bypass data}
The bypass attachment data $(\Lambda_{\pm}; D_{\pm})$ is {\em trivial} if either of the following holds: 
\begin{itemize}
    \item[(TB1)] $(\Lambda_+; D_+)$ is a pair consisting of a standard Legendrian and standard Lagrangian disk such that $\Lambda_+$ is below $\Lambda_-$\footnote{More precisely, $\Lambda_+$ is below $\Lambda_-$ if in the local front projection given by \autoref{fig:trivial_data} the $z$-coordinates of $\Lambda_+$ are smaller than those of $\Lambda_-$, except at the unique point of intersection $\Lambda_-\cap \Lambda_+$.} (in the $n=1$ case a standard Lagrangian disk cuts off a half-disk $D\subset R_+$ with boundary $\delta\cup D_+$, where $\delta\subset \Gamma$ lies right below $\Lambda_+\cap \Lambda_-$).
    \item[(TB2)] $(\Lambda_-; D_-)$ is a pair consisting of a standard Legendrian and standard Lagrangian disk such that $\Lambda_-$ is above $\Lambda_+$ (in the $n=1$ case $D_-$ cuts off a half disk $D\subset R_-$ with boundary $\delta\cup D_-$, where $\delta\subset \Gamma$ lies right above $\Lambda_+\cap \Lambda_-$).
\end{itemize}

A bypass with trivial bypass data is itself called {\em trivial}.  See \autoref{fig:trivial_data}. 
\end{defn}

\begin{figure}[ht]
	\begin{overpic}[scale=0.54]{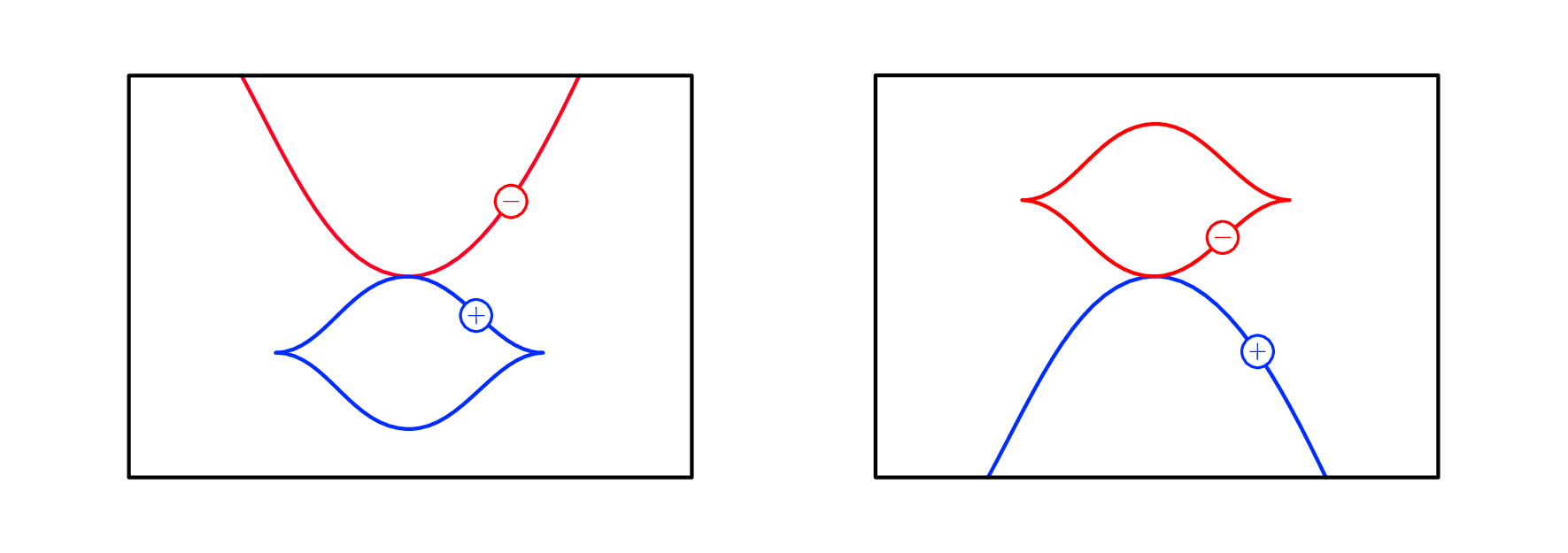}
	    \put(23,31.5){\small (TB1)}
	    \put(25,2.5){\small $\Gamma$}
	    \put(41,6){\small $\Gamma$}
	    \put(71,31.5){\small (TB2)}
	    \put(73,2.5){\small $\Gamma$}
	    \put(89,6){\small $\Gamma$}
	\end{overpic}
	\caption{The two types of trivial bypass data described in \autoref{def: trivial bypass data}.}
	\label{fig:trivial_data}
\end{figure}

\begin{prop}[Proposition 8.3.2 \cite{HH18}]
A trivial bypass attachment to $\Sigma \times \{0\}$ produces a vertically invariant contact structure on $\Sigma \times [0,1]$. That is, a trivial bypass does not change the isotopy class of the contact structure. 
\end{prop}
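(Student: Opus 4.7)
The plan is to reduce the statement to a handle-cancellation argument: show that the contact $n$-handle and contact $(n+1)$-handle comprising a trivial bypass form a canceling pair of contact handles in the sense that the attaching data of the $(n+1)$-handle coincides, after handleslides, with the belt-sphere and cocore-disk data of the $n$-handle. Since such a canceling pair may be absorbed into a collar neighborhood via a contact isotopy, this will produce a vertically invariant contact structure on $\Sigma\times[0,1]$.

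By symmetry between (TB1) and (TB2), I would focus on the (TB1) case. Work with the $R_+$-centric model from \autoref{theorem:bypass_attachment}: the contact $n$-handle is attached along $\Lambda_-\uplus\Lambda_+$, and the contact $(n+1)$-handle is attached along the Legendrian $\Lambda_+^{-\varepsilon}$ with Lagrangian fillings $\tilde D_-\subset R_-'$ and $D_+^{-\varepsilon}\subset R_+'$, where $\tilde D_-$ is obtained from $D_-^\varepsilon$ by sliding down across the $(-1)$-surgery. The trivial bypass hypothesis says $(\Lambda_+;D_+)$ is a standard Legendrian-Lagrangian pair with $\Lambda_+$ lying below $\Lambda_-$. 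Because the pair is standard and situated beneath $\Lambda_-$, the slight pushoff $\Lambda_+^{-\varepsilon}$ is a $\mathrm{tb}=-1$ Legendrian unknot linked once with $\Lambda_-\uplus\Lambda_+$, with $D_+^{-\varepsilon}$ filling it by a standard Lagrangian disk in $R_+'$. By \autoref{lemma:cocore_lemma}(ii), this is precisely a decorated-Legendrian description of the belt sphere and cocore of the Weinstein $n$-handle attached to $R_+$ along $\Lambda_-\uplus\Lambda_+$. For the negative side, I would invoke \autoref{lemma:legendrian_handleslide_n_handle}(ii): sliding $D_-^\varepsilon$ down across the $(-1)$-surgery transforms it into $(K_-\uplus_b D_-^\varepsilon)^{-\varepsilon}$, where $K_-$ is the Lagrangian core of the Weinstein $n$-handle in $R_-$; the standard-pair hypothesis on $(\Lambda_+;D_+)$ (which contributes no nontrivial topology on the $R_-$-side) ensures that the resulting $\tilde D_-$ is Hamiltonian isotopic to the cocore of the Weinstein $n$-handle attached to $R_-$ along $\Lambda_-\uplus\Lambda_+$.

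Once both Lagrangian disk fillings of the $(n+1)$-handle's equator are matched with the two Weinstein cocores in $R_\pm'$ and the Legendrian equator is matched with the belt sphere, the $n$/$(n+1)$-handle pair is a canceling pair of contact handles in the sense of the contact handle calculus of \cite{HH18,Sac19,ding2009handle}. A standard neighborhood of such a canceling pair admits a Liouville vector field with no zeros transverse to $\Sigma$, so the resulting contact manifold is contactomorphic to $(\Sigma\times[0,1],\ker(dt+\beta))$ for $\beta$ independent of $t$; equivalently, the cobordism is trivial. The main obstacle I anticipate is the careful diagrammatic bookkeeping in the second paragraph: verifying that the ``standard'' and ``below/above'' conditions in \autoref{def: trivial bypass data} are exactly what is needed to realize the $(n+1)$-handle's attaching data as the cocore/belt-sphere package of the $n$-handle, with particular attention to how the $\uplus_b$-boundary-sum operation on Lagrangian disks interacts with the handleslides across $\Lambda_-\uplus\Lambda_+$.
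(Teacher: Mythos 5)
Your first step is sound: by \autoref{lemma:cocore_lemma}(ii), the standard pair $(\Lambda_+; D_+)$ positioned below $\Lambda_-$ means $D_+^{-\varepsilon}$ is Hamiltonian isotopic to the positive cocore of the Weinstein $n$-handle attached along $\Lambda_- \uplus \Lambda_+$; this is exactly how the paper's own \autoref{lemma:trivial_bypass_lemma} is set up. The gap is in your claim about $\tilde D_-$. Sliding $D_-^\varepsilon$ down across the $(-1)$-surgery, \autoref{lemma:legendrian_handleslide_n_handle}(ii) produces $(K_- \uplus_b D_-^\varepsilon)^{-\varepsilon}$, where $K_-$ is the Lagrangian \emph{core} (not cocore) of the negative Weinstein $n$-handle. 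There is no reason this boundary-connect sum should be Hamiltonian isotopic to the cocore, and generically it is not: once the equator is normalized so that the positive hemisphere is the cocore $K$, the negative hemisphere becomes $\tau_L(K)$, a Dehn twist of $K$ about the Lagrangian sphere $L$ built from $D_-$ and the core of the handle --- this is spelled out in the paper's proof of the ``Trivial bypasses are positive stabilizations'' lemma and in \autoref{fig:POBD_stabilization}. Moreover, (TB1) places \emph{no} condition at all on $D_-$; only $(\Lambda_+; D_+)$ is required to be standard. So nothing in the hypotheses can force $\tilde D_-$ into cocore position, and your parenthetical ``contributes no nontrivial topology on the $R_-$-side'' does not engage with what $D_-$ itself contributes.

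The consequence is that the cancellation mechanism you propose --- both disk fillings matching the two Weinstein cocores, hence a canceling pair in the strict sense --- is not available. You do get $R_+'' = R_+$ and $\Gamma'' = \Gamma$, but $R_-''$ is $R_-'$ with a neighborhood of a Dehn-twisted cocore removed, which does not obviously recover $R_-$ on the nose. The Proposition is true, but the route must be different: one either produces the $s$-invariant contact vector field directly from the local model of a bypass attachment with trivial data, which is the approach of \cite[Prop.~8.3.2]{HH18}, or one invokes the identification of a trivial bypass with a positive (P)OBD stabilization together with the independent fact that positive stabilization preserves the supported contact manifold. To salvage your handle-cancellation framing you would need a criterion for contact handle cancellation that tolerates the negative disk being a Dehn twist of the cocore rather than the cocore itself, and establishing such a criterion is essentially equivalent to proving the Proposition directly.
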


Of importance to this paper is the ability to identify trivial bypasses directly via pairs of contact handles. 

\begin{lemma}[Identifying trivial bypasses]\label{lemma:trivial_bypass_lemma}
Let $\Sigma^{2n} = R_+ \cup \Gamma \cup R_-$ be the convex boundary of $(M^{2n+1}, \xi)$. Let $h_n$ be a contact $n$-handle attached to $M$ along $\Lambda_n \subseteq \Gamma$, and let $h_{n+1}$ be a contact ($n+1$)-handle attached to $M \cup h_n$ along $D_+ \cup \Lambda_{n+1} \cup D_-$. Then the pair of handles $h_n \cup h_{n+1}$ forms a trivial bypass if either of the following conditions hold; see \autoref{fig:trivialbypasslemma}:
\begin{itemize}
    \item[(TB1)] $D_+$ is a positive Reeb shift of the cocore of the $n$-handle attached to $R_+$ along $\Lambda_n$, and $D_- \subseteq R_-$. 
    \item[(TB2)] $D_-$ is a negative Reeb shift of the cocore of the $n$-handle attached to $R_-$ along $\Lambda_n$, and $D_+ \subseteq R_+$.
\end{itemize}
\end{lemma}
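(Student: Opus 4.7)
The plan is to recognize the two handle configurations in (TB1) and (TB2) as precisely the $R_+$- and $R_-$-centric diagrammatic realizations of a trivial bypass produced by \autoref{theorem:bypass_attachment} applied to bypass attachment data of the corresponding type in \autoref{def: trivial bypass data}. By the symmetry swapping $R_+\leftrightarrow R_-$ and reversing Reeb directions, it suffices to treat (TB1) alone.

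First I would unpack the hypothesis using \autoref{lemma:cocore_lemma}. The statement that $D_+$ is a positive Reeb shift of the cocore of the Weinstein $n$-handle attached to $R_+$ along $\Lambda_n$ means, in decorated Legendrian notation, that $\Lambda_{n+1}$ is Hamiltonian isotopic in the surgered dividing set $\Gamma'$ to a positive Reeb pushoff $\Lambda_n^{\delta}$ (for some $\delta>0$) decorated with a $\oplus$-node and a dashed edge to a solid node on $\Lambda_n$ — this is panel (A) of \autoref{lemma:cocore_lemma}. The disk $D_+$ is identified up to Hamiltonian isotopy with the corresponding pushoff of the cocore disk. Meanwhile the hypothesis $D_-\subseteq R_-$ says that the $(n+1)$-handle's $R_-$-disk is disjoint from the core Lagrangian of the $n$-handle on the $R_-$ side.

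Next I would match this configuration with trivial bypass data. Decompose $\Lambda_n=\Lambda_-\uplus\Lambda_+$ by choosing $\Lambda_+$ to be a small standard Legendrian sitting below $\Lambda_-$ equipped with a standard Lagrangian disk $D_+'\subseteq R_+$, so that $(\Lambda_+;D_+')$ is standard in the sense of \autoref{def: trivial bypass data}(TB1); such a pair exists since any choice of $\Lambda_+$ near the endpoint of a suitable $\Lambda_-$-strand works. A direct inspection via \autoref{lemma:cocore_lemma} shows that after attaching the $n$-handle along $\Lambda_-\uplus\Lambda_+$, the negative Reeb pushoff $\Lambda_+^{-\ve}$ in $\Gamma'$ realizes a positive Reeb pushoff of the cocore (the two sign conventions exchange because the pushoff is computed on the ``$\Lambda_+$-strand'' of $\Lambda_-\uplus\Lambda_+$), and $D_+'^{-\ve}$ agrees with $D_+$ up to Hamiltonian isotopy in $R_+'$. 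Since $D_-\subseteq R_-$ avoids the $n$-handle, the slide $\tilde D_- = D_-^{\ve}$ appearing in the $R_+$-centric model of \autoref{theorem:bypass_attachment} is trivial, so the bypass attachment with data $(\Lambda_-,\Lambda_+;D_-,D_+')$ produces exactly the handle pair $h_n\cup h_{n+1}$. This is a trivial bypass of type (TB1) by construction. Case (TB2) follows by the mirror argument using the $R_-$-centric model.

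The step I expect to require the most care is the sign-matching between ``positive Reeb shift of the cocore'' in the hypothesis and the $-\ve$ pushoff appearing in the $R_+$-centric model, and verifying that this correspondence holds uniformly in both the $n=1$ and $n>1$ cases. Once the pushoff directions are reconciled via panels (A), (B), (C) of \autoref{lemma:cocore_lemma}, the rest of the argument is a direct transcription between decorated Legendrian diagrams and the bypass attachment models.
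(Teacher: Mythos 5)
Your strategy is the same as the paper's: write down trivial bypass attachment data in the sense of \autoref{def: trivial bypass data}(TB1) and verify, via one of the two centric models of \autoref{theorem:bypass_attachment}, that the resulting bypass attaches exactly the handle pair $h_n \cup h_{n+1}$. The only structural difference is which centric model you use. You pick $\Lambda_-$ so that $\Lambda_-\uplus\Lambda_+=\Lambda_n$ and work $R_+$-centrically, making the $n$-handle match $h_n$ by construction and pushing the nontrivial verification onto the $(n+1)$-handle data. The paper instead takes $\tilde\Lambda_-:=\Lambda_{n+1}^{-\ve}$, $\tilde D_-:=D_-^{-\ve}$ and a standard $(\tilde\Lambda_+;\tilde D_+)$ positioned below, and works $R_-$-centrically: then the equator $\tilde\Lambda_-^\ve=\Lambda_{n+1}$ and the negative disk $\tilde D_-^\ve=D_-$ match $h_{n+1}$ on the nose, and the remaining check is a Kirby move of the $n$-handle attaching sphere down to $\Lambda_n$ (this is \autoref{fig:trivial_bypass_proof}). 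The two choices are dual: each pushes one isotopy of the surgery diagram to a different side.

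Two points in your write-up need more care, and you correctly flag the first. The assertion that $\Lambda_+^{-\ve}$ ``realizes a positive Reeb pushoff of the cocore'' because ``the two sign conventions exchange'' is not quite right as stated: $\Lambda_+^{-\ve}$ sits \emph{below} $\Lambda_-\uplus\Lambda_+$ in the front, whereas the positive Reeb shift of the cocore is drawn \emph{above} $\Lambda_n$ (panel (A) of \autoref{lemma:cocore_lemma}). The honest identification is not a sign-flip but the handleslide across $\Lambda_-\uplus\Lambda_+$ that relates the $R_+$- and $R_-$-centric equators (the last sentence of \autoref{theorem:bypass_attachment}); once you invoke that handleslide you have in effect reproduced the paper's $R_-$-centric computation. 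Second, the bypass datum $D_-$ must fill $\Lambda_-\cong\Lambda_n$, while the lemma's $D_-$ fills $\Lambda_{n+1}$, so the slide in the $R_+$-centric model is not literally trivial — it changes the boundary Legendrian — and what the hypothesis $D_-\subseteq R_-$ actually buys you is that the slide produces no solid-node decoration, i.e.\ does not create an intersection with the cocore of the $n$-handle. Neither of these is a gap in the sense that the argument fails, but both are where the real diagrammatic content of the proof lives, and they should be spelled out rather than asserted.
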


\begin{figure}[ht]
	\begin{overpic}[scale=0.54]{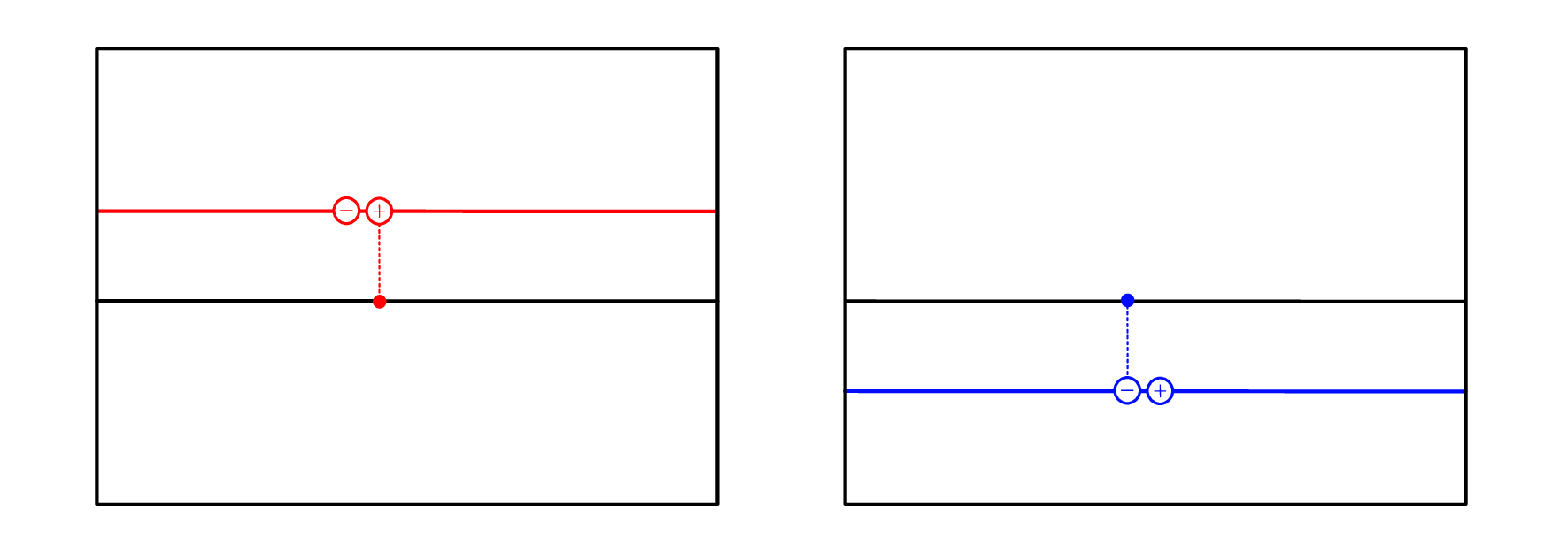}
	    \put(23,33){\small (TB1)}
	    \put(25,0.5){\small $\Gamma'$}
	    \put(43,4){\small $\Gamma$}
	    \put(34,13.5){\small $\Lambda_n$}
	    \put(34,23){\small {\color{red}  $\Lambda_{n+1}$}}
	    \put(46.5,15.5){\tiny $(-1)$}
	    \put(46.5,21.5){\tiny {\color{red}  $(+1)$}}
	    \put(71,33){\small (TB2)}
	    \put(73,0.5){\small $\Gamma'$}
	    \put(91,4){\small $\Gamma$}
	    \put(94,15.5){\tiny $(-1)$}
	    \put(94,10){\tiny \textcolor{blue}{$(+1)$}}
	    \put(82,17){\small $\Lambda_n$}
	    \put(82,8){\small \textcolor{blue}{$\Lambda_{n+1}$}}
	\end{overpic}
	\caption{The two trivial bypasses described in \autoref{lemma:trivial_bypass_lemma}. On the left, the positive Lagrangian disk filling of the red Legendrian is isotopic to the cocore of the Weinstein $n$-handle attached to $R_+$ along the black Legendrian. The same holds for the negative Legendrian disk filling of the blue Legendrian on the right. We again warn the reader that the dotted lines in the diagram do not uniquely determine the specific disks $D_+$ in (TB1) and $D_-$ in (TB2).}
	\label{fig:trivialbypasslemma}
\end{figure}

\begin{proof}[Proof of \autoref{lemma:trivial_bypass_lemma}.]
We prove the lemma for the (TB1) case; the (TB2) case is completely analogous. Let $h_n, h_{n+1}$ be a pair of contact handles attached to $(M,\xi)$ according to (TB1). This pair of handles arises from the bypass attachment data $(\tilde{\Lambda}_{\pm}; \tilde{D}_{\pm})$, where $\tilde{\Lambda}_- = \Lambda_{n+1}^{-\ve}$, $\tilde{D}_- = D_-^{-\ve}$, and  $(\tilde{\Lambda}_{+}; \tilde{D}_+)$ is a standard Legendrian unknot with standard Lagrangian disk filling, positioned below $\tilde\Lambda_-$ as in \autoref{fig:trivial_bypass_proof}. 
\begin{figure}[ht]
	\begin{overpic}[scale=0.54]{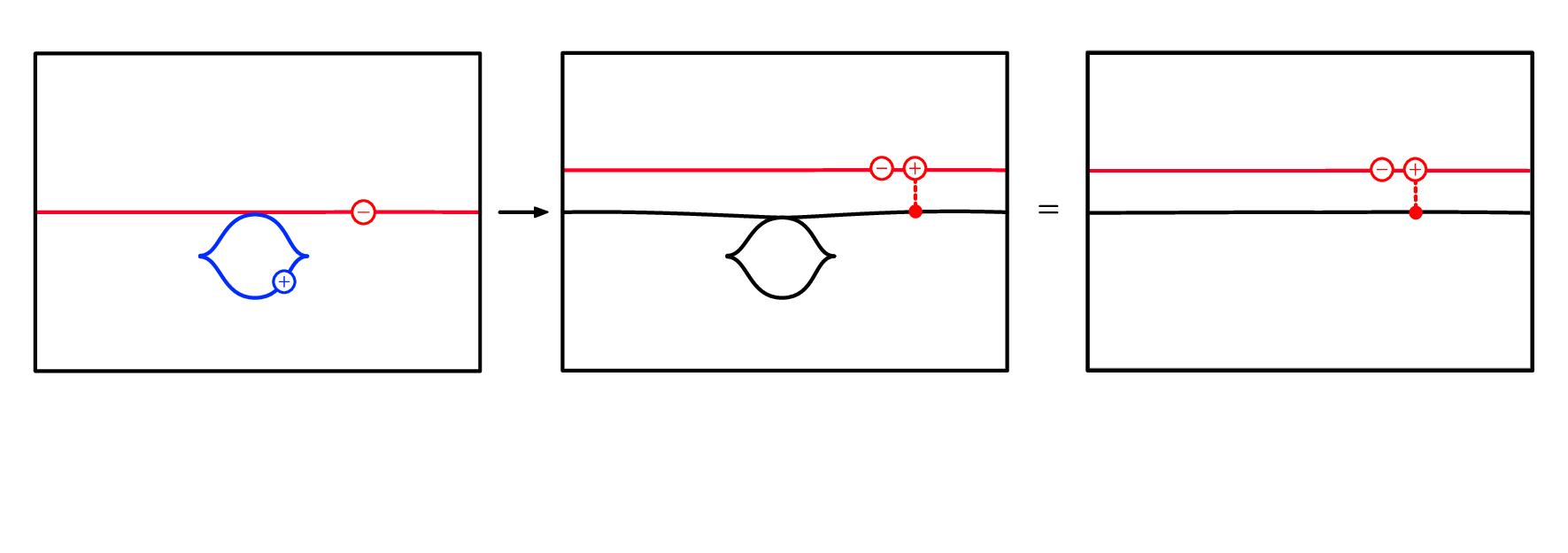}
	    \put(49.5,9.5){\tiny $\Gamma'$}
	    \put(62.5,12.5){\tiny $\Gamma$}
	    \put(40,26){\small {\color{red} $\tilde{\Lambda}_-^{\ve} = \Lambda_{n+1}$}}
	    \put(37,15){\small $\tilde{\Lambda}_- \uplus\tilde{\Lambda}_{+}$}
	    \put(59,20){\tiny $(-1)$}
	    \put(59,25.5){\tiny {\color{red}  $(+1)$}}
	    \put(83,9.5){\tiny $\Gamma'$}
	    \put(96,12.5){\tiny $\Gamma$}
	    \put(79.5,26){\small {\color{red}  $\Lambda_{n+1}$}}
	    \put(79.5,19){\small $\Lambda_n$}
	    \put(92.5,20){\tiny $(-1)$}
	    \put(92.5,25.5){\tiny {\color{red}  $(+1)$}}
	    \put(15.25,9.5){\tiny $\Gamma$}
	    \put(28.75,12.5){\tiny $\Gamma$}
	    \put(11,15.5){\small \textcolor{blue}{$\tilde{\Lambda}_+$}}
	    \put(7.5,23){\small {\color{red}  $\tilde{\Lambda}_-$}}
	\end{overpic}
	\vskip-.45in
	\caption{The pair of handles in (TB1) of \autoref{lemma:trivial_bypass_lemma} arising from a trivial bypass attachment. From the left to the middle is given by an $R_-$-centric bypass as in \autoref{theorem:bypass_attachment}, and from the middle to the right is given by a Kirby move.}
	\label{fig:trivial_bypass_proof}
\end{figure}
The resulting bypass is trivial by \autoref{def: trivial bypass data}. 
\end{proof}

\begin{remark}
\autoref{lemma:trivial_bypass_lemma} implies that a pair of decorated Legendrians as in \autoref{fig:trivialbypasslemma} can be erased at the level of the underlying contact manifold. That is, not only do the surgeries on $\Gamma^{2n-1}$ cancel, but the corresponding contact handles attached to $M^{2n+1}$ cancel at the level of contact structures.
\end{remark}

Now we discuss bypass attachments at the level of POBDs. Recall that if $\Sigma = R_+ \cup\Gamma \cup R_-$ is the convex boundary of the contact manifold $(M, \xi)$ supported by the POBD $(W, \phi:S \to W)$, then $R_+ = W \setminus S$ and $R_- = W \setminus \phi(S)$. A contact handle attachment, and thus a bypass attachment, can be interpreted in terms of its effect on a supporting POBD. Briefly, a contact $n$-handle has the effect of adding a Weinstein $n$-handle to $W$ with no additional monodromy, and a contact ($n+1$)-handle attached along $D_+ \cup D_-$ has the effect of enlarging the the cornered subdomain $S$ by a regular neighborhood of $D_+$ and modifying the monodromy so that the image of $D_+$ is $D_-$. For details see Section 8 of \cite{HH19}. 

Importantly, attaching a trivial bypass to the boundary of a POBD is equivalent to performing a positive stabilization of the POBD.

\begin{lemma}[Trivial bypasses are positive stabilizations]
Let $(W, \phi: S \to W)$ be a POBD supporting $(M, \xi)$ with convex boundary $\Sigma$. Let $(\Lambda_{\pm}; D_{\pm})\subseteq \Sigma$ be trivial bypass data.
\begin{enumerate}
    \item If the trivial bypass is trivial in the sense of (TB1) in \autoref{def: trivial bypass data}, then attaching the trivial bypass corresponds to a (TB1) positive stabilization of $(W, \phi: S \to W)$ along $D_-$.
    
    \item If the trivial bypass is trivial in the sense of (TB2) in \autoref{def: trivial bypass data}, then attaching the trivial bypass corresponds to a (TB2) positive stabilization of $(W, \phi: S \to W)$ along $D_+$.
\end{enumerate}
\end{lemma}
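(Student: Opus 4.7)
The plan is to translate the contact-handle description of a trivial bypass directly into POBD operations via the dictionary recalled just above the lemma (contact $n$-handle $\rightsquigarrow$ Weinstein $n$-handle with no monodromy change; contact $(n+1)$-handle $\rightsquigarrow$ enlargement of $S$ by $N(D_+)$ together with a monodromy modification sending $D_+\mapsto D_-$), and then match the output against the definition of a positive POBD stabilization. The two cases are symmetric under swapping the roles of $R_\pm$ together with the $R_\pm$-centric models of \autoref{theorem:bypass_attachment}, so I would prove (1) carefully and observe that (2) follows by the same argument applied to the $R_-$-centric model, with $L_0=D_+$.

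For (1), I would describe the trivial bypass using the $R_+$-centric model: a contact $n$-handle attached along $\Lambda_-\uplus\Lambda_+$ together with a contact $(n+1)$-handle attached along $\tilde{D}_-\cup\Lambda_+^{-\ve}\cup D_+^{-\ve}$. The $n$-handle adds a Weinstein $n$-handle $h$ to the page $W$. Because $(\Lambda_+;D_+)$ is standard (in the (TB1) sense), $\Lambda_-\uplus\Lambda_+$ is Legendrian isotopic to $\Lambda_-$ in $\Gamma$, so the core $L_1$ of $h$ satisfies $\partial L_1=\partial L_0$ with $L_0:=D_-\subset R_-\subset W$; hence $L:=L_0\cup L_1\subset W\cup h$ is precisely the Lagrangian sphere required by the stabilization. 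Next, the $(n+1)$-handle enlarges $S$ by $N(D_+^{-\ve})$. Combining \autoref{lemma:cocore_lemma} with the standardness of $(\Lambda_+;D_+)$ shows that $D_+^{-\ve}$ is Hamiltonian isotopic to the cocore of $h$, so $N(D_+^{-\ve})$ is isotopic rel $\partial$ to the handle $h$ itself, and the new page is $S\cup h$, matching the (TB1) stabilization.

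It remains to identify the new monodromy with $\tau_L\circ(\phi\cup\op{id}_h)$. The map $\phi\cup\op{id}_h$ fixes the cocore $D_+^{-\ve}$ setwise, while the positive symplectic Dehn twist $\tau_L$ about $L=D_-\cup L_1$ sends this cocore, which is transverse to $L$ in a single point, to the disk obtained by sliding once across $L$; geometrically this is precisely the disk $\tilde{D}_-$ obtained from $D_-^{\ve}$ by a handleslide over the $(-1)$-surgery along $\Lambda_-\uplus\Lambda_+$. The main obstacle is verifying that the Hamiltonian isotopy of the page induced by the contact $(n+1)$-handle is exactly $\tau_L$, rather than merely isotopic to a Dehn twist. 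This reduces to the standard description of a symplectic Dehn twist as the monodromy of a canceling Weinstein $(n,n+1)$-handle pair whose $n$-handle has core $L_1$ and whose $(n+1)$-handle has descending disk $L_0\cup L_1$; in the present situation this identification follows from the local normal form of the contact $(n+1)$-handle, whose fiberwise picture over the base direction is exactly a Lefschetz critical point with vanishing cycle $L$.
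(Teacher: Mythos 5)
Your proof of part (1) takes essentially the same route as the paper's: attach the contact $n$-handle to pass to $(W\cup h,\phi:S\to W\cup h)$, run the $R_+$-centric model of the bypass, identify $D_+^{-\ve}$ with the cocore $K$ of $h$ via \autoref{lemma:cocore_lemma}, and read off the new monodromy as $\tau_L\circ(\phi\cup\op{id}_h)$ on $S\cup h$. The concern you raise at the end about the monodromy being \emph{exactly} $\tau_L$ is the same point the paper resolves implicitly through its normal form for the $(n+1)$-handle; your appeal to the Lefschetz local picture is a legitimate way to address it.

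The gap is in your dismissal of part (2) as "the same argument applied to the $R_-$-centric model, with $L_0=D_+$." The two stabilization moves are not mirror images of each other: the (TB1) output is $(W\cup h,\ \tau_L\circ(\phi\cup\op{id}_h):\,S\cup h\to W\cup h)$ while the (TB2) output is $(W\cup h,\ (\phi\cup\op{id}_h)\circ\tau_L:\,\tau_L^{-1}(S)\cup N(L_0)\to W\cup h)$, so both the domain and the order of composition change. Moreover, the POBD dictionary you quote always enlarges $S$ by a neighborhood of the \emph{positive} hemisphere of the $(n+1)$-handle's attaching sphere, regardless of which centric presentation you use; in the $R_-$-centric model that hemisphere is the handleslid disk $\tilde D_+$, not $D_+^{-\ve}$, and $N(\tilde D_+)$ is not visibly $N(L_0)$. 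The paper in fact stays with the $R_+$-centric model for (TB2) as well, enlarging $S$ by $N(D_+^{-\ve})$ and post-composing by $\tau_L$, and then invokes the key observation your proposal omits: since $D_+\subseteq R_+=W\setminus S$, the sphere $L=D_+\cup L_1$ is disjoint from $S$, so $\tau_L$ fixes $S$ and commutes with $\phi\cup\op{id}_h$, converting $\tau_L\circ(\phi\cup\op{id}_h)$ on $S\cup N(D_+^{-\ve})$ into $(\phi\cup\op{id}_h)\circ\tau_L$ on $\tau_L^{-1}(S)\cup N(L_0)$. Without that commutativity you cannot land on the (TB2) normal form, and your "by symmetry" shortcut conceals exactly the step that requires it.
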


\begin{proof}
First we attach the contact $n$-handle of the bypass to $\Sigma$ along $\Lambda_- \uplus \Lambda_+$. As a contact $n$-handle is a Reeb-thickened Weinstein $n$-handle $h$, the resulting POBD is $(W \cup h, \phi: S \to W\cup h)$. Note that the cornered subdomain $S$ and the partially-defined monodromy $\phi$ remain unchanged. Next we attach the contact ($n+1$)-handle. Here we consider separately the trivial bypasses of type (TB1) and (TB2).

\s\n
{\em (TB1).} First assume the bypass data is trivial in the (TB1) sense, so that $(\Lambda_+;D_+)$ is standard and below $\Lambda_-$. With this assumption the $n$-handle $h$ is attached along $\Lambda_-\uplus \Lambda_+ \cong \Lambda_-$. We attach the contact ($n+1$)-handle according to the $R_+$-centric model. The positive hemisphere of the attaching sphere of the ($n+1$)-handle, that is, the intersection of the attaching sphere with $R_+$, is $D_+^{-\ve}$. The negative hemisphere arises from sliding $D_-^{\ve}$ down across the $n$-handle attached along $\Lambda_-$. By \autoref{lemma:cocore_lemma}, we may isotop (via Legendrian isotopy in the dividing set) the attaching sphere of the contact ($n+1$)-handle so that $D_+^{-\ve}$ becomes the cocore $K$ of $h$. After this isotopy, the image of the negative hemisphere is exactly $\tau_L(K)$, where $L$ is the union of $D_-$ and the core of $h$; see the top row of \autoref{fig:POBD_stabilization}. 

Consequently, the effect of this handle attachment to the POBD $(W\cup h, \phi: S \to W\cup h)$ is to add to $S$ a neighborhood of $K$, which we may take to be all of $h$, and to compose the monodromy with $\tau_L$. This yields a (TB1) positive stabilization of $(W, \phi:S \to W)$.

\s\n
{\em (TB2).} Next assume the bypass data is trivial in the (TB2) sense, so that $(\Lambda_-;D_-)$ is standard and above $\Lambda_+$. With this assumption the $n$-handle $h$ is attached along $\Lambda_-\uplus \Lambda_+ \cong \Lambda_+$. We attach the contact ($n+1$)-handle according to the $R_+$-centric model. The positive hemisphere of the attaching sphere of the ($n+1$)-handle, is $D_+^{-\ve}$. The negative hemisphere arises from sliding $D_-^{\ve}$ down across the $n$-handle attached along $\Lambda_+$; see the bottom row of \autoref{fig:POBD_stabilization}. We remark for the sake of symmetry with the (TB1) case that in the (TB2) case we can isotop the attaching sphere of the ($n+1$)-handle so that the negative hemisphere is the cocore $K$ of $h$ and the positive hemisphere is $\tau_L^{-1}(K)$, where $L$ is the union of $D_+$ and the core of $h$. However, at the moment we do not perform this last isotopy and work directly with the $R_+$-centric model of the bypass attachment. 

Consequently, the effect of this handle attachment to the POBD $(W\cup h, \phi: S \to W\cup h)$ is to add to $S$ a neighborhood of $D_+^{-\ve}$, and to compose the monodromy with $\tau_L$. Note that by assumption $D_+\subseteq W\setminus S$ and so $\tau_L$ commutes with $\phi \cup \mathrm{id}_h$. The yields a (TB2) positive stabilization of $(W,\phi:S \to W)$ along $D_+$.
\end{proof}

\begin{figure}[ht]
\vskip-0.8in
	\begin{overpic}[scale=0.44]{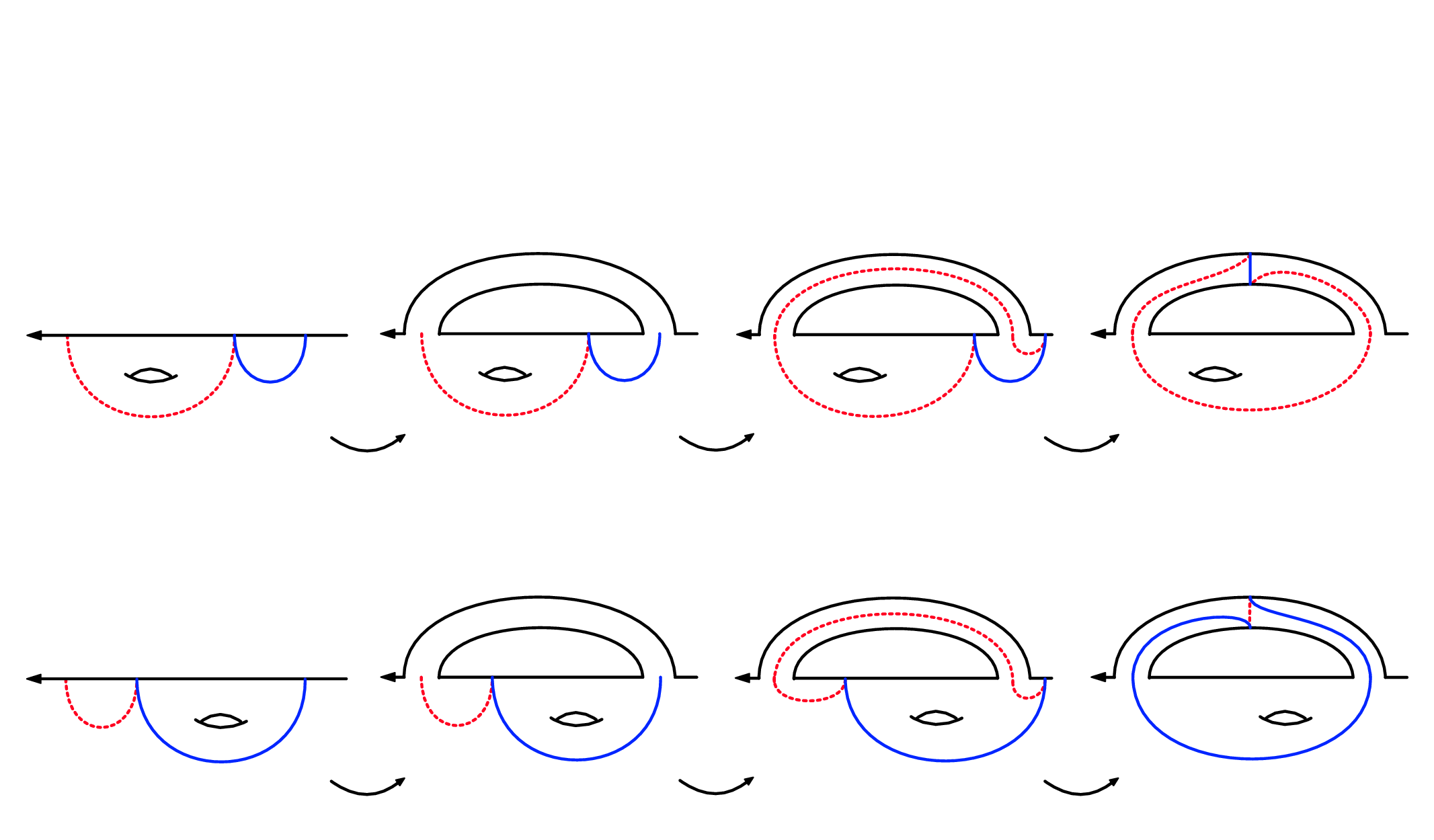}
	    \put(5,42.5){\small (TB1)}
	    \put(5,28){\small {\color{red}  $D_-$}}
	    \put(20,30){\small \textcolor{blue}{$D_+$}}
	    \put(21,25.25){\tiny attach $n$-handle}
	    \put(42.5,25.25){\tiny identify attaching sphere}
	    \put(44.5,23.25){\tiny of ($n+1$)-handle}
	    \put(73.25,25.25){\tiny isotop}
	    \put(68.5,29.5){\small \textcolor{blue}{$D_+^{-\ve}$}}
	    \put(5,18){\small (TB2)}
	    \put(5,5){\small {\color{red}  $D_-$}}
	    \put(17.5,3.25){\small \textcolor{blue}{$D_+$}}
	    \put(21,1.25){\tiny attach $n$-handle}
	    \put(42.5,1.25){\tiny identify attaching sphere}
	    \put(44.5,-0.75){\tiny of ($n+1$)-handle}
	    \put(73.25,1.25){\tiny isotop}
	    \put(66,2.75){\small \textcolor{blue}{$D_+^{-\ve}$}}
	\end{overpic}
	\caption{Positive stabilizations of POBDs arising from trivial bypass attachments in the case $n=1$. All figures depict $W\times [0,\frac{1}{2}]_t$ with the $t$-direction oriented out of the page, so that $W\times \{\frac{1}{2}\}$ is in front and $W\times \{0\}$ is in back. In each row, the first figure depicts trivial bypass data. The second figure depicts the contact $n$-handle attachment, interpreted as a Weinstein $n$-handle attachment to $W$. The third figure identifies the $R_+$-centric attaching sphere of the contact ($n+1$)-handle. In both the third and fourth figures, the new POBD monodromy after attaching the contact ($n+1$)-handle maps the solid blue arc to the dashed red arc.}
	\label{fig:POBD_stabilization}
\end{figure}

We leave it to the reader to further verify that, likewise, inserting a trivial bypass into the patty of a mushroom burger corresponds to positively stabilizing the OBD. 

\begin{remark}
The pair of contact handles described by the $\pi$-rotation of \autoref{fig:trivialbypasslemma} --- for example, involving a \textit{down}-shift of the positive cocore, or an \textit{up}-shift of the negative cocore --- corresponds to a \textit{negative} stabilization of the POBD, rendering the contact manifold overtwisted. 
\end{remark}

\subsection{How to detect stabilizations geometrically} \label{subsection: detecting stabilizations}

Above, we described the detection of stabilizations in terms of canceling contact handles. It will also be useful to detect stabilizations directly in a POBD, which we describe here.

Given a strongly Weinstein OBD $(W,\phi)$ for $(M,\xi)$, there exists a small neighborhood $N(B)$ of the binding $B$ and a decomposition $M\setminus N(B)= H_0\cup H_1$ such that $H_0=W\times[-2\epsilon,2\epsilon]$ with $\epsilon>0$ small is a contact handlebody and $H_1=M\setminus \op{int}(N(B)\cup H_0)$ is a generalized contact handlebody. (We may interchangeably use $H_1=M\setminus \op{int}(N(B)\cup H_0)$ or $H_1=M\setminus \op{int}(H_0)$, provided $N(B)$ is sufficiently small.)

Let $L_0\subset W$ be a regular Lagrangian disk with Legendrian boundary $\Lambda_0\subset \bdry W$. Since $L_0$ is regular, $W$ admits a decomposition into a Weinstein domain $W'$ and an $n$-handle $h$ attached along $\Lambda_0'\subset \bdry W'$ with core $L_0'$ and cocore $L_0$; see \autoref{fig: OBDstabilization}.

\begin{figure}[ht]
	\centering
	\begin{overpic}[scale=.5]{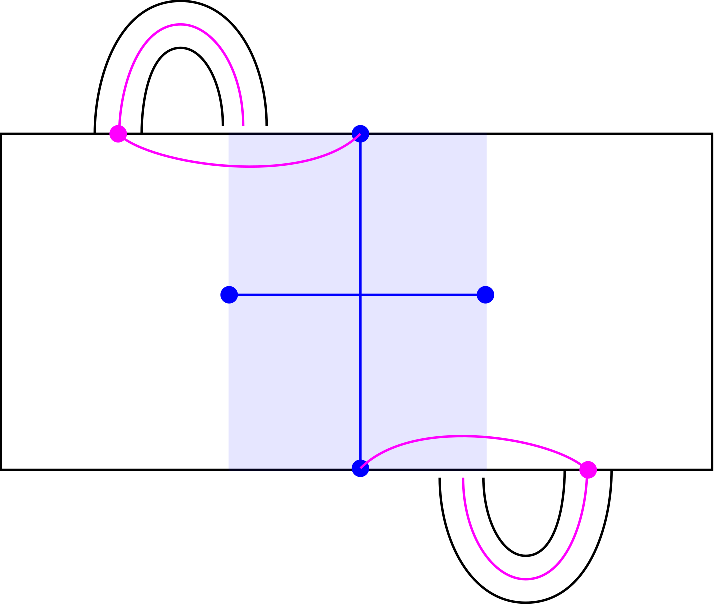}
	\put(10,40){$W'$} \put(80,40){$W'$}
	\put(34.1,45){\tiny $\Lambda_0'$} \put(69.4,45){\tiny $\Lambda_0'$} \put(57,45.3){\tiny $L_0'$}
	\put(48,13.3){\tiny $\Lambda_0$} \put(48,68){\tiny $\Lambda_0$} \put(52,30){\tiny $L_0$}
	\put(9.2,59.5){\tiny $(\Lambda_0')^\delta$} \put(80,22){\tiny $(\Lambda_0')^\delta$}
	\put(35,80){\tiny $h^\delta$} \put(60,0){\tiny $h^\delta$}
	\end{overpic}
	\caption{The shaded region is the $n$-handle $h$, with cocore $L_0$ and core $L_0'$. The purple arcs from $(\Lambda_0')^\delta$ to $\Lambda_0$ represent the trace of a Legendrian isotopy from $(\Lambda_0')^\delta$ to $\Lambda_0$.}
	\label{fig: OBDstabilization}
\end{figure}

Next let $\tilde L_t\subset h\times [-\epsilon,\epsilon]$, $t\in [-\epsilon,\epsilon]$, be a $1$-parameter family of pairwise disjoint $\xi$-Legendrians (this means Legendrians with respect to the contact structure $\xi$) with boundary $\bdry \tilde L_t=\tilde \Lambda_t$ such that:
\be
\item[(i)] $\tilde L_0=L_0\times\{0\}$;
\item[(ii)] $\tilde \Lambda_t \subset (\bdry W\cap h)\times \{t\}$ for $t\in [-\epsilon,\epsilon]$;
\item[(iii)] any two $\tilde \Lambda_{t_1},\tilde \Lambda_{t_2}$, $t_1,t_2\in[-\epsilon,\epsilon]$, are taken to each other by the characteristic foliation of $(\bdry W)\times[-\epsilon,\epsilon]$ and hence $\sqcup_{t\in[-\epsilon,\epsilon]}\tilde \Lambda_t$ is $\xi$-Legendrian;
\item[(iv)] $\sqcup_{t\in[-\epsilon,\epsilon]}\tilde L_t$ is a smooth disk with corners which sweeps out the core of a contact $(n+1)$-handle; and
\item[(v)] $\tilde \Lambda_{-\epsilon},\tilde \Lambda_{\epsilon}\subset (\bdry W'\cap \bdry W)\times \{\pm \epsilon\}$.
\ee
See \autoref{fig: OBDstabilization2}. Recall that if $\beta$ is the Liouville form on $W$ and $dt+\beta|_{\bdry W}$ is the restriction of the contact form on $(\bdry W)\times[-2\epsilon,2\epsilon]$, then the characteristic foliation on $(\bdry W)\times[-\epsilon,\epsilon]$ is directed by $\bdry_t - R_{\beta|_{\bdry W}}$, where $R$ denotes the Reeb vector field.  We leave it to the reader to verify the existence of the family $\{\tilde L_t\}_{t\in[-\epsilon,\epsilon]}$.

We then set $H_0'= H_0\setminus \sqcup_{t\in[-\epsilon,\epsilon]}N(\tilde L_t)$ and $H_1'=M\setminus \op{int}(H_0')$. Here $N(\tilde L_t)$, $t\in[-\epsilon,\epsilon]$, is a Weinstein $n$-handle which is a small neighborhood of $L_t$. 

\begin{figure}[ht]
	\centering
	\begin{overpic}[scale=.5]{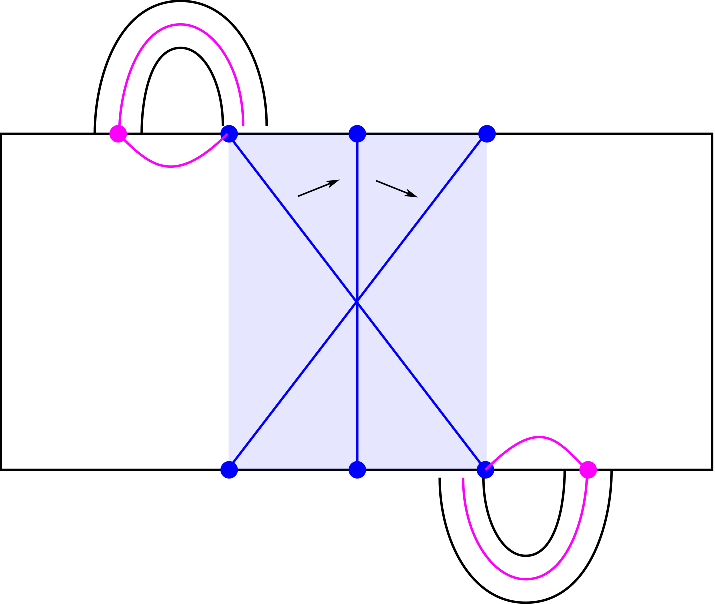}
	\put(35,50){\tiny $\tilde L_{-\epsilon}$} \put(61.6,50){\tiny $\tilde L_\epsilon$}
	\put(52,24){\tiny $L_0$}
	\end{overpic}
	\caption{The $1$-parameter family $\tilde L_t$ in blue.}
	\label{fig: OBDstabilization2}
\end{figure}

\begin{lemma} \label{lemma: stabilization}
The decomposition $M= H_0'\cup H_1'$ corresponds to the stabilization of $(W,\phi)$ along $L_0$.
\end{lemma}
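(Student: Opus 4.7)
The plan is to show that the decomposition $M = H_0' \cup H_1'$, viewed as a contact handlebody decomposition of $M$, extracts to the stabilized OBD $(W \cup h', \phi \circ \tau_L)$, where $h'$ denotes the Weinstein $n$-handle of the stabilization, $L_1$ is its core, and $L = L_0 \cup L_1$. By condition (iv) on the family $\{\tilde L_t\}$, the region $\sqcup_{t \in [-\epsilon, \epsilon]} N(\tilde L_t) \subset H_0$ is the neighborhood of the core of a contact $(n+1)$-handle. Consequently, $H_1' = H_1 \cup \sqcup_t N(\tilde L_t)$ is obtained from $H_1$ by attaching this contact $(n+1)$-handle.

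First I would exhibit a new contact handlebody structure on $H_0'$ with page equal to the stabilized page $W \cup h'$, even though $H_0' \subset H_0 = W \times [-2\epsilon, 2\epsilon]$ as a subset of $M$. The key point is that the new Weinstein handle $h'$ of the page appears as a consequence of the removal: by conditions (ii), (iii), and (v), the boundaries $\tilde \Lambda_{\pm \epsilon}$ sit on $(\bdry W' \cap \bdry W) \times \{\pm\epsilon\}$ and are related by the characteristic foliation on $\bdry W \times [-\epsilon, \epsilon]$, providing the attaching sphere for a new Weinstein $n$-handle $h'$ on $\bdry W'$; the core $L_1$ of $h'$ is realized by the trace of the characteristic foliation between $\tilde \Lambda_{-\epsilon}$ and $\tilde \Lambda_\epsilon$. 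Concretely, this entails constructing a new contact vector field on $H_0'$ whose convex boundary admits a Weinstein structure with positive region $W' \cup h \cup h' = W \cup h'$, thereby identifying $H_0'$ with $(W \cup h') \times [-2\epsilon', 2\epsilon']$ for some $\epsilon' > 0$.

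Second, I would verify that the contact $(n+1)$-handle attachment to $H_1$ composes the monodromy with $\tau_L$. Under the identification from Step 1, the attaching sphere of the contact $(n+1)$-handle is bounded by Lagrangian disks $D_{\pm}$ in $R_{\pm}(\bdry H_0')$ such that $D_+$ contains (a pushoff of) $L_0$ and $D_-$ contains (a pushoff of) $L_1$. By the correspondence between contact $(n+1)$-handle attachments and OBD monodromy modifications developed in \cite[Section 8]{HH19} and summarized in \autoref{subsection:trivial_bypasses_and_POBD_stabilization}, this handle attachment modifies the monodromy by post-composition with the positive Dehn twist $\tau_L$ about $L = L_0 \cup L_1$, yielding $\phi \circ \tau_L$ as required.

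The main obstacle is Step 1: exhibiting the proper subset $H_0' \subset H_0$ as a contact handlebody over a Weinstein domain $W \cup h'$ that is \emph{strictly larger} than the original page $W$. The delicate point is that the new handle $h'$ is not visible in any individual slice $W \times \{t\}$; rather, it emerges from the global geometry of the family $\{\tilde L_t\}$ and the Reeb-directed drift encoded by the characteristic foliation on $\bdry W \times [-\epsilon, \epsilon]$. A careful construction of a new contact vector field on $H_0'$, interpolating between the original Liouville structure on $W$ and the new geometry introduced by the removal, is what furnishes this larger Weinstein page. Once Step 1 is carried out, the matching with the stabilized OBD decomposition is formal from Step 2 and the definition of the contact handlebody decomposition associated to an OBD.
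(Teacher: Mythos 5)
The two-step structure you propose is the same as the paper's: first show that $H_0'$ is a contact handlebody over a page that is $W$ with one extra Weinstein $n$-handle, then recognize the attached contact $(n+1)$-handle as composing the monodromy with $\tau_L$. You also correctly identify Step~1 as the crux. However, the proposal stops exactly there: it describes the \emph{output} of Step~1 (``a new contact vector field on $H_0'$ \dots which furnishes this larger Weinstein page'') but does not supply the construction, and this is where the whole content of the lemma lies.

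The paper resolves Step~1 by a concrete device that your outline is missing. After removing $\sqcup_t N(\tilde L_t)$, one views $H_0'$ as $W\times[-2\epsilon,-\epsilon]$ with the surviving handle $N(\tilde L_\epsilon)$ attached, and then \emph{slides} $N(\tilde L_\epsilon)$ (regarded as an $n$-handle attached to $W'\times\{\epsilon\}$) in the negative $t$-direction along the characteristic foliation of $\bdry W'\times[-2\epsilon,2\epsilon]$. This slide pushes $\tilde\Lambda_\epsilon$ by a positive Reeb amount $\delta$, so that $H_0'$ is manifestly a contact handlebody over $W\cup h^\delta$ with $h^\delta$ attached along the Reeb pushoff $(\Lambda_0')^\delta$. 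Two things are accomplished by this slide that your proposal leaves unaddressed: (i) it produces a genuine product contact handlebody structure on $H_0'$, rather than merely asserting that one exists; and (ii) it resolves the ambiguity inherent in conditions (ii)--(v), which provide \emph{two} candidate attaching spheres $\tilde\Lambda_{\pm\epsilon}$ on opposite Reeb-sides of $\Lambda_0'$, whereas a single well-defined handle $h'$ must be attached along one definite Legendrian. Your appeal to ``the global geometry of the family $\{\tilde L_t\}$ and the Reeb-directed drift'' gestures at the right phenomenon but does not select the attaching data or show that the removal actually yields a product. Once the slide is carried out, the identification of the monodromy is then indeed a short bookkeeping step (in the paper: after isotopy $\tilde L_{-\epsilon}\subset R_+$ and $\tilde L_\epsilon\subset R_-$ with matching boundary projections, so the partial monodromy sends $\tilde L_{-\epsilon}\mapsto\tilde L_\epsilon$ and is $\tau_L$), roughly as you sketch in Step~2. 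But without the slide, Step~1 is an unproved assertion, and the argument does not close.
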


\begin{proof}
We reformulate \autoref{lemma: stabilization} in terms of POBDs.

Let $(\Lambda_0')^\delta$ be the $\delta$-Reeb pushoff of $\Lambda_0'$ in $\bdry W'$ and $h^\delta$ an $n$-handle attached to $W$ along $(\Lambda_0')^\delta$, where $\delta>0$ is small. Also let $L$ be the Lagrangian sphere in $W\cup h^\delta$ obtained by gluing $L_0$, the core of $h^\delta$ with boundary on $(\Lambda_0')^\delta$, and a standard modification of the trace of the Legendrian isotopy from $(\Lambda_0')^\delta$ to $\Lambda_0$.
Then:

\begin{claim} \label{claim: stabilization}
The contact handlebody $H_0$ admits a POBD $(W\cup h^\delta, \tau_L)$.
\end{claim}

\begin{proof}[Proof of \autoref{claim: stabilization}]
The key is to view $H_0'$ as $W\times[-2\epsilon,-\epsilon]$ with a thickening of $h^\delta$ attached, where $\delta$ is taken to be a small number with a dependency on $\epsilon$. For this we slide the handle $N(\tilde L_\epsilon)$, viewed as attached to $W'\times \{\epsilon\}$, along the characteristic foliation of $\bdry W'\times[-2\epsilon,2\epsilon]$ in the negative $t$-direction.  (As far as the description of the POBD is concerned, there is no difference between isotopic handles $h$ and $N(\tilde L_\epsilon)$.) This has the effect of pushing $\tilde \Lambda_\epsilon=\bdry \tilde L_\epsilon$ in the positive Reeb direction by an amount $\delta$ which depends on the constant $\epsilon$.

The contact handlebody $H_0$ is obtained from $H_0'$ by attaching the canceling contact $(n+1)$-handle $\sqcup_{t\in[-\epsilon,\epsilon]}N(\tilde L_t)$.   
After a slight isotopy, we may assume that:
\be 
\item $\tilde L_{-\epsilon}$ is on the $R_+$ part of the boundary of the contact handlebody over $W\cup N(\tilde L_t)$; 
\item $\tilde L_\epsilon$ is on the $R_-$ part of the boundary of the contact handlebody over $W\cup N(\tilde L_t)$ and is represented by the purple arcs in \autoref{fig: OBDstabilization2}; and 
\item the projections of their boundaries to $W\cup N(\tilde L_t)$ coincide.
\ee
Hence the partially defined monodromy map sends $\tilde L_{-\epsilon}$ to $\tilde L_\epsilon$ and is isotopic to $\tau_L$.
\end{proof}

\autoref{lemma: stabilization} follows from \autoref{claim: stabilization}.
\end{proof}

\subsection{Folded Weinstein hypersurfaces}

A $C^0$-generic hypersurface in a contact manifold is Weinstein convex, which, informally, is equivalent to admitting a decomposition as the union of two positive and negative Weinstein domains. There is a more general class of folded Weinstein hypersurfaces \cite{HH19}, which, informally, is one that admits a decomposition into alternating positive and negative Weinstein cobordisms. Retrogradients of characteristic foliations, and hence bypass attachments, are described naturally by folded Weinstein hypersurfaces. In this subsection we review the concept as it will be used in the next section.

The following definition is from \cite{HH19}:

\begin{defn}
A {\em folded Weinstein hypersurface} $\Sigma \subseteq (M,\xi)$ is an oriented closed hypersurface such that the characteristic foliation $\Sigma_{\xi}$ satisfies the following properties:
\begin{itemize}
    \item[(FW1)] There are pairwise disjoint codimension-$1$ submanifolds $K_i \subseteq \Sigma$ called the {\em folding loci} that cut $\Sigma$ into $2m$ pieces, i.e., 
    \[
    \Sigma = W_1 \cup_{K_1} \cdots \cup_{K_{2m-1}} W_{2m}
    \]
    where $W_i$ are compact with $\partial W_i = K_i \cup K_{i-1}$. Here we assume $K_0 = K_{2m} = \varnothing$. 
    \item[(FW2)] The singular points of $\Sigma_{\xi}$ in each $W_i$ have the same sign, and the sign changes upon crossing $K_i$ or $K_{i-1}$. We assume the singularities in $W_1$ are positive, and that each $W_i$ has at least one singularity. 
    \item[(FW3)] There is a Morse function on each $W_i$ for which $\Sigma_{\xi}|_{W_i}$ is gradient-like and $K_i, K_{i-1}$ are regular level sets.
\end{itemize}
\end{defn}

\begin{remark}
We refer to $K_i$ as \textit{maximal} if $i$ is odd, and \textit{minimal} if $i$ is even. That is, maximal folding loci behave like dividing sets, in the sense that the oriented characteristic foliation flows from a positive to a negative region. The opposite is true of a minimal folding locus. 
\end{remark}

In \cite{HH19} the second and third authors construct a normalized contact neighborhood of a folded Weinstein hypersurface. In particular, there is a contact form which induces a Weinstein cobordism structure on each $W_i$. 

\subsection{Retrogradient and bypass diagrams}\label{subsection:retro_and_bypass}
If $\Sigma^{2n}$ is a convex hypersurface, then it naturally admits a (trivial) folded Weinstein structure as $\Sigma = R_+ \cup_{\Gamma} R_-$. Let $p^{\pm}$ be positive and negative, respectively, critical points of index $n$ of the characteristic foliation of $\Sigma$ and suppose that $W^{\mathrm{u}}(p^+) \cap W^{\mathrm{s}}(p^-) = \varnothing$, where $W^{\mathrm{u}}$ and $W^{\mathrm{s}}$ indicate the unstable and unstable manifolds. By exchanging the values of the critical points, we can shuffle the corresponding elementary Weinstein cobordisms to give $\Sigma$ a non-trivial folded Weinstein structure 
\[
\Sigma = W_1 \cup W_2 \cup_Y W_3 \cup W_4.
\]
See \autoref{fig:folded_weinstein}. 
\begin{figure}[ht]
\vskip-.15in
	\begin{overpic}[scale=0.43]{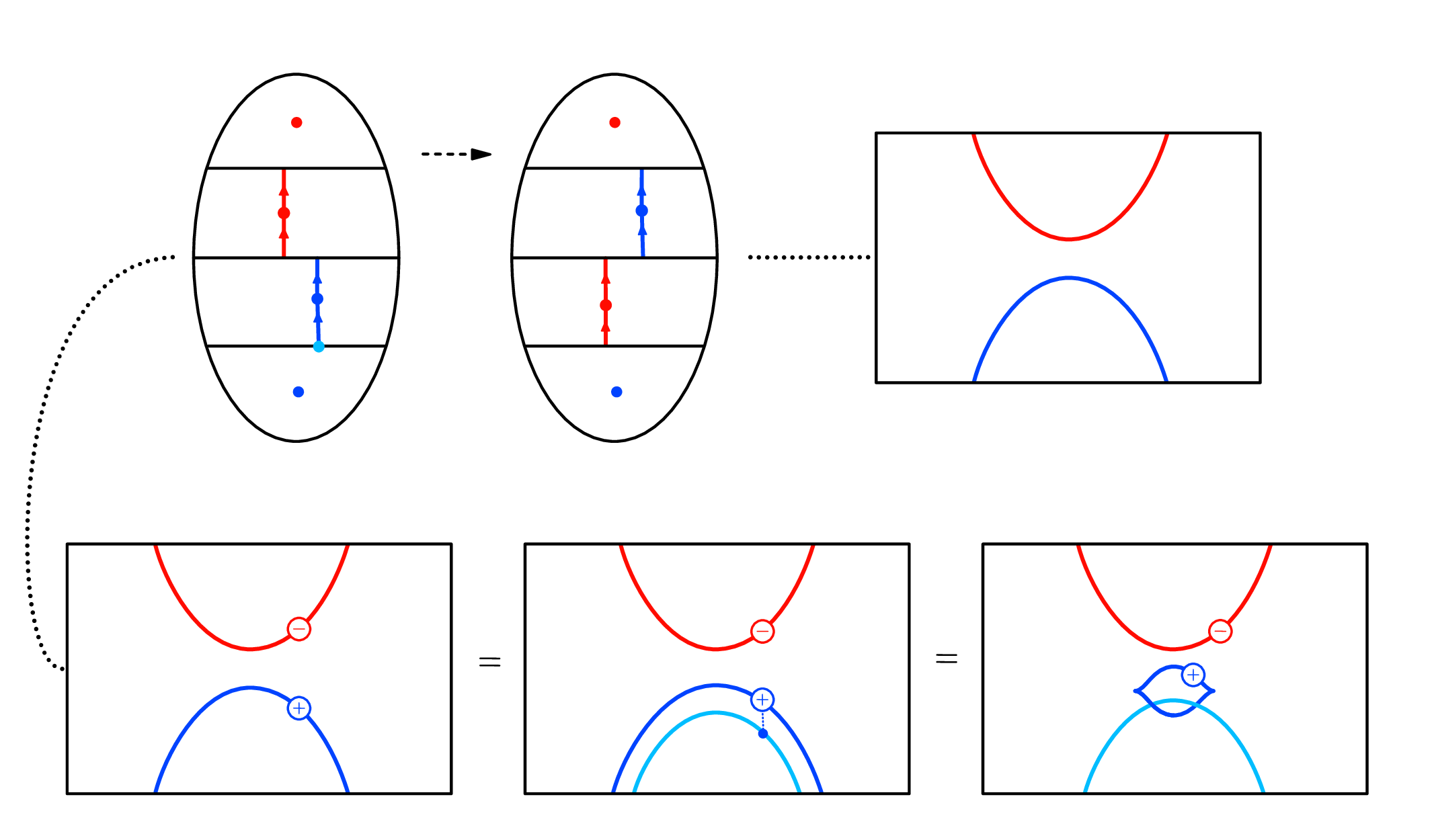}
	    \put(17,54.5){\tiny $\Sigma$ (convex)}
	    \put(36.5,54.5){\tiny $\Sigma$ (folded Weinstein)}
	    \put(28.75,40){\tiny $\Gamma$}
	    \put(50.5,40){\tiny $Y$}
	    \put(27.75,33.5){\tiny $G_+$}
	    \put(85,32.5){\small $Y$}
	    \put(74,29.5){\small $Y$}
	    \put(61.5,50.5){\small (local retrogradient diagram)}
	    \put(39,22.5){\small (local bypass diagrams)}
	    \put(28.75, 4.25){\small $\Gamma$}
	    \put(17.5, 0.5){\small $\Gamma$}
	    \put(59.5, 4.25){\small $G_+$}
	    \put(49.5, 0.5){\small $\Gamma$}
	    \put(91.5, 4.25){\small $G_+$}
	    \put(81, 0.5){\small $\Gamma$}
	    \put(59,8){\tiny \textcolor{Cerulean}{$(-1)$}}
	    \put(91,8){\tiny \textcolor{Cerulean}{$(-1)$}}
	\end{overpic}
	\vskip-.05in
	\caption{Shuffling cobordisms in a convex hypersurface to produce a non-trivial folded Weinstein structure. Various local surgery diagrams are drawn. The bottom three diagrams all depict the same information.}
	\label{fig:folded_weinstein}
\end{figure}

We will use this non-trivial folded Weinstein structure to identify and model retrogradient trajectories. 
To this end the following two types of Legendrian surgery diagrams are particularly relevant:

\begin{defn}$\mbox{}$
\begin{enumerate}
    \item A {\em (local) retrogradient diagram} is a (local) Legendrian surgery diagram that represents a minimal contact folding locus of a folded Weinstein hypersurface, where intersections of red and blue Legendrians represent retrogradient trajectories. 
    
    \item A {\em (local) bypass diagram} is a (local) Legendrian surgery diagram that describes the dividing set of a convex hypersurface, often with decorations representing bypass attachment data. 
\end{enumerate}
\end{defn}

\begin{remark}
We will often draw retrogradient diagrams parametrically; for example, see \autoref{fig:parametric_FW}. 
\end{remark}

\begin{figure}[ht]
\vskip-.15in
	\begin{overpic}[scale=0.58]{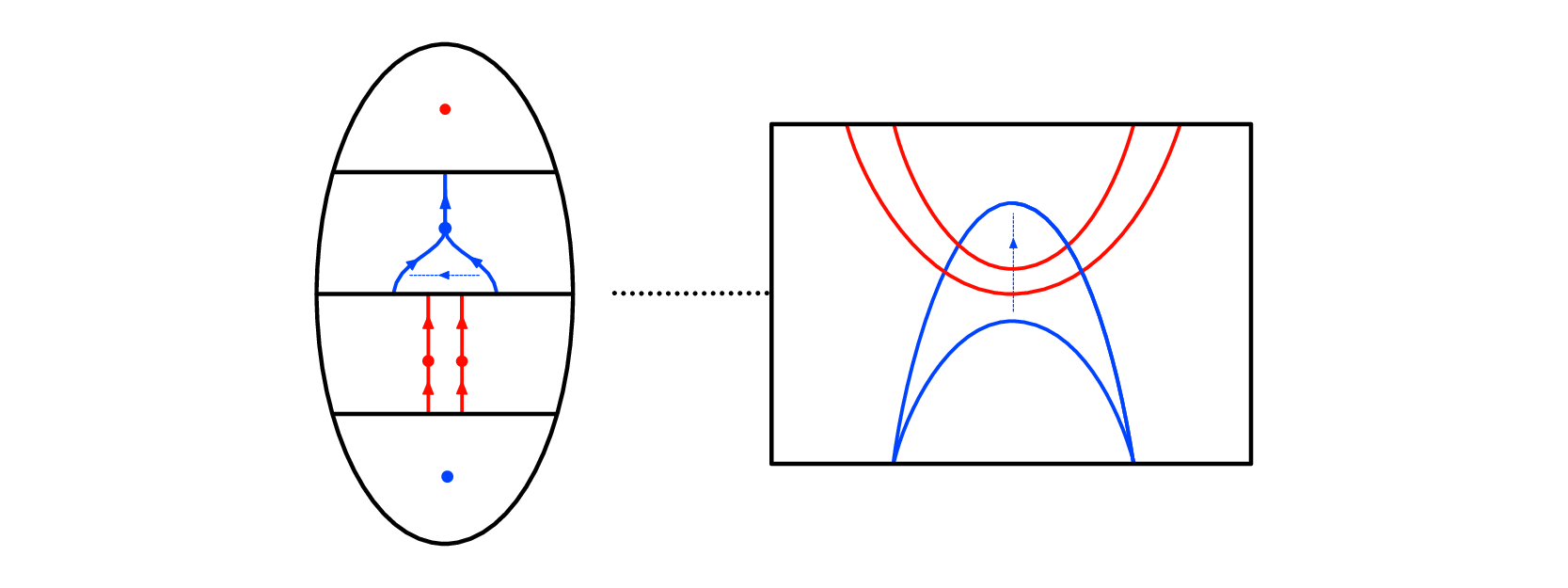}
	    \put(20,33){\small $\Sigma$}
	    \put(37.25,18.25){\tiny $Y$}
	    \put(77.5,8.75){\small $Y$}
	    \put(64,5.5){\small $Y$}
	\end{overpic}
	\vskip-.15in
	\caption{A $1$-parametric family of folded Weinstein hypersurfaces in which two retrogradients appear.}
	\label{fig:parametric_FW}
\end{figure}

\begin{remark}
It is not necessary for us to keep track of Lagrangian disk fillings of Legendrians in local retrogradient diagrams inside folded Weinstien hypersurfaces, so we typically omit decorations.  
\end{remark}

\subsection{The bypass-bifurcation correspondence}

Here we review material from Section 8 of \cite{HH19}. Every retrogradient of the characteristic foliation between nondegenerate critical points of index $n$ is modeled by a bypass attachment, in a way that will be given precisely by \autoref{prop:bypass_bifurcation_correspondence} below. 

We will often translate between folded Weinstein descriptions of retrogradients and bypass attachment data via this correspondence. In particular, we will perform chains of bypass attachments in succession, and as such it is important to keep track of the (un)stable disks of the critical points involved in retrogradients before and after a bypass. 
 
\begin{prop}[Proposition 8.3.2 of \cite{HH19}]\label{prop:bypass_bifurcation_correspondence}
Let $(\Sigma \times [-\delta, \delta], \xi)$ satisfy the property that $\Sigma_s := \Sigma \times \{s\}$ is convex for $s\neq 0$ and is folded Weinstein at $s=0$, at which point there is a retrogradient $p_- \rightsquigarrow p_+$ between nondegenerate critical points $p_{\pm}$ of index $n$. Then $(\Sigma \times [-\delta, \delta], \xi)$ is contactomorphic relative to the boundary to a bypass attachment along $\Sigma_{-\delta}$ with data $(\Lambda_{\pm}; D_{\pm})$, where 
\begin{itemize}
    \item $D_+$  can be identified with a small positive Reeb shift of the unstable disk $K_+$ of $p_+$, and 
    \item $D_-$ can be identified with a small negative Reeb shift of the stable disk $K_-$ of $p_-$.
\end{itemize}
Moreover, after the bypass, 
\begin{itemize}
    \item the unstable disk of $p_+$ in $\Sigma_{\delta}$ can be identified in the $R_+$-centric model as the cocore of the positive Weinstein $n$-handle, and 
    \item the stable disk of $p_-$ in $\Sigma_{\delta}$ can be identified in the $R_-$-centric model as the cocore of the negative Weinstein $n$-handle.
\end{itemize}
See \autoref{fig:bypass_bifurcation_correspondence}. 
\end{prop}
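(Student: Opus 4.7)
The plan is to construct both sides explicitly from local models and exhibit a contactomorphism between them. By the folded Weinstein neighborhood theorem of \cite{HH19}, a tubular neighborhood of $\Sigma_0 = \Sigma \times \{0\}$ carries a normal form for the contact structure adapted to the folded Weinstein decomposition $\Sigma = W_1 \cup W_2 \cup_Y W_3 \cup W_4$ obtained by shuffling critical values so that $p_+$ lies in the top positive cobordism $W_1$ and $p_-$ lies in the bottom negative cobordism $W_4$, with the minimal folding locus $Y$ containing the retrogradient trajectory $\gamma: p_- \rightsquigarrow p_+$. Since the family $\Sigma_s$ is convex for $s \neq 0$ and the only bifurcation at $s=0$ is the isolated retrogradient $\gamma$, the restriction of $\xi$ to $(\Sigma \setminus N(\gamma)) \times [-\delta,\delta]$ is Gray-stably vertically invariant rel boundary, reducing the problem to the local neighborhood $N(\gamma) \times [-\delta,\delta]$.

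Next, I would read off the bypass attachment data directly from the retrogradient geometry. On $\Sigma_{-\delta}$, which is convex, the dividing set $\Gamma$ arises from a small perturbation of $Y$, and the unstable disk $K_+ \subset W_1$ of $p_+$ and stable disk $K_- \subset W_4$ of $p_-$ have boundaries meeting $Y$ near the endpoints of $\gamma$. Setting $D_\pm := K_\pm^{\pm\epsilon}$ via positive/negative Reeb pushoffs and $\Lambda_\pm := \partial D_\pm \subset \Gamma$, the fact that $\gamma$ is a single transverse retrogradient translates exactly into the condition that $\Lambda_+$ and $\Lambda_-$ intersect $\xi|_\Gamma$-transversely in one point. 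This yields a valid tuple of bypass attachment data $(\Lambda_\pm; D_\pm)$ in the sense of \autoref{theorem:bypass_attachment}.

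The third step is to construct the comparison contactomorphism. Applying \autoref{theorem:bypass_attachment} with data $(\Lambda_\pm; D_\pm)$ produces a model layer $(\Sigma \times [-\delta,\delta], \xi^{\mathrm{model}})$ realizing the bypass. Both $\xi$ and $\xi^{\mathrm{model}}$ agree on $\Sigma \times \{\pm\delta\}$ and on the vertically-invariant complement of $N(\gamma) \times [-\delta,\delta]$. Inside the local neighborhood, the (un)stable disks $K_\pm$ together with the retrogradient $\gamma$ uniquely determine the local folded Weinstein geometry up to parametric Weinstein isotopy, while the same data $(\Lambda_\pm; D_\pm)$ uniquely determines the bypass model up to contact isotopy. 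A parametric Moser/Gray-stability argument, combined with the parametric Weinstein neighborhood theorem applied to the Lagrangian disks, then produces a contactomorphism rel boundary between the two layers.

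Finally, the "moreover" statement is extracted by tracing the disks through the bypass. In the $R_+$-centric bypass model, the contact $n$-handle is attached along $\Lambda_- \uplus \Lambda_+$ and the contact $(n+1)$-handle is attached so that its positive hemisphere is a small Reeb shift of $D_+$, i.e., of $K_+$. Under the contactomorphism, the surviving unstable disk of $p_+$ on $\Sigma_\delta$ is precisely what remains of $K_+$ after passing through this $(n+1)$-handle attachment; \autoref{lemma:cocore_lemma} identifies this remnant as a Reeb pushoff of the cocore of the positive Weinstein $n$-handle. The symmetric argument in the $R_-$-centric model handles the stable disk of $p_-$. The principal technical obstacle is the construction in Step 3: carefully matching the contact form, Reeb direction, and Lagrangian disk data in a parametric Darboux-Weinstein chart adapted simultaneously to the folded Weinstein structure and to the bypass handle model. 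The machinery from \cite{HH19} is tailor-made for this bookkeeping, so the argument reduces to a careful but unsurprising verification.
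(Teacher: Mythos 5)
This proposition is cited from \cite{HH19} (Proposition 8.3.2 there); the present paper does not reprove it, so there is no ``paper proof'' to compare against. I will therefore assess your sketch on its own merits.

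The general shape of your argument — build a normal form from the folded Weinstein picture, read off bypass data from the (un)stable disks, and match the two local models by a Moser-type argument — is the right one, and your Step~2 correctly identifies $D_\pm$ and $\Lambda_\pm$. However, there is a concrete gap in Step~1, and Step~3 is asserted rather than proved.

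The error in Step~1: you claim that $\xi$ restricted to $(\Sigma \setminus N(\gamma)) \times [-\delta, \delta]$ is vertically invariant rel boundary, so that everything reduces to $N(\gamma) \times [-\delta,\delta]$. This is false. A bypass attachment performs $(-1)$- and $(+1)$-surgeries on the dividing set along $\Lambda_- \uplus \Lambda_+$ and a pushoff of $\Lambda_\pm$, and these surgeries are supported near $\partial K_+ \cup \partial K_-$, which in general is nowhere near $\gamma$. Equivalently, the $(n+1)$-handle of the bypass removes a neighborhood of a Reeb shift of the \emph{entire} disk $K_+$ from $R_+$, which modifies the characteristic foliation well outside $N(\gamma)$. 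The correct localization region is a neighborhood of the whole ``bypass package'' $K_- \cup \gamma \cup K_+$ together with a collar of $\partial K_\pm$ inside $\Gamma$, not just $N(\gamma)$. The $n=1$ picture makes this transparent: the bypass modifies the dividing set along the full bypass arc $K_- \cup \gamma \cup K_+$, not just near the middle intersection point.

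Step~3, which you yourself flag as ``the principal technical obstacle,'' is where the actual content of the proof lives — constructing a contactomorphism rel boundary between $(\Sigma \times [-\delta,\delta], \xi)$ and the bypass model layer. Saying that it ``reduces to a careful but unsurprising verification'' is a promissory note, not an argument; uniqueness of the contact germ determined by the data $(\Lambda_\pm; D_\pm)$, which you invoke, is precisely what needs to be established. Finally, the phrasing in Step~4 (``the surviving unstable disk of $p_+$ is what remains of $K_+$ after passing through the $(n+1)$-handle attachment'') is misleading: after the $(n+1)$-handle removes a neighborhood of $K_+^{-\ve}$, the new unstable disk of $p_+$ is not a remnant of $K_+$ but a genuinely different Lagrangian disk whose trajectories now flow into the new $n$-handle; it is identified with the cocore of that handle via \autoref{lemma:cocore_lemma}, which is the mechanism you want but not the description you gave.
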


\begin{figure}[ht]
	\begin{overpic}[scale=0.455]{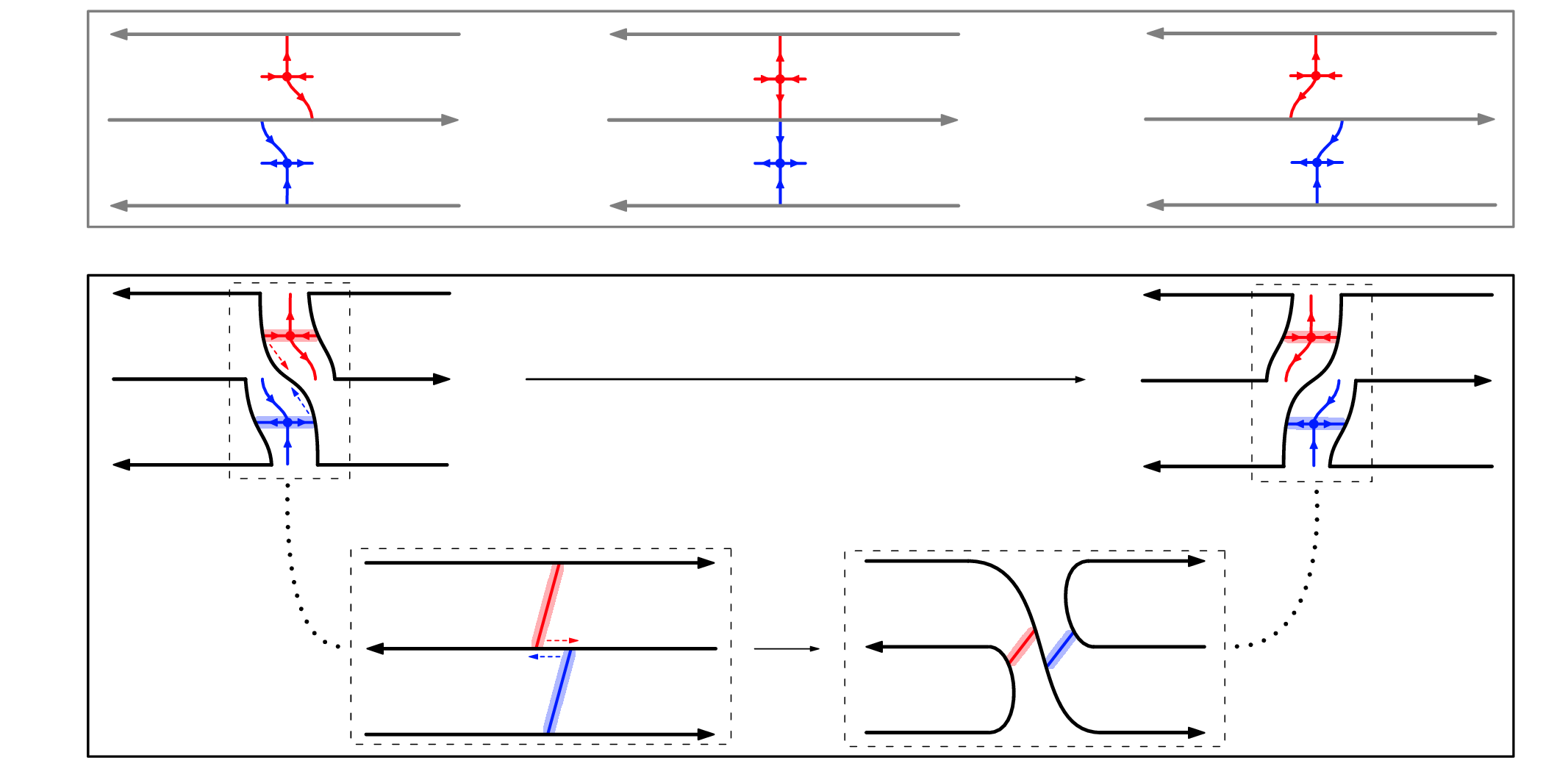}
	    \put(15,32.5){\small $t = -\delta$}
	    \put(48,32.5){\small $t = 0$}
	    \put(82,32.5){\small $t = \delta$}
	    \put(43,48.5){\small \textcolor{gray}{Folded Weinstein}}
	    \put(43,26){\small Bypass attachment}
	    \put(36.5,4){\tiny \textcolor{blue}{$K_+$}}
	    \put(31.5,10){\tiny {\color{red}  $K_-$}}
	    \put(46.75,-1.25){\small Convex}
	\end{overpic}
	\caption{The bypass-bifurcation correspondence as described by \autoref{prop:bypass_bifurcation_correspondence}. The top row is a family of folded Weinstein hypersurfaces where at $t=0$ there is a retrogradient $p_- \rightsquigarrow p_+$ of the characteristic foliation. Below is the corresponding situation for the convex hypersurfaces at times $t=\pm \delta$. The black arrows are the dividing sets. The bypass data is given by shifting the highlighted blue and red disks on the left in the indicated directions until they intersect.}
	\label{fig:bypass_bifurcation_correspondence}
\end{figure}

\section{Proof of stabilization equivalence}\label{section: outline of proof of stabilization equivalence} 

In this section we prove \autoref{theorem: stabilization equivalence}, modulo several theorems (Theorems~\ref{theorem: sigma times interval}, \ref{theorem: layer between N(L) and N(L')}, \ref{thm: layer between N and N' general case}, and \ref{theorem: birth-death}) that will be proven in subsequent sections. 

For the rest of the paper, unless otherwise stated, ``(P)OBD'' refers to a strongly Weinstein (P)OBD. Our starting point is a $1$-Morse family of smooth functions $f_t:M\to \R$, $t\in[0,1]$, where $f_0$ and $f_1$ are contact Morse functions corresponding to the two OBDs. The analysis of such a family $\{f_t\}_{t\in[0,1]}$ together with its gradient-like vector fields in smooth topology is classical and is due to Cerf~\cite{Cer70} and Hatcher-Wagoner~\cite{HW73}. Our goal is to make the family $f_t$ as contact as possible. In particular, we will use the smooth homotopy $f_t$ to build a family of $\theta$-decompositions, and then we will analyze the $\theta$-decompositions to conclude stable equivalence of the OBDs.

The $\theta$-decompositions are analyzed in the three steps described in the next subsection. The proof of stabilization equivalence is then given in \autoref{subsection:actual_proof_of_main_theorem}.

\s\n
{\em Idea of proof.} To clarify the role of each step below, we briefly outline the idea of the proof. For most times $t$, i.e., when $f_t$ is Morse, we can apply the technology of \cite{HH19} to produce a $\theta$-decomposition $\Theta_t$. If two such $\theta$-decompositions have buns that are smoothly isotopic, then after a flexibilization process of the buns (Step 2 below) we can assume they are contact isotopic. The $\theta$-decompositions are then related by a homotopy of patties, which allows us to analyze the OBDs (Step 1 below). It remains to consider moments in the homotopy $f_t$ that change the topological type of the buns of the $\theta$-decompositions. This only happens at birth-death type critical points of index $(n,n+1)$; note that birth-death points of strictly super- or sub-critical indices do not affect the relevant topologies. The analysis of the OBDs near such points is given by Step 3.

\subsection{Three main steps} Here we analyze special kinds of $\theta$-decompositions in three steps. 

\s\n
{\em Step 1: Homotopies of patties.}
\s

This step corresponds to \autoref{sec: sigma times interval}, where we will prove the following main technical theorem:  

\begin{theorem}\label{theorem: sigma times interval}
	Let $\Theta_t=H_0\cup (\Sigma\times[0,1])\cup H_1$, $t\in[0,1]$, be a $1$-parameter family of $\theta$-decompositions of $(M,\xi_t)$, where $\xi_t$ is independent of $t$ on $H_0$ and $H_1$. Then the OBDs corresponding to $\Theta_0$ and $\Theta_1$ are stably equivalent. 
\end{theorem}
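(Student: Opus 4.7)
The plan is to push the parametric form of \autoref{theorem:family genericity} one dimension higher and read off the resulting bifurcations at the level of OBDs. Because $\xi_t$ is constant on $H_0$ and $H_1$, each $\Theta_t$ produces an OBD whose contact handle decomposition differs from that of $\Theta_0$ only inside the patty, so it suffices to track how the bypass decomposition $\Delta_t$ of $(\Sigma\times[0,1],\xi_t)$ varies with $t$. Invoking \autoref{rmk: parametric} with one parameter gives, after a boundary-relative isotopy of $\{\xi_t\}$, a finite sequence of bifurcation times $0<t_1<\ldots<t_K<1$ such that on each open subinterval the bypass decomposition varies through ambient contact isotopy --- producing OBDs that differ only by Weinstein homotopy and conjugation --- while at each $t_i$ a single elementary bifurcation of a classified type occurs. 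Thus the theorem reduces to showing that each such elementary bifurcation induces a stable equivalence on the associated OBD.

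The main step is to enumerate the possible bifurcations using the bypass-bifurcation correspondence (\autoref{prop:bypass_bifurcation_correspondence}). The classified families I expect to encounter are: the birth or death of a canceling pair of bypasses in which one member is trivial in the sense of \autoref{def: trivial bypass data}; the commutation of two adjacent bypass attachments whose supports become disjoint at $t_i$; handleslides of one bypass across another, controlled by \autoref{lemma:legendrian_handleslide_n_handle} and \autoref{lemma:legendrian_handleslide_n+1_handle}; and Reidemeister-type moves on the decorated Legendrian data of a single bypass, arising when a retrogradient trajectory in the characteristic foliation crosses an additional critical trajectory. In each case the change in bypass decomposition can be captured inside a local decorated Legendrian diagram of the type developed in \autoref{section: preliminaries}.

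For each bifurcation type I then translate the local change into an OBD move. Births of trivial bypass pairs become positive stabilizations by the lemma of \autoref{subsection:trivial_bypasses_and_POBD_stabilization}, so a birth-death event becomes a stabilization followed immediately by a destabilization, which is consistent with stable equivalence. Commutations and handleslides translate into conjugations of the monodromy $\phi$ or into Weinstein homotopies of the page via ambient isotopies of the framing data of the stabilizing Lagrangian disks, and Reidemeister moves correspond to exact Lagrangian isotopies in the page and hence to Weinstein homotopies of the OBD.

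The main obstacle --- and the technical content to be supplied by \autoref{sec: sigma times interval} --- is the exhaustive case analysis of these bifurcations and the verification, case by case using the decorated Legendrian calculus together with the $R_{\pm}$-centric models of \autoref{theorem:bypass_attachment}, that every resulting OBD change lies inside the stable equivalence relation. Concretely, for each local model one must check that the Lagrangian disk playing the role of the stabilizing $L_0$ is regular, and that neighboring bifurcations can be consistently rewritten in a compatible centric model so that their interaction is visible at the OBD level. I expect that handling the hybrid cases, where a trivial bypass birth is entangled with a nearby handleslide or Reidemeister move, will require the bulk of the diagrammatic work.
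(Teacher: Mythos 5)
Your high-level plan — do parametric Cerf theory on the characteristic foliations, isolate a finite list of codimension-two bifurcations, and show each translates into a stable-equivalence move at the OBD level — is exactly the paper's strategy. However, there are two concrete gaps between the proposal and what is actually needed.

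First, the bifurcation list you propose is not the right one, and it is not justified. The correct classification is obtained by doing genuine $2$-parametric Cerf/transversality analysis on the characteristic foliations of $\Sigma$, treating retrogradient trajectories between index-$n$ singularities as the codimension-one events and then counting dimensions of moduli of (un)stable manifold intersections. This produces the list the paper calls (P1)--(P5'): two simultaneous retrogradients, a non-transverse retrogradient, a retrogradient to or from a birth-death point of index $(n-1,n)$ or $(n,n+1)$, a retrogradient to a positive index-$(n-1)$ point (or dually from a negative index-$(n+1)$ point), and broken retrogradients through an intermediate same-sign critical point. Your proposed items — ``commutation of bypasses with disjoint supports,'' ``handleslides of one bypass across another,'' ``Reidemeister-type moves'' — are better regarded as tools or consequences rather than codimension-two events, and the items you omit (notably the index-$(n-1)$ or index-$(n+1)$ bifurcation and the broken-retrogradient cases) are precisely where the diagrammatic work is subtle. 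The assertion that the relevant births/deaths involve a canceling pair ``one member of which is trivial'' also gets the logic backwards: triviality of the relevant bypass is a conclusion to be established by computation in each case, not an a priori hypothesis.

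Second, and more importantly, the proposal is missing the paper's key structural lemma — the parallel strands bypass lemma — without which the case analysis does not close. The paper's proof repeatedly normalizes a chain of retrogradients sharing a common source (or sink) into a local model of parallel strands, and the lemma states that the resulting sequence of bypasses is equivalent, after isotopy of contact handles, to a \emph{single} bypass plus trivial bypasses, with the new positive (or negative) attaching data being the standard disk filling of an unknot encircling all the strands. It is this collapsing mechanism, proved by an induction using \autoref{lemma:legendrian_handleslide_n_handle}, \autoref{lemma:legendrian_handleslide_n+1_handle}, \autoref{lemma:cocore_lemma}, and \autoref{lemma:trivial_bypass_lemma} in the decorated Legendrian calculus, that makes the codimension-two cases (P2), (P3), (P4), (P5') tractable. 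Without it, one is left trying to verify case-by-case cancellations of contact $n$- and $(n+1)$-handles across an uncontrolled number of neighboring bypasses, and I do not see how the diagram chase terminates. You would need to either discover this lemma or find a replacement for it before the case analysis you sketch could be completed.
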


\autoref{theorem: sigma times interval} is equivalent to the following statement, which generalizes a result of Bin Tian \cite{Ti18} in dimension three.

\begin{theorem}\label{theorem: sigma times interval, contact category version}
Any two sequences of bypass attachments starting with $\Sigma\times\{0\}$ that describe $(\Sigma\times  [0,1],\xi)$ can be related to each other by two types of moves: far commutativity and adding a trivial bypass. 
\end{theorem}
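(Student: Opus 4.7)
The plan is to interpolate between the two bypass sequences via a generic $1$-parameter family and classify its codimension-$1$ bifurcations.

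Given sequences $\Delta_0, \Delta_1$ describing $(\Sigma \times [0,1], \xi)$, take the constant family $\xi_r := \xi$ for $r \in [0,1]$. By the parametric refinement of \autoref{theorem:family genericity} (\autoref{rmk: parametric}), there is a $1$-parameter family of bypass decompositions $\Delta_r$ of $(\Sigma \times [0,1], \xi)$ interpolating between $\Delta_0$ and $\Delta_1$, with bypass attachment data varying smoothly outside a finite set $\{r_1, \dots, r_N\} \subset (0,1)$. The theorem then reduces to showing that each elementary transition $\Delta_{r_i - \epsilon} \leadsto \Delta_{r_i + \epsilon}$ can be realized as a sequence of far commutativities and insertions/deletions of trivial bypasses.

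Via the bypass-bifurcation correspondence (\autoref{prop:bypass_bifurcation_correspondence}), each elementary transition is governed by a codimension-$1$ degeneration of the characteristic foliation, either on a single patty $\Sigma \times \{s\}$ or at a pair of adjacent bypass heights. The two easy model bifurcations are: (i) two adjacent heights $s_j, s_{j+1}$ swap with attachment data supported in disjoint regions, which is directly far commutativity; and (ii) a pair of index-$n$ and index-$(n+1)$ critical points of opposite signs is born or dies inside a single convex level, producing by \autoref{lemma:trivial_bypass_lemma} a trivial bypass insertion or deletion.

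The main obstacle is handling a swap between two \emph{interacting} bypasses whose retrogradient diagrams meet at the bifurcation moment; the elementary move is then a handleslide-type operation rather than a bare commutation. My strategy would be to decompose each such handleslide into the two allowed moves: first insert a trivial bypass (a canceling pair of contact $(n,n+1)$-handles) into an appropriate layer, then far-commute one of the new handles past the interacting pair, which becomes possible once the added stable/unstable disks push the previously overlapping data out of the way, and finally cancel a different pair as a trivial bypass. The bookkeeping requires an exhaustive case analysis --- organized using the decorated Legendrian and retrogradient diagrams of \autoref{subsection:retro_and_bypass} --- of the possible interaction patterns of the four stable/unstable disks of the index-$n$ critical points involved in the bifurcation, together with explicit handleslide reductions (\autoref{lemma:legendrian_handleslide_n_handle}, \autoref{lemma:legendrian_handleslide_n+1_handle}) in each case. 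This enumeration is precisely what \autoref{sec: sigma times interval} is devoted to carrying out.
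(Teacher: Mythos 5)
Your high-level plan --- build a generic $1$-parameter family of bypass decompositions and classify its elementary transitions --- is the same strategy the paper uses in \autoref{sec: sigma times interval}. But two ingredients that make this strategy actually close are absent from the proposal, and your sketch of the hard case understates where the difficulty lies.

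The first gap is the classification itself. A $1$-parameter family of bypass decompositions of $(\Sigma\times[0,1],\xi)$ is a $2$-parameter family of folded Weinstein hypersurfaces, and the list of codimension-$2$ degenerations is the content of \autoref{prop:codimension_2_bifurcations}: there are nine types, including a non-transversely cut-out retrogradient (P2), retrogradients into a birth-death singularity of $\Sigma_\xi$ (P3)/(P3'), retrogradients to a critical point of index $n\mp1$ (P4)/(P4'), and broken retrogradients (P5)/(P5'). Your case (ii) --- ``a pair of index-$n$ and index-$(n+1)$ critical points of opposite signs born inside a single convex level'' --- does not cleanly correspond to any of these: you appear to be conflating contact handle indices with singularity indices of $\Sigma_\xi$ (a Weinstein convex $\Sigma^{2n}$ has no positive singularities of index $n+1$, and birth-death does not pair opposite signs), and \autoref{lemma:trivial_bypass_lemma} detects trivial bypasses from given handle data but does not by itself produce a bifurcation analysis. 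The moments at which the number of bypasses changes are (P3)/(P3'), and showing that the bypass born there is trivial requires explicitly identifying its Lagrangian disk data.

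The second and larger gap is that the claimed reduction of an interacting wall-crossing to ``insert a trivial bypass, far-commute a handle past the pair, cancel a different pair'' is the substance of the proof rather than a corollary of it. The paper's engine is the parallel strands bypass lemma (\autoref{lemma:parallel_strands_bypass}): a sequence of $k$ retrogradients emanating from a common negative critical point can, after isotopy of contact handles, be rewritten as $k-1$ trivial bypasses followed by a single bypass whose positive data is a boundary-connected-sum of the relevant cocores. Its proof is an induction whose step is a multi-stage decorated Legendrian Kirby calculus that tracks the $\oplus$/$\ominus$ Lagrangian disk decorations --- not merely the Legendrian attaching spheres --- in order to recognize a (TB1)/(TB2) trivial pair at the end. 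The exact point where your sketch could fail is the phrase ``which becomes possible once the added stable/unstable disks push the previously overlapping data out of the way'': whether the Lagrangian disk data of the $(n+1)$-handle actually detaches from the $n$-handle being cancelled is a computation, not an assumption, and getting it wrong would silently produce a negative stabilization. With \autoref{lemma:parallel_strands_bypass} in place, each case of \autoref{prop:codimension_2_bifurcations} is a short application; without it, the case analysis you defer to has no engine.
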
 

This theorem guides our general philosophy, which is to push as much of the nontrivial contact topology into the family of patties as possible.

\s\n
{\em Step 2: Wrinkling Legendrians via bypasses.}
\s

In this step we show that we can flexibilize the buns of a $\theta$-decomposition. The precise statement is \autoref{thm: layer between N and N' general case} and the proof will be given in \autoref{section: proof of theorem layer}. As a consequence, we prove stable equivalence of the OBDs corresponding to homotopies with no birth-death points of middle index (\autoref{thm: no birth-death} below), which will be used in the proof of \autoref{theorem: stabilization equivalence}. To state the theorem cleanly, we introduce some language. 

\begin{defn}
Let $f:M \to \R$ be a self-indexing Morse function. Let $Y_0$ and $Y_1$ be the $n$-skeleta of $f$ and $-f$, respectively. We say that a $\theta$-decomposition $\Theta = H_0 \cup (\Sigma \times [0,1]) \cup H_1$ of $(M, \xi)$ is {\em bun-supported by $f$} if there is a diffeomorphism $\phi:M \to M$ isotopic to the identity such that $H_i$ is a neighborhood of $\phi(Y_i)$. 
\end{defn}

\begin{remark}
Given a self-indexing Morse function $f:M \to \R$, the proof of Corollary 1.3.1 in \cite{HH19} produces a $\theta$-decomposition of $(M, \xi)$ that is bun-supported by $f$, and moreover we may take the diffeomorphism $\phi$ to be $C^0$-close to the identity. 
\end{remark}

With this, we prove: 

\begin{theorem}\label{thm: no birth-death}
Let $\Theta_0$ and $\Theta_1$ be $\theta$-decompositions of $(M^{2n+1}, \xi)$ such that there exists a $1$-Morse family $f_t:M \to \R, t\in [0,1]$ satisfying:
\begin{itemize}
    \item for $t=0,1$, $\Theta_t$ is bun-supported by $f_t$, and 
    \item for $t\in [0,1]$, $f_t$ has no birth-death critical points of index $(n,n+1)$.
\end{itemize}
Then the OBDs corresponding to $\Theta_0$ and $\Theta_1$ are stably equivalent. In particular, any two $\theta$-decompositions bun-supported by the same Morse function give stably equivalent OBDs. 
\end{theorem}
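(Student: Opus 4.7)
The plan is to construct a $1$-parameter family of $\theta$-decompositions $\Theta_t^{\mathrm{constr}}$ of $(M,\xi)$ with $\Theta_t^{\mathrm{constr}}$ bun-supported by $f_t$ for all $t\in[0,1]$, and then combine the layer theorems of Step 2 (to match this family with the given $\Theta_0,\Theta_1$ at the endpoints) with \autoref{theorem: sigma times interval} (to propagate stable equivalence across the interior of the family).

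First I would appeal to Cerf-Hatcher-Wagoner theory to partition $[0,1]$ into finitely many subintervals on which $f_t$ is Morse, separated by isolated birth-death moments of index $(k,k+1)$ with $k\neq n$. On each Morse subinterval, apply the parametric version of the genericity technology of \cite{HH19} (\autoref{rmk: parametric}) to obtain a smoothly varying family of $\theta$-decompositions $\Theta_t^{\mathrm{constr}}$ bun-supported by $f_t$, where on small intervals the Weinstein structures on the buns and the bypass decompositions of the patties vary continuously. At a non-middle birth-death moment, the smooth modification of the affected bun is the insertion or deletion of a canceling pair of subcritical or supercritical handles. Applying Gromov's $h$-principle to realize these as a canceling pair of contact handles entirely inside the bun, we can extend $\Theta_t^{\mathrm{constr}}$ continuously across the moment while leaving the patty $\Sigma\times[0,1]$ unchanged up to boundary-relative contact isotopy.

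Next, at each endpoint $t\in\{0,1\}$, both $\Theta_t$ and $\Theta_t^{\mathrm{constr}}$ are bun-supported by the same Morse function $f_t$, so their buns agree up to smooth isotopy but may carry different Weinstein structures. To upgrade this smooth agreement to a contact one, I would invoke the flexibilization layer theorem \autoref{thm: layer between N and N' general case} of Step 2, which produces an interpolating $(\Sigma\times I)$-layer whose corresponding OBD differs from the original by a sequence of positive stabilizations, destabilizations, and Weinstein homotopies. Applying this at both endpoints reduces us to the case where $\Theta_0$ and $\Theta_1$ are connected by the constructed family $\{\Theta_t^{\mathrm{constr}}\}_{t\in[0,1]}$, whose buns are fixed (up to contact isotopy) and whose variation is confined to the patty. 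This is exactly the hypothesis of \autoref{theorem: sigma times interval}, which then yields stable equivalence of the corresponding OBDs; concatenating the three stable equivalences finishes the proof. The ``in particular'' clause is the special case $f_t\equiv f$, covered by the same argument with a constant Morse homotopy.

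The main obstacle I anticipate is the extension of $\Theta_t^{\mathrm{constr}}$ across non-middle birth-death moments: one must verify that the canceling smooth handle pair in the bun can be realized as a contact-topologically trivial modification that can be isolated from the patty's bypass decomposition. Because these handles are of strictly sub- or supercritical index relative to $n$, the relevant Legendrian/isotropic realization is covered by the flexible $h$-principle, so the modification can be made local and the patty undisturbed. Once this technical point is settled, the theorem reduces cleanly to a combination of the Step 1 patty-homotopy theorem and the Step 2 flexibilization of buns.
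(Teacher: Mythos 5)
Your outline has the right ingredients — flexibilize via \autoref{thm: layer between N and N' general case} and reduce to \autoref{theorem: sigma times interval} — but the central construction of a family $\{\Theta_t^{\mathrm{constr}}\}$ bun-supported by $f_t$, on which your whole argument rests, does not go through as stated. On a Morse subinterval the smooth $n$-handles of $f_t$ undergo ambient isotopies and handleslides, and to keep $\Theta_t^{\mathrm{constr}}$ bun-supported by $f_t$ you would need the Legendrian cores of the corresponding contact $n$-handles to track these smooth motions. A smooth isotopy of Legendrian $n$-spheres does not in general lift to a Legendrian isotopy, so the buns of your family will not all be contact isotopic — and then the hypothesis of \autoref{theorem: sigma times interval}, that $\xi_t$ be $t$-independent on $H_0$ and $H_1$, fails. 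This is precisely the obstruction that wrinkling (Murphy's $h$-principle for loose Legendrians when $n>1$, or full stabilization when $n=1$) removes; but you invoke \autoref{thm: layer between N and N' general case} only at the endpoints, which is too late to control the interior of the family. There is also a secondary slip: a birth-death of index $(n-1,n)$, which your hypotheses allow, adds to the $n$-skeleton of $f_t$ a subcritical $(n-1)$-handle \textit{and a critical $n$-handle}, so the extra handles are not all of strictly sub- or supercritical index, and realizing the critical one contactly again requires looseness.

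The paper's proof deliberately avoids constructing such a family. It first replaces $\Theta_0$ and $\Theta_1$ by wrinkled $\theta$-decompositions $\Theta'_0,\Theta'_1$ (a stable equivalence of OBDs by \autoref{thm: layer between N and N' general case}(2)), then observes that the absence of index-$(n,n+1)$ birth-deaths forces the $n$-skeleta of $\pm f_0$ and $\pm f_1$ to be smoothly ambient isotopic, and then — the whole point of wrinkling first — that this smooth isotopy of the now-loose cores lifts to a contact isotopy, so $H'_{i,0}$ and $H'_{i,1}$ are genuinely contact isotopic. The intervening family of $\theta$-decompositions with fixed buns is produced trivially by this contact isotopy, and \autoref{theorem: sigma times interval} applies directly. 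If you wrinkle from the very beginning, your construction of $\Theta_t^{\mathrm{constr}}$ can be made to work, but at that point it reduces to the paper's argument with extra steps.
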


\begin{remark}
Note that in this theorem we are not assuming that $f_0$ and $f_1$ are contact Morse functions, only that they are smooth Morse functions bun-supporting the corresponding $\theta$-decompositions. 
\end{remark}

In order to prove this theorem, we would like to ``freely'' move the skeleta of the contact handlebodies of the $\theta$-decompositions, i.e., find a $C^0$-close approximation of the smooth isotopy of the handles arising from the family $f_t$, $t\in [0,1]$. When $n=1$, one needs to apply a sequence of positive and negative stabilizations to the the Legendrian skeleta in order to realize the link isotopy by a Legendrian isotopy; this is well-known. From now on in this step we will assume $n>1$. In this case we replace the contact handlebodies $H_{i,t}$, $t=0,1$, of $\Theta_t$ by flexible contact handlebodies $H_{i,t}'$. This is analogous to the construction of flexible Weinstein manifolds due to Cieliebak-Eliashberg \cite{CE12}.

Before describing how to do this in general (\autoref{thm: layer between N and N' general case} below) we first consider a basic example. Let $L$ be a closed Legendrian submanifold diffeomorphic to $S^n$. Its standard neighborhood $N=N(L)$ is a contact handlebody and its boundary is a convex hypersurface which we denote by $\Sigma$. Let $L'\subset N$ be a wrinkled stabilization of $L$, i.e., those with an unfurled swallowtail as in Eliashberg-Mischachev~\cite{EM97}; see also Murphy~\cite[Section 3]{Mur12}. See \autoref{fig: Giroux2}. Its small standard neighborhood $N'=N(L')\subset N$ is also a contact handlebody with convex boundary $\Sigma'$. 

\begin{figure}[ht]
	\begin{overpic}[scale=.45]{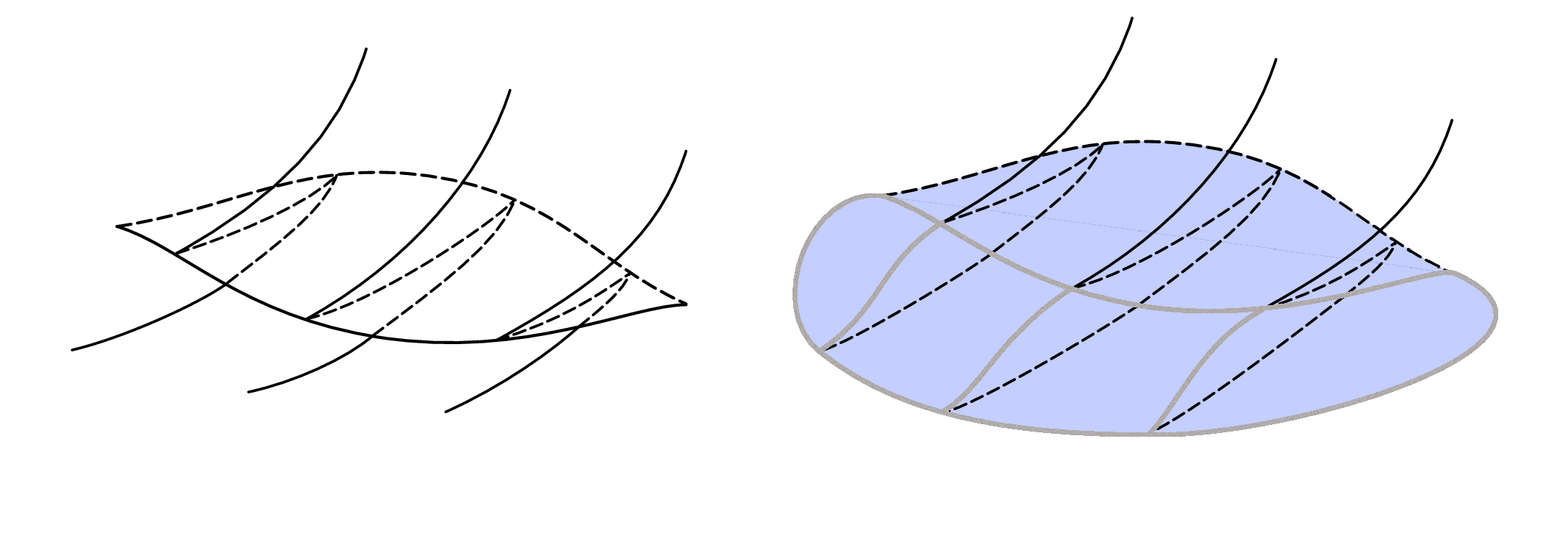}
	\end{overpic}
	\vskip-0.3in
	\caption{The front projection of an unfurled swallowtail (left) and a bypass attachment (right). On the right, the gray disk is a contact $n$-handle, and the blue interior of the ``air pocket" is the smoothly canceling contact $(n+1)$-handle of the destabilizing bypass.}
	\label{fig: Giroux2}
\end{figure}

At this point it is convenient to switch perspectives to POBDs. Note that a contact handlebody over a Weinstein domain $W$ admits a trivial POBD $(W,\varnothing)$, where $\varnothing$ is the trivial map $\varnothing\to W$. The layer $N\setminus \op{int}(N')$, i.e., the region bounded by $\Sigma$ and $\Sigma'$, is then described as follows:

\begin{theorem} \label{theorem: layer between N(L) and N(L')}
$\mbox{}$
\be
\item $N$ (resp.\ $N'$) is a contact handlebody over $W$ (resp.\ $W'$), obtained by a attaching a handle along the standard (resp.\ loose) Legendrian unknot on $\bdry D^{2n}$ where $D^{2n}$ is standard. More precisely, when $n=2$, a ``loose Legendrian" means a twice-stabilized Legendrian, with one positive and one negative stabilization as given by $\Lambda'$ in \autoref{fig:simple_legendrians}, and when $n>2$ the loose Legendrian is obtained from the $n=2$ one by spinning.
\item $N$ is obtained from $N'$ by attaching a bypass. Moreover, the resulting POBD obtained from attaching the bypass to $N'$ is a stabilization of $(W, \varnothing)$. See \autoref{fig:wrinkle2} for a schematic picture.
\ee
\end{theorem}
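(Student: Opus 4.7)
The proof naturally splits into the two parts of the statement: identifying $N$ and $N'$ as contact handlebodies in Part (1), and analyzing the layer $N\setminus\op{int}(N')$ as a bypass attachment in Part (2).

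For Part (1), I would invoke the Weinstein neighborhood theorem for closed Legendrian submanifolds: a small neighborhood of a closed Legendrian $S^n$ is contactomorphic to a neighborhood of the zero section of $T^*S^n$, whose natural contact handlebody presentation is obtained from a Darboux ball $D^{2n}$ by attaching a Weinstein $n$-handle along a Legendrian unknot $\Lambda \subset \partial D^{2n}$. The only data that varies is the Legendrian type of the attaching sphere. For the smooth Legendrian $L$, this attaching sphere is the standard Legendrian unknot, giving $W$. For the wrinkled $L'$, an unfurled swallowtail makes $L'$ loose, and a local analysis of its front projection shows that the attaching Legendrian becomes the loose Legendrian unknot with the stabilization pattern described in the statement (in the $n=2$ case, one positive and one negative stabilization; in the $n>2$ case, obtained by spinning).

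For Part (2), the key idea is to extract a canonical pair of contact handles from the ``air pocket'' carved out between $L$ and $L'$. First, I would decompose the layer $N \setminus \op{int}(N')$ into a trivial invariant part away from the wrinkle and a local model near the unfurled swallowtail. In the local model, the cusp edge of the wrinkle (the equator of the swallowtail) cobounds the pocket from above by the original Legendrian $L$ and from below by part of $L'$; a small neighborhood of this equator gives a contact $n$-handle attached along a Legendrian $\Lambda_n \subset \Gamma = \p(N')$, while the bulk of the pocket is filled by a contact $(n+1)$-handle attached along a canceling sphere $D_+ \cup \Lambda_{n+1} \cup D_-$. Next, I would verify that one of $D_{\pm}$ is isotopic (through a positive or negative Reeb shift, depending on the chirality of the wrinkle) to the cocore of the Weinstein $n$-handle attached to $R_+$ or $R_-$ along $\Lambda_n$, so that the configuration matches either (TB1) or (TB2) of \autoref{lemma:trivial_bypass_lemma}. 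This identifies the pair of handles as a trivial bypass.

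Finally, I would apply the lemma identifying trivial bypasses with positive stabilizations of POBDs: attaching this trivial bypass to the trivial POBD $(W',\varnothing)$ of $N'$ yields a POBD of $N$ whose page is $W' \cup h$ and whose monodromy is a Dehn twist about the Lagrangian sphere $L_0 \cup L_1$, where $L_0$ is the cocore Lagrangian disk produced above and $L_1$ is the core of $h$. To identify this with a stabilization of $(W,\varnothing)$, I would use a Weinstein handle-cancellation/sliding argument: $W' \cup h$ and $W \cup h_{\mathrm{stab}}$ are Weinstein homotopic in a way that respects the Lagrangian disk $L_0$, matching the stabilization data on the two sides. The \emph{main obstacle} will be this last identification, i.e., keeping the Lagrangian disk $L_0$ and the ambient Weinstein structures compatible throughout the handle manipulation; the stabilization patterns of the loose Legendrian unknot (one positive and one negative) are exactly what ensure the canceling pair of subcritical handles can be slid and removed without affecting the $L_0$-data. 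Careful use of \autoref{lemma:cocore_lemma} to track cocores through the Legendrian diagrams, combined with the handleslide lemmas \autoref{lemma:legendrian_handleslide_n_handle} and \autoref{lemma:legendrian_handleslide_n+1_handle}, should carry this through.
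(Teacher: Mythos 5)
Your overall architecture tracks the paper's, but both halves leave the technical core undone, and Part (2) takes a noticeably different route at the crucial final step.

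For Part (1), the paper does not just invoke the Weinstein neighborhood theorem and gesture at a front-projection computation. It decomposes the wrinkled Legendrian $L'$ into two pieces — the (slightly enlarged) middle sheet $D_1$ and a complementary disk $D_2$ — describes $N(D_1)$ as a Darboux ball with an explicit Liouville form, introduces Legendrian disks $\Lambda_{2,\pm}$ and a cylindrical piece $\Lambda_{2,0}$, flows them to the dividing set, and then verifies that the resulting boundary $\partial D_2$ is exactly the $\mathrm{tb}=-3$, $\mathrm{rot}=0$ unknot when $n=2$ (and its spun loose version when $n>2$). This coordinate computation is essentially the entire proof of (1); the sentence ``a local analysis of its front projection shows that the attaching Legendrian becomes the loose Legendrian unknot'' is a placeholder for all of it, and in particular does not explain why there is exactly one positive and one negative stabilization rather than some other loose pattern.

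For Part (2), your framework (extract a contact $n$/$(n+1)$-handle pair from the air pocket, recognize it via \autoref{lemma:trivial_bypass_lemma}, and convert to a positive POBD stabilization) has the right ingredients, but the paper proceeds in the opposite direction and thereby avoids the problem you flag. The paper \emph{chooses} the bypass data up front: $(\Lambda_+;D_+)$ is a positive Reeb shift of the cocore of the $n$-handle of $W'$ (the one attached along the loose Legendrian), and $(\Lambda_-;D_-)$ is a standard unknot/disk pair; this data is already trivial by construction. It then attaches the $R_+$-centric bypass and performs decorated Legendrian Kirby calculus (\autoref{fig: Legendrians}): after isotoping, a canceling pair is identified via \autoref{lemma:trivial_bypass_lemma} and erased, and what remains is a single $(-1)$-surgery along the \emph{standard} Legendrian unknot, i.e., a contact handlebody over $W$. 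The erased pair records the stabilization. In your proposal, you instead try to read off the handles directly from the pocket geometry and then defer the key point — that the resulting POBD is a stabilization of $(W,\varnothing)$ rather than merely of $(W',\varnothing)$ — to a vague ``Weinstein handle-cancellation/sliding argument.'' That deferred step is precisely where the paper's Kirby calculus does the work; without it the proof is incomplete. (A minor slip: the handles being cancelled and slid in that last step are critical $n$-handles, not subcritical ones.)
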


\begin{figure}[ht]
	\centering
	\begin{overpic}[scale=.44]{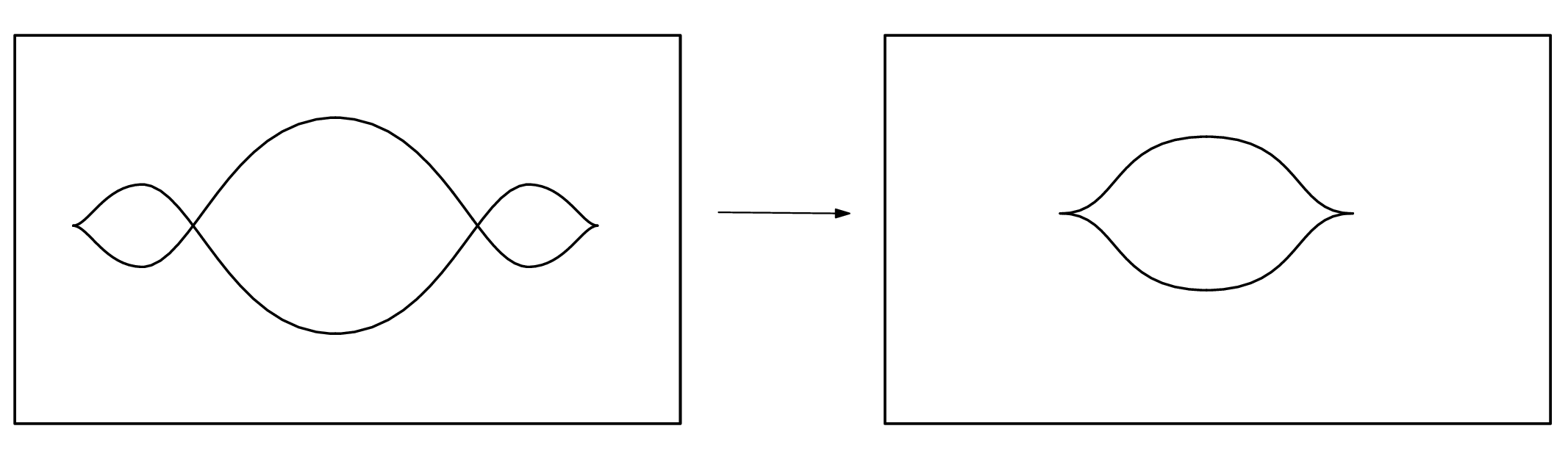}
	\put(46.75,13){\small bypass}
	\put(5,10){\small $\Lambda'$}
	\put(20,27.75){\small $N'$}
	\put(76,27.75){\small $N$}
	\put(10,0){\small contact handlebody over $W'$}
	\put(66,0){\small contact handlebody over $W$}
    \put(38.5,14){\small $(-1)$}
    \put(93.75,14){\small $(-1)$}
	\end{overpic}
	
	\caption{The statement of \autoref{theorem: layer between N(L) and N(L')}. On the left is a handlebody diagram for $W'$, and on the right is a handlebody diagram for $W$. The contact handlebody $N$ (over $W$) is obtained by attaching a bypass to $N'$.}
	\label{fig:simple_legendrians}
\end{figure}

\begin{figure}[ht]
	\centering
	\begin{overpic}[scale=.4]{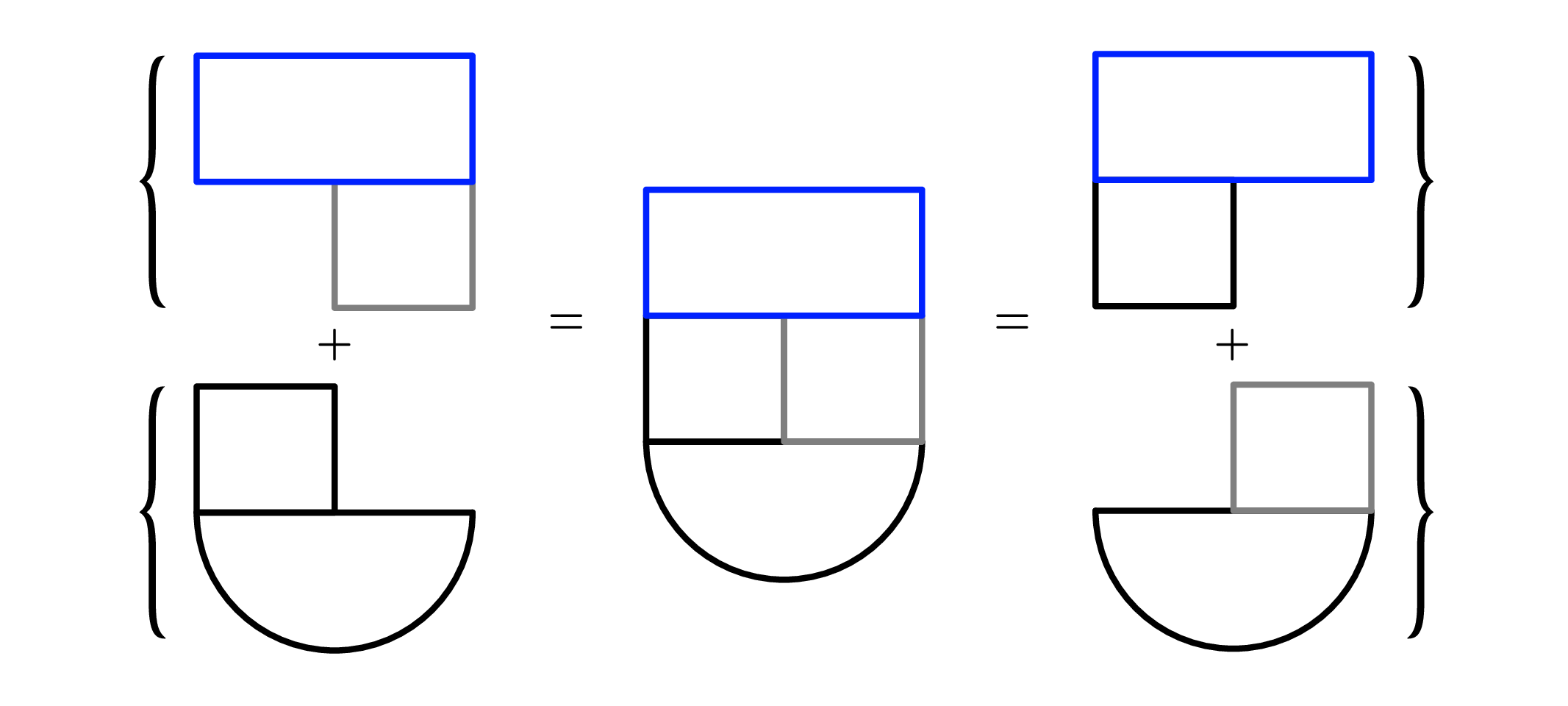}
	\put(-6, 12){\small $N'\simeq (W', \varnothing)$}
	\put(-2, 32.5){\small Bypass $B$}
	\put(92.5, 12){\small $(W, \varnothing)\simeq N$}
	\put(92.5, 32.5){\small Pos. stab.}
	\put(49, 35.5){\small $N$}
	\put(18,8){\tiny $0$-handle}
	\put(15.5,37){\tiny \textcolor{blue}{($n+1$)-handle}}
	\put(47,12.5){\tiny $0$-handle}
	\put(44.5,28){\tiny \textcolor{blue}{($n+1$)-handle}}
	\put(75.5,8){\tiny $0$-handle}
	\put(73,37){\tiny \textcolor{blue}{($n+1$)-handle}}
	\put(80,16){\tiny \textcolor{gray}{$n$-handle}}
	\put(13.75,16){\tiny $n$-handle}
	\put(42.25,20){\tiny $n$-handle}
	\put(51.25,20){\tiny \textcolor{gray}{$n$-handle}}
	\put(22.7,28.75){\tiny \textcolor{gray}{$n$-handle}}
	\put(71,28.75){\tiny $n$-handle}
	\end{overpic}
	\caption{A schematic description of the statement of \autoref{theorem: layer between N(L) and N(L')}(2). The colors are consistent with \autoref{fig: Legendrians}.
	}
	\label{fig:wrinkle2}
\end{figure}

See \autoref{fig: Giroux3} which describes the $1$- and $2$-handles of the bypass in the $n=1$ case and also the right-hand side \autoref{fig: Giroux2} which describes the $n$-handle and smoothly canceling $(n+1)$-handle. The explicit bypass data is given by \autoref{fig: Legendrians} in \autoref{section: proof of theorem layer}.
\begin{figure}[ht]
	\centering
	\begin{overpic}[scale=.33]{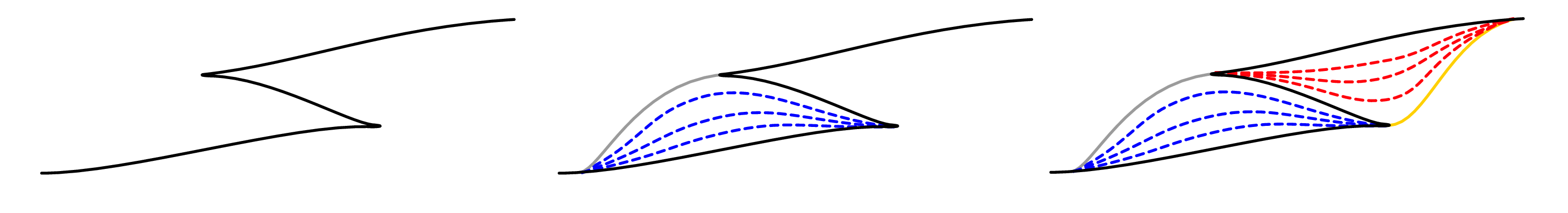}
	\end{overpic}
	\vskip-0.1in
	\caption{The middle depicts attaching a $1$-handle (solid gray arc) and a canceling $2$-handle (disk foliated by dotted blue arcs) of a bypass to a standard neighborhood of the once-stabilized Legendrian $L'$ on the left in the $n=1$ case. The right-hand side depicts attaching two bypasses.  Although it looks as though we need two bypasses to go from the standard neighborhood $N(L')$ to the neighborhood $N(L)$ of the original Legendrian, one bypass (as in the middle) suffices.}
	\label{fig: Giroux3}
\end{figure}

In general we have the following:

\begin{theorem}
\label{thm: layer between N and N' general case}
Let $N$ be a contact handlebody over a Weinstein domain $W$.
\be
\item If we replace each $n$-handle $h_i$, $i=1,\dots,\kappa$, of $W$ with core $L_i$ by its wrinkled stabilization $L'_i$, then we obtain a contact handlebody $N'$ over the flexible Weinstein domain $W'$ which is obtained from the neighborhood $W^{(n-1)}$ of the $(n-1)$-skeleton of $W$ by attaching $n$-handles $h_i'$, $i=1,\dots,\kappa$, along the stabilizations of $\bdry L_i$. Here the stabilizations are done along spheres $S^{n-1}$ bounding disks in $L_i$. 
\item $N$ is obtained from $N'$ by attaching bypasses $B_i$, $i=1,\dots,\kappa$, along $\bdry N'$, and moreover the resulting POBD for $N$ is a stabilization of $(W,\varnothing)$.
\ee
\end{theorem}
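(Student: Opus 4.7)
The plan is to reduce the general statement to the single-handle case of \autoref{theorem: layer between N(L) and N(L')} by localizing at each $n$-handle and iterating. Write $W = W^{(n-1)} \cup h_1 \cup \cdots \cup h_\kappa$, so that $N$ is a contact handlebody over $W$ and the $h_i$ are the top-dimensional (critical) Weinstein $n$-handles with cores $L_i$. Since the handle attachments are disjoint, we may choose pairwise disjoint open neighborhoods $U_i \subset N$ of each $h_i$ (together with a collar of the corresponding cocore sphere in $\partial N$) in which everything reduces to the basic model treated in \autoref{theorem: layer between N(L) and N(L')}.

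For part (1), I would argue as follows. Wrinkling $L_i$ inside $U_i$ produces a loose Legendrian $L_i'$ whose small standard neighborhood is a contact handlebody attached to $W^{(n-1)}$ along a loose Legendrian unknot in the boundary of the corresponding $D^{2n}$ summand, by the single-handle statement. Performing all $\kappa$ wrinklings in the disjoint neighborhoods $U_i$ therefore produces a contact handlebody $N'$ over the Weinstein domain
\[
W' = W^{(n-1)} \cup h_1' \cup \cdots \cup h_\kappa',
\]
where each $h_i'$ is attached along a loose Legendrian stabilization of $\partial L_i$. Flexibility of $W'$ is immediate from the Cieliebak--Eliashberg definition: every critical attaching sphere is loose in the subcritical boundary, hence $W'$ is flexible.

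For part (2), I would attach the $\kappa$ bypasses $B_i$ one at a time in the disjoint regions $\partial N' \cap U_i$. By \autoref{theorem: layer between N(L) and N(L')}(2), attaching $B_i$ in $U_i$ has two effects: at the level of underlying manifolds it transforms $N'|_{U_i}$ into $N|_{U_i}$ (undoing the wrinkle and replacing $h_i'$ by $h_i$), and at the level of POBDs it stabilizes the current (partial) POBD along a Lagrangian disk $D_i$ whose boundary is a Legendrian in the binding region coming from the $i$-th cocore. Because the $B_i$ live in disjoint neighborhoods, they commute (by far commutativity of bypass attachments) and may be performed in any order; each application converts one wrinkled handle into an honest $n$-handle and adds one positive stabilization to the POBD. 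After all $\kappa$ bypasses are attached, the underlying contact manifold is $N$, and the POBD is $(W^{(n-1)} \cup h_1' \cup \cdots \cup h_\kappa', \varnothing)$ successively stabilized along $D_1,\dots,D_\kappa$, which by \autoref{lemma: stabilization} is exactly the iterated stabilization of $(W, \varnothing)$.

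The main obstacle I anticipate is a bookkeeping one rather than a conceptual one: one must check that the local model of \autoref{theorem: layer between N(L) and N(L')} applies cleanly inside each $U_i$ in the presence of the other handles, i.e., that the construction of the wrinkled Legendrian $L_i'$ and the bypass $B_i$ can be carried out in a contact neighborhood of $h_i$ that is disjoint from the other $h_j$ and their cocores. This is arranged by choosing the $U_i$ small enough around the cocore disks of the $h_i$ (using that the cocores are disjointly embedded and their boundaries are disjoint Legendrians in $\partial N$) and by performing the wrinkling in a collar of $\partial N$ inside $U_i$. Once this localization is in place, the iterated application of the single-handle theorem together with far commutativity of the $B_i$ completes the proof.
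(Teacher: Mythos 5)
Your plan to ``localize at each $n$-handle and iterate the single-handle case'' glosses over a genuine topological difference between \autoref{theorem: layer between N(L) and N(L')} and the general statement, and this is precisely what the paper's proof has to work to overcome. In \autoref{theorem: layer between N(L) and N(L')}, $L$ is a \emph{closed Legendrian sphere}, so when you remove the middle sheet $D_1$ of the wrinkle from $L'\simeq S^n$ the complement $D_2$ is again a disk; this is what allows $N(L')$ to be exhibited as a Darboux ball $N(D_1)$ with a single Weinstein $n$-handle $N(D_2)$ attached along a loose unknot. In \autoref{thm: layer between N and N' general case}, however, $L_i$ is a \emph{core disk} of the handle $h_i$ with $\partial L_i \subset \partial W^{(n-1)}$, not a closed sphere, and wrinkling is done relative to $\partial L_i$. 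Removing the middle sheet $D_{1,i}$ from the wrinkled disk $L_i'$ then leaves an \emph{annulus} $A_i \simeq S^{n-1}\times[0,1]$, one of whose boundary spheres lies on $\partial N(D_{1,i})$ and the other on $\partial W^{(n-1)}$. You cannot apply \autoref{theorem: layer between N(L) and N(L')} to this picture: there is no ``corresponding $D^{2n}$ summand'' and the complement of the middle sheet is not a handle. This is not a bookkeeping issue that choosing $U_i$ small enough fixes; it is the new combinatorial content of the general case.

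The paper's proof resolves this by further decomposing the annulus $A_i$ into a neighborhood of an arc $c_i=\{p\}\times[0,1]$ and a disk $D_{2,i}$: attaching $D_{1,i}$ births a Weinstein $0$-handle, attaching $N(c_i)$ is a canceling Weinstein $1$-handle that joins this Darboux ball to $W^{(n-1)}$, and attaching $N(D_{2,i})$ is the critical $n$-handle along $\Lambda_i \# \partial L_i$, which is the stabilization of $\partial L_i$ asserted in the statement. The same extra $1$-handle has to be carried through the Kirby-calculus proof of part (2), as in \autoref{fig:wrinkled_core_bypass}. Relatedly, your argument for (2) produces only a $\kappa$-fold stabilization of $(W', \varnothing)$; the assertion to be proved is that the resulting POBD is a stabilization of $(W, \varnothing)$, and identifying the two is exactly the content of the trivial-bypass cancellation in \autoref{fig: Legendrians} (with the extra $1$-handle), not something that follows from \autoref{lemma: stabilization} and far commutativity alone.
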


We will prove this theorem in \autoref{section: proof of theorem layer}. For now, we use it to prove \autoref{thm: no birth-death}.

\begin{proof}[Proof of \autoref{thm: no birth-death}.] 
For $t=0,1$, let $H_{i,t}$ be the  contact handlebody buns of $\Theta_t$. Let $H_{i,t}'\subset H_{i,t}$ be the ``wrinkled'' contact handlebodies described by \autoref{thm: layer between N and N' general case} and let $\Theta_{t}'$ be the $\theta$-decompositions obtained from $\Theta_t$ by replacing 
\[
H_{i,t}\to H'_{i,t}, \quad \Sigma_t\times[0,1]\to (\Sigma_t\times[0,1])\cup(H_{0,t}-H'_{0,t})\cup (H_{1,t}-H'_{1,t}).
\]
By \autoref{thm: layer between N and N' general case}(2), for each $t=0,1$ the OBDs corresponding to $\Theta_t$ and $\Theta_t'$ are stably equivalent. Thus, it suffices prove stable equivalence of the OBDs corresponding to $\Theta_0'$ and $\Theta_1'$. 

Because the smooth homotopy $f_t$ has no birth-death critical points of index $(n,n+1)$, the corresponding $n$-skeleta of $\pm f_{t}$ for $t=0,1$ are ambient isotopic. Since $\Theta_t$ is bun-supported by $f_t$ for $t=0,1$, the handlebodies $H_{i,0}$ and $H_{i,1}$ are smoothly isotopic. By the wrinkled stabilization of the Legendrian cores of the critical contact handles as described in \autoref{thm: layer between N and N' general case}(1), the wrinkled contact handlebodies $H_{i,0}'$ and $H_{i,1}'$ are then contact isotopic. Thus we can connect the $\theta$-decompositions $\Theta_0'$ and $\Theta_1'$ via these contact isotopies, and after normalizing the contact isotopy on the buns we are reduced to a family of $\theta$-decompositions as described by \autoref{theorem: sigma times interval} in Step 1. From that theorem it follows that the OBDs corresponding to $\Theta_0'$ and $\Theta_1'$, and thus $\Theta_0$ and $\Theta_1$, are stably equivalent.
\end{proof}

\s\n
{\em Step 3: Birth-death homotopies.} 
\s

This step corresponds to \autoref{section: proof of theorem: birth-death}, where we prove a theorem comparable to \autoref{thm: no birth-death} for a $1$-Morse family with a middle-index birth-death critical point. 

\begin{theorem}\label{theorem: birth-death}
Suppose that $f_t:(M^{2n+1}, \xi) \to \R$, $t\in [t_0 - \epsilon, t_0 + \epsilon]$, is a $1$-Morse family with $f_{t}$ Morse for $t\neq t_0$ and with a single birth-death critical point of index $(n,n+1)$ at $t=t_0$. Then there are $\theta$-decompositions $\Theta_{t_0\pm \epsilon}$ such that 
\begin{enumerate}
    \item $\Theta_{t_0\pm \epsilon}$ is bun-supported by $f_{t_0\pm \epsilon}$, and 
    \item the OBDs corresponding to $\Theta_{t_0 - \epsilon}$ and $\Theta_{t_0 + \epsilon}$ are stably equivalent. 
\end{enumerate}
\end{theorem}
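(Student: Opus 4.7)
The plan is to reduce the birth-death case to the trivial-bypass-equals-positive-stabilization lemma of \autoref{subsection:trivial_bypasses_and_POBD_stabilization}, using the wrinkling flexibility of \autoref{thm: layer between N and N' general case} to convert smooth isotopies of Morse skeleta into contact isotopies of the corresponding Legendrian skeleta. Write $f^\pm := f_{t_0\pm\epsilon}$, and without loss of generality assume $f^+$ is the Morse function with the extra pair of canceling critical points $q_n,q_{n+1}$. By the standard Cerf normal form, $q_n$ and $q_{n+1}$ are localized inside a small ball $B\subset M$, and the $n$-skeleton of $f^+$ is obtained from that of $f^-$ by wedging on a trivially attached $n$-cell inside $B$; symmetrically for $-f^+$ and $-f^-$. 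Start by applying \cite[Corollary 1.3.1]{HH19} to $f^-$ to obtain $\Theta_{t_0-\epsilon} = H_0^-\cup(\Sigma^-\times[0,1])\cup H_1^-$ bun-supported by $f^-$, with corresponding OBD $(W^-,\phi^-)$, and arrange that $B$ meets the patty in a vertically invariant slab.

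To construct $\Theta_{t_0+\epsilon}$, insert a trivial bypass of type (TB1) into the slab of the patty meeting $B$, with Legendrian-Lagrangian data $(\Lambda_\pm;D_\pm)$ chosen so that $D_-$ is a small standard Lagrangian $n$-disk in the $R_-$ region of the dividing hypersurface. By the lemma ``trivial bypasses are positive stabilizations'' from \autoref{subsection:trivial_bypasses_and_POBD_stabilization}, this insertion exhibits the OBD of the resulting decomposition as a (TB1) positive stabilization of $(W^-,\phi^-)$ along $D_-$. Then, using vertical invariance of the patty outside the bypass layer, slide the canceling contact $n$- and $(n+1)$-handles $h_n,h_{n+1}$ of the trivial bypass down into $H_0^-$ and up into $H_1^-$ respectively, and set
\[
\Theta_{t_0+\epsilon} := (H_0^-\cup h_n)\cup(\Sigma^+\times[0,1])\cup(H_1^-\cup h_{n+1}).
\]
This rearrangement preserves the ambient contact manifold and its induced OBD, so the OBD of $\Theta_{t_0+\epsilon}$ remains a positive stabilization of $(W^-,\phi^-)$.

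To verify bun-support of $\Theta_{t_0+\epsilon}$ by $f^+$, note that smoothly $H_0^-\cup h_n$ is a regular neighborhood of the $n$-skeleton of $f^-$ with an extra trivially attached $n$-cell, hence ambient isotopic to a neighborhood of the $n$-skeleton of $f^+$; symmetrically for $H_1^-\cup h_{n+1}$ and $-f^+$. To realize these smooth ambient isotopies at the contact level, apply \autoref{thm: layer between N and N' general case} to wrinkle the Legendrian cores of the critical $n$-handles of $H_0^-$ and $H_1^-$; after wrinkling, the contact isotopy classes of the buns are governed by the smooth isotopy classes of their skeleta, so the bun-support condition for $f^+$ can be achieved by an ambient contact isotopy. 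By \autoref{thm: layer between N and N' general case}(2), the wrinkling itself introduces only further positive stabilizations of both OBDs, which is harmless for stable equivalence. Composing all these stabilizations yields the required stable equivalence of the OBDs corresponding to $\Theta_{t_0-\epsilon}$ and $\Theta_{t_0+\epsilon}$.

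The main obstacle is precisely the bun-support verification just described: Legendrian attaching spheres are generally too rigid to realize an arbitrary smooth ambient isotopy of skeleta by a contact isotopy, so one cannot naively identify the freshly inserted contact handles $h_n,h_{n+1}$ with the genuine birth-death handles of $f^+$. The wrinkling result of \autoref{thm: layer between N and N' general case} is what bridges this gap by replacing rigid Legendrians with loose ones governed by a smooth $h$-principle, and this is the key technical input that makes the reduction to the trivial bypass lemma go through.
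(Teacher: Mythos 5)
Your approach diverges from the paper's, and the divergence leaves a real gap at the crucial step.

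The paper constructs $\Theta_{t_0+\epsilon}$ by attaching contact handles $N(L_0)$ and $N(L_1)$ to the buns of $\Theta_{t_0-\epsilon}$, where $L_0$, $L_1$ are sufficiently stabilized Legendrian approximations of the birth pair's core and cocore, and then proves a substantial technical lemma (\autoref{lemma: before and after'}) showing the patties $\Sigma_{\pm}\times[0,1]$ admit bypass decompositions whose attached $n$-handles (resp.\ upside-down $(n+1)$-handles) produce identical contact handlebodies $\tilde H_{0,t_0-\epsilon}=\tilde H_{0,t_0+\epsilon}$ and $\tilde H_{1,t_0-\epsilon}=\tilde H_{1,t_0+\epsilon}$. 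This reduces the birth-death case to \autoref{theorem: sigma times interval} (the patty homotopy theorem). The proof of that lemma is the heart of the argument: it introduces the isotropic arc $\gamma=D_0\cap D_1$ linking the new $n$- and $(n+1)$-handles, normalizes its neighborhood (\autoref{lemma:normal_form_low} for $n=1$ and \autoref{lemma: normalization for n greater than 1'} for $n>1$), and runs a careful four-step layer-by-layer comparison. The whole point, emphasized explicitly in the paper, is that the birth's pair of contact handles is \emph{smoothly but not necessarily contact-topologically} canceling, so one cannot treat it as a trivial bypass a priori.

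Your proposal does exactly what the paper warns against: it inserts a \emph{trivial} bypass and then asserts, without argument, that ``this rearrangement preserves the ambient contact manifold and its induced OBD.'' That sentence is carrying the entire weight of the theorem. The OBD of a $\theta$-decomposition depends on the decomposition, not merely on the underlying contact manifold, and moving the trivial bypass's $n$-handle into $H_0^-$ and the $(n+1)$-handle into $H_1^-$ changes both buns, the patty, and its bypass decomposition. There is no cited lemma that says this rearrangement leaves the induced OBD unchanged (\autoref{lemma: stabilization} concerns the \emph{opposite} maneuver of carving a cocore neighborhood out of a bun, and \autoref{theorem: sigma times interval} requires the buns to be \emph{fixed} throughout the family). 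Establishing this invariance rigorously is, in substance, what \autoref{lemma: before and after'} does, and your proof has no analogue of it. Secondarily, your appeal to \autoref{thm: layer between N and N' general case} to ``realize the smooth ambient isotopy at the contact level'' is misdirected: bun-support is defined via a diffeomorphism isotopic to the identity, so it is a purely smooth condition, and no contact-level realization is needed for it. The wrinkling/stabilization issue the paper actually faces is not about bun-support but about approximating the smooth handle cores by Legendrians that are flexible enough to be placed where the smooth picture demands; this is why the paper takes $L_0,L_1$ sufficiently stabilized, not why it wrinkles the pre-existing buns. So the proposal both omits the key lemma and misidentifies what the flexibility input is for.
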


\begin{remark}
Again, we are not assuming that $f_{t_0\pm \epsilon}$ are contact Morse functions.
\end{remark}

Here we describe the buns of the $\theta$-decompositions $\Theta_{t_0\pm \epsilon}$. In \autoref{section: proof of theorem: birth-death} we will describe the bypass decompositions of the patties and prove that the resulting OBDs are stably equivalent. First, assume that the birth-death point at $t=t_0$ is a birth point; the case of a death point is identical. 

\begin{itemize}
    \item {\em The $\theta$-decomposition $\Theta_{t_0-\epsilon}$.} 
    
    Here we simply apply the technology of \cite{HH19} to $f_{t_0-\epsilon}$ without need for specification. In particular, by the proof of Corollary 1.3.1 and in particular Claim 10.0.1 in \cite{HH19}, we may let $H_{0,t_0-\epsilon}$ be a contact handlebody that is $C^0$-close to a neighborhood of the $n$-skeleton of $f_{t_0-\epsilon}$, and let $H_{1,t_0-\epsilon}$ be the same for $-f_{t_0-\epsilon}$. The buns are then $H_{i,t_0-\epsilon}$ and the patty $\Sigma_{t_0-\epsilon} \times [0,1]$ is the region between $H_{0,t_0-\epsilon}$ and $H_{1,t_0-\epsilon}$. See the left side of \autoref{fig:buns}. By construction, $\Theta_{t_0-\epsilon}$ is bun-supported by $f_{t_0-\epsilon}$.  
    
    \item {\em The $\theta$-decomposition $\Theta_{t_0+\epsilon}$.} 
    
    We build a $\theta$-decomposition $\Theta_{t_0+\epsilon}$ starting from $\Theta_{t_0-\epsilon}$. Note that at $t=t_0+\epsilon$ there is a smoothly \textit{but not necessarily contact-topologically} canceling pair of $n$- and ($n+1$)-handles. We $C^0$-closely approximate the $n$-handle by a sufficiently stabilized\footnote{Here, ``sufficiently stabilized'' means sufficiently stabilized when $n=1$ and either loose or wrinkled with an unfurled swallowtail when $n>1$.} Legendrian $L_0$, subject to the condition that the attaching locus of $L_0$ on $\partial H_{0,t_0-\epsilon}$ is a sufficiently stabilized Legendrian in the dividing set of the latter. Similarly, we let $L_1$ be the analogous Legendrian approximation of the cocore of the ($n+1$)-handle. We set $H_{0,t_0+\epsilon} := H_{0,t_0-\epsilon} \cup N(L_0)$ and $H_{1,t_0+\epsilon} := H_{1,t_0-\epsilon} \cup N(L_1)$, where $N(L_0)$ and $N(L_1)$ are contact handles corresponding to $L_0$ and $L_1$. The buns are then $H_{i,t_0+\epsilon}$ and the patty $\Sigma_{t_0+\epsilon} \times [0,1]$ is the region between $H_{0,t_0-\epsilon}$ and $H_{1,t_0-\epsilon}$. See the right side of \autoref{fig:buns}. Again, by construction $\Theta_{t_0+\epsilon}$ is bun-supported by $f_{t_0+\epsilon}$. 
    
\end{itemize}

\begin{figure}[ht]
	\centering
	\begin{overpic}[scale=.32]{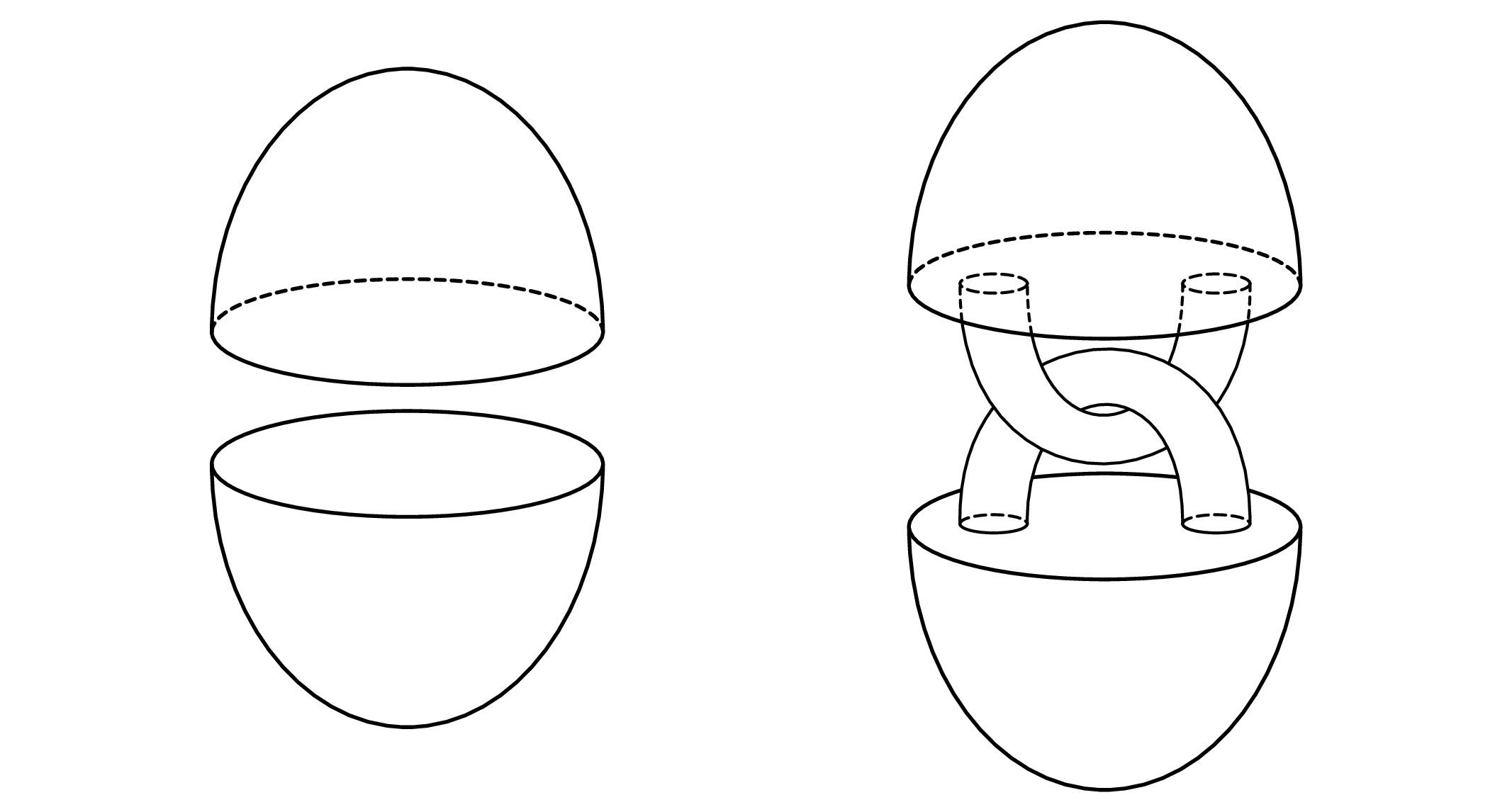}
	\put(23.5,2.25){\small $\Theta_{t_0-\epsilon}$}
	\put(22.25,13.5){\small $H_{0,t_0-\epsilon}$}
	\put(22.25,38.5){\small $H_{1,t_0-\epsilon}$}
	\put(69.5,-1.5){\small $\Theta_{t_0+\epsilon}$}
	\put(54.5,30){\small $N(L_1)$}
	\put(83,22){\small $N(L_0)$}
	\end{overpic}
	\caption{The buns of the $\theta$-decompositions $\Theta_{t_0 \pm \epsilon}$ on either side of a birth point of $f_t$.}
	\label{fig:buns}
\end{figure}

\subsection{Proof of \autoref{theorem: stabilization equivalence}.}\label{subsection:actual_proof_of_main_theorem} With the tools above we now prove our main theorem. 

\begin{proof}[Proof of \autoref{theorem: stabilization equivalence}.]
Given two supporting OBDs of $(M^{2n+1}, \xi)$, let $f_t:(M, \xi)\to \R$, $t\in [0,1]$ be a $1$-Morse family of smooth functions such that $f_0$ and $f_1$ are contact Morse functions corresponding to the OBDs. Let $\Theta_0$ and $\Theta_1$ be corresponding $\theta$-decompositions. 

By genericity there is a sequence $0 < t_1 < \cdots < t_K < 1$ such that $f_{t_k}$ has a single birth-death critical point of index $(n,n+1)$ for $k=1, \dots, K$, and $f_t$ has no such points for $t\neq t_1, \dots, t_K$. By \autoref{theorem: birth-death} in Step 3, for $\epsilon > 0$ sufficiently small there are $\theta$-decompositions $\Theta_{t_k\pm \epsilon}$, $k=1, \dots, K$, each bun-supported by $f_{t_k\pm \epsilon}$, such that:
\begin{itemize}
    \item For each $k=1, \dots, K$, the OBD corresponding to $\Theta_{t_k- \epsilon}$ is stably equivalent to that of $\Theta_{t_k+ \epsilon}$.
\end{itemize}
Next, since $f_t$ has no birth-death points of index $(n,n+1)$ for $t\neq t_1, \dots, t_K$, the following statements are applications of \autoref{thm: no birth-death} in Step 2: 
\begin{itemize}
    \item The OBD corresponding to $\Theta_0$ is stably equivalent to that of $\Theta_{t_1 - \epsilon}$.
    \item For $k= 1, \dots, K-1$ the OBD corresponding to $\Theta_{t_k+\epsilon}$ is stably equivalent to that of $\Theta_{t_{k+1}-\epsilon}$. 
    \item The OBD corresponding to $\Theta_{t_{K}+\epsilon}$ is stably equivalent to that of $\Theta_1$. 
\end{itemize}
It follows that the OBDs corresponding to $\Theta_0$ and $\Theta_1$ are stably equivalent.
\end{proof}

\section{Homotopies of patties} \label{sec: sigma times interval}

The goal of this section is to prove \autoref{theorem: sigma times interval}. The argument rests on the study of $2$-parameter families of characteristic foliations on a hypersurface $\Sigma$, and the interpretation of retrogradients in terms of bypass attachments. For convenience, we give an overview of the proof and organization of the section.

\s\n
{\em Overview of the proof.} Recall that in \autoref{theorem: sigma times interval}, $\xi_t, 0\leq t \leq 1$ is a family of contact structures on $\Sigma \times [0,1]_s$ such that $\xi_t$ is $t$-independent near the convex hypersurfaces $\Sigma \times \{i\}$ for $i=0,1$. We can assume that for $t=0,1$, the hypersurfaces $\Sigma \times \{s\} \subseteq (\Sigma \times [0,1], \xi_t)$ are convex except at a finite number of values of $s$, at which there is a retrogradient of the characteristic foliation. Each failure of convexity is described by a bypass attachment. Our goal is to prove that the sequences of bypasses for $(\Sigma \times [0,1], \xi_0)$ and $(\Sigma \times [0,1], \xi_1)$ are the same, up to far commutativity and addition (deletion) of trivial bypasses. Note that the latter corresponds to positive (de)stabilization of the OBD of $M= H_0 \cup (\Sigma \times [0,1]) \cup H_1$. 

The proof involves two steps. First, we relate the sequences of retrogradients in $(\Sigma \times [0,1], \xi_0)$ and $(\Sigma \times [0,1], \xi_1)$ Cerf-theoretically, which provides a finite list of $2$-parametric local models for retrogradient bifurcations. Second, we interpret each of these $2$-parametric local models in terms of bypass attachments, in particular showing that they amount to shuffling, adding, and deleting trivial bypasses. 

\s\n
{\em Organization.} The section is organized as follows. 

\begin{itemize}
    \item In \autoref{subsection:retrogradients_2_parameter}, we perform $2$-parametric Cerf theory on the retrogradients of the characteristic foliation of $\Sigma$ in order to generate a list of codimension-$2$ bifurcations that need to be analyzed in terms of bypass attachments.
    
    \item In \autoref{subsection:parallel_strands_bypass_lemma}, we prove \autoref{lemma:parallel_strands_bypass}, which describes how to perform a sequence of ``parallel'' bypasses in succession. 
    
    \item Finally, in \autoref{subsection:bypasses_2_parameter} we prove \autoref{theorem: sigma times interval}. In particular, we use \autoref{lemma:parallel_strands_bypass} from \autoref{subsection:parallel_strands_bypass_lemma} to analyze the codimension-$2$ retrogradient bifurcations from \autoref{subsection:retrogradients_2_parameter} in terms of bypass attachments. 
\end{itemize}

\subsection{Retrogradients in 2-parameter families}\label{subsection:retrogradients_2_parameter}

By \autoref{theorem:family genericity} a $1$-parameter family of hypersurfaces $\Sigma^{2n}$ in a contact manifold is $C^0$-generically convex except at a finite number of times. Each isolated failure of convexity is given by a retrogradient trajectory of the characteristic foliation from a nondegenerate negative index $n$ singular point to a nondegenerate positive index $n$ singular point. In this subsection we study the failure of convexity in the $2$-parameter case. In particular, let $\xi_t$, $t\in [0,1]$, be a family of contact structures on $\Sigma \times [0,1]_s$ which is independent of $t$ on $\Sigma\times\{0,1\}$. Let $\Sigma_{s,t}$ denote the characteristic foliation of $\Sigma_s:=\Sigma\times\{s\}$ with respect to $\xi_t$. We investigate all possibilities for failure of convexity of $\Sigma_s$ with respect to $\xi_t$ under suitably generic conditions. 

First, we recall some basic facts about the stable and unstable manifolds of nondegenerate, birth-death, and swallowtail singularities of smooth functions $f:\Sigma \to \R$ \cite{HW73} (here we are using the positive gradient flow of $f$).
\begin{itemize}
    \item Let $p$ be a nondegenerate critical point of index $k$. The stable manifold $W^{\mathrm{s}}(p)$ is a disk of dimension $k$ and the unstable manifold $W^{\mathrm{u}}(p)$ is a disk of dimension $2n-k$. In particular, there is a coordinate neighborhood of $p$ and a choice of auxiliary metric such that $p=0\in \R^{2n}$, 
    \[
    f(\mathbf{x}) = -x_1^2 + \cdots + -x_k^2 + x_{k+1}^2 + \cdots + x_{2n}^2,
    \]
    and 
    \begin{align*}
        W^{\mathrm{s}}(p) &= \R^k_{x_1, \dots, x_k} \times \{0\}_{x_{k+1},\dots, x_{2n}}, \\
        W^{\mathrm{u}}(p) &= \{0\}_{x_1, \dots, x_k}\times \R^{2n-k}_{x_{k+1}, \dots, x_{2n}}.
    \end{align*}
    
    \item Let $p$ be a birth-death singularity of index $(k,k+1)$, so that resulting nondegenerate critical points in the universal unfolding have index $k$ and $k+1$. The stable manifold $W^{\mathrm{s}}(p)$ of $p$ is a half-space of dimension $k+1$, and the unstable manifold $W^{\mathrm{u}}(p)$ is a half-space of dimension $2n-k$. In particular, there is a coordinate neighborhood of $p$ and a choice of auxiliary metric such that $p=0\in \R^{2n}$, 
    \[
    f(\mathbf{x}) = -x_1^2 + \cdots + -x_k^2 + x_{k+1}^2 + \cdots +x_{2n-1}^2 + x_{2n}^3,
    \]
    and 
    \begin{align*}
        W^{\mathrm{s}}(p) &= \R^k_{x_1, \dots, x_k} \times \{0\}_{x_{k+1},\dots, x_{2n-1}} \times \{x_{2n}\leq 0\}, \\
        W^{\mathrm{u}}(p) &= \{0\}_{x_1, \dots, x_k}\times \R^{2n-k-1}_{x_{k+1}, \dots, x_{2n-1}} \times \{x_{2n}\geq 0\}.
    \end{align*}
    
    \item Let $p$ be a swallowtail singularity of index $k$. The stable manifold $W^{\mathrm{s}}(p)$ is a disk of dimension $k$ and the unstable manifold $W^{\mathrm{u}}(p)$ is a disk of dimension $2n-k$. In particular, there is a coordinate neighborhood of $p$ and a choice of auxiliary metric such that $p=0\in \R^{2n}$, 
    \[
    f(\mathbf{x}) = -x_1^2 + \cdots + -x_{k-1}^2 + -x_k^4 + x_{k+1}^2 + \cdots  + x_{2n}^2,
    \]
    and 
    \begin{align*}
        W^{\mathrm{s}}(p) &= \R^k_{x_1, \dots, x_k} \times \{0\}_{x_{k+1},\dots, x_{2n}}, \\
        W^{\mathrm{u}}(p) &= \{0\}_{x_1, \dots, x_k}\times \R^{2n-k}_{x_{k+1}, \dots, x_{2n}}.
    \end{align*}
\end{itemize}

\begin{remark}
Assume that $\Sigma^{2n}$ is Weinstein convex. If $p$ is a positive birth-death singularity of index $(k,k+1)$, it must be the case that $0\leq k \leq n-1$, and if $p$ is a negative birth-death singularity of index $(k,k+1)$, it must be the case that $n \leq k \leq 2n-1$.
\end{remark}

\begin{defn}
A smooth function $f:\Sigma\to \R$ is {\em $0$-Morse} or simply {\em Morse} (resp.\ {\em $1$-Morse; $2$-Morse}) if each critical point of $f$ is nondegenerate (resp.\ nondegenerate or of birth-death type; nondegenerate, of birth-death type, or of swallowtail type). 
For $k=0,1,2$, a vector field $X$ on $\Sigma$ is {\em $k$-Morse} if $X$ is gradient-like for some $k$-Morse function $f:\Sigma \to \R$. More generally, we say that a $j$-parameter family of functions $f_{\mathbf{t}}:\Sigma \to \R$ with $\mathbf{t} \in \D^j$ is {\em $k$-Morse} if the function $f_{\mathbf{t}}$ is $k$-Morse for each $\mathbf{t} \in \D^j$. (Here $\D^j$ is the $j$-dimensional unit disk.)
\end{defn}

\begin{remark}
In general, a $k$-Morse vector field is the correct type of vector field that arises generically in a $k$-parameter family. Indeed, it is a classical result due to work of Cerf and Hatcher-Wagoner \cite{HW73} that a generic $2$-parameter family of vector fields $X_{s,t}$ for $(s,t)\in \D^2$ is $2$-Morse. 
\end{remark}

\begin{prop} \label{prop: extension of theorem: sigma times interval}
Let $\xi_t$, $t\in[0,1]$, be a $1$-parameter family of contact structures on $\Sigma\times[0,1]_s$ such that:
\begin{itemize}
\item $\Sigma\times\{0,1\}$ are Weinstein convex for all $t\in[0,1]$,
\item $\xi_t$ is independent of $t$ along $\Sigma\times\{0,1\}$ and \item \autoref{theorem:family genericity} holds for $t=0,1$.
\end{itemize}
Then:
\begin{itemize}
\item[(a)] the characteristic foliations $\Sigma_{s,t}$ can be made $2$-Morse after contact isotopies which leave $\partial (\Sigma\times[0,1])$ fixed and
\item[(b)] the stable and unstable submanifolds can be made to intersect as transversely as possible in a family.
\end{itemize}
\end{prop}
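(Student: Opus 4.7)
The plan is to apply the proofs of \autoref{theorem:genericity} and \autoref{theorem:family genericity} parametrically, as emphasized in \autoref{rmk: parametric}, with parameter space $[0,1]_s \times [0,1]_t$. The hypotheses fix the foliation data on the edges of the parameter square: $\xi_t$ is $t$-independent along $\Sigma \times \{0,1\}$ (so $\Sigma_{s,t}$ is predetermined at $s=0,1$), and \autoref{theorem:family genericity} is already assumed for $t=0,1$. The goal is therefore to perturb $\xi_t$ by a contact isotopy supported away from $\partial(\Sigma \times [0,1])$ so that the resulting $2$-parameter family $\{\Sigma_{s,t}\}$ is as generic as possible.

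For part (a): the proof of \autoref{theorem:genericity} in \cite{HH19} produces a $C^0$-small, contact-Hamiltonian-realized perturbation of a hypersurface whose characteristic foliation becomes gradient-like for a Morse function, and the proof of \autoref{theorem:family genericity} is the $1$-parametric version, producing at worst birth-death failures of convexity between generic times. By \autoref{rmk: parametric}, the same argument carries through in any number of parameters; applying it to the present $2$-dimensional parameter space and combining with the Cerf--Hatcher--Wagoner classification of generic $2$-parameter families of functions~\cite{HW73}---which asserts that such families are $2$-Morse---we conclude that $\Sigma_{s,t}$ can be arranged to be $2$-Morse for every $(s,t)$. The boundary condition is preserved because the Hamiltonian perturbations are supported in the interior of the parameter square, where the genericity statements at $t=0,1$ and $s=0,1$ are left untouched.

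For part (b): once the family is $2$-Morse, we run a parametric Sard--Smale type transversality argument for the stable and unstable submanifolds of the critical points of $\Sigma_{s,t}$, again using contact Hamiltonians supported in the interior of the parameter square. The output is that $W^{\mathrm{u}}(p)$ and $W^{\mathrm{s}}(q)$ meet transversely in $\Sigma$ for all but a discrete set of $(s,t)$, and moreover that the various possible non-transverse coincidences occur in the expected codimensions. In particular, retrogradient trajectories between an index-$n$ critical point of one sign and an index-$n$ critical point of the opposite sign are codimension-$2$ events, so they occur at isolated points $(s_\ast,t_\ast)\in (0,1)^2$, with no two such retrogradients coexisting for the same $(s,t)$; this is the sense of ``as transversely as possible in a family.''

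The main obstacle is that the perturbations must be realized by ambient contact isotopies of $\Sigma\times[0,1]$ fixing $\partial(\Sigma\times[0,1])$, rather than by abstract deformations of the characteristic foliations themselves---\emph{a priori}, not every desired perturbation of $\Sigma_{s,t}$ comes from a genuine ambient contact Hamiltonian. This is resolved exactly as in \cite{HH19} by working inside a standardized contact neighborhood of each hypersurface, where the map from contact Hamiltonians to perturbations of the characteristic foliation is surjective onto a $C^0$-open neighborhood of the identity. Parametrizing these Hamiltonians smoothly in $(s,t)$, patching them together with a partition of unity on $[0,1]^2$, and cutting off near the boundary of the parameter square (where the data is already generic) yields the required $2$-parametric contact isotopy.
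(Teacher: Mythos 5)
Your overall strategy---delegate to the machinery of \cite{HH19} applied parametrically, using contact Hamiltonians supported in the interior of the parameter square, citing \autoref{rmk: parametric}---is the same as the paper's; the paper's own argument is only a sketch that says precisely this, with the additional explicit mention of the plug installation from \cite[Section 7]{HH19} needed to eliminate cycles and ensure every trajectory begins and ends at a singular point (this is what makes ``gradient-like,'' hence $2$-Morse, achievable). So the structural gap with the plugs is implicit in your proposal but acceptable since you are pointing at the same references.

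However, there is a genuine conceptual error in your reading of part (b). You claim that ``retrogradient trajectories between an index-$n$ critical point of one sign and an index-$n$ critical point of the opposite sign are codimension-$2$ events, so they occur at isolated points $(s_\ast,t_\ast)$, with no two such retrogradients coexisting for the same $(s,t)$.'' This is false. The dimension count (carried out precisely in the proof of \autoref{prop:codimension_2_bifurcations}, Case (Ia)) shows that such a retrogradient is a codimension-$1$ phenomenon: in a regular intermediate level set $N^{2n-1}$, $W^{\mathrm{u}}_N(p^-)$ and $W^{\mathrm{s}}_N(p^+)$ are both $(n-1)$-spheres with expected transverse intersection of dimension $-1$, so in a $2$-parameter family their locus of intersection is a $1$-manifold $L\subset [0,1]^2$, not a finite set. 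Were your claim true, a generic $1$-parameter slice $\{\Sigma_{s,t_0}\}_{s}$ would contain \emph{no} retrogradients at all, contradicting \autoref{theorem:family genericity} and trivializing the rest of the paper. The codimension-$2$ events are instead the \emph{degenerate} retrogradients catalogued in (P1)--(P5') of \autoref{prop:codimension_2_bifurcations}: simultaneous retrogradients, non-transverse retrogradients (cusps of $L$), retrogradients involving birth-death points, off-index retrogradients, and broken retrogradients. ``As transversely as possible in a family'' means precisely that the failure locus is a $1$-manifold-with-boundary $L$ together with a finite set $P$ of these degeneracies, not that the failure locus is discrete.
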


\begin{proof}[Idea of proof]
This is just a $2$-parametric version of the proof of \autoref{theorem:family genericity} from \cite[Section 9]{HH19} and we only provide a sketch. First we make all the singular points of $\Sigma_{s,t}$ $2$-Morse by $C^\infty$-small perturbations of $\xi_t$ rel boundary. Next, we can install/uninstall plugs locally (see the definition of a {\em $Y$-shaped plug with parameter $\epsilon>0$} in \cite[Definition 7.1.1]{HH19} and its construction with $\epsilon$ arbitrarily small when $Y$ is a standard Darboux ball in \cite[Theorem 7.5.1]{HH19}) so that all trajectories begin and end at singular points of $2$-Morse type and there are no cycles.  Once the above blocking condition is met, one can adjust the characteristic foliations on the nonsingular part by applying local perturbations of the form
$$U\times\{a\}\subset \Sigma\times\{a\}\rightsquigarrow\{s=f(x), x\in U\},$$ 
where $U\subset \Sigma$ is a small open set foliated by intervals of the characteristic foliation and $f(x)-a$ is $C^\infty$-close to zero with support in $U$, so that the stable and unstable submanifolds intersect as transversely as possible in a family.
\end{proof}

\begin{defn}
For $k=0,1,2$, a hypersurface $\Sigma \subset (M,\xi)$ is {\em $k$-Morse} if $\Sigma_{\xi}$ is $k$-Morse. We say that $\Sigma$ is {\em $k$-Morse$^+$} if, in addition, there are no trajectories from a negative singularity of $\Sigma_{\xi}$ to a positive singularity of $\Sigma_{\xi}$.
\end{defn}

Recall that a $1$-Morse$^+$ hypersurface is convex \cite{HH19}. The proof there easily generalizes to the following lemma:

\begin{lemma}\label{lemma:2_morse_convex}
Let $\Sigma \subset (M,\xi)$ be a $2$-Morse$^+$ hypersurface. Then $\Sigma$ is convex. 
\end{lemma}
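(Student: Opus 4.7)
The plan is to extend the proof of the $1$-Morse$^+$ case in \cite{HH19} by reducing to it via a local contact isotopy that eliminates each swallowtail singularity of $\Sigma_\xi$. The input data differs from the $1$-Morse case only by the presence of swallowtails, and since a swallowtail is codimension $2$ in the space of gradient-like vector fields, its universal unfolding replaces it by a $1$-Morse configuration (either Morse points, or a pair of birth-death points, or a mixture). Performing these unfoldings locally and then invoking the known $1$-Morse$^+$ result should give convexity.

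Concretely, the swallowtail locus $\{p_1, \dots, p_N\}$ of $\Sigma_\xi$ is finite. I would choose pairwise disjoint open balls $U_i \ni p_i$, disjoint from all other singularities. In a chart adapted to the swallowtail normal form
\[ f(x) = -x_1^2 - \cdots - x_{k-1}^2 - x_k^4 + x_{k+1}^2 + \cdots + x_{2n}^2, \]
a generic perturbation of the form $f + \varepsilon(ax_k + bx_k^2)$ with $(a,b)$ off the bifurcation set unfolds the swallowtail into one or three nondegenerate Morse critical points (or, on the bifurcation curve, into a pair of birth-death points) --- in any case a $1$-Morse configuration. Using the standard contact neighborhood model $dt + \beta$ on $\Sigma \times (-\delta,\delta)_t$ and perturbing $\beta$ inside $U_i$, this modification of the characteristic foliation is realized by an ambient contact isotopy of $M$ supported near $U_i$.

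Iterating over all $p_i$ produces a hypersurface $\Sigma'$ contact-isotopic to $\Sigma$ whose characteristic foliation is $1$-Morse. To conclude $\Sigma'$ is $1$-Morse$^+$, I need to check that no new negative-to-positive trajectory has been created. Each $p_i$ has a well-defined sign, determined by the orientation of the tangency of $\xi$ to $\Sigma$ at $p_i$, and this sign is $C^\infty$-robust; the new critical points in $U_i$ therefore all inherit the sign of $p_i$. Any new cross-$U_i$ trajectory must be $C^0$-close to a pre-existing trajectory outside $U_i$, which by hypothesis does not run from a negative to a positive singularity. Hence $\Sigma'$ is $1$-Morse$^+$, and by the case already proven in \cite{HH19} it is convex; since convexity is invariant under contact isotopy, $\Sigma$ is convex as well.

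The main obstacle is the compactness-and-transversality argument ensuring preservation of the $+$ condition under the local perturbation. Because swallowtail and birth-death unstable/stable loci are half-spaces (not disks), nearby trajectory dynamics are subtler than in the purely nondegenerate case, and one must rule out new connections arising from concatenation of the local unfolding with the original foliation outside $U_i$. I expect this will require a parametric version of the plug installation technology from \cite[Section 7]{HH19}, applied exactly as in the proof sketch of \autoref{prop: extension of theorem: sigma times interval}, to block any potentially offending trajectories before carrying out the $C^\infty$-small perturbations inside each $U_i$.
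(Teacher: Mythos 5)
Your approach---unfold each swallowtail by a local perturbation, then quote the $1$-Morse$^+$ result---is genuinely different from the paper's. The paper simply observes that the \cite{HH19} proof for $1$-Morse$^+$ hypersurfaces applies verbatim when swallowtail singularities are allowed: they still have well-defined stable and unstable disks of the expected dimensions, so the construction of a transverse contact vector field from the characteristic foliation data requires no new ideas beyond those already handling birth-death points; the only hypothesis used is the absence of negative-to-positive trajectories.

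There is a genuine gap in your final step. You claim the modification of the characteristic foliation "is realized by an ambient contact isotopy of $M$," and then conclude "since convexity is invariant under contact isotopy, $\Sigma$ is convex as well." But a contact isotopy pushes the characteristic foliation forward by a diffeomorphism of $\Sigma$ and so cannot change singularity types---it cannot eliminate a swallowtail. What your construction actually produces (either by a $C^\infty$-small ambient isotopy of $\Sigma$, or by perturbing $\beta$ and applying Gray stability, which moves $\Sigma$ off itself) is a nearby hypersurface $\Sigma'$ that is $1$-Morse$^+$ and hence convex. This does not yet imply $\Sigma$ is convex. Convexity of $\Sigma'$ is $C^1$-open, so nearby hypersurfaces \emph{are} convex, but the allowable perturbation size depends on the transversality angle of $\Sigma'$'s contact vector field, and that vector field comes from the HH19 construction precisely in the neighborhood of the unfolded swallowtail, where there is no a priori bound comparing it to the size of your unfolding. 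Indeed \autoref{theorem:genericity} gives convex hypersurfaces $C^0$-close to \emph{any} $\Sigma$, convex or not, so proximity to a convex hypersurface by itself carries no information. Closing this quantitative gap amounts to running the HH19 construction directly on the swallowtail model, which is the route the paper intends; I'd recommend arguing that way rather than by perturbation.
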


Consequently, as in the $1$-parameter case, convexity fails $2$-parametrically only when there is a retrogradient trajectory, i.e., one from a negative singularity to a positive singularity. The following proposition describes all variants of this behavior. 

\begin{prop}\label{prop:codimension_2_bifurcations}
Let $X_{s,t}$ for $(s,t)\in [0,1]\times [0,1]$ be a generic $2$-parameter family of $2$-Morse characteristic foliations on $\Sigma^{2n}$, where $(\Sigma, X_{s,t})$ is $2$-Morse$^+$ and $t$-independent for $s=0,1$. Then there is an almost-everywhere immersed $1$-manifold-with-boundary $L\subset [0,1]\times [0,1]$ and a finite set $P \subset (0,1)\times (0,1)$ such that, for $(s,t) \notin L \cup P$, the hypersurface $(\Sigma, X_{s,t})$ is $2$-Morse$^+$ and hence convex. Moreover, failure of convexity along $L \cup P$ is described as follows:
\begin{enumerate}
    \item[(L)] For $(s,t) \in L\setminus P$, failure of convexity is given by a single transversely cut out retrogradient trajectory from a negative nondegenerate index $n$ singularity to a positive nondegenerate index $n$ singularity.
    
    \item[(P)] For each $(s,t) \in P$, failure of convexity is described by exactly one of the following:
    
    \begin{itemize}
        \item[(P1)] There are two separate retrogradients from negative nondegenerate index $n$ singularities to positive nondegenerate index $n$ singularities.
        
        \item[(P2)] There is a single retrogradient from a negative nondegenerate index $n$ singularity to a positive nondegenerate index $n$ singularity, but the respective unstable and stable manifolds are not transverse. 
        
        \item[(P3)] There is a single retrogradient from a negative nondegenerate point of index $n$ to a positive birth-death point of index $(n-1,n)$. 
        
        \item[(P3')] There is a single retrogradient from a negative birth-death point of index $(n,n+1)$ to a positive nondegenerate point of index $n$.
        
        \item[(P4)] There is a single retrogradient from a negative nondegenerate point of index $n$ to a positive nondegenerate point of index $n-1$.
        
        \item[(P4')] There is a single retrogradient from a negative nondegenerate point of index $n+1$ to a positive nondegenerate point of index $n$.

        \item[(P5)] There is a ``broken'' retrogradient of the form $p_1^- \rightsquigarrow p_2^- \rightsquigarrow p^+$, where $p_1^-$ and $p_2^-$ are negative nondegenerate points of index $n$ and $p^+$ is a positive nondegenerate point of index $n$.

        \item[(P5')] There is a ``broken'' retrogradient of the form $p^- \rightsquigarrow p_1^+ \rightsquigarrow p_2^+$, where $p^-$ is a negative nondegenerate point of index $n$ and $p_1^+$ and $p_2^+$ are positive nondegenerate points of index $n$.
        
    \end{itemize}
\end{enumerate}
\end{prop}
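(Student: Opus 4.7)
The plan is to run the $2$-parametric analogue of the genericity argument in \autoref{prop: extension of theorem: sigma times interval} to reduce to a generic family, invoke the classical $2$-parameter Cerf theory of Cerf \cite{Cer70} and Hatcher-Wagoner \cite{HW73} to classify the resulting bifurcation locus in $[0,1]^2$, and then intersect this stratification with the failure-of-convexity locus. By \autoref{lemma:2_morse_convex}, failure of convexity is detected exactly by the existence of a retrogradient from a negative to a positive singularity, so the enumeration reduces to listing those $2$-parameter bifurcations that produce such a trajectory.

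First I would perform the $2$-parametric contact-topological perturbation of \cite[Section 9]{HH19} (see also \autoref{rmk: parametric}): after a $C^0$-small contact isotopy of $\Sigma \times [0,1]$ fixing the boundary, I can assume that $X_{s,t}$ is $2$-Morse for every $(s,t)$, no trajectory of $X_{s,t}$ is a cycle, and any two stable/unstable submanifolds of singularities of $X_{s,t}$ meet each other as $2$-parametrically transversely as possible. This uses $Y$-shaped plugs of arbitrarily small width to block and rearrange trajectories, together with graph-like perturbations of $\Sigma \times \{s\}$ in the non-singular part to achieve transversality of the relevant evaluation maps. The classical Hatcher-Wagoner classification then describes the strata of $[0,1]^2$: a generic $2$-Morse family has nondegenerate singularities away from a system of smooth birth-death and same-index handleslide curves, meeting at isolated points of swallowtail, double birth-death, double handleslide, and tangency type.

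Next I would single out which strata produce retrogradients. For the codim-$1$ stratum, if $p^{\pm}$ are nondegenerate of index $n$ then $\dim W^{\mathrm{u}}(p^-) + \dim W^{\mathrm{s}}(p^+) = n + n = 2n$, so a transverse isolated trajectory between them exists along a codim-$1$ curve; this gives (L). No other codim-$1$ event creates a retrogradient, since same-sign handleslides and same-sign birth-deaths do not connect a negative to a positive singularity. For the codim-$2$ stratum, each of the following local models accounts for exactly one codim-$2$ Hatcher-Wagoner event and produces a retrogradient: transverse self-intersection of $L$ yields (P1); a tangency of $W^{\mathrm{u}}(p^-)$ and $W^{\mathrm{s}}(p^+)$ along a single trajectory yields (P2); transverse intersection of $L$ with the birth-death curve forces the birth-death index to be $(n-1, n)$ in the positive case or $(n, n+1)$ in the negative case, since only then do the relevant half-disk dimensions still sum to $2n$, yielding (P3) and (P3'); ``wrong-index'' connections with $\dim W^{\mathrm{u}}(p^-) + \dim W^{\mathrm{s}}(p^+) = 2n-1$ exist on a codim-$2$ locus, yielding (P4) and (P4'); and transverse intersection of $L$ with a same-sign handleslide curve yields the broken retrogradients $p^-_1 \rightsquigarrow p^-_2 \rightsquigarrow p^+$ or $p^- \rightsquigarrow p^+_1 \rightsquigarrow p^+_2$ of (P5) and (P5'). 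All other codim-$2$ strata in the Cerf-Hatcher-Wagoner classification (pure-sign swallowtails, double pure-sign handleslides, etc.) preserve the $2$-Morse$^+$ condition and hence leave $\Sigma_{s,t}$ convex by \autoref{lemma:2_morse_convex}.

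The hardest part will be the rigorous verification of the $2$-parametric transversality within the contact category, namely that the plug constructions and graphical perturbations of \cite[Section 9]{HH19} extend $2$-parametrically while preserving contactness; this is claimed in \autoref{rmk: parametric} and should go through without surprises, but it is the only genuinely new technical content. A secondary bookkeeping task is to confirm the completeness of the list via direct dimension counts on the evaluation maps of the stable, unstable, and birth-death submanifolds, ruling out any further simultaneity or higher-order tangency as codim $\geq 3$ and hence generically avoided by a $2$-parameter family. Once these two points are settled, the proposition follows by reading off the retrogradient-producing strata from classical $2$-parameter Cerf theory.
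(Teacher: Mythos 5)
Your proposal follows essentially the same strategy as the paper: run the $2$-parametric extension of the HH19 genericity argument (plugs plus graphical perturbations), then classify the retrogradient-producing strata by dimension counting on stable and unstable manifolds of the various singularity types, concluding that swallowtails and birth-death/birth-death pairs are excluded while the listed codimension-one and -two events exhaust what remains. The paper carries out the dimension counts a bit more explicitly by introducing the trace manifolds $\tilde{W}^{\ast}_N$ and $\tilde{L}(p^-,p^+)$ in $\D^2_{s,t}\times N$ and ranging over all indices $0\le k^+\le n\le k^-\le 2n$ to rule out the ``further out'' cases, but this is only a difference of presentation, not of method.
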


\begin{figure}[ht]
\vskip-.05in
	\begin{overpic}[scale=0.28]{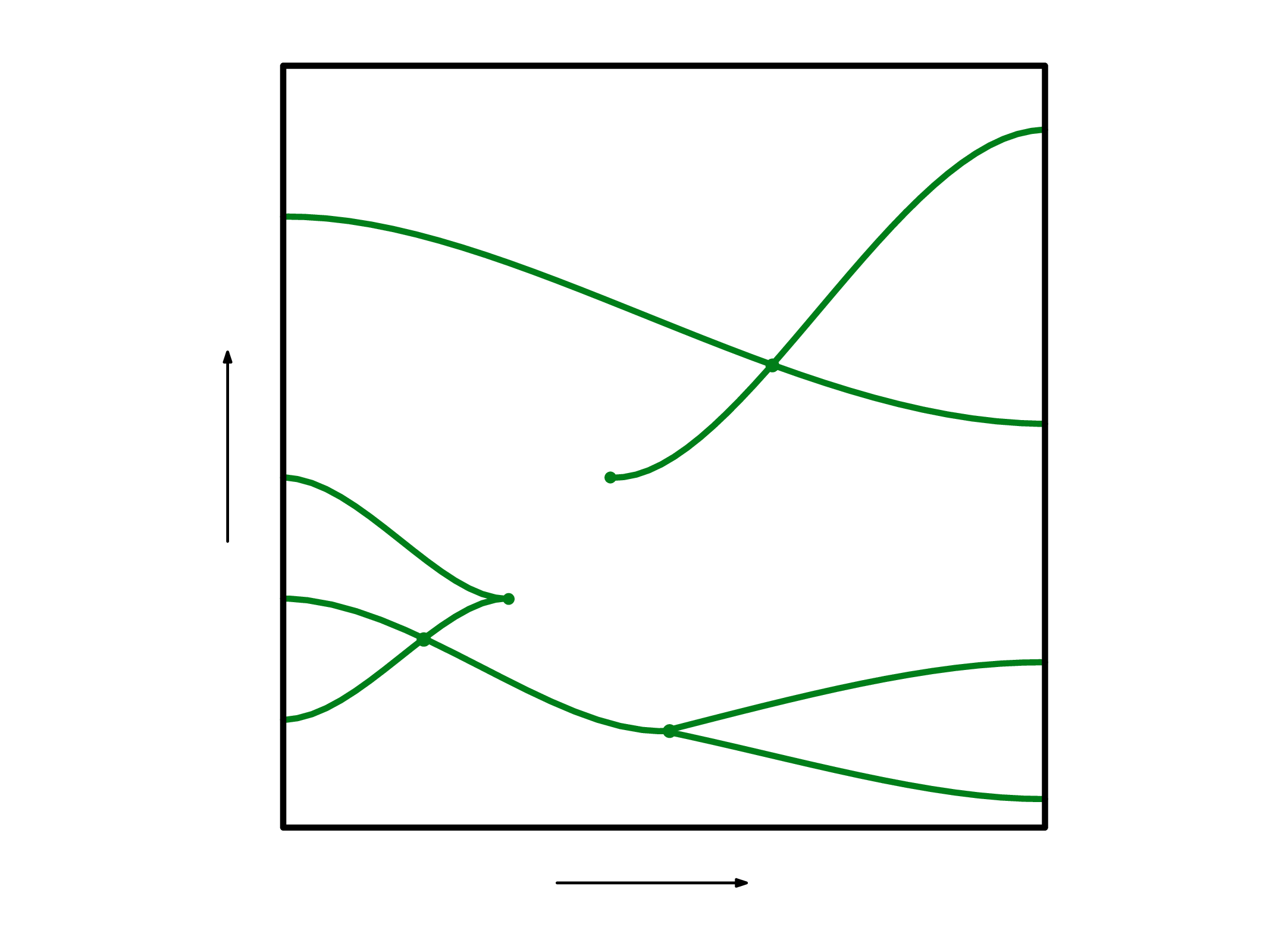}
	    \put(17.25,48.5){\small $s$}
	    \put(60.5,5){\small $t$}
	    \put(39,30){\tiny (P2)}
	    \put(32,20.75){\tiny (P1)}
	    \put(49,19.5){\tiny (P4)}
	    \put(57.5,49){\tiny (P1)}
	    \put(45,39.5){\tiny (P3)}
	\end{overpic}
	\vskip-.05in
	\caption{An example $2$-parameter space as described by \autoref{prop:codimension_2_bifurcations}. The contact manifolds $(\Sigma \times [0,1]_s, \xi_i)$ for $i=0,1$ are both described by a sequence of $4$ bypasses.}
	\label{fig:2parameter}
\end{figure}

\begin{remark}
When $n=1$, the only possibilities in \autoref{prop:codimension_2_bifurcations} are (L), (P1), (P3), (P3'), (P5), and (P5').
\end{remark}

\begin{proof}
We analyze all potential retrogradients according to the type of critical points involved (nondegenerate, birth-death, and swallowtail). Throughout, the notation $p^{\pm} = p^{\pm}_{s,t}$  refers to a (family of) positive and negative singularities, respectively, and $W^{\mathrm{s}}(p)$ and $W^{\mathrm{u}}(p)$ are the stable and unstable manifolds of $p$. Recall that in a generic $2$-parameter family, swallowtail singularities are a codimension $2$ phenomenon (i.e., they arise at isolated parameter values), and birth-death singularities are a codimension $1$ phenomenon (i.e., they arise in $1$-dimensional families). Let $E_{\mathrm{bd}}, E_{\mathrm{sw}} \subset \D^2_{s,t}:=[0,1]_s \times [0,1]_t$ be the sets of parameter values where $X_{s,t}$ has birth-death and swallowtail singularities, respectively. By \autoref{prop: extension of theorem: sigma times interval}, the singularities behave as in a generic $2$-parameter family of Morse functions and the stable and unstable manifolds intersect as transversely as possible in a generic $2$-parameter family.  Hence we may assume that $E_{\mathrm{bd}}$ is an immersed $1$-manifold-with-boundary, $E_{\mathrm{sw}}$ is a finite set of points, and:
\begin{itemize}
    \item[(T1)] There is a finite number of isolated parameter values where $X_{s,t}$ has two birth-death singularities. In particular, $E_{\mathrm{bd}}$ has a finite number of double points, and is otherwise properly embedded in $\D^2_{s,t}$.
    
    \item[(T2)] There is a finite number of isolated parameter values where $X_{s,t}$ has a single swallowtail singularity, and at these moments $X_{s,t}$ has no birth-death singularities. In particular, $E_{\mathrm{bd}} \cap E_{\mathrm{sw}} = \varnothing$. 
\end{itemize}

\s\n
\noindent { \em Case I: Retrogradients involving exactly two nondegenerate points.}

\s 
First we analyze the possibilities of retrogradients appearing between exactly two nondegenerate critical points. We will do the transversality analysis carefully in this case. Let $f_{s,t}:\Sigma^{2n} \to \R$ be a $2$-parameter family of Lyapunov functions for $X_{s,t}$. Since we are only concerned with nondegenerate critical points, we may localize and assume that each $f_{s,t}$ is genuinely Morse. Let $p_{s,t}^{\pm}$ be (families of) nondegenerate critical points of index $k^{\pm}$, where 
\begin{equation} \label{equation: k plus k minus}
0 \leq k^+ \leq n \leq k^- \leq 2n.
\end{equation}
Because we are interested in existence of retrogradients, we may assume that there is a $c_0 \in \R$ such that, for all $s,t$, 
\[
f_{s,t}(p_{s,t}^-) < c_0 < f_{s,t}(p_{s,t}^+)
\]
and such that $c_0$ is regular for each $f_{s,t}$. Assume further that $f_{s,t}$ is $(s,t)$-independent along $N^{2n-1}:= f_{s,t}^{-1}(c_0) \subset \Sigma$. Finally we may assume that the families of nondegenerate critical points are not part of birth-death or swallowtail families, as these cases will be considered later. 

We are interested in the intersection of the unstable and stable manifolds of $p_{s,t}^{\pm}$ in $N$. To this end, we introduce some notation. Let 
\begin{gather*}
    W^{\mathrm{u}} := W^{\mathrm{u}}_{s,t}(p_{s,t}^-), \quad 
    W^{\mathrm{s}} := W^{\mathrm{s}}_{s,t}(p_{s,t}^+), \quad
    W^{\ast}_N := W^{\ast} \cap N, \\
    \tilde{W}^{\ast}_N := \set{(s,t,x)}{x \in W^{\ast}_N} \subseteq \D^2_{s,t} \times N.
\end{gather*}
Given $p^{\pm}$, our goal is to analyze
\[
L(p^-, p^+) := \set{(s,t)}{W^{\mathrm{u}}_N \cap W^{\mathrm{s}}_N \neq \varnothing} \subseteq \D^2_{s,t}
\]
and then ultimately the union $L$ of these sets over all pairs of positive and critical points. In words, $L(p^-, p^+)$ is the set of parameter values for which there exists a retrogradient flow line from $p^-$ to $p^+$. Define 
\[
\tilde{L}(p^-, p^+) := \set{(s,t,x)}{x \in W^{\mathrm{u}}_N \cap W^{\mathrm{s}}_N} \subseteq \D^2_{s,t} \times N
\]
and observe that $\pi(\tilde{L}) = L$, where $\pi:\D^2_{s,t}\times N \to \D^2_{s,t}$ is the projection onto the first factor. Also observe that $\tilde{L}(p^-,p^+) = \tilde{W}_N^{\mathrm{u}} \cap \tilde{W}_N^{\mathrm{s}}$.

For a fixed $(s,t)$-parameter value, we have $W_N^{\mathrm{u}} \cong S^{2n-k^--1}$ and $W_N^{\mathrm{s}} \cong S^{k^+-1}$. Therefore, 
\begin{gather*}
    \tilde{W}^{\mathrm{u}}_N = \op{Im} \left(\D^2_{s,t} \times S^{2n-k^--1} \hookrightarrow\D^2_{s,t} \times N\right), \\
    \tilde{W}^{\mathrm{s}}_N = \op{Im}\left(\D^2_{s,t} \times S^{k^+-1} \hookrightarrow \D^2_{s,t} \times N\right). 
\end{gather*}
By transversality, it follows that $\tilde{L}(p^-,p^+) \subset \D^2_{s,t} \times N^{2n-1}$ is a submanifold of dimension 
\begin{align*}
    (2n-k^--1 + 2) + (k^+ -1 + 2) - (2n-1 + 2) = k^+ - k^- + 1.
\end{align*}
By \eqref{equation: k plus k minus}, we have $k^+ - k^- + 1 \leq 1$, with three possibilities to consider:
\begin{enumerate}
    \item[(Ia)] ($k^+ = n$ and $k^- = n$). In this case $k^+ - k^- + 1 = 1$ and so $\tilde{L} \subseteq \D^2_{s,t} \times N$ is a smooth $1$-manifold with boundary contained in $[0,1]_s \times \{0,1\}_t \times N$. The projection $L=\pi(\tilde L)\subseteq \D^2_{s,t}$, which we may take to be generic, is an immersed curve except at a finite number of points, with the following codimension-$2$ phenomena:  
    \begin{itemize}
        \item Regular double points of $L$. At finitely many points of $L$, two transversely cut-out retrogradients exist simultaneously. This gives (P1).
        
        \item Cusp points of $L$.  This happens when the projection map $\pi:\tilde{L} \to L$ fails to be an immersion. In particular, at such a point $(s_0, t_0, x_0)$ the differential $d\pi = 0$ and so $\dim T_{x_0}W^{\mathrm{u}}_N \cap T_{x_0}W^{\mathrm{s}}_N = 1$. In particular, at a cusp of $L$, there is a single retrogradient between nondegenerate points of index $n$, but it is not cut out transversely in $\Sigma$. This gives (P2).
    \end{itemize}

    \item[(Ib)] ($k^+ = n$ and $k^- = n +1$). In this case $k^+ - k^- + 1 = 0$. Hence there is an isolated instance of a retrogradient from a negative index $n+1$ singularity to a positive index $n$ singularity. This describes situation (P4').
    
    \item[(Ic)] ($k^+ = n -1$ and $k^- = n$.) In this case $k^+ - k^- + 1 = 0$. Hence there is an isolated instance of a retrogradient from a negative index $n$ singularity to a positive index $n-1$ singularity. This describes situation (P4).
\end{enumerate}

\begin{remark}
Consider a retrogradient $p_{n}^- \rightsquigarrow p_{n-1}^+$ as described by (P4), occurring at an isolated moment $(s_0, t_0)$ in $2$-parameter space. Suppose that $p_{n,1}^+, \dots, p_{n,N}^+$ are positive nondegenerate index $n$ points with trajectories $p_{n-1}^+ \rightsquigarrow p_{n,j}^+$. By a gluing argument, near $(s_0,t_0)$ in $2$-parameter space there will be ``normal'' retrogradient trajectories $p_{n}^- \rightsquigarrow p_{n,j}^+$. This is observation not necessary for the statement of the present proposition, but is important in the proof of \autoref{theorem: sigma times interval}.
\end{remark}

\s\n
\noindent {\em Case II: Retrogradients involving birth-death points.}

\s According to (T1), there are three possibilities to consider. Here the transversality analysis is brief, as the details are similar to (but simpler than) Case I. 

\begin{enumerate}
    \item[(IIa)] {\em Both $p^-$ and $p^+$ are birth-death points.}  Suppose $p^-$ and $p^+$ are negative and positive birth-death singularities of index $(k^-,k^-+1)$ and $(k^+,k^+ + 1)$, respectively. By (T1) this situation occurs in isolated instances. We have 
    \begin{gather*}
        \dim W^{\mathrm{u}}(p^-) = 2n-k^-, \quad
        \dim W^{\mathrm{s}}(p^+) = k^+ + 1,
    \end{gather*}
    and so the expected dimension of their transverse intersection in a regular intermediate level set $N^{2n-1}$ is
\[
(k^+) + (2n-k^- - 1) - (2n-1) = k^+ - k^-. 
\]
Since $k^+ +1 \leq n$ and $k^- \geq n$, we have $k^+ - k^- < 0$ and so generically there is no intersection. Thus, there are no retrogradients between birth-death points. 

\item[(IIb)] {\em The point $p^-$ is of birth-death type and $p^+$ is nondegenerate.} Suppose $p^-$ is a negative birth-death singularity of index $(k^-,k^-+1)$ and $p^+$ is nondegenerate of index $k^+$. Then 
    \begin{gather*}
        \dim W^{\mathrm{u}}(p^-) = 2n-k^-, \quad
        \dim W^{\mathrm{s}}(p^+) = k^+.
    \end{gather*}
Since $p^-$ arises in a $1$-dimensional family, the expected dimension of the transverse intersection of $W^{\mathrm{u}}(p^-)$ and $W^{\mathrm{s}}(p^+)$ in a regular intermediate level set $N^{2n-1}$ is 
\[
(k^+ - 1) + (2n-k^- - 1) - (2n-1) + (1) = k^+ - k^-.
\]
Note that $k^+ \leq n$ and $k^- \geq n$. Thus, when $k^- = k^+ = n$, we have an isolated retrogradient from the negative index $(n,n+1)$ birth-death point $p^-$ to a positive index $n$ nondegenerate singularity $p^+$. This gives (P3').

\item[(IIc)] {\em The point $p^-$ is nondegenerate, and $p^+$ is of birth-death type.} The dimension computation here is slightly different to the previous case. Let $k^- = \ind(p^-)$ and let $p^+$ be of index $(k^+,k^++1)$. Then 
    \begin{gather*}
        \dim W^{\mathrm{u}}(p^-) = 2n-k^-, \quad
        \dim W^{\mathrm{s}}(p^+) = k^+ + 1.
    \end{gather*}
Again, this situation occurs in a $1$-dimensional family, so the relevant transverse intersection in a regular intermediate level set has dimension 
\[
(k^+) + (2n - k^- - 1) - (2n-1) + (1) = k^+ - k^- + 1.
\]
This time, we have the restriction $k^+ + 1 \leq n$ and $k^- \geq n$, so $k^+ - k^- + 1 \leq 0$ with equality occurring when $k^+ =n-1$ and $k^- = n$. This gives an isolated retrogradient trajectory from a negative index $n$ nondegenerate singularity to a positive birth-death singularity of index $(n-1,n)$, which is (P3).
\end{enumerate}

\s\n
\noindent {\em Case III: Retrogradients involving swallowtail points.}

\s Let $p^-$ be a negative swallowtail singularity of index $k^-$ occurring at the parameter $(s_0,t_0)$ and let and let $p^+$ be a positive singularity. By (T2), we may assume that $p^+$ is nondegenerate in a neighborhood of $(s_0,t_0)$. Let $k^+ := \ind(p^+)$.

We have 
\begin{align*}
    \dim W^{\mathrm{u}}(p^-) = 2n-k^-, \quad     \dim W^{\mathrm{s}}(p^+) = k^+.
\end{align*}
With $N^{2n-1}$ a regular intermediate level set as before, the expected dimension of the transverse intersection inside $N$ is
\[
(k^+ - 1) + (2n-k^- - 1) - (2n-1) = k^+ - k^- -1.
\]
Since $k^+ \leq k^-$, we have $k^+ - k^- -1 < 0$. Thus, generically there are no retrogradients from negative swallowtail points to positive nondegenerate points.

The case where $p^+$ is an isolated positive swallowtail point of index $k^+$ and $p^-$ is a (family of) negative nondegenerate point(s) of index $k^-$ is similar. The dimension of the relevant stable and unstable manifold intersection in a regular intermediate level set is 
 \[
    (k^+- 1) + (2n-k^- - 1) - (2n-1) = k^+ - k^- - 1.
    \]
    This dimension count is identical to prior case, and so generically there are no retrogradients from a nondegenerate point to a swallowtail point. Therefore, generically there are no retrogradients involving swallowtail singularities at all. 

\s\n
\noindent { \em  Case IV: Retrogradients involving more than two critical points.}

\s     

There is one last situation to consider which, informally, is a consequence of the following fact: transversely cut-out retrogradients between nondegenerate index $n$ points occur in $1$-dimensional families, and positive-positive and negative-negative index $n$ handleslides occur in $1$-dimensional families. In a $2$-para\-meter family, this allows for codimension-$2$ isolated instances of ``broken'' retrogradients involving either two positive points and one negative point, or two negative points and one positive point.

More precisely, let $p_i^{\pm}$, $i=1,2$, denote (families of) nondegenerate positive (resp.\ negative) critical points of index $n$. We proceed in the same way as in the beginning of Case I. Assume there are values $c_-, c_0, c_+ \in \R$ such that 
\[
f(p_1^-) < c_- < f(p_2^-) < c_0 < f(p_1^+) < c_+ < f(p_2^+). 
\]
Let $N_{\ast}^{2n-1} := f^{-1}(c_{\ast})$ denote the corresponding regular level sets in $\Sigma$ and let $W_{N_{\ast}}^{\ast}(p_i^{\ast}) = W^{\ast}(p_i^{\ast}) \cap N_{\ast}$ denote the intersection of the corresponding (un)stable manifolds with these level sets. Let 
\begin{align*}
    L_- &:= \set{(s,t)}{W^{\mathrm{u}}_{N_-}(p_1^-) \cap W^{\mathrm{s}}_{N_-}(p_2^-) \neq \varnothing} \subseteq \D^2_{s,t} \\
    L_0 &:= \set{(s,t)}{W^{\mathrm{u}}_{N_0}(p_2^-) \cap W^{\mathrm{s}}_{N_0}(p_1^+) \neq \varnothing} \subseteq \D^2_{s,t} \\
    L_+ &:= \set{(s,t)}{W^{\mathrm{u}}_{N_+}(p_1^+) \cap W^{\mathrm{s}}_{N_+}(p_2^+) \neq \varnothing} \subseteq \D^2_{s,t}.
\end{align*}
By the dimension counting analysis in Case I, in particular (Ia), $L_-$, $L_0$, and $L_+$ are all immersed curves in $\D_{s,t}^2$. The $0$-dimensional intersection $L_-\cap L_0$ then corresponds to broken retrogradients $p_1^- \rightsquigarrow p_2^- \rightsquigarrow p_1^+$, which gives (P5), and the $0$-dimensional intersection $L_0 \cap L_+$ likewise corresponds to broken retrogradients $p_2^- \rightsquigarrow p_1^+ \rightsquigarrow p_2^+$, which gives (P5').
\end{proof}

\subsection{A parallel strands bypass lemma}\label{subsection:parallel_strands_bypass_lemma}

In this subsection we state and prove the main lemma needed to interpret all codimension-$2$ retrogradient bifurcations in terms of bypass attachments. Informally, the lemma describes how to interpret a sequence of $k$ ``locally parallel'' retrogradients as a single bypass attachment and can be viewed as a higher-dimensional analog of a ``bypass rotation" in dimension three. See \autoref{fig:strandFW} for a heuristic picture and Figures~\ref{fig:strand_lemma} and \ref{fig:dual_strand_lemma} for a precise formulation; we will also use the notation $Y,G_\pm,\Gamma_0,\Gamma_1$ as indicated in the figures.

\begin{figure}[ht]
	\begin{overpic}[scale=0.43]{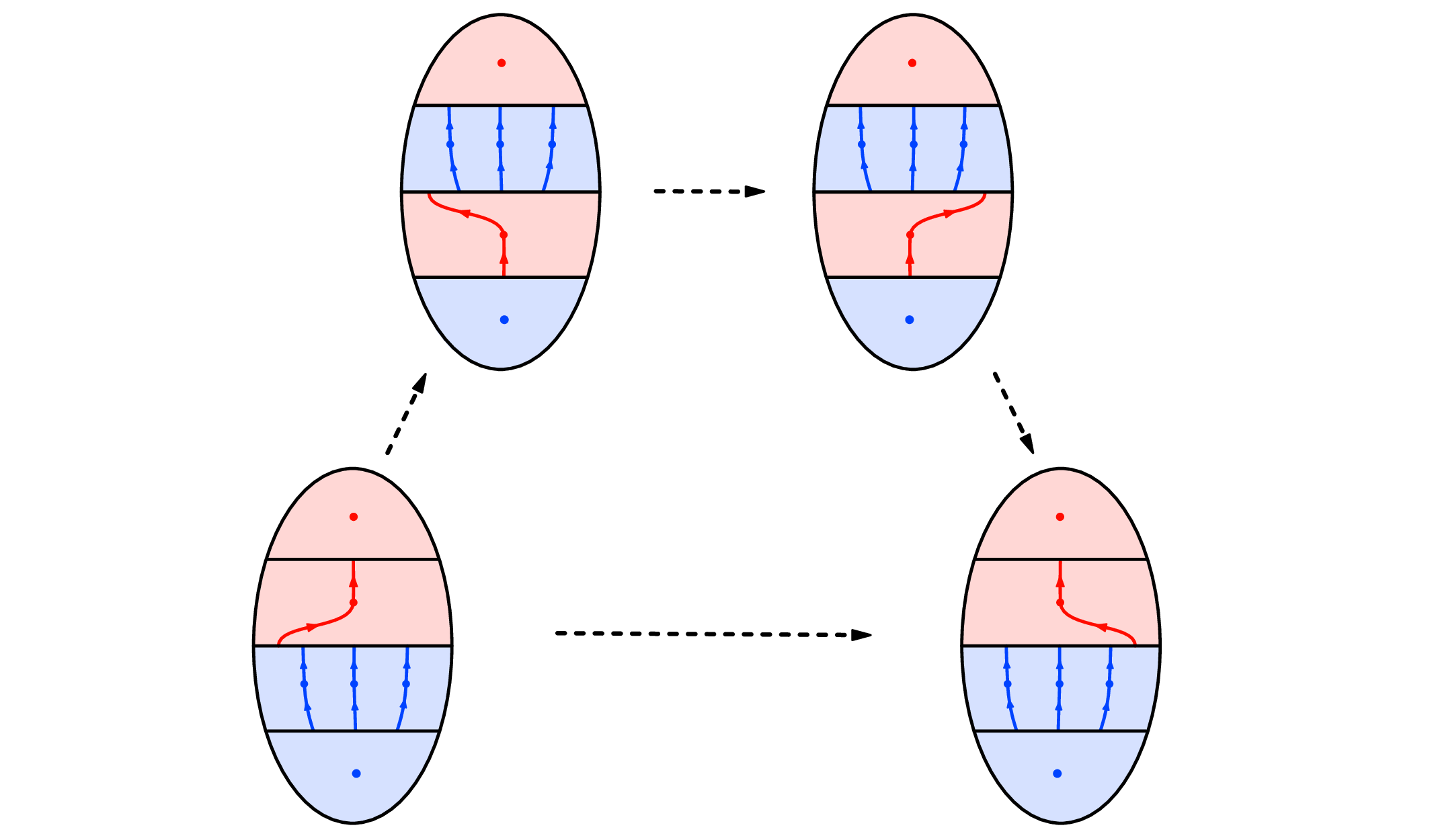}
	    \put(44,47){\tiny $k$ retrogradients}
	    \put(42.5,45){\tiny $Y$}
	    \put(71,45){\tiny $Y$}
	    \put(81,19.5){\tiny $G_-$}
	    \put(31.55,19.5){\tiny $G_-$}
	    \put(81,7){\tiny $G_+$}
	    \put(31.55,7){\tiny $G_+$}
	    \put(81.5,13){\tiny $\Gamma_1$}
	    \put(32.25,13){\tiny $\Gamma_0$}
		\put(35.75,11.5){\small bypasses $TB_1, \dots, TB_{k-1}, \tilde{B}$}
	\end{overpic}
	\caption{A heuristic depiction of \autoref{lemma:parallel_strands_bypass}. The second row contains the convex hypersurfaces $\Sigma \times \{i\}$ for $i=0,1$. The upper two folded Weinstein hypersurfaces depict a sequence of $k=3$ retrogradients, obtained after shuffling the central cobordisms. \autoref{lemma:parallel_strands_bypass} describes this as a single bypass attachment $\tilde{B}$ together with a sequence of trivial bypasses.}
	\label{fig:strandFW}
\end{figure}

\begin{lemma}[Parallel strands bypass lemma]\label{lemma:parallel_strands_bypass}
Suppose $(\Sigma^{2n} \times [0,1]_s, \xi), n\geq 2$ is a contact manifold such that there are $0 < s_1 < \cdots < s_k < 1$ satisfying the following properties:
\begin{enumerate}
    \item[(i)] For $s\neq s_j$, $\Sigma \times \{s\}$ is convex.
    \item[(ii)] For $s=s_j$, convexity fails because of a single retrogradient between nondegenerate critical points of index $n$, and the sequence of retrogradients is described by the local retrogradient diagram on the left side of \autoref{fig:strand_lemma}. In particular, each retrogradient is of the form $p^- \rightsquigarrow p_{j'}^+$ where $p^-$ is a single negative critical point and $\{p_1^+, \dots, p_{k'}^+\}$ is a set of positive critical points with $k'\leq k$. In particular $(\Sigma \times [0,1], \xi)$ is given by a sequence of corresponding bypass attachments $B_1, \dots, B_k$.
\end{enumerate}
Then, after isotopy of contact handles, the bypass sequence $B_1, \dots, B_k$ is equivalent to a sequence of bypasses $TB_1, \dots, TB_{k-1}, \tilde{B}$, where $TB_1, \dots, TB_{k-1}$ are trivial bypasses and $\tilde{B}$ is the bypass attached along $(\Lambda_{\pm}; D_{\pm})\subset \Sigma$ as indicated by the right side of \autoref{fig:strand_lemma}. Here $(\Lambda_-; D_-)$ is given by the stable disk of $p^-$ and $(\Lambda_+; D_+)$ is the standard disk filling of the indicated unknot surrounding all the strands.
\end{lemma}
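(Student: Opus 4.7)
I will prove the lemma by induction on $k$, with the base case $k=1$ being immediate since then $B_1$ coincides with $\tilde B$ by construction. The central observation underlying the inductive step is that all $k$ bypasses share the same negative critical point $p^-$, so by the bypass--bifurcation correspondence (\autoref{prop:bypass_bifurcation_correspondence}) the negative attaching data $(\Lambda_-^{(j)}, D_-^{(j)})$ of each $B_j$ is Hamiltonian isotopic to a small negative Reeb pushoff of the stable disk of $p^-$; only the positive data $(\Lambda_+^{(j)}, D_+^{(j)})$ varies, being a positive Reeb pushoff of the unstable disk of the corresponding positive critical point $p_{j'(j)}^+$.

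For the inductive step I focus on the pair $B_1, B_2$. After $B_1$ is attached, the ``moreover'' clause of \autoref{prop:bypass_bifurcation_correspondence} identifies the stable disk of $p^-$ on the post-$B_1$ convex hypersurface with the cocore of the negative Weinstein $n$-handle of $B_1$ in the $R_-$-centric model; in particular the negative attaching data of $B_2$ becomes a negative Reeb pushoff of this cocore. Using \autoref{lemma:cocore_lemma} to reposition the relevant Lagrangian disks and the handleslide lemmas \autoref{lemma:legendrian_handleslide_n_handle} and \autoref{lemma:legendrian_handleslide_n+1_handle} to re-pair the four contact handles of $B_1 \cup B_2$, I can reorganize them into a trivial bypass $TB_1$ --- recognizable as a (TB2) trivial bypass via \autoref{lemma:trivial_bypass_lemma}, since its negative disk is a Reeb shift of the cocore of its $n$-handle --- followed by a new bypass $B_{12}$ whose negative data is still the stable disk of $p^-$ and whose positive data is a standard unknot enveloping the unstable disks of $p_{j'(1)}^+$ and $p_{j'(2)}^+$.

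Having replaced $B_1, B_2$ by $TB_1, B_{12}$, the sequence $B_{12}, B_3, \ldots, B_k$ again satisfies the hypothesis of the lemma with $k-1$ bypasses (the first now encompassing a small enveloped cluster of positive disks in place of a single $p_{j'}^+$, which does not affect the argument); applying the inductive hypothesis produces $TB_2, \ldots, TB_{k-1}, \tilde B$, and prepending $TB_1$ completes the proof. The principal technical obstacle is carrying out the re-pairing of $B_1 \cup B_2$ into $TB_1 \cup B_{12}$: one must simultaneously handleslide the contact $n$-handles using the parallelism of their negative attaching data, track the effect on the contact $(n+1)$-handles via the Legendrian--Lagrangian pair slide lemmas, and verify through decorated Legendrian diagrams in the spirit of \autoref{fig:bypass_attachment} and \autoref{fig:trivial_bypass_proof} that the resulting disk fillings match a cocore on the trivial-bypass side and a standard enveloping disk on the $B_{12}$ side. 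This is also where the $n>1$ hypothesis enters, since it provides the room needed for the higher-dimensional Reidemeister moves that effect the envelopment.
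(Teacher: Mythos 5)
Your proposal follows the same inductive strategy as the paper's proof (induction on $k$, pairwise reorganization of bypasses using \autoref{lemma:cocore_lemma}, the handleslide Lemmas \ref{lemma:legendrian_handleslide_n_handle}--\ref{lemma:legendrian_handleslide_n+1_handle}, and \autoref{lemma:trivial_bypass_lemma}, with the correct (TB2) identification of the trivial bypass). However, you run the induction in the opposite direction: you combine the first two bypasses $B_1, B_2$ into $TB_1, B_{12}$ and then apply the inductive hypothesis to the remaining sequence $B_{12}, B_3, \dots, B_k$. The paper instead applies the inductive hypothesis first to the original $B_1, \dots, B_k$ --- which genuinely satisfies the stated hypothesis --- collapses them (up to trivial bypasses) to a single enveloping bypass $\tilde B_k$, and only then does the pairwise combining with the new bypass $B_{k+1}$.

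This restructuring creates a genuine gap. The modified sequence $B_{12}, B_3, \dots, B_k$ does not satisfy the hypothesis of the lemma as stated: the first bypass $B_{12}$ has an enveloping unknot as positive data rather than the unstable disk of a nondegenerate critical point of index $n$, and the corresponding local retrogradient diagram is no longer $k-1$ parallel strands crossed by a negative Legendrian. You acknowledge this parenthetically (``which does not affect the argument''), but as written the inductive hypothesis simply cannot be invoked; you would need to state and prove a mildly more general lemma in which the first bypass in the sequence is allowed to be of enveloping type. The paper's ordering of the induction avoids the issue cleanly, which is why I would recommend adopting it. Separately, the main technical content --- the decorated Legendrian Kirby calculus that re-pairs the four contact handles of two consecutive bypasses into a trivial bypass plus an enveloping bypass, while tracking the $R_\pm$ Lagrangian disk fillings through every slide --- is only identified as an obstacle in your plan, not carried out; the paper executes this in four explicit steps (slide red over black, pink over red, black over pink, then identify black and red as trivial), and completing your proof would require an equivalent computation.
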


\begin{figure}[ht]
	\begin{overpic}[scale=0.53]{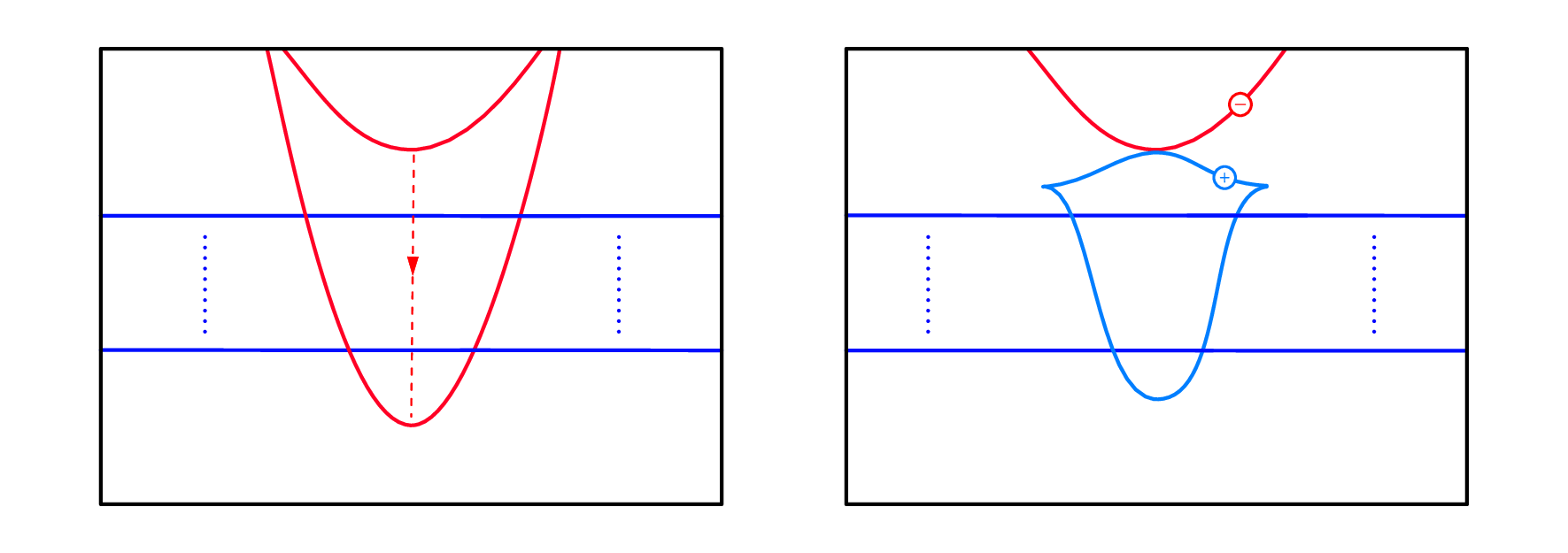}
	    \put(14,33){\small Local retrogradient diagram}
	    \put(26,1){\small $Y$}
	    \put(44,4){\small $Y$}
	    \put(64,33){\small Local bypass diagram}
	    \put(74,1){\small $\Gamma_0$}
	    \put(90,4){\small $G_+$}
	    \put(94,12.5){\tiny \textcolor{blue}{$(-1)$}}
	    \put(94,21){\tiny \textcolor{blue}{$(-1)$}}
	    \put(82,23){\small \textcolor{RoyalBlue}{$\Lambda_+$}}
	    \put(81,28){\small {\color{red}  $\Lambda_-$}}
	\end{overpic}
	\caption{The statement of \autoref{lemma:parallel_strands_bypass}. On the left is a local (parametric) retrogradient diagram on a folded Weinstein hypersurface that describes $k$ retrogradients, one for each $\xi$-transverse intersection of the parametric red strand with the $k$ blue strands. Note that the blue strands may or may not be part of distinct Legendrians. On the right is the nontrivial bypass diagram for $\tilde{B}$ provided by the lemma.}
	\label{fig:strand_lemma}
\end{figure}


\begin{proof}[Proof of \autoref{lemma:parallel_strands_bypass}]
We induct on the number of strands. In the following, $p^-$ always refers to the nondegenerate negative critical point of index $n$ which is the source of each retrogradient, and we abuse notation and use $\ve, \ve', \ve''$ to denote sufficiently small parameters which often vary from step to step. The precise vertical placements of Legendrians are as indicated in the diagrams.

\s\n
{\em Base case $k=1$.} Suppose that the local retrogradient diagram has a single blue strand, corresponding to the intersection of the stable manifold of a positive critical point $p^+$ of index $n$ with a minimal folding locus $Y$. There is one bypass associated to the single retrogradient $p^- \rightsquigarrow p^+$. The bypass attachment data $(\Lambda_+,\Lambda_-; D_+,D_-)$ is such that $(\Lambda_\pm; D_\pm)$ is given by the cocore and belt sphere of the Weinstein $n$-handle in $R_\pm$ associated to $p^\pm$. Let $G_+\subseteq R_+ \subseteq \Sigma\times\{0\}$ be a regular contact level set such that the dividing set $\Gamma_0$ of $\Sigma\times\{0\}$ is obtained by $(-1)$-surgery along the attaching sphere of the handle associated to $p^+$. By \autoref{lemma:cocore_lemma}, we may identify $(\Lambda_+; D_+)$ as on the right side of \autoref{fig:strand_lemma} with one blue strand. This completes the base case.

\s\n
{\em Inductive step.} Suppose the statement of the lemma holds for some $k\geq 1$ and the local retrogradient diagram in a minimal folding locus $Y$ consists of $k+1$ parallel strands as on the left side of \autoref{fig:strand_lemma}, so that convexity of $\Sigma \times \{s\}$ fails for 
\[
0 < s_1 < \cdots < s_k < s_{k+1} < 1.
\] 
Recall that some of the parallel strands may or may not be associated to the same Legendrian, and that the proof does not require a distinction to be made. As before, let $G_+ \subseteq R_+ \subseteq \Sigma \times \{0\}$ be a regular contact level set such that the dividing set $\Gamma_0$ is obtained by $(-1)$-surgery along each parallel strand. By the inductive hypothesis, the contact manifold $(\Sigma \times [0,s_k + \ve], \xi)$ is contactomorphic to a single bypass attachment to $(\Sigma \times [0,\ve], \xi)$ along $(\Lambda_{\pm}; D_{\pm})\subset \Sigma\times\{\ve\}$ as indicated on the left side of \autoref{fig:inductive_step}. 

\begin{figure}[ht]
	\begin{overpic}[scale=0.53]{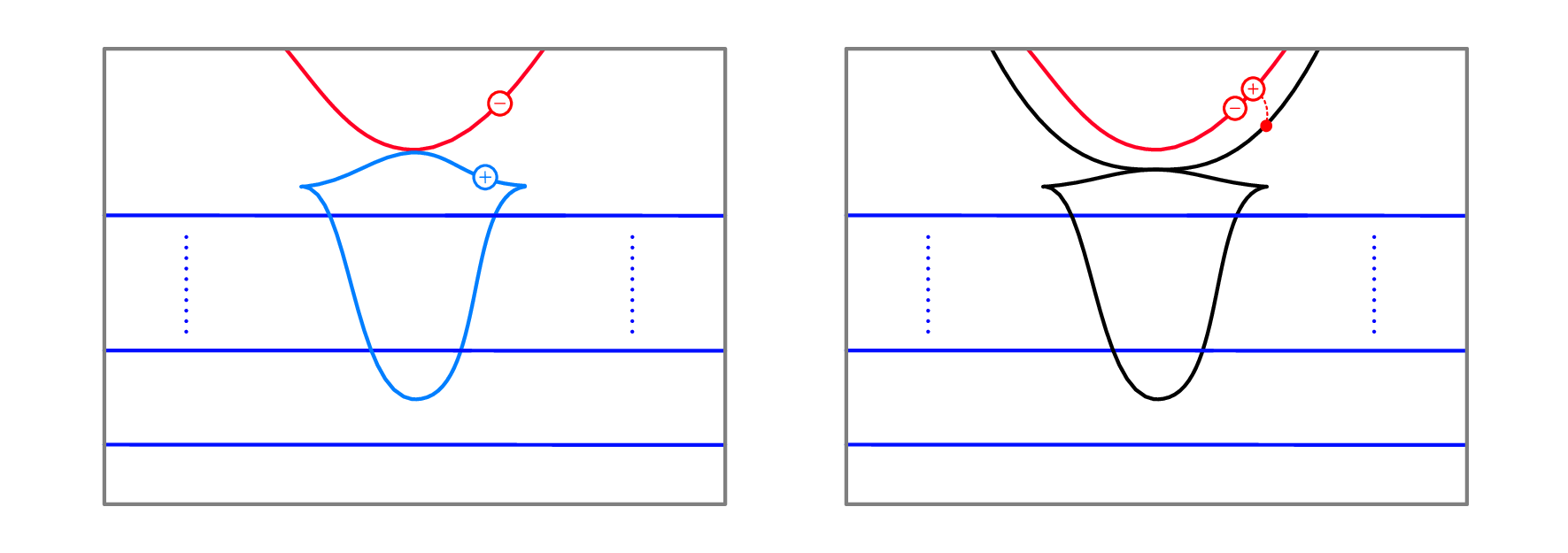}
	    \put(26,1){\small $\Gamma_0$}
	    \put(43,4){\small $G_+$}
	    \put(35,23){\small \textcolor{RoyalBlue}{$\Lambda_+$}}
	    \put(34,28){\small {\color{red}   $\Lambda_-$}}
	    \put(47,12.5){\tiny \textcolor{blue}{$(-1)$}}
	    \put(47,21){\tiny \textcolor{blue}{$(-1)$}}
	    \put(47,6.5){\tiny \textcolor{blue}{$(-1)$}}

	    \put(74,1){\small $\Gamma_{s_k+\ve}$}
	    \put(90,4){\small $G_+$}
	    \put(94,12.5){\tiny \textcolor{blue}{$(-1)$}}
	    \put(94,21){\tiny \textcolor{blue}{$(-1)$}}
	    \put(94,6.5){\tiny \textcolor{blue}{$(-1)$}}
	    \put(94,25){\tiny $(-1)$}
	    \put(94,28){\tiny {\color{red}   $(+1)$}}
	    \put(82,24){\small $\Lambda_- \uplus \Lambda_+$}
	    \put(84,28){\small {\color{red}   $\Lambda_-^{\ve}$}}
		
	\end{overpic}
	\caption{Applying the inductive hypothesis in the proof of \autoref{lemma:parallel_strands_bypass}. In both figures, there is a total of $k+1$ parallel blue strands. On the left is the bypass diagram furnished by the inductive step of the lemma applied to the first $k$ retrogradients. On the right is the result of attaching the bypass, interpreted as a pair of contact handles.}
	\label{fig:inductive_step}
\end{figure}
Attaching the $R_-$-centric bypass using \autoref{theorem:bypass_attachment} produces the diagram on the right side of \autoref{fig:inductive_step}. In particular, there is a contact $n$-handle attached along $\Lambda_- \uplus \Lambda_+$ and a contact ($n+1$)-handle with equator $\Lambda_-^{\ve}$. 

Next we identify the bypass attachment data $(\Lambda_{\pm}'; D_{\pm}')$ for the bypass corresponding to crossing the bottom-most strand: 
\begin{itemize}
    \item The negative data $(\Lambda_-'; D_-')$ is given by the cocore and belt sphere of the Weinstein $n$-handle attached to $R_-$ along $\Lambda_- \uplus \Lambda_+$. By \autoref{lemma:cocore_lemma}, we may diagrammatically witness this as an appropriately decorated small negative Reeb shift of $\Lambda_- \uplus \Lambda_+$ as on the left side of \autoref{fig:bypass2}. 
\begin{figure}[ht]
	\begin{overpic}[scale=0.53]{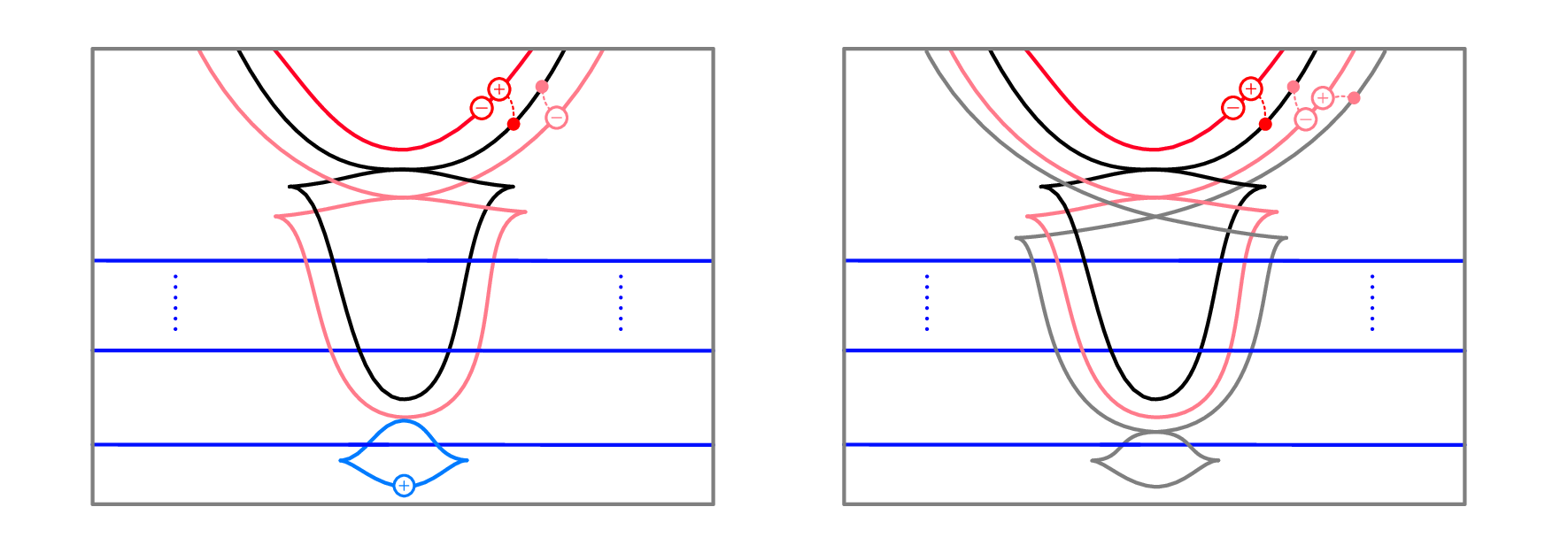}
	    \put(24,1){\small $\Gamma_{s_k+\ve}$}
	    \put(42,4){\small $G_+$}
	    \put(30,4){\small \textcolor{RoyalBlue}{$\Lambda_+'$}}
	    \put(36,26){\small \textcolor{Salmon}{$\Lambda_-'$}}
	    \put(46,12.5){\tiny \textcolor{blue}{$(-1)$}}
	    \put(46,18){\tiny \textcolor{blue}{$(-1)$}}
	    \put(46,6.5){\tiny \textcolor{blue}{$(-1)$}}
	    \put(46,26){\tiny $(-1)$}
	    \put(46,28){\tiny {\color{red}  $(+1)$}}

	    \put(73,1){\small $\Gamma_{1}$}
	    \put(90,4){\small $G_+$}
	    \put(94,12.5){\tiny \textcolor{blue}{$(-1)$}}
	    \put(94,18){\tiny \textcolor{blue}{$(-1)$}}
	    \put(94,6.5){\tiny \textcolor{blue}{$(-1)$}}
	    \put(94,26){\tiny $(-1)$}
	    \put(94,28){\tiny {\color{red}   $(+1)$}}
	    \put(80,9){\small \textcolor{gray}{$\Lambda_-' \uplus \Lambda_+'$}}
	    \put(94,22){\tiny \textcolor{gray}{$(-1)$}}
	    \put(94,24){\tiny \textcolor{Salmon}{$(+1)$}}
	    \put(86,25){\small \textcolor{Salmon}{$(\Lambda_-')^{\ve}$}}
		
	\end{overpic}
	\caption{The second bypass in the inductive step of the proof of \autoref{lemma:parallel_strands_bypass}. On the left is the bypass attachment data. On the right is the result of attaching the bypass as a pair of contact handles.}
	\label{fig:bypass2}
\end{figure}
\item The positive data $(\Lambda_+'; D_+')$ is given by the cocore and belt sphere of the Weinstein $n$-handle in $R_+$ associated to the $(-1)$-surgery along the lowest strand, drawn in the diagram by \autoref{lemma:cocore_lemma} as a standard Legendrian knot linked once with the strand in question. In contrast to the base case, we position the cusps below the strand as on the left side of \autoref{fig:bypass2}. 
\end{itemize}
As before, attaching the $R_-$-centric bypass via a pair of contact handles produces the decorated diagram on the right side of \autoref{fig:bypass2}.

The rest of the proof proceeds by performing (decorated) Legendrian Kirby moves on the diagram on the right side of \autoref{fig:bypass2} (or equivalently on the left side of \autoref{fig:step1lemma}) to identify a trivial bypass and erase it from the diagram. In the process, the manipulations will result in the gray and pink Legendrians producing the desired final diagram, completing the induction.

\begin{remark}[Erasing surgeries vs.\ erasing decorated surgeries]
If decorations are ignored, then on the left side of \autoref{fig:step1lemma} we may cancel the $(+1)$-surgery along the black Legendrian $\Lambda_- \uplus \Lambda_+$ and the $(+1)$-surgery along the pink Legendrian $\Lambda_-' = (\Lambda_- \uplus \Lambda_+)^{-\ve}$. This would leave us with an undecorated surgery diagram that matches the conclusion of \autoref{fig:strand_lemma}. 
\begin{figure}[ht]
	\begin{overpic}[scale=0.53]{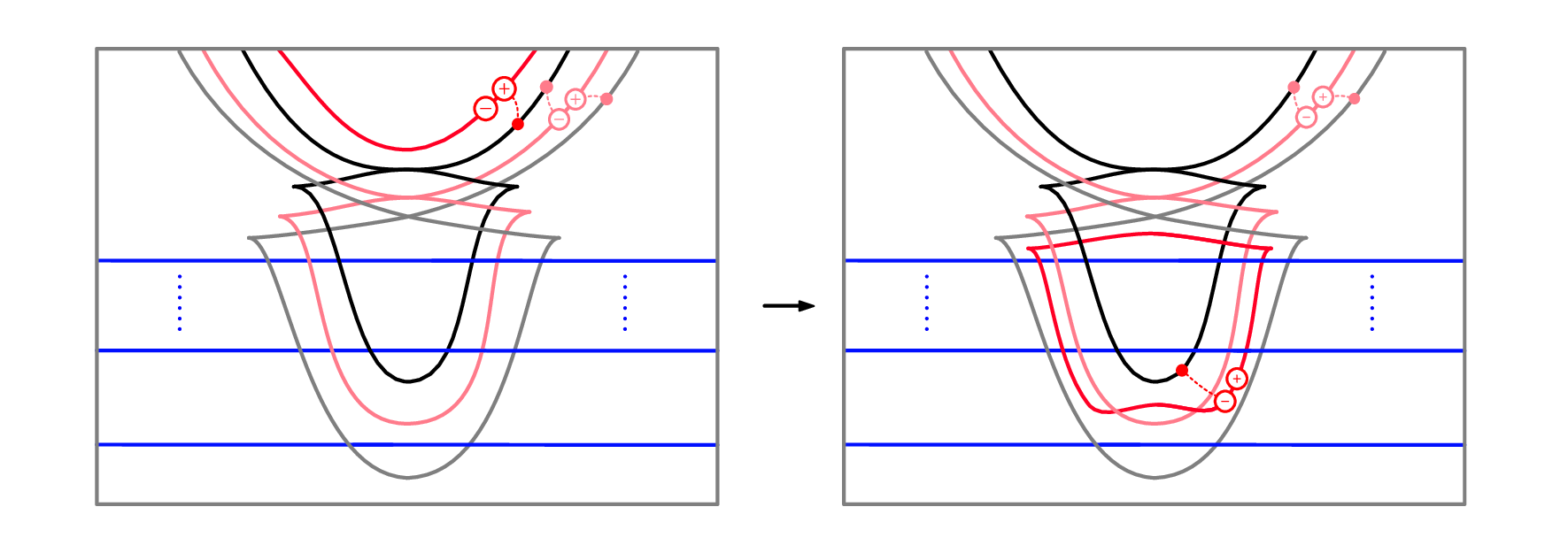}
	    \put(25,1){\small $\Gamma_{1}$}
	    \put(42,4){\small $G_+$}
	    \put(46.5,12.5){\tiny \textcolor{blue}{$(-1)$}}
	    \put(46.5,18){\tiny \textcolor{blue}{$(-1)$}}
	    \put(46.5,6.5){\tiny \textcolor{blue}{$(-1)$}}
	    \put(46.5,26){\tiny $(-1)$}
	    \put(46.5,28){\tiny {\color{red}  $(+1)$}}
	    \put(46.5,22){\tiny \textcolor{gray}{$(-1)$}}
	    \put(46.5,24){\tiny \textcolor{Salmon}{$(+1)$}}

	    \put(73,1){\small $\Gamma_{1}$}
	    \put(90,4){\small $G_+$}
	    \put(94,12.5){\tiny \textcolor{blue}{$(-1)$}}
	    \put(94,18){\tiny \textcolor{blue}{$(-1)$}}
	    \put(94,6.5){\tiny \textcolor{blue}{$(-1)$}}
	    \put(94,26){\tiny $(-1)$}
	    \put(94,28){\tiny {\color{red}  $(+1)$}}
	    \put(94,22){\tiny \textcolor{gray}{$(-1)$}}
	    \put(94,24){\tiny \textcolor{Salmon}{$(+1)$}}

	\end{overpic}
	\caption{Step 1 of the inductive step in the proof of \autoref{lemma:parallel_strands_bypass}: Sliding the red down over the black.}
	\label{fig:step1lemma}
\end{figure}
In fact, by \autoref{lemma:trivial_bypass_lemma}, the contact $n$-handle associated to $\Lambda_- \uplus \Lambda_+$ and the contact ($n+1$)-handle with equator $\Lambda_-'$ do form a trivial bypass. However, these two handles cannot be erased from the diagram because the ($n+1$)-handle with red equator $\Lambda_-^{\ve}$ intersects the black handle, as indicated by the decorations. This is the point of the rest of the proof. Instead of erasing the black and pink, we will perform Legendrian Kirby moves to separate the interaction of the two bypasses, ultimately erasing the black and \textit{red} (the bypass coming from the inductive hypothesis) as a trivial bypass. 
\end{remark}

We describe the relevant decorated Legendrian Kirby moves in four steps. 

\s\n
{\em Step 1: Slide red down over black.}

\s
The sequence of Kirby moves begins with the diagram on the left side of \autoref{fig:step1lemma}, which is equivalent to the right side of \autoref{fig:bypass2} by two Kirby moves. We then use \autoref{lemma:legendrian_handleslide_n_handle} to slide the red Legendrian $\Lambda_-^{\ve}$ down over the contact $n$-handle attached along the black Legendrian $\Lambda_- \uplus \Lambda_+$. In effect, this step simply changes the bypass in \autoref{fig:inductive_step} from an $R_-$-centric model to an $R_+$-centric model, though we need to exercise caution with the other Legendrians and Lagrangians in the diagram. 

First we analyze the effect on the Legendrian. After a sequence of omitted Kirby moves, we obtain the red Legendrian on the right side of \autoref{fig:step1lemma}. (As a sanity check, were it not for the presence of the pink Legendrian we could simply compute that sliding $\Lambda_-^{\ve}$ down over $\Lambda_- \uplus \Lambda_+$ produces 
\[
\left((\Lambda_- \uplus \Lambda_+) \uplus \Lambda_-\right)^{-\ve} \cong \Lambda_+^{-\ve}.
\]
To understand the any potential linking behavior with the pink Legendrian, the Kirby moves are necessary.)

Next we analyze the effect on the Lagrangian disks on the right side of \autoref{fig:step1lemma}. By \autoref{theorem:bypass_attachment}, the new negative red disk is isotopic to any of the following:
\be
\item $(K \uplus_b D_-)^{-\ve}$, where $K$ is the negative core of the Weinstein $n$-handle attached to $R_-$ along the black Legendrian;
\item $(K')^{-\ve'} \uplus_b D_-^{-\ve}$, where $K'$ is the cocore of the same Weinstein $n$-handle and $\ve'>0$ is chosen so that $(K')^{-\ve'}$ and $D_-^{-\ve}$ intersect ``contact transversely" at a point; 
\item $K^\parallel \uplus_b D_-^{+\ve}$, where $\parallel$ indicates a non-intersecting pushoff. 
\ee By \autoref{theorem:bypass_attachment} and the inductive hypothesis, the positive red disk is the disk $D_+$ provided by the statement of the lemma, i.e.,
\[
D_+ \cong K_1 \uplus_b \cdots \uplus_b K_k,   \]
where $K_j$ is isotopic to the unstable manifold of the positive critical point $p_{j'}^+$ associated to the $j$th strand, counted from top to bottom. 

The $n=1$ case of Step 1 is given by the left and middle diagrams in \autoref{fig:step2lemma_low}.

\begin{figure}[ht]
\vskip-.3in
	\begin{overpic}[scale=0.55]{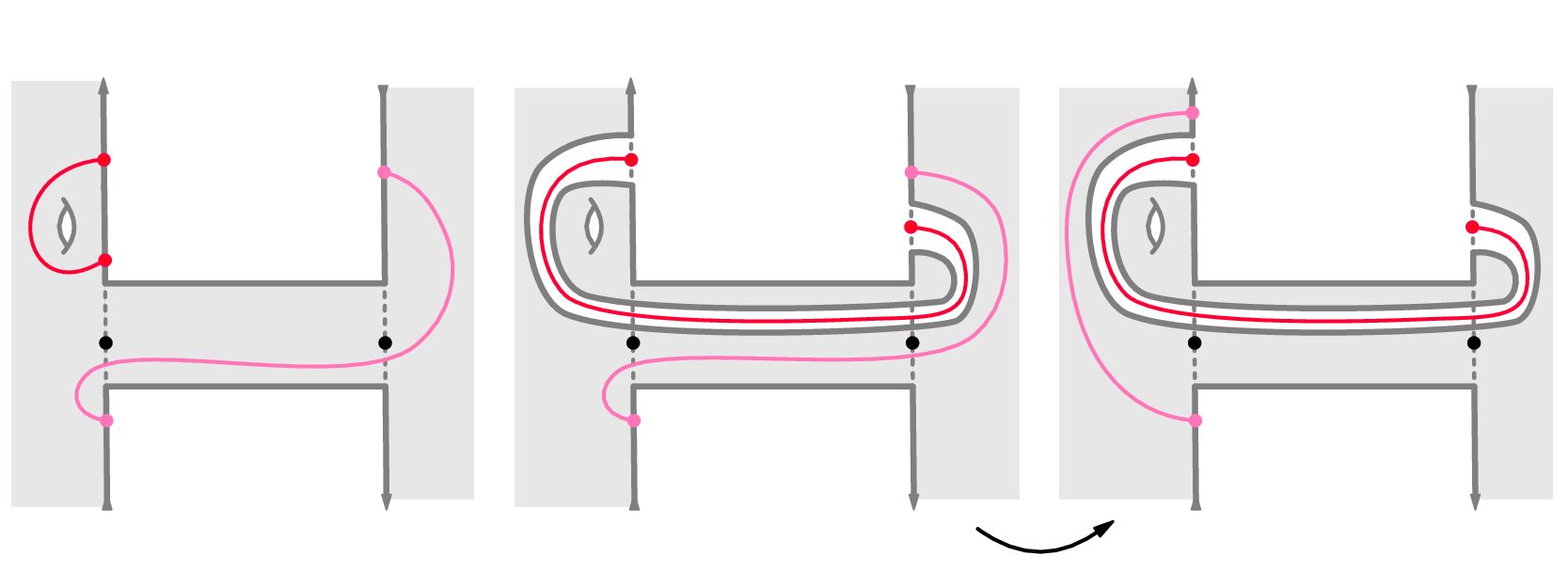}
	    
	\end{overpic}
	\vskip-.15in
	\caption{The $n=1$ cases of Step 1, given by the left and middle frames, and Step 2, given by the middle and right frames. In the left frame, a Weinstein $1$-handle is attached to $R_-$, with the boundary orientation indicated by the arrows, along the black Legendrian dots in the original dividing set. The pink arc is a Reeb down-shift of the cocore of the handle and the red arc is a Legendrian-Lagrangian pair. In the middle frame, the red pair is slid down across the handle and then a neighborhood of the resulting arc is removed; this corresponds to the right side of \autoref{fig:step1lemma}. In going from the middle frame to the right, the pink pair is slid up across the handle removal along the red pair.}
	\label{fig:step2lemma_low}
\end{figure}

\s\n
{\em Step 2: Slide pink up over red.}

\s
Next we slide the pink Legendrian up over the red, this time using \autoref{lemma:legendrian_handleslide_n+1_handle} (via the bigon located between strands $k$ and $k+1$). Another sequence of omitted Kirby moves produces the new pink Legendrian on the right of \autoref{fig:step2lemma}.

\begin{figure}[ht]
	\begin{overpic}[scale=0.53]{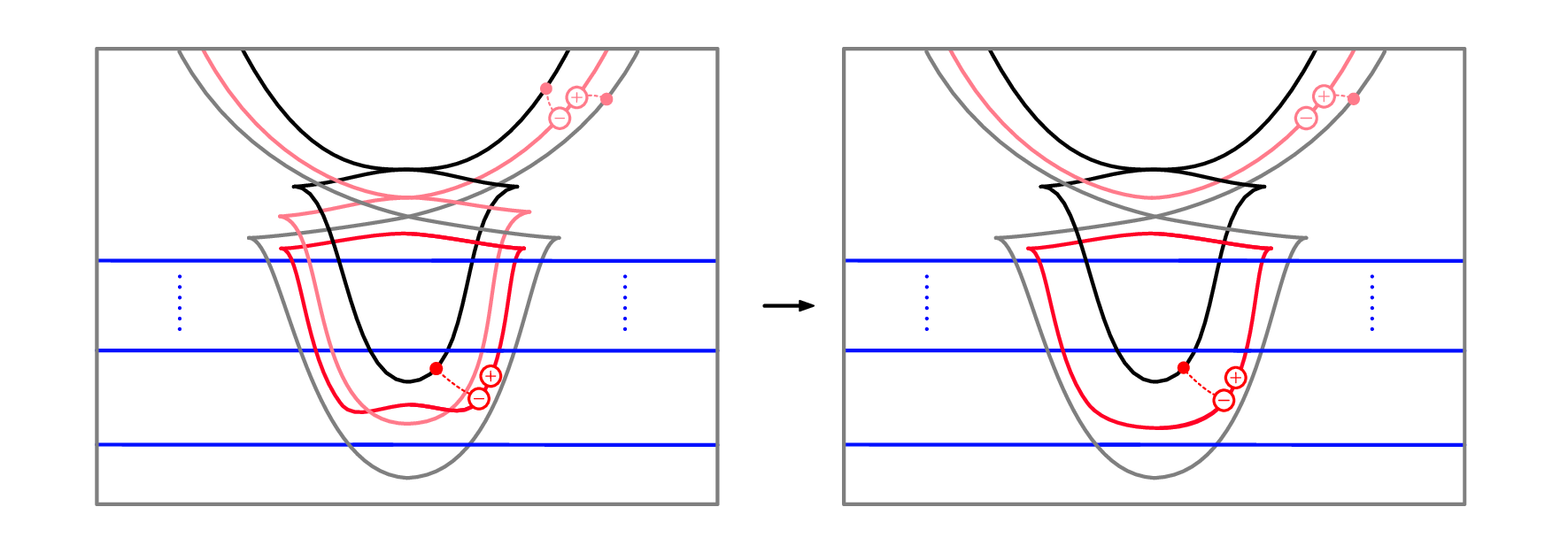}
	    \put(25,1){\small $\Gamma_{1}$}
	    \put(42,4){\small $G_+$}
	    \put(46.5,12.5){\tiny \textcolor{blue}{$(-1)$}}
	    \put(46.5,18){\tiny \textcolor{blue}{$(-1)$}}
	    \put(46.5,6.5){\tiny \textcolor{blue}{$(-1)$}}
	    \put(46.5,26){\tiny $(-1)$}
	    \put(46.5,28){\tiny {\color{red} $(+1)$}}
	    \put(46.5,22){\tiny \textcolor{gray}{$(-1)$}}
	    \put(46.5,24){\tiny \textcolor{Salmon}{$(+1)$}}

	    \put(73,1){\small $\Gamma_{1}$}
	    \put(90,4){\small $G_+$}
	    \put(94,12.5){\tiny \textcolor{blue}{$(-1)$}}
	    \put(94,18){\tiny \textcolor{blue}{$(-1)$}}
	    \put(94,6.5){\tiny \textcolor{blue}{$(-1)$}}
	    \put(94,26){\tiny $(-1)$}
	    \put(94,28){\tiny {\color{red} $(+1)$}}
	    \put(94,22){\tiny \textcolor{gray}{$(-1)$}}
	    \put(94,24){\tiny \textcolor{Salmon}{$(+1)$}}

	\end{overpic}
	\caption{Step 2 of the inductive step in the proof of \autoref{lemma:parallel_strands_bypass}: Sliding the pink up over the red.}
	\label{fig:step2lemma}
\end{figure}

We analyze the effect of the handleslide on the Lagrangian disks. Before the slide, the negative pink disk is a Reeb down-shift $(K')^{-\ve''}$  of the cocore $K'$. The negative red disk on the left side of \autoref{fig:step2lemma} is isotopic to $K^\parallel \uplus_b D_-^{+\ve}$ by Step 1. After the slide, by \autoref{lemma:legendrian_handleslide_n+1_handle}, the negative pink disk is isotopic to
\[
(K^\parallel \uplus_b D_-^{+\ve})^\parallel \uplus_b (K')^{-\ve''}\simeq (D_-^{+\ve})^\parallel\uplus_b(K^\parallel \uplus_b (K')^{-\ve''}).
\]
Since $(K^\parallel \uplus_b (K')^{-\ve''})\simeq (K')^\parallel$ we have 
\[
(K^\parallel \uplus_b D_-^{+\ve})^\parallel \uplus_b (K')^{-\ve''}\simeq (D_-^{+\ve})^\parallel \uplus_b (K')^\parallel,
\]
which is a non-intersecting pushoff of $(D_-^{+\ve})^\parallel$. Note that the new negative pink disk is isotopic to $D_-$ if we delete the handle corresponding to the black Legendrian. See \autoref{fig:step2lemma_low} for the $n=1$ case of Step 2.

The positive pink disk, before the slide, arises from sliding the positive Weinstein cocore of the $n$-handle associated to the $(-1)$-surgery along the lowest blue strand up over the contact $n$-handle attached along the gray Legendrian; indeed, the disk comes from the bypass data $D_+' = K_{k+1}$ in \autoref{fig:bypass2}. By \autoref{lemma:legendrian_handleslide_n+1_handle}, sliding over the positive red disk $D_+$ gives a new positive pink disk isotopic to
\[
D_+ \uplus_b K_{k+1} \cong K_1 \uplus_b \cdots \uplus_b K_k \uplus_b K_{k+1},
\]
followed by a slide up over the gray handle.

\s\n
{\em Step 3: Slide black down over pink.}

\s
Now we slide the black down over the pink, again using \autoref{lemma:legendrian_handleslide_n+1_handle}. At the Legendrian level, the result --- after Kirby moves --- is the black Legendrian on the right side of \autoref{fig:step3lemma}.

\begin{figure}[ht]
	\begin{overpic}[scale=0.53]{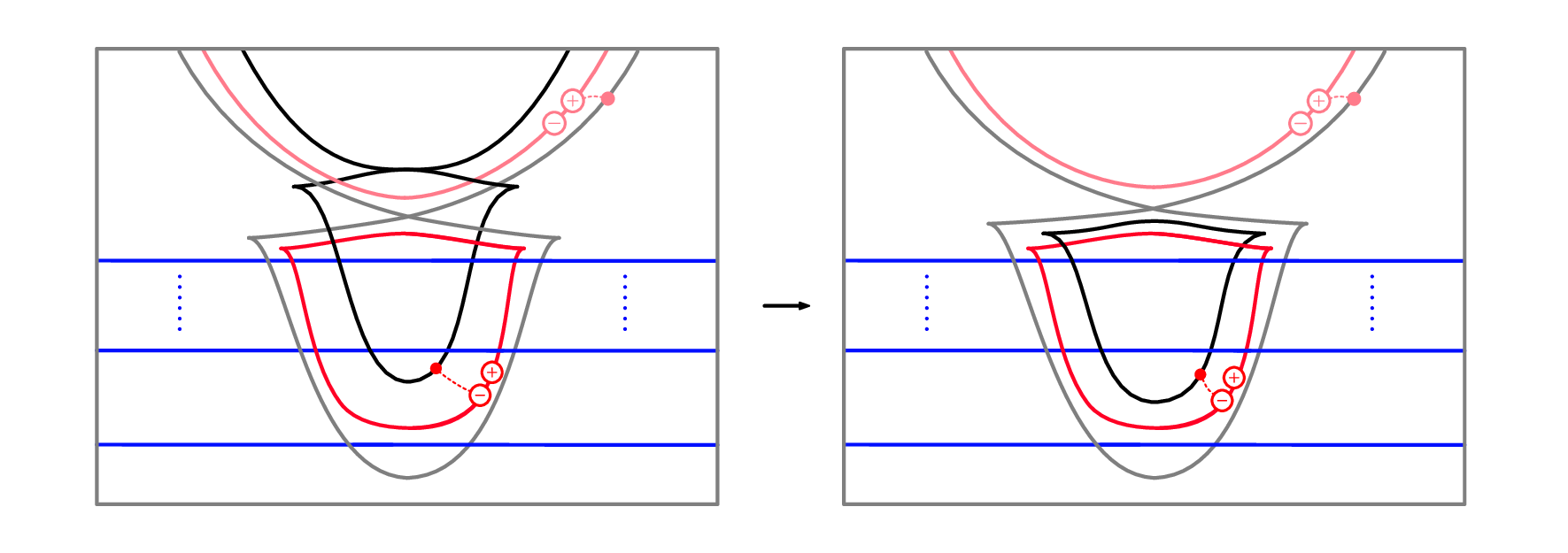}
	    \put(25,1){\small $\Gamma_{1}$}
	    \put(42,4){\small $G_+$}
	    \put(46.5,12.5){\tiny \textcolor{blue}{$(-1)$}}
	    \put(46.5,18){\tiny \textcolor{blue}{$(-1)$}}
	    \put(46.5,6.5){\tiny \textcolor{blue}{$(-1)$}}
	    \put(46.5,26){\tiny $(-1)$}
	    \put(46.5,28){\tiny {\color{red}  $(+1)$}}
	    \put(46.5,22){\tiny \textcolor{gray}{$(-1)$}}
	    \put(46.5,24){\tiny \textcolor{Salmon}{$(+1)$}}

	    \put(73,1){\small $\Gamma_{1}$}
	    \put(90,4){\small $G_+$}
	    \put(94,12.5){\tiny \textcolor{blue}{$(-1)$}}
	    \put(94,18){\tiny \textcolor{blue}{$(-1)$}}
	    \put(94,6.5){\tiny \textcolor{blue}{$(-1)$}}
	    \put(94,26){\tiny $(-1)$}
	    \put(94,28){\tiny {\color{red}  $(+1)$}}
	    \put(94,22){\tiny \textcolor{gray}{$(-1)$}}
	    \put(94,24){\tiny \textcolor{Salmon}{$(+1)$}}

	\end{overpic}
	\caption{Step 3 of the inductive step in the proof of \autoref{lemma:parallel_strands_bypass}: Sliding the black down over the pink.}
	\label{fig:step3lemma}
\end{figure}

The only Lagrangian disk affected by this slide is the negative red disk, which nontrivially intersects the black handle and  is isotopic to $(K')^{-\ve'} \uplus_b D_-^{-\ve}$ by Step 1, where $K'$ is the cocore of the negative Weinstein $n$-handle attached along the black Legendrian. We claim that the effect of this step is that the new negative red disk is a Reeb down-shift $(K'')^{-\ve''}$ of the cocore $K''$ of the handle attached along the new black Legendrian: Indeed, the new negative red disk is obtained by sliding $(K')^{-\ve'}$ and $D_-^{-\ve}$ separately and then taking their boundary connected sum.  This gives $(K'')^{-\ve''} \uplus_b (D_-^{-\ve}\uplus_b (D_-^{-\ve})^\parallel)$, once we recall from Step 2 that the negative pink disk is isotopic to $D_-^{-\ve}$. This then simplifies to 
\[
(K'')^{-\ve''}\uplus_b D'\simeq (K'')^{-\ve''},
\]
where $D'$ is a standard Lagrangian disk filling of a Legendrian unknot. See \autoref{fig:step3lemma_low} for the $n=1$ case of Step 3.

\begin{figure}[ht]
\vskip-.1in
	\begin{overpic}[scale=0.55]{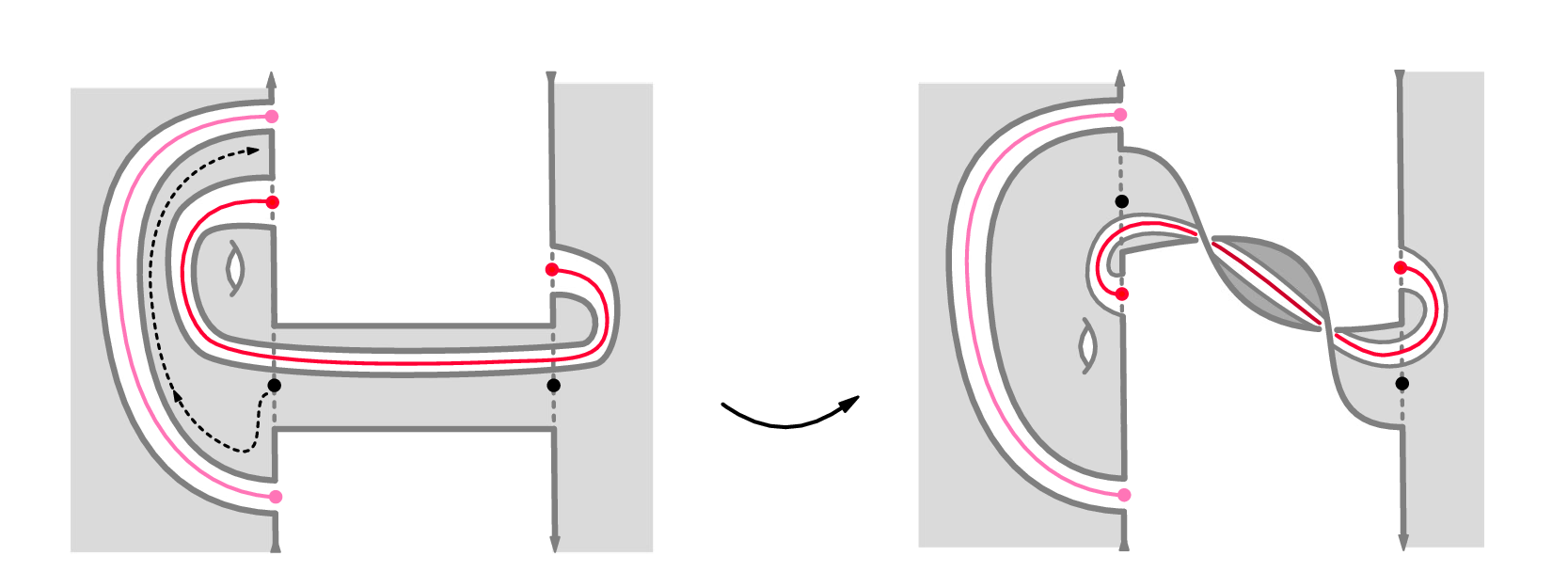}
	  
	\end{overpic}
	\vskip-.05in
	\caption{The $n=1$ version of the maneuver in Step 3, a continuation of \autoref{fig:step2lemma_low}. A neighborhood of the pink arc (a Lagrangian disk in $R_-$) has been removed, corresponding to the $(+1)$-surgery of the dividing curve along the pink Legendrian $S^0$ in \autoref{fig:step3lemma}. We slide the handle attached along the black Legendrian $S^0$ down over this $(+1)$-surgery, the motion being indicated by the dashed arrow on the left. After the slide, the red arc (a Lagrangian disk) is isotopic to the cocore of the (slid) black handle. In particular, the black handle attachment is canceled by the red arc removal.} 
	\label{fig:step3lemma_low}
\end{figure}

\s\n
{\em Step 4: Identify the black and red as a trivial bypass and erase.}

\s
By Step 3 the negative red disk is a downshift of the negative cocore of the black handle. Thus, by \autoref{lemma:trivial_bypass_lemma}, the black and red decorated Legendrians form a trivial bypass and can be erased from the diagram. We are left with the gray and pink decorated Legendrians, which describe a contact $n$-handle and ($n+1$)-handle respectively. Note that these handles correspond to an $R_-$-centric bypass with attaching data described by the conclusion of the lemma in question; see \autoref{fig:strand_lemma}. Indeed, the negative pink disk is isotopic to $D_-$, and the positive pink disk is isotopic to $K_1 \uplus_b \cdots \uplus_b K_{k+1}$ after a slide across the gray handle. This completes the induction and the proof of the lemma. 
\end{proof}

\begin{remark}
There is a dual version of \autoref{lemma:parallel_strands_bypass} that describes a positive strand moving up across a sequence of locally parallel negative strands as a single bypass attachment; see \autoref{fig:dual_strand_lemma}. The details of the statement and proof, omitted here, are completely analogous to those of \autoref{lemma:parallel_strands_bypass}.
\end{remark}

\begin{figure}[ht]
	\begin{overpic}[scale=0.53]{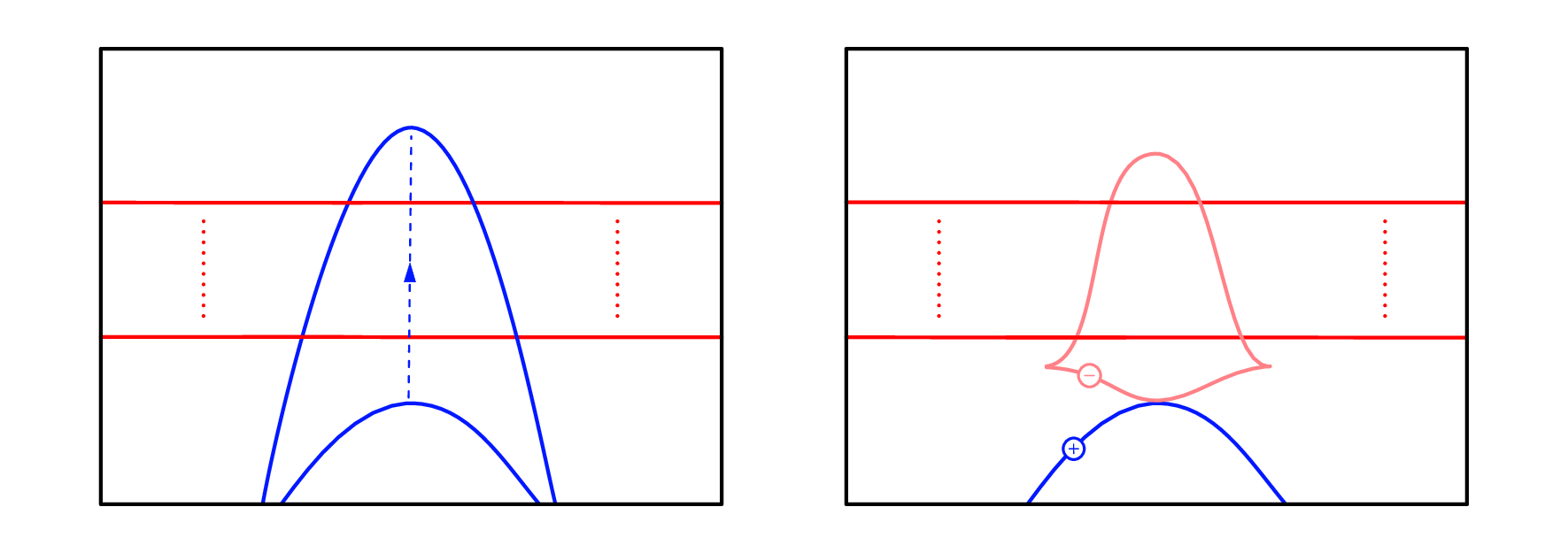}
	    \put(14,33){\small Local retrogradient diagram}
	    \put(26,1){\small $Y$}
	    \put(44,4){\small $Y$}
	    \put(64,33){\small Local bypass diagram}
	    \put(74,1){\small $\Gamma_0$}
	    \put(90,4){\small $G_-$}
	    \put(94,13.5){\tiny {\color{red}  $(-1)$}}
	    \put(94,22){\tiny {\color{red}  $(-1)$}}
	    \put(82,5){\small \textcolor{blue}{$\Lambda_+$}}
	    \put(81,9){\small \textcolor{Salmon}{$\Lambda_-$}}
	\end{overpic}
	\caption{The dual version of \autoref{lemma:parallel_strands_bypass}.}
	\label{fig:dual_strand_lemma}
\end{figure}

\subsection{Bypasses in 2-parameter families}\label{subsection:bypasses_2_parameter}

Armed with \autoref{lemma:parallel_strands_bypass}, we prove \autoref{theorem: sigma times interval}. 

\begin{proof}[Proof of \autoref{theorem: sigma times interval}.]

It suffices to consider each local codimension-$2$ retrogradient bifurcation described by \autoref{prop:codimension_2_bifurcations} in terms of bypasses. In particular, for each (P$x$) in the proposition, we assume that $(\Sigma \times [0,1]_s, \xi_t)$ is a contact manifold which locally models (P$x$), and we prove that the sequences of bypasses for $(\Sigma \times [0,1], \xi_i)$ with $i=0,1$ are the same up to shuffling, adding, and deleting trivial bypasses. We remind the reader that the only situations to analyze when $n=1$ are (P1), (P3),(P3'), (P5), and (P5'). 

\s\n
{\em (P1).} This situation concerns the moment when there are two separate retrogradients occurring simultaneously, both between nondegenerate critical points of index $n$. In the $(s,t)$-parameter space, this corresponds to two regular strands of the $1$-manifold $L$ crossing; see the top right of \autoref{fig:P1}.

If the two simultaneous retrogradients connect two distinct sets of positive and negative critical points, there is nothing to prove; this reverses the order of two unrelated bypass attachments and corresponds to far commutativity. There is more to analyze when the simultaneous retrogradients involve either the same positive and/or negative critical point. 

\begin{figure}[ht]
	\begin{overpic}[scale=0.3]{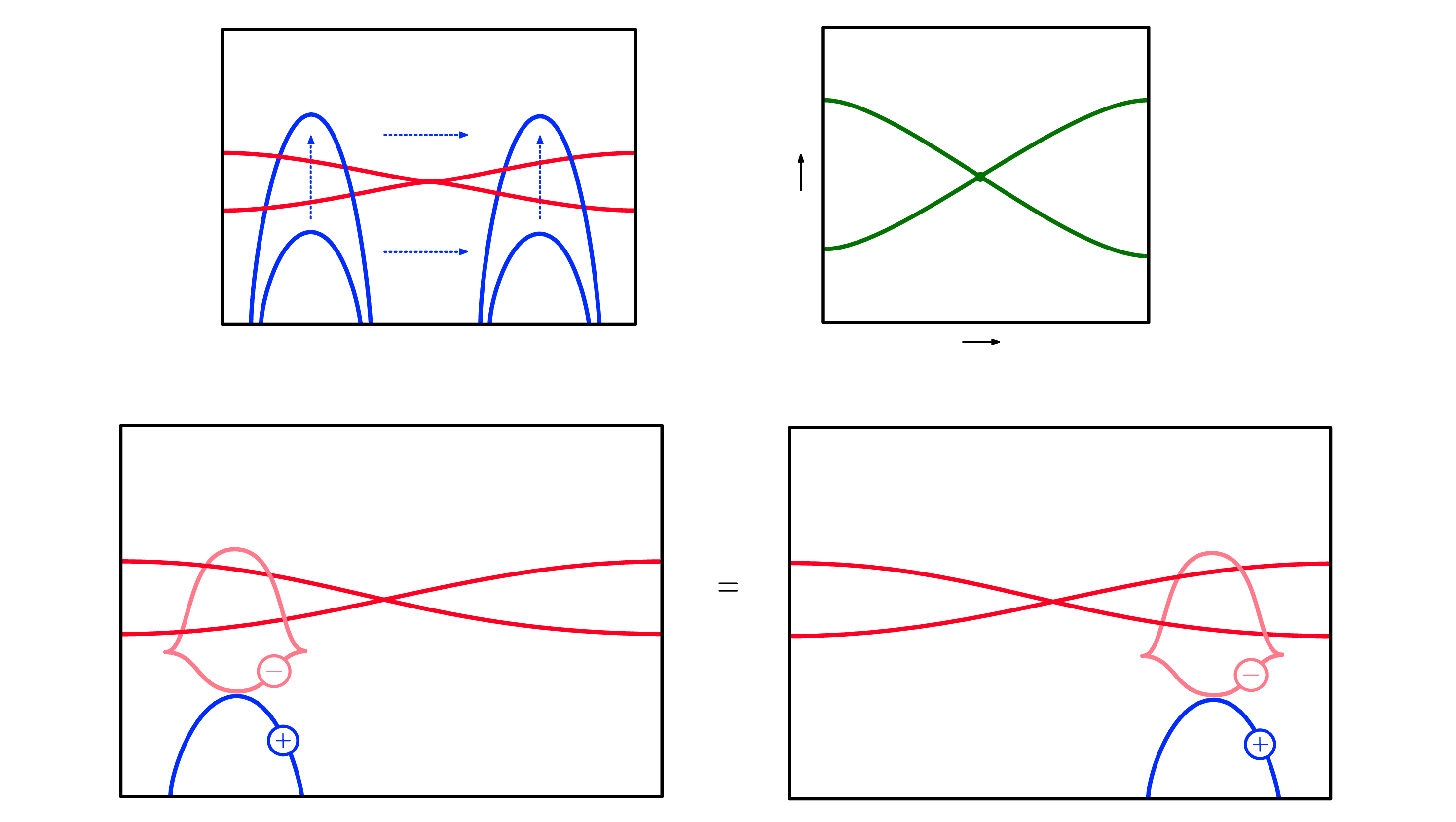}
	    \put(71.5,0){\tiny $\Gamma$}
	    \put(88.25,2.5){\tiny $G_-$}
	    \put(92,12){\tiny {\color{red}  $(-1)$}}
	    \put(92,18){\tiny {\color{red}  $(-1)$}}
	    \put(27,0){\tiny $\Gamma$}
	    \put(42,2.5){\tiny $G_-$}
	    \put(45.75,12){\tiny {\color{red}  $(-1)$}}
	    \put(45.75,18){\tiny {\color{red}  $(-1)$}}
	    \put(29,32.5){\tiny $Y$}
	    \put(41.75,35){\tiny $Y$}
	    \put(69.25,32.5){\tiny $t$}
	    \put(54.5,46.5){\tiny $s$}
	    \put(38.5,28.5){\small (local bypass diagrams)}
	    \put(16,55.5){\small (local retrogradient diagram)}
	    \put(59,55.5){\small ($2$-parameter space)}
	    \put(11.25, 21){\small \textcolor{Salmon}{$(\Lambda_-; D_-)$}}
	    \put(78.5, 21){\small \textcolor{Salmon}{$(\Lambda_-'; D_-')$}}
	\end{overpic}
	\caption{The analysis of (P1) in terms of bypasses. The top left is the local retrogradient diagram and to its right is the corresponding $(s,t)$-parameter space identifying the existence of retrogradients. The regular double point in green precisely (P1). On the bottom are the corresponding bypass diagrams corresponding to the left and right sides of the parameter space, after an application of \autoref{lemma:parallel_strands_bypass}.}
	\label{fig:P1}
\end{figure}

{\em The case $n>1$.} With a specific choice of local model, we can use \autoref{lemma:parallel_strands_bypass}. Assume that the two retrogradients involve the same positive point $p^+$; we allow for either one negative point or two distinct negative points. The remaining case is identical. We may normalize the situation so that it is given by the local retrogradient model given at the top left of \autoref{fig:P1}.  This is possible because the neighborhood of the stable disk of $p^+$, together with the two retrogradients to $p^+$ can be normalized so that the retrogradients are close together. In the top left frame, the vertical arrows correspond to increasing $s$ and the horizontal arrows correspond to increasing $t$. Here we make no assumption about whether the two red Legendrian strands are associated with the same negative point or distinct points. We apply the dual version of \autoref{lemma:parallel_strands_bypass} for $t=0,1$ to witness each sequence of bypasses as a single bypass with the indicated negative data, namely $(\Lambda_-; D_-)$ for $t=0$ and $(\Lambda_-'; D_-')$ for $t=1$. By the lemma, $(\Lambda_-; D_-)$ and $(\Lambda_-'; D_-')$ are both standard fillings of isotopic knots, and hence the resulting bypass data is Hamiltonian isotopic.

{\em The case $n=1$.} In low dimensions we must consider two situations. Note that in the $n>1$ case, our local model deals with the following two cases at the same time because it does not matter from which critical point(s) the red strands originate.

\begin{itemize}
    \item {\em One positive point and two negative points.} Assume that there are two retrogradients involving one positive point and two distinct negative points. We can model this with the far left folded Weinstein surface of \autoref{fig:P1_low_data}. We may then identify the data for the corresponding bypass attachments according to the bypass-bifurcation correspondence given by \autoref{prop:bypass_bifurcation_correspondence}. This gives the local model for bypass attachment data as described by the far right diagram of \autoref{fig:P1_low_data}. In particular, two possible bypass arcs are produced by shifting the blue arc in either (but not both) of the indicated directions until it intersects with either the top or bottom red arc.
    
    \begin{figure}[ht]
	\begin{overpic}[scale=0.45]{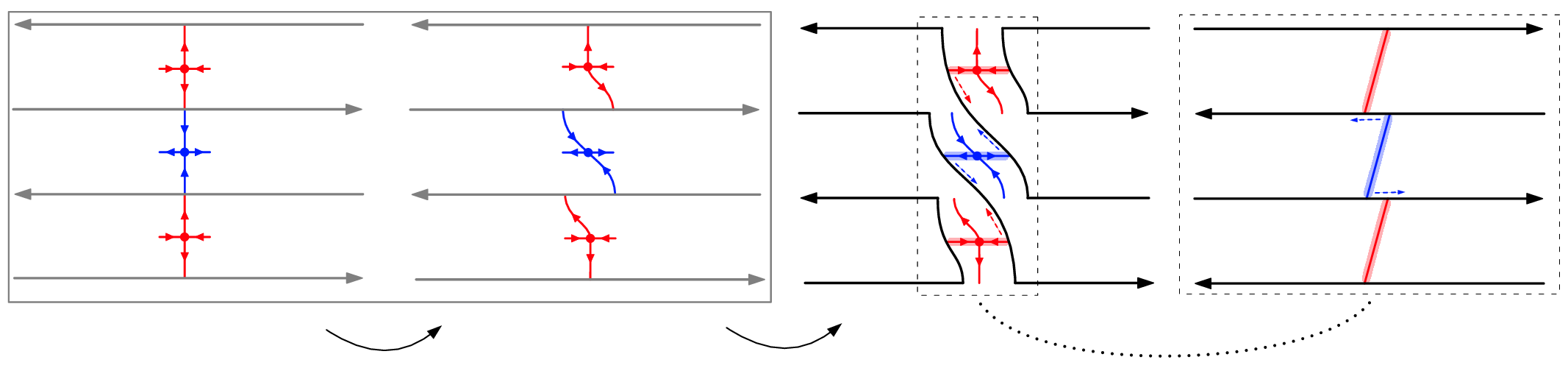}
	    \put(19,-0.5){\tiny before the bypasses}
	    \put(46.5,-0.5){\tiny identify data}
	    \put(19,23.5){\tiny \textcolor{gray}{Folded Weinstein}}
	    \put(72,23.5){\tiny Convex}
	\end{overpic}
	\caption{Identifying bypass data in the $n=1$ version of (P1) when there is one positive point and two negative points.}
	\label{fig:P1_low_data}
    \end{figure}
    
    We may then verify that the two bypass attachments commute. This computation is given by \autoref{fig:P1_low}.

    \begin{figure}[ht]
	\begin{overpic}[scale=0.4]{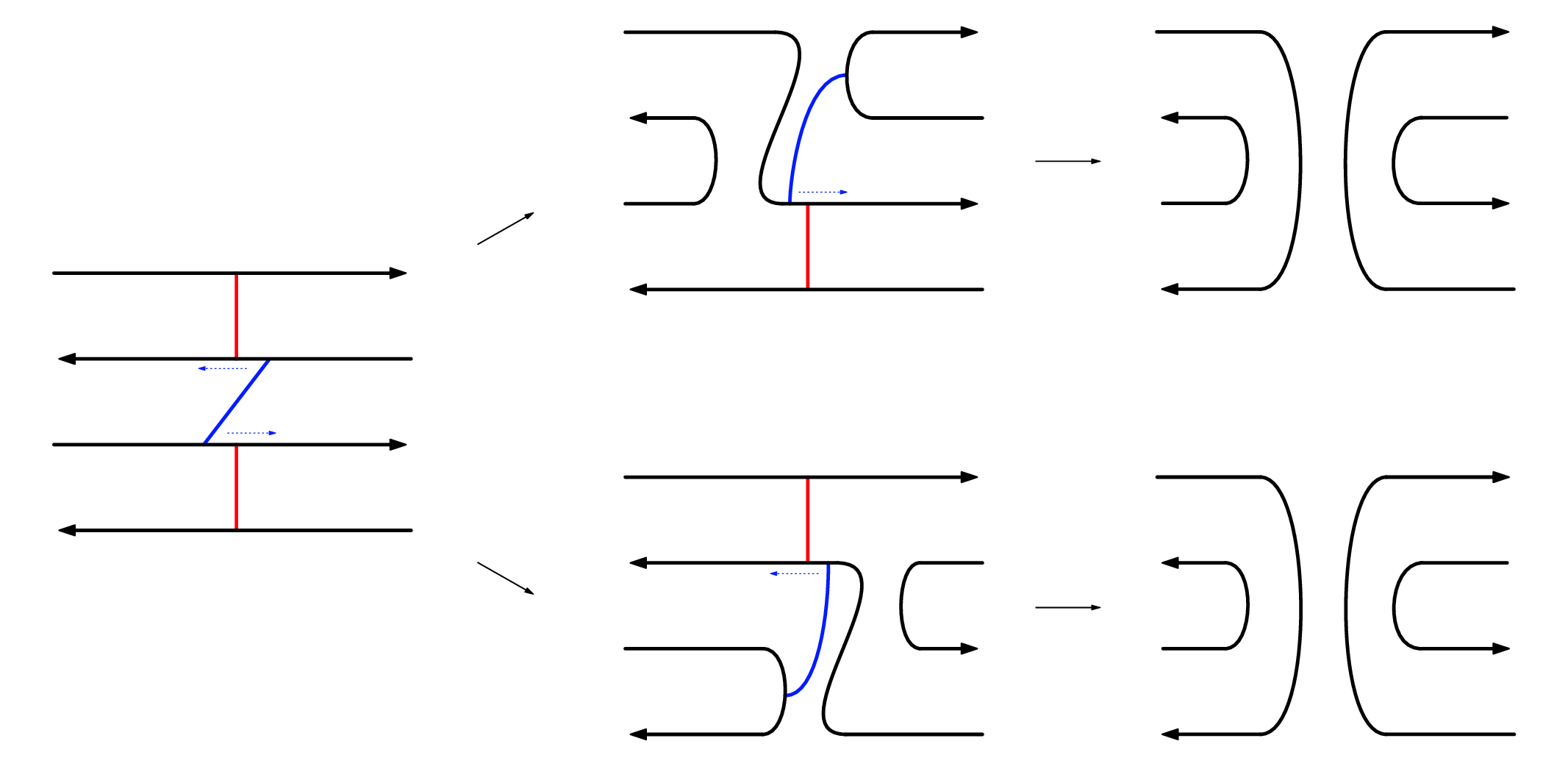}
	    \put(27, 37.5){\small top bypass}
	    \put(25, 7.75){\small bottom bypass}
	    \put(64.75, 39.5){\small bottom}
	    \put(65, 36.75){\small bypass}
	    \put(66.5, 11.25){\small top}
	    \put(65, 8.25){\small bypass}
	    
	\end{overpic}
	\caption{The $n=1$ version of (P1) when there is one positive point and two negative points. The data for the two commuting bypasses is identified on the left and comes from \autoref{fig:P1_low_data}.}
	\label{fig:P1_low}
    \end{figure}
    
    The case of one negative point and two positive points is identical. 
    
    \begin{figure}[ht]
	\begin{overpic}[scale=0.45]{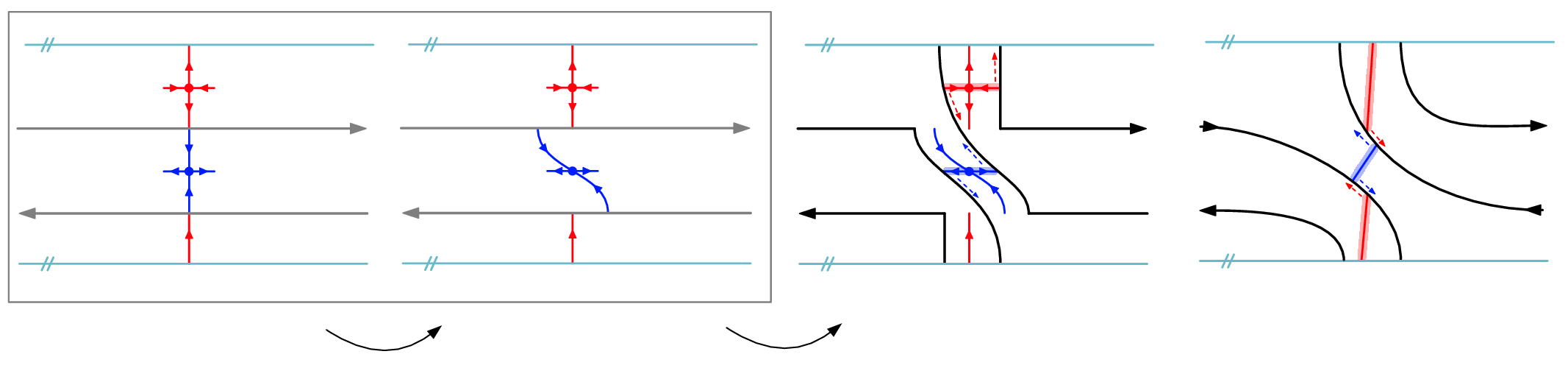}
	    \put(19,-0.5){\tiny before the bypasses}
	    \put(46.5,-0.5){\tiny identify data}
	    \put(19,23.5){\tiny \textcolor{gray}{Folded Weinstein}}
	    \put(72,23.5){\tiny Convex}
	\end{overpic}
	\caption{Identifying bypass data in the $n=1$ version of (P1) when there is one positive point and one negative point. In all diagrams the light blue lines are identified.}
	\label{fig:P1_low_data2}
    \end{figure}
    
    \item {\em One positive point and one negative point.} Here we assume there are two retrogradients, both connecting the same negative point to the same positive point. The analysis is similar to the previous case. In \autoref{fig:P1_low_data2} we model this double retrogradient with the folded Weinstein surface on the far left. The light blue lines are identified. We again use \autoref{prop:bypass_bifurcation_correspondence} to identify the bypass data corresponding to each retrogradient. After isotopy, this gives us a local model for the two bypass attachments as described by the far right diagram of \autoref{fig:P1_low_data2}.
    
    We then verify that the two bypasses commute in \autoref{fig:P1_low2}. 
    
    \begin{figure}[ht]
	\begin{overpic}[scale=0.45]{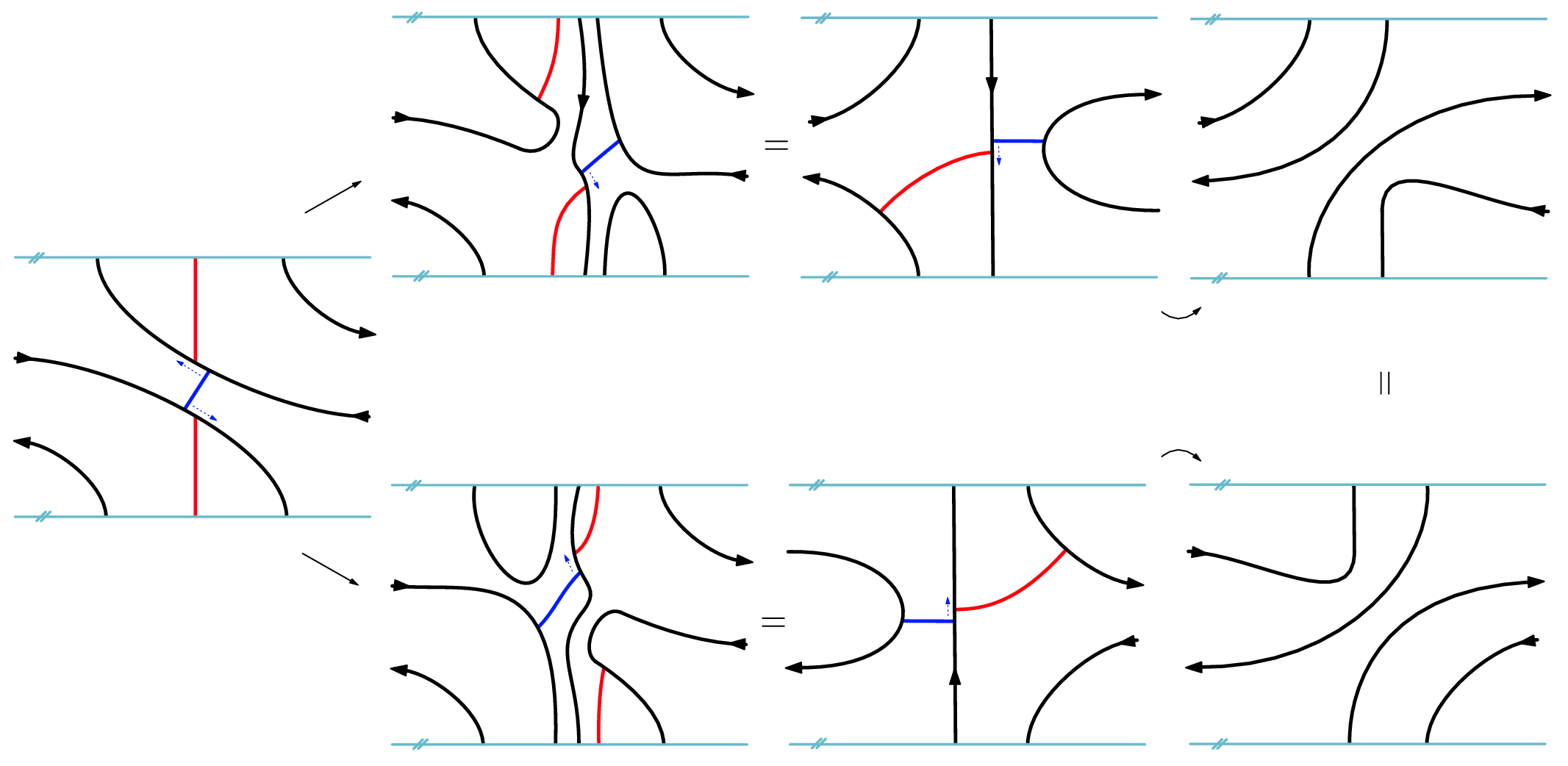}
	    \put(12, 37){\small top bypass}
	    \put(8, 11){\small bottom bypass}
	    \put(69, 26.25){\small bottom bypass}
	    \put(71, 21.5){\small top bypass}
	\end{overpic}
	\caption{The $n=1$ version of (P1) when there is one positive point and one negative point. The data for the two commuting bypasses is identified on the left and comes from \autoref{fig:P1_low_data2}. The second column to to the third column is an isotopy of the dividing set and the red arc for the sake of simplifying the picture.}
	\label{fig:P1_low2}
    \end{figure}
\end{itemize}

\s\n
{\em (P2).} We model (P2) with the local retrogradient diagram in \autoref{fig:P2}. The retrogradient occurring at the cusp of the red Legendrian in the local retrogradient diagram is precisely the moment when the stable and unstable manifolds of the critical points are not transverse; this is easily seen in the Lagrangian projection. 

For $t=1$, there are no retrogradients and hence no bypasses. In particular, $(\Sigma\times[0,1], \xi_1)$ is a vertically invariant contact structure. We therefore need to show that the two retrogradients for $t=0$ correspond to bypass attachments that yield a vertically invariant contact structure. By \autoref{lemma:parallel_strands_bypass}, the sequence of bypass attachments is equivalent to a single bypass with data given by the lower left diagram of \autoref{fig:P2}. After isotopy, the negative data is a standard Lagrangian disk filling of the standard unknot positioned above the positive data, and hence the resulting bypass is trivial.

\begin{figure}[ht]
	\begin{overpic}[scale=0.3]{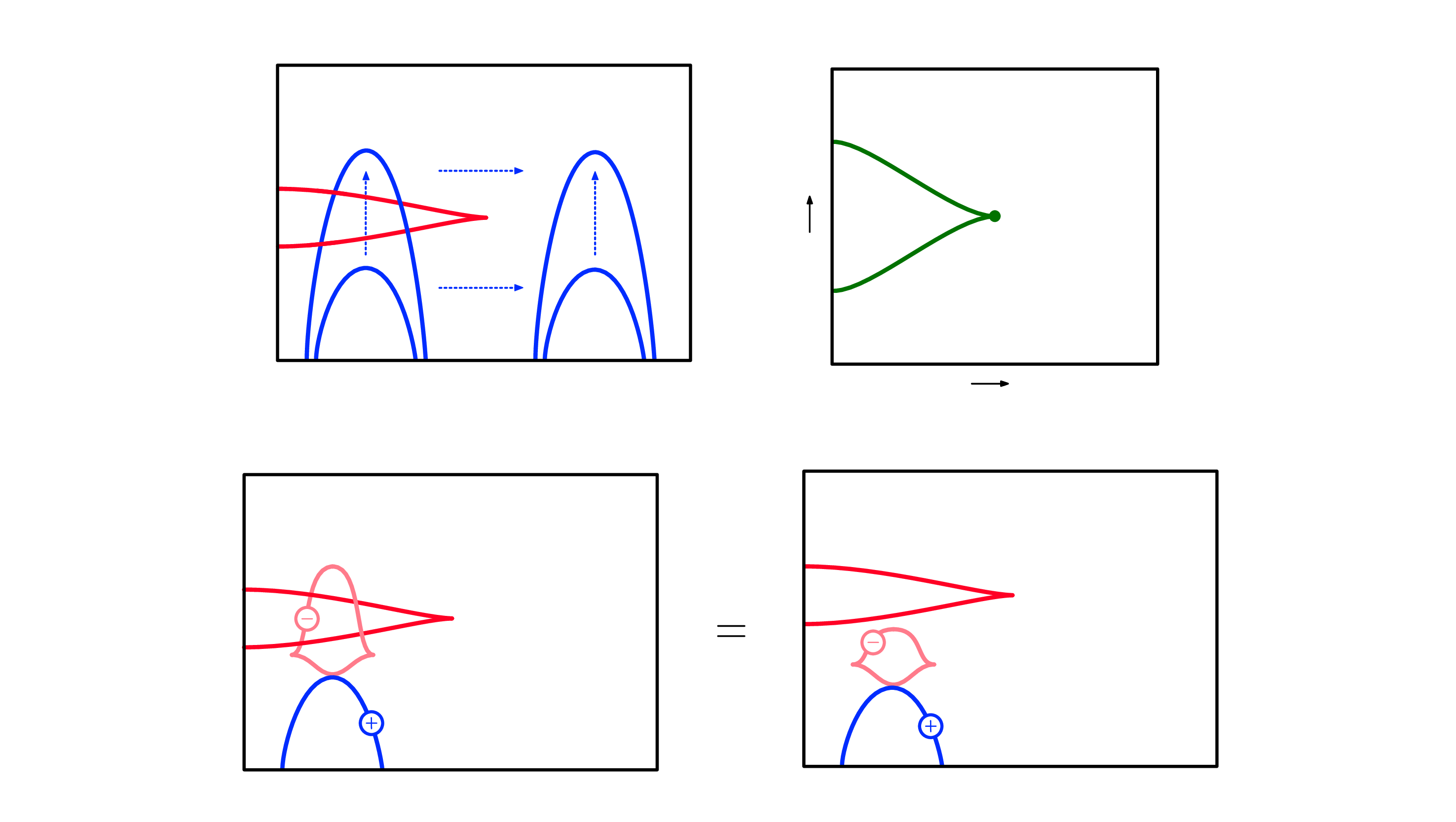}
	    \put(69.5,1.5){\tiny $\Gamma$}
	    \put(80.25,4.5){\tiny $G_-$}
	    \put(79,13){\tiny {\color{red}  $(-1)$}}
        \put(30.25,1.5){\tiny $\Gamma$}
	    \put(42,4.5){\tiny $G_-$}
	    \put(40.75,13){\tiny {\color{red}  $(-1)$}}
	    \put(32,30){\tiny $Y$}
	    \put(45.75,32.5){\tiny $Y$}
	    \put(70,29.75){\tiny $t$}
	    \put(55.25,43.75){\tiny $s$}
	    \put(38.5,25.5){\small (local bypass diagrams)}
	    \put(19.75,53){\small (local retrogradient diagram)}
	    \put(59,53){\small ($2$-parameter space)}
	\end{overpic}
	\vskip-.05in
	\caption{The analysis of (P2) in terms of bypasses. \autoref{lemma:parallel_strands_bypass} provides the bottom left figure and the bottom right is obtained by isotoping the negative data.}
	\label{fig:P2}
\end{figure}

There are other occurrences of (P2) that correspond to reflecting the retrogradient diagram in \autoref{fig:P2} across the vertical axis, and also by reflecting across the horizontal axis together with a change in the role of positive and negative data. The analysis for each of these is completely analogous with appropriate applications of \autoref{lemma:parallel_strands_bypass} and its dual. 

\s\n
{\em (P3) and (P3').} We analyze (P3) explicitly; (P3') is completely analogous. Suppose there is a single retrogradient from a negative nondegenerate point $p^-$ of index $n$ to a positive birth-death point of index $(n-1, n)$. Because of the birth-death point we may assume that for $t=0$ there are no retrogradients, and hence $(\Sigma \times [0,1], \xi_0)$ is $s$-invariant and that for $t=1$ there is a single retrogradient between $p^-$ and the birthed positive index $n$ point $p^+$. We prove that this retrogradient corresponds to a trivial bypass. 

{\em The case $n>1$.} The idea is the same as the proof of \autoref{lemma:parallel_strands_bypass}. The positive data of the corresponding bypass attachment is the belt sphere and cocore $(\Lambda_+; D_+)$ of the positive Weinstein $n$-handle associated to $p^+$. As in \autoref{lemma:cocore_lemma}, we isotop this pair to disconnect it from the handle. Since $p^+$ and the birthed positive point of index $n-1$ cancel, their corresponding handles can be canceled, leaving a standard Legendrian unknot with a standard Lagrangian disk as the positive bypass data. We can further assume after isotopy that it is positioned below the negative data. Thus, the bypass is trivial, as desired. See \autoref{fig:P3}. 

\begin{figure}[ht]
	\begin{overpic}[scale=0.32]{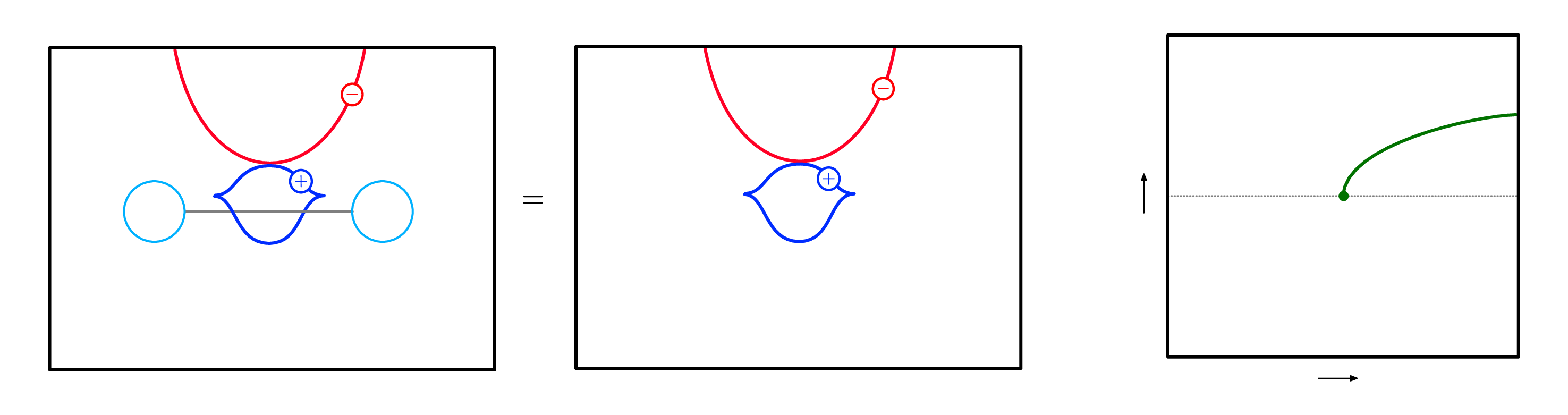}
	    \put(50.5,1){\tiny $\Gamma$}
	    \put(63,3.5){\tiny $\Gamma$}
	    \put(15.5,1.25){\tiny $\Gamma$}
	    \put(24.5,3.75){\tiny $G_+ = \Gamma$}
	    \put(27.5,12.5){\tiny \textcolor{gray}{$(-1)$}}
	    \put(87,2){\tiny $t$}
	    \put(72.5,16){\tiny $s$}
	    \put(23.5,25){\small (local bypass diagrams)}
	    \put(76.5,25){\small ($2$-parameter space)}
	\end{overpic}
	\vskip-.05in
	\caption{The analysis of (P3) in terms of bypasses. On the left, the $(-1)$-surgered gray arc and the light blue circles represent a canceling pair of $n$- and ($n-1$)-handles, and the decorated blue Legendrian represents the cocore of the gray handle. On the right is the $2$-parameter space. The dashed gray line indicates the presence of a birth-death point. Above represents the region where the canceling points live, and below is the region where they die.}
	\label{fig:P3}
\end{figure}

{\em The case $n=1$.} The low-dimensional case is described by \autoref{fig:P3_low}. The $2$-parameter family of folded Weinstein diagrams in the gray box depicts the positive birth-death point in the second column and the pair of canceling critical points in the third column. As in Figures \ref{fig:P1_low_data} and \ref{fig:P1_low_data2}, using \autoref{prop:bypass_bifurcation_correspondence} we use the lower right folded Weinstein diagram to identify the bypass data corresponding to the bypass described by moving up the third column. Observe that the resulting positive blue arc is a standard arc that sits below the red arc, and hence the bypass is trivial.  

\begin{figure}[ht]
\vskip.1in
	\begin{overpic}[scale=0.58]{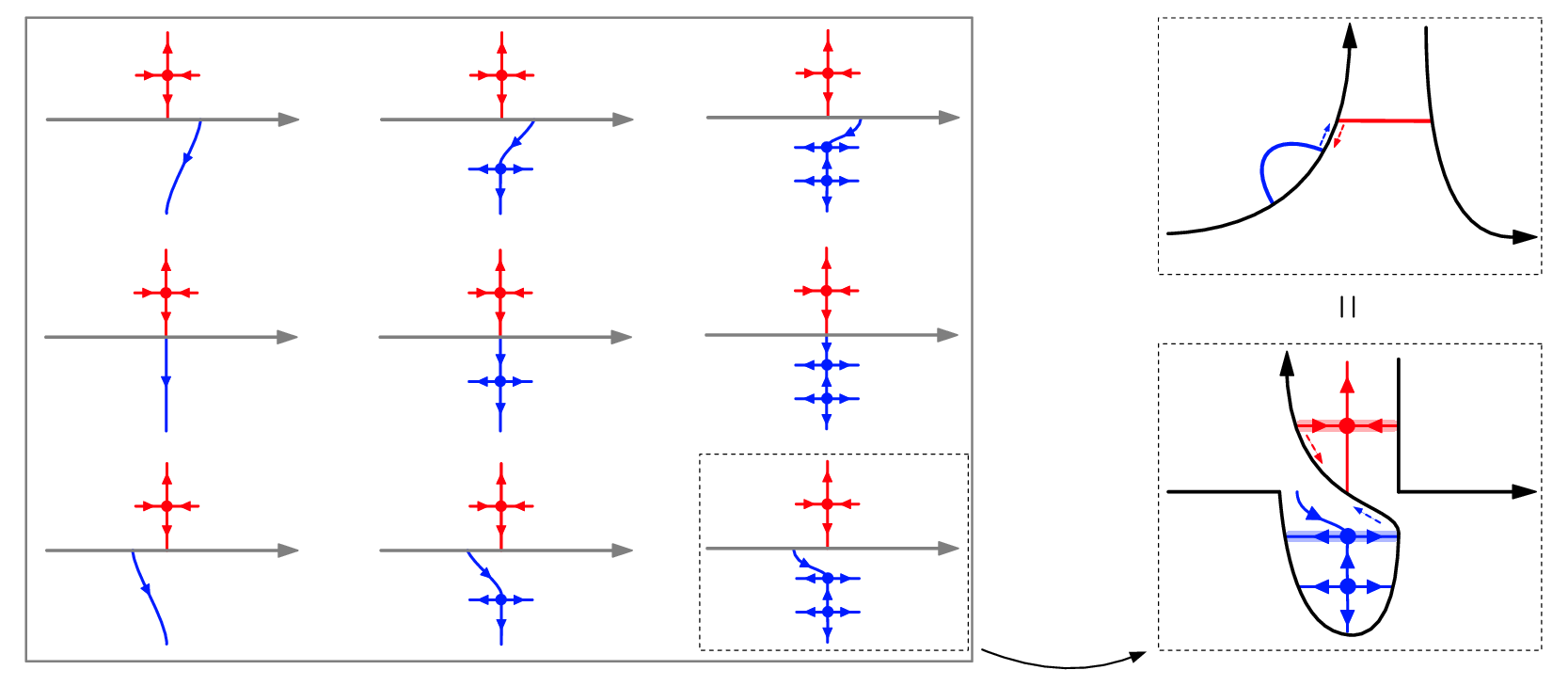}
	    \put(65.5,5){\tiny identify}
	    \put(65.75,3.5){\tiny bypass}
	    \put(66.5,2){\tiny data}
	    \put(26.5,43.5){\tiny \textcolor{gray}{Folded Weinstein}}
	    \put(84,43.5){\tiny Convex}
	\end{overpic}
	\caption{The $n=1$ version of (P3). The resulting bypass arc described by the top right figure gives a trivial bypass.}
	\label{fig:P3_low}
\end{figure}

\s\n
{\em (P4) and (P4').} We analyze (P4) explicitly; (P4') is completely analogous. Assume that there is a single retrogradient from a negative nondegenerate point $p_n^-$ of index $n$ to a positive nondegenerate point $p_{n-1}^+$ of index $n-1$. On either side of this retrogradient in $2$-parameter space, there may be nearby retrogradients from $p_n^-$ to other positive critical points of index $n$, and this is the behavior that needs to be analyzed in terms of bypass attachments. 

\begin{figure}[ht]
	\begin{overpic}[scale=0.33]{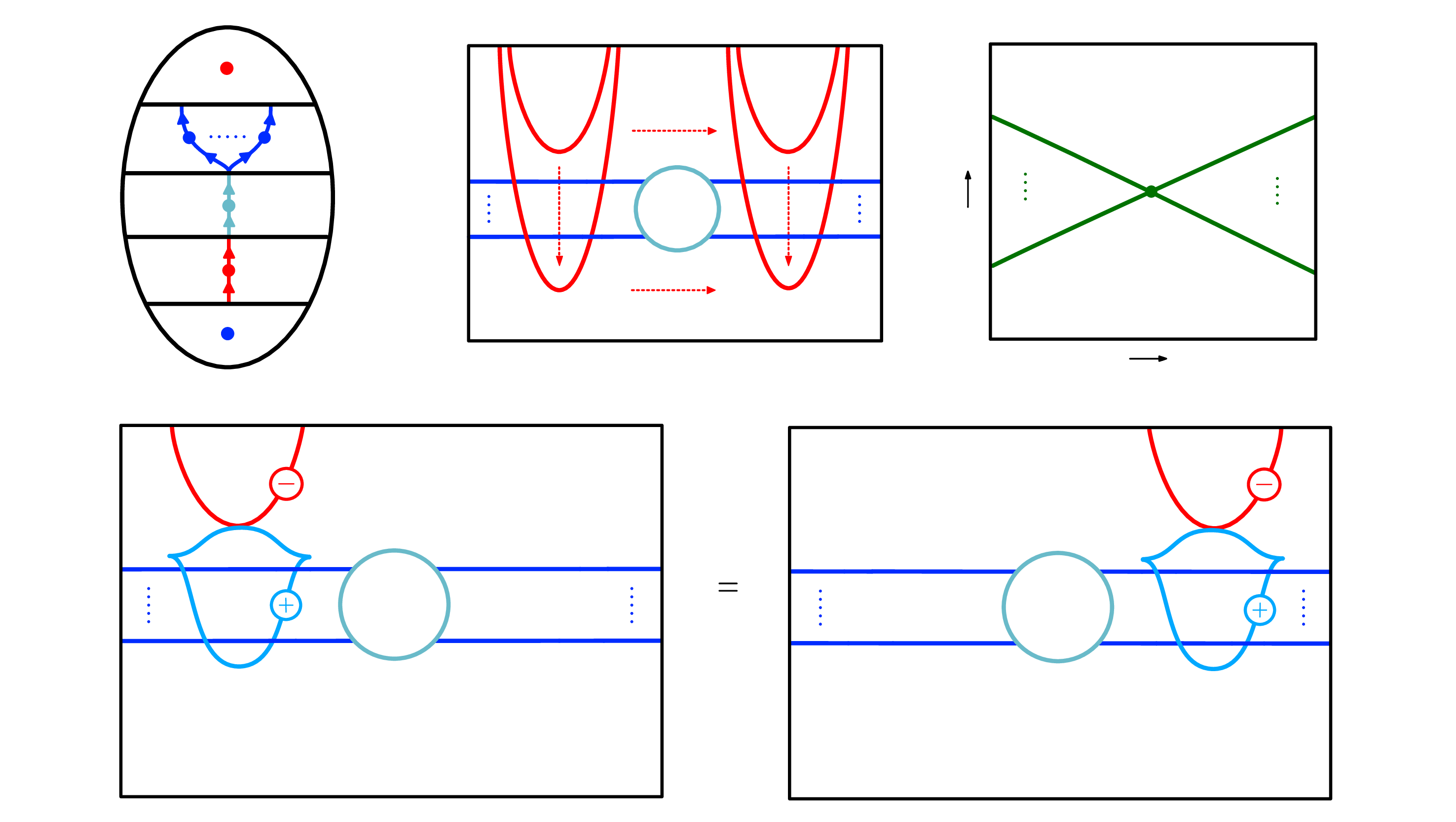}
	    \put(71.5,0){\tiny $\Gamma$}
	    \put(88.25,2.5){\tiny $G_+$}
	    \put(92,12){\tiny \textcolor{blue}{$(-1)$}}
	    \put(92,17.5){\tiny \textcolor{blue}{$(-1)$}}
	    \put(27,0){\tiny $\Gamma$}
	    \put(43,2.5){\tiny $G_+$}
	    \put(45.75,12){\tiny \textcolor{blue}{$(-1)$}}
	    \put(45.75,17.5){\tiny \textcolor{blue}{$(-1)$}}
	    \put(46,31.25){\tiny $Y'$}
	    \put(58.25,33.75){\tiny $Y$}
	    \put(80.5,31.5){\tiny $t$}
	    \put(66,45.25){\tiny $s$}
	    \put(39.5,28.5){\small (local bypass diagrams)}
	    \put(7.5,56.25){\small $\Sigma$ (folded Weinstein)}
	    \put(34.5,55){\small (local retrogradient diagram)}
	    \put(70.5,55){\small ($2$-parameter space)}
	    \put(82, 49){\tiny \textcolor{Green}{$k_1$ strands}}
	    \put(69.5, 49){\tiny \textcolor{Green}{$k_0$ strands}}
	    \put(9.5,46.5){\tiny \textcolor{blue}{$p_{n,1}^+$}}
	    \put(18.75,46.25){\tiny \textcolor{blue}{$p_{n,K}^+$}}
	    \put(17,42){\tiny \textcolor{Cerulean}{$p_{n-1}^+$}}
	    \put(17,37.5){\tiny {\cbu  $p_{n}^-$}}
	    \put(23.5,39.5){\tiny $Y$}
        \put(23.5,44){\tiny $Y'$}
        \put(22.25,35){\tiny $G_+$}
	\end{overpic}
	\caption{The analysis of (P4) in terms of bypasses. In the top middle frame, the (downward) vertical arrows correspond to increasing $s$ and the horizontal arrows correspond to increasing $t$.} 
	\label{fig:P4}
\end{figure}

Indeed, let $p_{n,1}^+, \dots, p_{n, K}^+$ be the positive critical points of index $n$ such there are (possibly multiple) trajectories $p_{n-1}^+ \rightsquigarrow p_{n,k}^+$ for each $1\leq k\leq K$. Let $G_+\subseteq R_+$ be a regular contact level set below $p_{n-1}^+$ and $p_{n,1}^+, \dots, p_{n, K}^+$. We can then view and model this assumption in a Legendrian surgery diagram as follows: The attaching region of the Weinstein ($n-1$)-handle associated to $p_{n-1}^+$ is given by the light blue disk in the diagrams in \autoref{fig:P4}, again with the spinning convention of \cite{casals2019legendrianfronts}.
The attaching spheres of the Weinstein $n$-handles associated to $p_{n,1}^+, \dots, p_{n, K}^+$ are given by the $\tilde{K}$ blue strands. The assumption that there are trajectories $p_{n-1}^+ \rightsquigarrow p_{n,k}^+$ means that the blue strands intersect and pass through the attaching region of the ($n-1$)-handle. We allow $\tilde{K}\geq K$ to account for the possibility that there are multiple trajectories $p_{n-1}^+ \rightsquigarrow p_{n,k}^+$ for a fixed $k$, and after localizing we may assume there are $K_0$ strands on the left and $K_1$ on the right, so that $\tilde{K} = K_0 + K_1$. 

By a gluing argument, the (P4) retrogradient $p_{n-1}^+ \rightsquigarrow p_{n,k}^+$ forms broken flow lines $p_{n-1}^+ \rightsquigarrow p_{n,k}^+ \rightsquigarrow p_{n,k}^+$ that may be perturbed into genuine flow lines. This is witnessed by the local retrogradient diagram in \autoref{fig:P4}. On either side of the (P4) retrogradient $p_{n-1}^+ \rightsquigarrow p_{n,k}^+$, the sequences of bypasses is given as a single bypass by \autoref{lemma:parallel_strands_bypass}, yielding the two local bypass diagrams in the bottom row of \autoref{fig:P4} with isotopic data. This shows that both sequences of bypasses for $t=0,1$ are equivalent, completing the analysis of (P4) and the proof of \autoref{theorem: sigma times interval}.

\s\n
{\em  (P5) and (P5').} Finally we analyze (P5) and (P5'), which consider broken retrogradients of the form $p_1^- \rightsquigarrow p_2^- \rightsquigarrow p^+$ and $p^- \rightsquigarrow p_1^+ \rightsquigarrow p_2^+$, where all critical points are of index $n$.

{\em The case $n>1$.} For the high-dimensional case we consider (P5'). As usual, the case of (P5) is identical. We thus assume there is a broken retrogradient $p^- \rightsquigarrow p_1^+ \rightsquigarrow p_2^+$ as in the top left diagram of \autoref{fig:P5}. A generic perturbation of this broken retrogradient is depicted by the top middle diagram in the figure, which is based in $Y$ and presents $Y''$. There are two vertical axes of local spin symmetry in the front, which coincide with the maximum and minimum height of the light blue Legendrian. The retrogradient $p^-\rightsquigarrow p_1^+$ occurs when the Legendrian $\Lambda_-$, the ascending sphere of $p^-$ in $Y$, intersects $\Lambda_{+,1}$, the attaching sphere of $p_1^+$ in $Y$. The trajectory $p_1^+ \rightsquigarrow p_2^+$ corresponds to the handleslide of $\Lambda_{+,2}$, the attaching sphere of $p_{2}^+$, up over the surgered $\Lambda_{+,1}$. In the $2$-parametric family, the arrow indicating the motion of $\Lambda_-$ occurring in $Y$ corresponds to increasing $s$, and the arrow indicating the handleslide, corresponds to increasing $t$. Consequently, for $t=0$, before the handleslide, there is one retrogradient and hence one bypass, while for $t=1$ there are two retrogradients and hence two bypasses.

The corresponding bypass diagrams in the bottom row are furnished by \autoref{lemma:parallel_strands_bypass}, as usual. After sliding $\Lambda_{+,2}$ over $\Lambda_{+,1}$ on the left, the bypass diagrams become equal, hence the bypass sequences are the same up to an additional trivial bypass on the left.

\begin{figure}[ht]
	\begin{overpic}[scale=0.33]{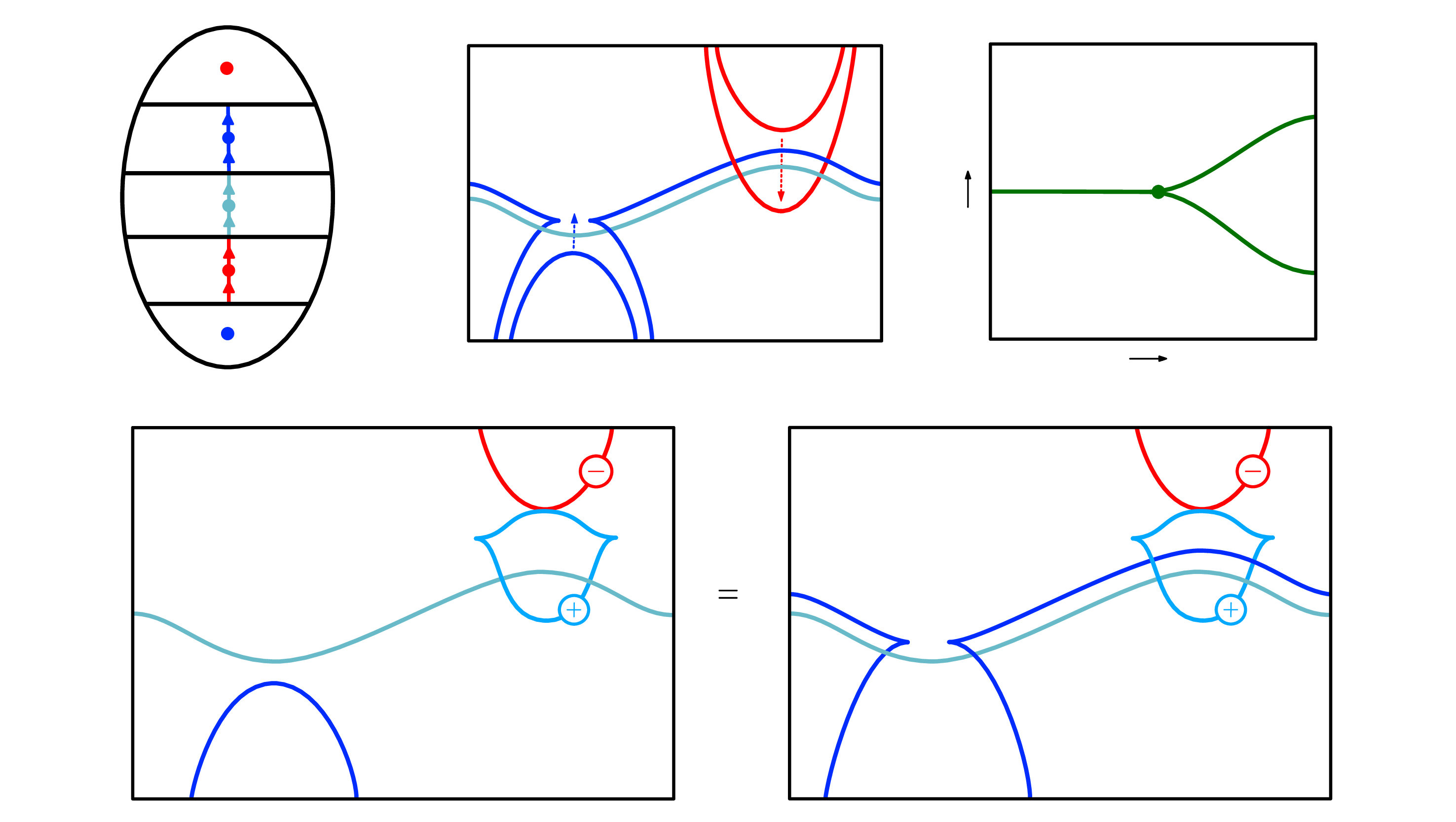}
	    \put(71.5,0){\tiny $\Gamma$}
	    \put(88.25,2.5){\tiny $G_+$}
	    \put(92,13.25){\tiny \textcolor{Cerulean}{$(-1)$}}
	    \put(92,15.75){\tiny \textcolor{blue}{$(-1)$}}
	    \put(27,0){\tiny $\Gamma$}
	    \put(43,2.5){\tiny $G_+$}
	    \put(46.75,13.25){\tiny \textcolor{Cerulean}{$(-1)$}}
	    \put(46.75,11){\tiny \textcolor{blue}{$(-1)$}}
	    \put(46,31.25){\tiny $Y''$}
	    \put(58.25,33.75){\tiny $Y$}
	    \put(80.5,31.5){\tiny $t$}
	    \put(66,45.25){\tiny $s$}
	    \put(39.5,28.5){\small (local bypass diagrams)}
	    \put(7.5,56.25){\small $\Sigma$ (folded Weinstein)}
	    \put(34.5,55){\small (local retrogradient diagram)}
	    \put(70.5,55){\small ($2$-parameter space)}
	    \put(17,46.75){\tiny \textcolor{blue}{$p_{2}^+$}}
	    \put(17,42.5){\tiny \textcolor{Cerulean}{$p_{1}^+$}}
	    \put(17,37.5){\tiny {\cbu  $p_{n}^-$}}
	    \put(23.5,39.5){\tiny $Y$}
        \put(23.5,44){\tiny $Y'$}
        \put(22.5,48.75){\tiny $Y''$}
        \put(22.25,35){\tiny $G_+$}
        \put(61,42){\tiny \textcolor{Cerulean}{$(-1)$}}
        \put(61,44){\tiny \textcolor{blue}{$(-1)$}}
        \put(52.5,50){\small \textcolor{red}{$\Lambda_-$}}
        \put(37.5,35){\small \textcolor{blue}{$\Lambda_{+,2}$}}
        \put(45.5,40){\small \textcolor{Cerulean}{$\Lambda_{+,1}$}}
	\end{overpic}
	\caption{The analysis of (P5') in terms of bypasses. In the top middle frame, the red arrow corresponds to increasing $s$ and the blue arrow corresponds to increasing $t$.} 
	\label{fig:P5}
\end{figure}

{\em The case $n=1$.} For the low-dimensional case we instead analyze (P5). The situation is summarized by \autoref{fig:P5_low}, which is more complicated but analogous to, for instance, \autoref{fig:P3_low}. The gray box in the center of \autoref{fig:P5_low} depicts the generic local model of a (P5) broken retrogradient in the folded Weinstein setting; this is given by the diagram surrounded by the light blue box. Surrounding this light blue box is the generic $2$-parameter perturbation, where from left to right we perturb the $p_1^- \rightsquigarrow p_2^-$ negative trajectory and from bottom to top we perturb the $p_2^- \rightsquigarrow p^+$ retrogradient trajectory. Note that, as a result, the left side of the family exhibits one retrogradient from bottom to top, and the right side exhibits two. As in \autoref{fig:P3_low} we identify the initial bypass data for each family in the lower row and compute the correspond bypass attachments. It follows that the left and right bypass sequences are the same, up to the addition of a trivial bypass on the left. Note that this can be viewed as an instance of bypass rotation.
\begin{figure}[ht]
\vskip.1in
	\begin{overpic}[scale=0.58]{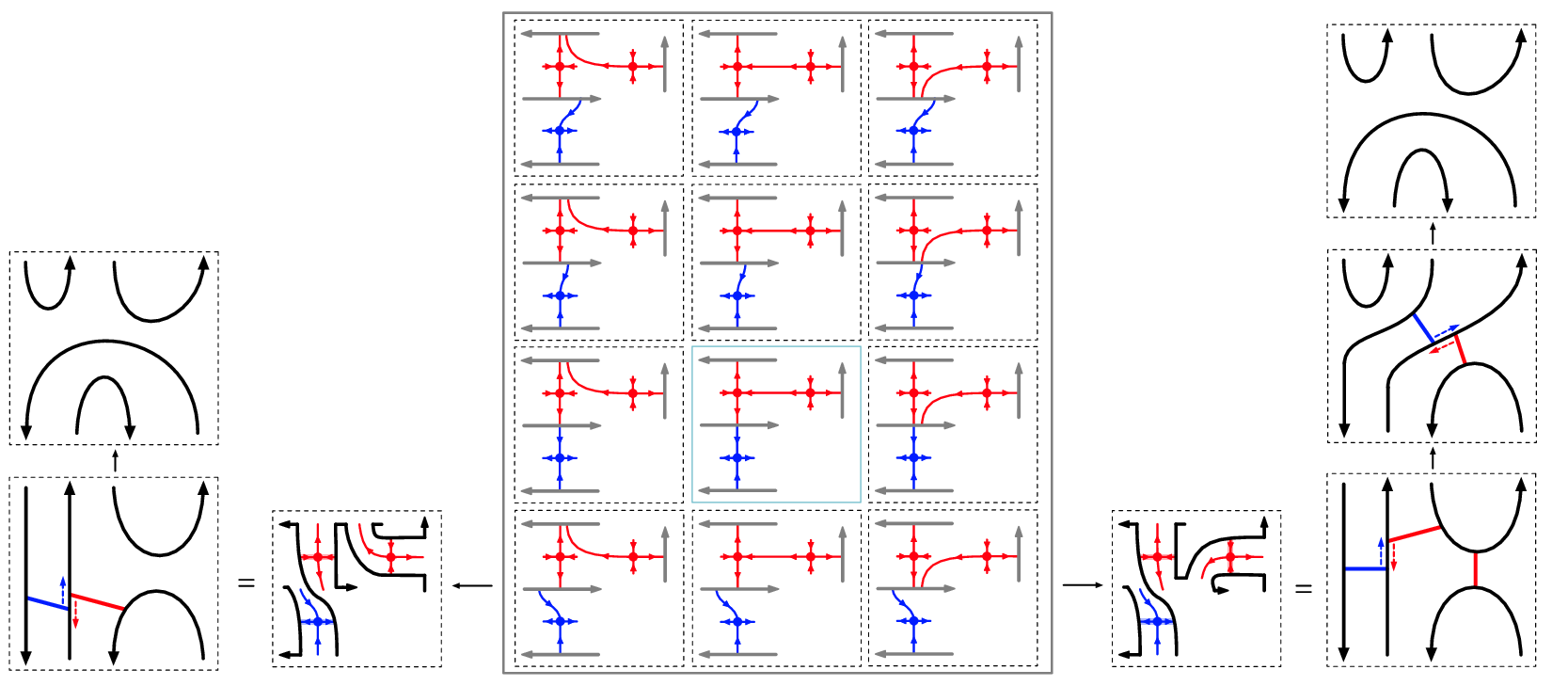}
	    \put(70.5,0){\tiny identify bypass data}
	    \put(17,0){\tiny identify bypass data}
	    \put(44,43.5){\tiny \textcolor{gray}{Folded Weinstein}}
	    \put(89,43){\tiny Convex}
     \put(5,28.5){\tiny Convex}
	\end{overpic}
	\caption{The $n=1$ version of (P5).}
	\label{fig:P5_low}
\end{figure}
\end{proof}

\section{Wrinkling Legendrians via bypasses}
\label{section: proof of theorem layer}

In this section we prove \autoref{theorem: layer between N(L) and N(L')} and \autoref{thm: layer between N and N' general case}.

\subsection{Wrinkled Legendrian spheres}  \label{subsection: description of N(L')}

First we prove \autoref{theorem: layer between N(L) and N(L')}. We remind the reader that $L$ is a standard Legendrian sphere, $L'$ is a wrinkled Legendrian sphere, and that we have standard neighborhoods $N(L') \subset N(L)$.

\begin{proof}[Proof of \autoref{theorem: layer between N(L) and N(L')}(1).]
To prove (1) we describe $N(L)$ and $N(L')$ as contact handlebodies. The description of $N(L)$ as a contact handlebody over $W$ as in \autoref{theorem: layer between N(L) and N(L')}(1) is standard, so the rest of the proof is devoted to describing $N(L')$ as a contact handlebody over $W'$ as on the left side of \autoref{fig:simple_legendrians}. 

Note that $L'$ is the union of two (topological) $n$-disks, one being the middle sheet of the wrinkle bounded by the piecewise smooth sphere $S^{n-1}$ with the cusp edge $S^{n-2}$, and the other its complement. So our starting point is to first describe $N(L')$ as $N(D_1) \cup N(D_2)$ where: 
\begin{itemize}
    \item The $n$-disk $D_1$ is a smooth, small extension of the middle sheet depicted by the red disk on the left side of \autoref{fig:wrinkledisks}.
    \item The $n$-disk $D_2$ is a slight retraction of the complement of the middle sheet, whose boundary is the green sphere on the right side of \autoref{fig:wrinkledisks}. 
\end{itemize}
We can then interpret $N(D_2)$ as a contact $n$-handle attached to the Darboux ball $N(D_1)$ along the green $S^{n-1} = \partial D_2$, hence giving a contact handlebody description of $N(L')$. The rest of the proof proceeds by giving a careful local description of $N(D_1) \cup N(D_2)$, and then using this local description to identify $\partial D_2 \subset \partial N(D_1)$ as the Legendrian on the left side of \autoref{fig:simple_legendrians}.

\begin{figure}[ht]
\vskip-.1in
	\begin{overpic}[scale=.3]{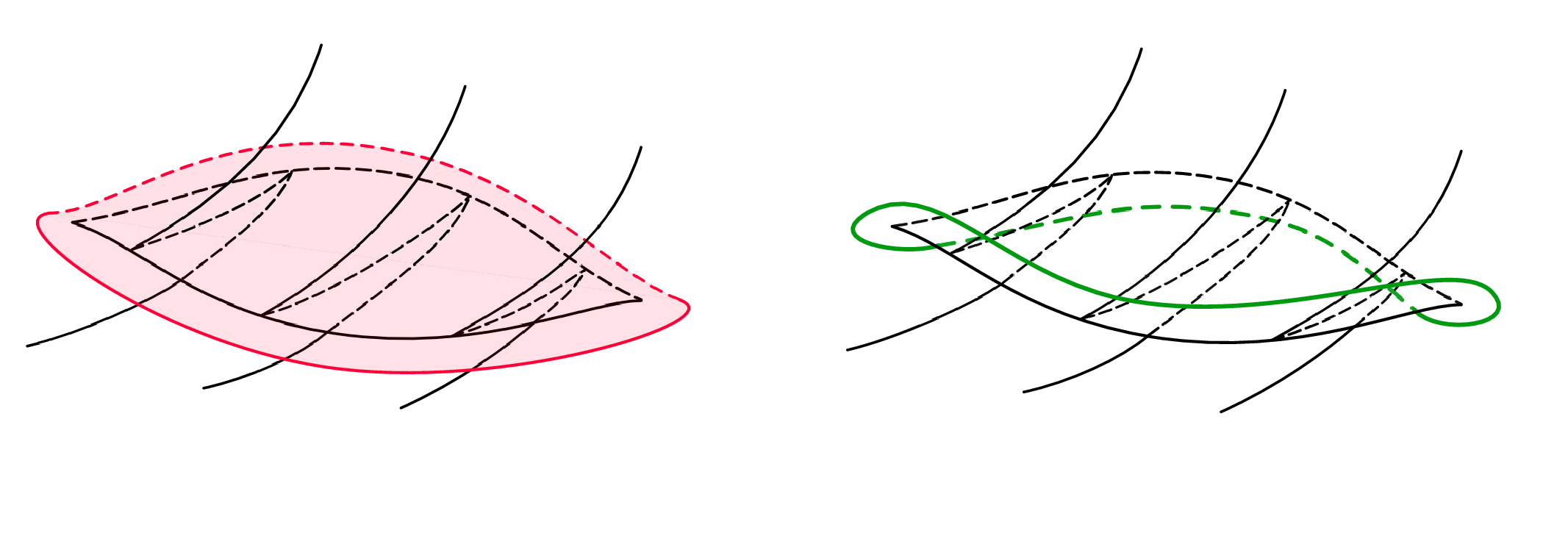}
    \put(10,26){\small {\color{red}  $D_1$}}
    \put(56,24){\small \textcolor{ForestGreen}{$\partial D_2$}}
	\end{overpic}
    \vskip-0.3in
	\caption{The disks $D_1$ and $D_2$.}
	\label{fig:wrinkledisks}
\end{figure}

\s\n 
{\em Description of $N(D_1)$.}
\s

Let $D_1$ be a Legendrian $n$-disk. For $\epsilon > 0$ small, there is a contact handlebody neighborhood 
\[
\left(N(D_1) \simeq \D^*D_1 \times [-\epsilon, \epsilon]_t, \, \alpha = dt + \beta\right)
\]
where $\beta$ is a Liouville form on $\D^*D_1$ that will be described shortly. After smoothing, $N(D_1)$ has convex boundary with dividing set
\[
\Gamma:=\Gamma_{\bdry N(D_1)}=\bdry (\mathbb{D}^*D_1)\times\{0\}=(\mathbb{S}^*D_1\cup  \mathbb{D}^*D_1|_{\bdry D_1})\times\{0\}.
\]
Here $\mathbb{S}^*$ (resp.\ $\mathbb{D}^*$) denotes the unit sphere (resp.\ unit disk) cotangent bundle.  We refer to $\mathbb{S}^*D_1$ as the {\em horizontal boundary} and $\mathbb{D}^*D_1|_{\bdry D_1}$ as the {\em vertical boundary} of $\mathbb{D}^*D_1$.

Next we describe the Liouville form $\beta$ near $\partial \D^*D_1$. Let $x_1,\dots, x_{n-1}$ be local coordinates for $\bdry D_1$ and $x'$ be a strictly increasing function of the radial coordinate for $D_1$ such that $\bdry D_1=\{x'= 0\}$.  Let $y_1,\dots, y_{n-1},y'$ be dual coordinates of $x_1,\dots,x_{n-1},x'$ in $T^*D_1$ so that $x_1,\dots,x_{n-1},y_1,\dots,y_{n-1}$ are coordinates on $T^*(\bdry D_1)$. Choose $\epsilon_0>0$ small and a nondecreasing smooth function $\phi:(-\infty,0]\to [0,\epsilon_0)$ such that $|\phi'|\leq 2\epsilon_0$, $\phi(-\sqrt{\epsilon_0})=0$, and $\phi(0)=\epsilon_0$. We then choose the Liouville form on $\mathbb{D}^*D_1$ to be 
\[
\beta:=-\textstyle\sum_{i=1}^{n-1}y_i\, dx_i- y'\, dx'+\phi(x')\, dy'.
\]
Note that the Liouville vector field 
\[
X_{\beta} = \textstyle\sum_{i=1}^{n-1} y_i\, \partial_{y_i} + \frac{ y'\, \partial_{y'} + \phi(x')\bdry_{x'}}{1+\phi'(x')}\, 
\]
does not point transversely out of the vertical boundary $\mathbb{D}^*D_1|_{\bdry D_1}=\mathbb{D}^*(\bdry D_1)\times [-1,1]_{y'}$ unless we add a term such as $\phi(x')\, dy'$.

\s\n
{\em Description of the attaching sphere of $D_2$.}
\s

Next we describe the sphere $\partial D_2 \subset \Gamma$ along which we will glue $D_2$. First, we introduce some auxiliary data $\Lambda_{2,+}$ and $\Lambda_{2,-}$ in $\partial N(D_1)$ but slightly off of $\Gamma$, and then we will flow this data to $\Gamma$ to describe $\partial D_2$. Namely, we let $\Lambda_{2,\pm} \simeq D^{n-1}$ be Legendrian disks in $\mathbb{S}^*D_1 \times \{\pm \epsilon/2\}$, such that, under the projection map $\pi: T^*D_1 \times [-\epsilon, \epsilon] \to D_1$, their images are given by \autoref{fig: attaching-Legendrian'}. 

\begin{figure}[ht]
\vskip.15in
	\begin{overpic}[scale=.65]{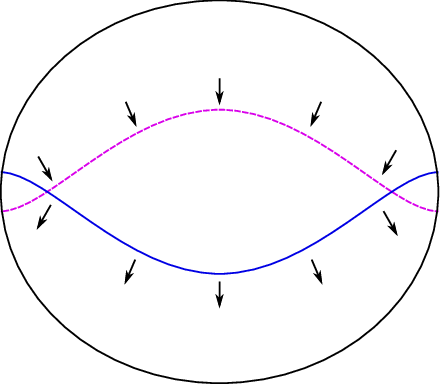}
    \put(47,30){\tiny $\Lambda_{2,+}$} \put(47,52){\tiny $\Lambda_{2,-}$}
	\end{overpic}
	\caption{The solid (resp.\ dotted) line indicates that $D_2$ approaches $D_1$ from the $t>0$ side (resp.\ from the $t<0$ side). The arrows indicate the normal orientations of $T\Lambda_{2,+}$ and $T\Lambda_{2,-}$ or, equivalently, the Reeb vector fields along $T\Lambda_{2,+}$ and $T\Lambda_{2,-}$.}
	\label{fig: attaching-Legendrian'}
\end{figure}

More precisely, the projections are $\pi(\Lambda_{2,\pm}) = D_{\pm}'$, where $\partial D_1 = D_+ \cup D_-$ is the decomposition of $\partial D_1$ into the upper and lower hemispheres and $D_{\pm}'$ are pushoffs of $D_{\pm}$ into $D_1$ so that $D_{\pm} \cap D_{\pm}' = \varnothing$, $\partial D_{\pm}' \subset \partial D_1$, and $\partial_{x'} \in TD_{\pm}'|_{\partial D_{\pm}'}$. By this last requirement, $\partial \Lambda_{2,+}$ and $\partial \Lambda_{2,-}$ lie on $\{y'=0\}$. 

Next, flow $\Lambda_{2,\pm}$ to $\Gamma$ using the characteristic foliation of $\Gamma\times[-\epsilon,\epsilon]$; this is directed by $\bdry_t -R_{\Gamma}$, where $R_{\Gamma}$ is the Reeb vector field of $\beta|_{\Gamma}$. Note that the result of the flow, which we denote by $(\Lambda_{2,\pm})^\sharp$, lie in the horizontal component $\mathbb{S}^*D_1 \times \{0\}$ of $\Gamma$.

We then take $D_2$ to have Legendrian boundary sphere
\[
\Lambda_2 := (\Lambda_{2,+})^{\sharp} \cup (\Lambda_{2,0})^{\sharp} \cup (\Lambda_{2,-})^{\sharp} \subset \Gamma,
\]
where $(\Lambda_{2,0})^{\sharp} \simeq S^{n-2}\times[-1,1]$ is a cylindrical component in the vertical boundary component $\D^*D_1|_{\partial D_1} \times \{0\}$. In order to identify the wrinkled sphere $L'$ and the resulting handlebody structure, we need to (more) carefully describe each of the three pieces of $\partial D_2$, as well as the disk $D_2$ itself near $\partial D_2$.

\s\n
{\em Local model of $D_2$ near interior points of $\Lambda_{2,\pm}$.}

Let $p$ be an interior point of $\Lambda_{2,+}$. We can choose a neighborhood $N(p)\simeq \mathbb{D}^*D^{n-1}\times \mathbb{D}^*D^1 \times [-\epsilon,\epsilon]$ of $p$ in $N(D_1)$ with slightly adjusted coordinates
\[
(x_1,\dots,x_{n-1},y_1,\dots,y_{n-1}),(x'',y''),t
\]
such that:
\be
\item[(a)] the contact form is $\alpha = dt-\sum_{i=1}^{n-1}y_i\, dx_i -y''\, dx''$; 
\item[(b)] $N(p)\cap D_1=\{y_1=\dots=y_{n-1}=y''=t=0\}$; 
\item[(c)] $N(p)\cap \Lambda_{2,+} = \{y_1=\dots=y_{n-1}=x''=0, y''=1, t=\epsilon/2\}$; and
\item[(d)] we are viewing the sphere cotangent bundle $\mathbb{S}^* D^n$ as 
$(\mathbb{S}^*D^{n-1}\times \mathbb{D}^* D^1) \cup (\mathbb{D}^* D^{n-1}\times \mathbb{S}^*D^1)$,
and hence $N(p)\cap \Lambda_{2,+}\subset \mathbb{S}^*D^n\times\{\epsilon/2\}$.
\ee
Note that the front projection of $N(p)$ is given by the coordinates $(x_1, \dots, x_{n-1},x'', t)$ and that the dual $y$-coordinates describe the slopes of the contact structure with respect to the $x$-coordinates. 

Letting $Op(\cdot)$ be an unspecified open enlargement as usual, we then take $D_2\cap Op(N(p))$ --- or more precisely, the version of $D_2$ before isotoping $\Lambda_{2,+}$ to $(\Lambda_{2,+})^\sharp$ --- to be 
$$\{(x_1,\dots,x_{n-1},y_1,\dots,y_{n-1})~|~ y_1=\dots= y_{n-1}=0\}\times C_+,$$
where $C_+\subset Op(\mathbb{D}^*D^1 \times [-\epsilon,\epsilon])$ is the curve such that:
\be
\item[(i)] $(dt-y''\, dx'')|_{C_+}=0$; 
\item[(ii)] $C_+$ is a subset of $\{x''\geq 0\}$ with an endpoint at $(x'',y'',t)=(0,1,\epsilon/2)$; and  
\item[(iii)] $y''|_{C_+}$ is a smooth function of $x''$ with $y''(0)=1$ and positive derivative at $x''=0$.
\ee
See \autoref{fig: profile'}.
\begin{figure}[ht]
\vskip.15in
	\begin{overpic}[scale=.23]{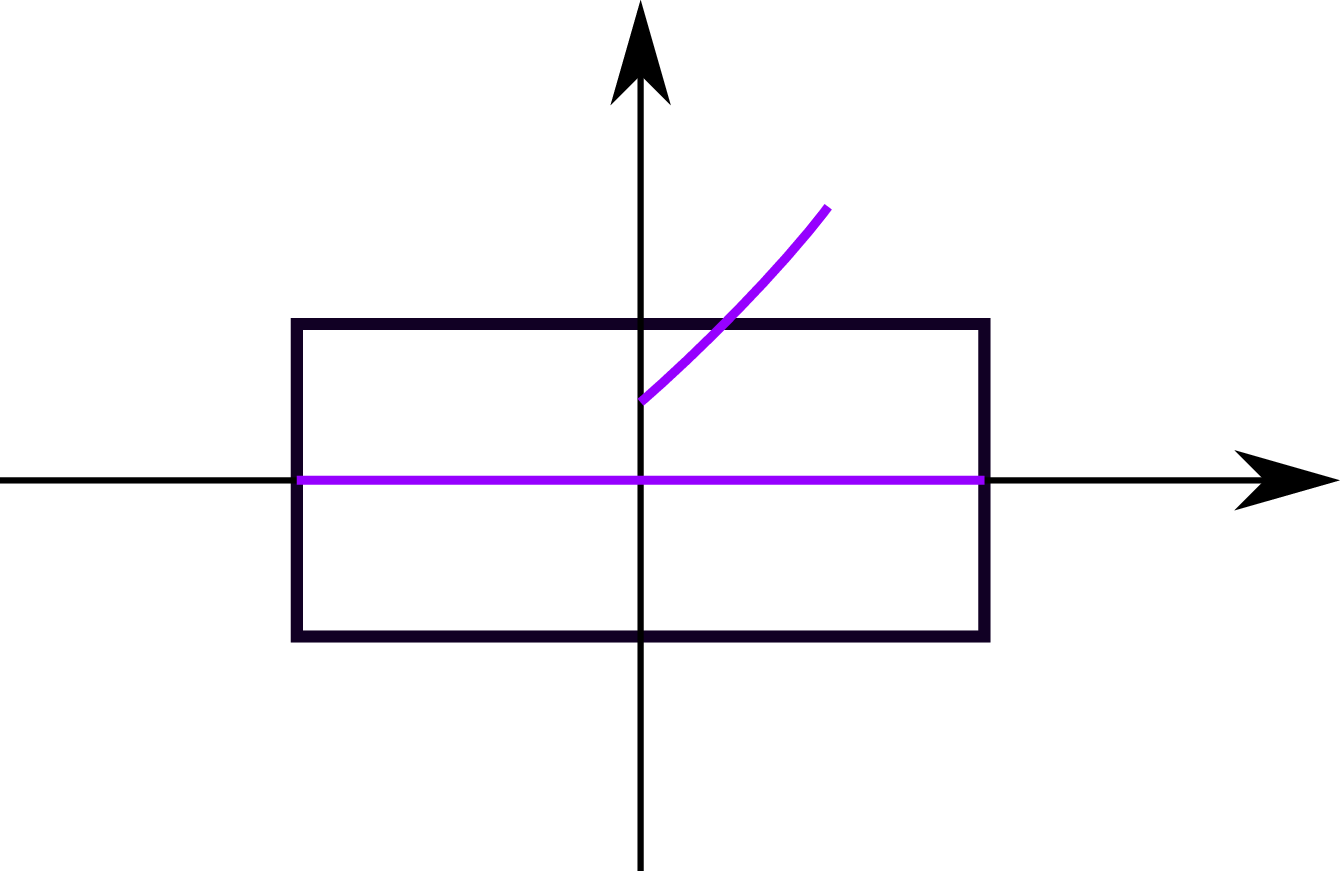}
    \put(95,32){\tiny $x''$} \put(74.5,40){\tiny $N(D_1)$}
    \put(62,32){\tiny $D_1$} \put(61.5,51.5) {\tiny $D_2$} \put(51,60){\tiny $t$} \put(36,35.5){\tiny $\Lambda_{2,+}$}
	\end{overpic}
	\caption{Local pictures of projections of $N(D_1)$, $D_1$, and $D_2$ (before isotoping $\Lambda_{2,+}$ to $(\Lambda_{2,+})^\sharp$) to the $tx''$-plane. Here $D_1$ and $D_2$ are products of $\{(x,y)~|~ y=0\}$ and the purple arcs shown.}
	\label{fig: profile'}
\end{figure}

The situation for an interior point of $\Lambda_{2,-}$ is analogous, with (c), (ii), (iii) replaced by:
\be
\item[(c')] $N(p)\cap \Lambda_{2,-} = \{y_1=\dots=y_{n-1}=x''=0, y''=1, t=-\epsilon/2\}$;
\item[(ii')] $C_-$ is a subset of $\{x''\leq 0\}$ with an endpoint at $(x'',y'',t)=(0,1,-\epsilon/2)$; and 
\item[(iii')] $y''|_{C_-}$ is a smooth function of $x''$ with $y''(0)=1$ and negative derivative at $x''=0$.
\ee

\s\n
{\em Local model near $\bdry \Lambda_{2,\pm}$.} 
\s

We consider a neighborhood $N(\bdry \Lambda_{2,+})\simeq \mathbb{D}^*S^{n-2}\times \mathbb{D}^*D^2 \times [-\epsilon,\epsilon]$
of $\bdry \Lambda_{2,+}\simeq S^{n-2}$ in $N(D_1)$ with coordinates 
\[
(x_1,\dots,x_{n-2},y_1,\dots,y_{n-2}),(x',x'',y',y''),t.
\]
Here $\bdry_{x'}$ points out of $D_1$, $\bdry_{x''}$ is transverse to $\Lambda_{2,+}$ in $D_1$ (and is consistent with $\bdry_{x''}$ used above), 
\[
\bdry\Lambda_{2,+} = S^{n-2}_0\times \{(x',y',x'',y'',t)=(0,0,0,1,\epsilon/2)\},
\]
where $S^{n-2}_0$ is the zero section $S^{n-2}_0$ of $\mathbb{D}^*S^{n-2}$, and $\Gamma$ restricts to $\mathbb{D}^*S^{n-2}\times \{(x',x'',y',y'')~|~ x'=0\} \times \{0\}$. The contact form on $N(\bdry\Lambda_{2,+})$ is
\[
dt-\left(\textstyle\sum_{i=1}^{n-2}y_i\, dx_i\right) -y'\, dx'+\phi (x') \, dy'-y''\, dx'',
\]
where $\phi(x')$ is as before. Assuming that $\bdry \Lambda_{2,+}$ and $\bdry \Lambda_{2,-}$ are sufficiently close, we may take 
\[
\bdry \Lambda_{2,-} = S^{n-1}_0 \times \{(x',y',x'',y'',t)=(0,0,-\epsilon_1,1,-\epsilon/2)\}
\]
where $\epsilon_1>0$ is small but satisfies $\epsilon_1\gg \epsilon$; this is possible since we can reduce the thickness of the contact handlebody $N(D_1)$.

In these coordinates, the characteristic foliation along $\Gamma\times [-\epsilon,\epsilon]$ is directed by $\bdry_t - R_\beta= \bdry_t - \tfrac{1}{\epsilon_0} \bdry_{y'}$. Thus, flowing $\Lambda_{2,\pm}$ to $\Gamma$ to get $(\Lambda_{2,\pm})^{\sharp}$ gives 
\begin{align*}
    \bdry (\Lambda_{2,+})^{\sharp} &= S^{n-1}_0 \times (0,\tfrac{\epsilon}{2\epsilon_0},0, 1,0) \\
    \bdry (\Lambda_{2,-})^{\sharp} &= S^{n-1}_0 \times (0,-\tfrac{\epsilon}{2\epsilon_0},-\epsilon_1,1,0)
\end{align*}
Here we can further shrink $\epsilon$ so that  $\tfrac{\epsilon}{2\epsilon_0}\ll \epsilon_1$ (basically we may assume that $\tfrac{\epsilon}{2\epsilon_0}=0$).  Finally, the Legendrian cylinder $(\Lambda_{2,0})^{\sharp}$ which connects $\bdry (\Lambda_{2,+})^{\sharp}$ to $\bdry (\Lambda_{2,-})^{\sharp}$ is given by 
\[
(\Lambda_{2,0})^{\sharp} = S^{n-1}_0 \times C'
\]
where $C'$ is a ``minimally twisting" Legendrian arc with respect to the contact form $\phi(0) \, dy' -y''\, dx''$ connecting the points $(y',x'',y'')=(\tfrac{\epsilon}{2\epsilon_0},0,1)$ and $(-\tfrac{\epsilon}{2\epsilon_0},-\epsilon_1,1)$ and isotopic to the straight line between them. 

\s\n
{\em Identification of $\partial D_2$.}
\s

We now identify the resulting Legendrian $\bdry D_2$. The $n=2$ and $n>2$ cases are slightly different:

\s\n
{\em Case $n=2$.} Each of $\bdry(\Lambda_{2,+})^\sharp$ and $\bdry (\Lambda_{2,-})^\sharp$ has two components and $\bdry D_2$ is the $\op{tb}=-3$, $\op{rot}=0$ unknot obtained by isotoping each corresponding pair of endpoints of $\bdry \Lambda_{2,+}$ and $\bdry \Lambda_{2,-}$ and converting it into a cusp. 

\s\n
{\em Case $n>2$.} There is only one component and $\bdry D_2$ is a loose Legendrian unknot since $S^{n-1}\times C'$ is a standard model for a stabilization.

\s
The above description of $N(L')$ as a contact handlebody completes the proof of \autoref{theorem: layer between N(L) and N(L')}(1).
\end{proof}

\begin{proof}[Proof of \autoref{theorem: layer between N(L) and N(L')}(2).]
Next we show that $N(L)$ is obtained from a bypass attachment $B$ to $N(L')$ and that the resulting POBD is a stabilization of $(W, \varnothing)$. The bypass data is given at the top left of \autoref{fig: Legendrians}. Namely, $(\Lambda_+; D_+)$ is a positive Reeb shift of the cocore of the positive Weinstein $n$-handle attached along the loose Legendrian and $(\Lambda_-; D_-)$ is a standard Legendrian unknot/Lagrangian disk pair. At the top left of \autoref{fig: Legendrians} we attach the resulting $R_+$-centric bypass. The diagram below it is obtained by an isotopy of the gray Legendrian. Then, by \autoref{lemma:trivial_bypass_lemma}, the gray and blue handles form a trivial bypass, i.e., a positive stabilization, and can be erased. This yields a contact handlebody over $W$ as desired.
\begin{figure}[ht]
	\centering
	\begin{overpic}[scale=.44]{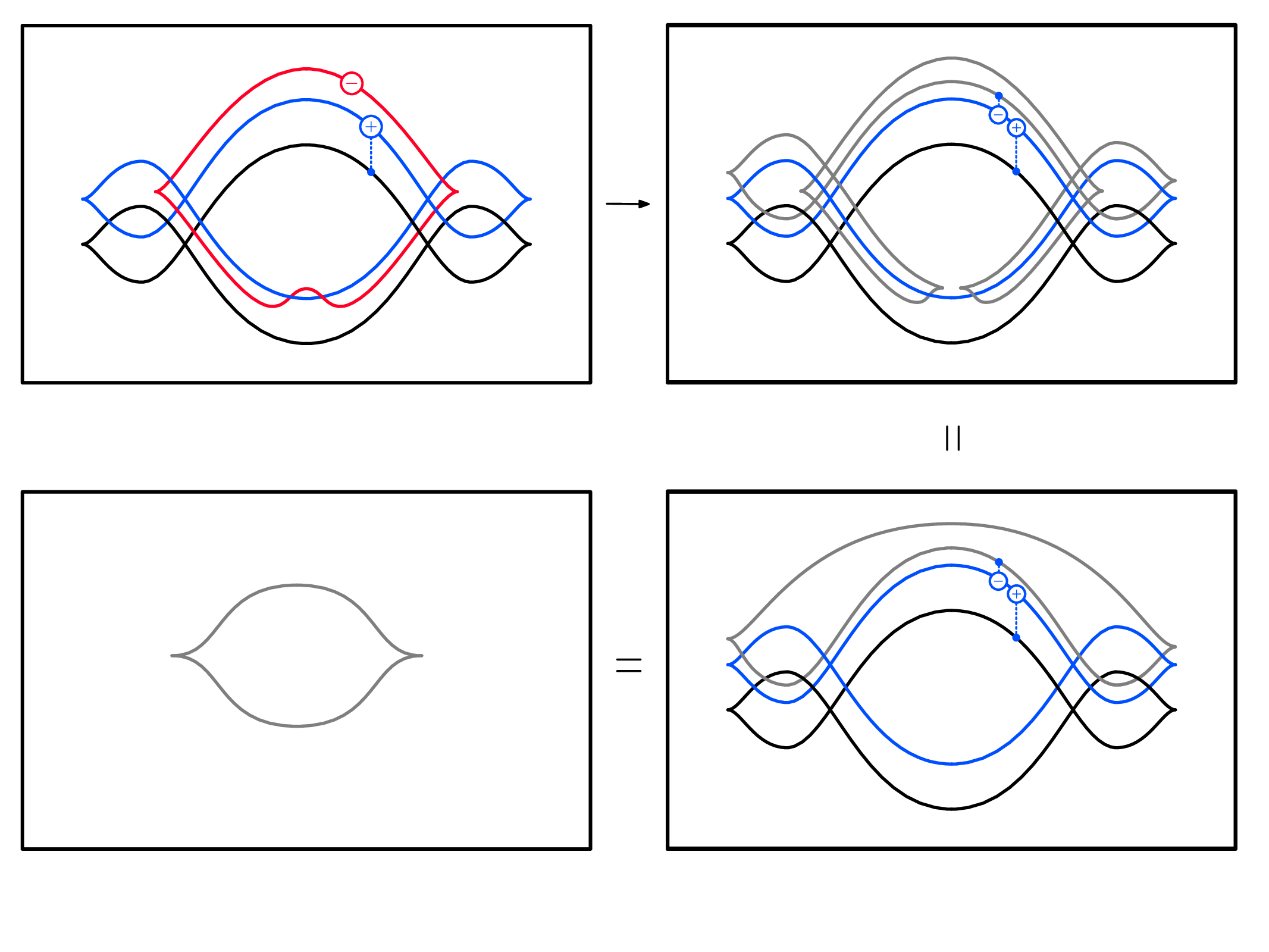}
	\put(16,68){\color{red}\small $\Lambda_-$}
	\put(58,66){\small \textcolor{gray}{$\Lambda_-\uplus \Lambda_+$}}
	\put(10,63.5){\color{blue}\small $\Lambda_+$}
	\put(93.25,55){\tiny $(-1)$}
	\put(93.25,61){\tiny \textcolor{gray}{$(-1)$}}
	\put(93.25,59){\tiny \textcolor{blue}{$(+1)$}}
	\put(93.25,18){\tiny $(-1)$}
	\put(93.25,24){\tiny \textcolor{gray}{$(-1)$}}
	\put(93.25,22){\tiny \textcolor{blue}{$(+1)$}}
	\put(42.25,24){\tiny \textcolor{gray}{$(-1)$}}
	\put(42.25,55){\tiny $(-1)$}
	\put(16, 74){\small (Bypass data $B$)}
	\put(70, 74){\small (Attach $B$)}
	\put(23.5,42.5){\small $\Gamma'$}
	\put(74.5,42.5){\small $\Gamma$}
	\put(23,6){\small $\Gamma$}
	\put(74.5,6){\small $\Gamma$}
	\end{overpic}
	\vskip-.4in
	\caption{The proof of \autoref{theorem: layer between N(L) and N(L')}(2).}
	\label{fig: Legendrians}
\end{figure}    
\end{proof}

\subsection{The general case}

Now we consider more generally how to wrinkle the core disks of $n$-handles. 

\begin{proof}[Proof of \autoref{thm: layer between N and N' general case}.]
    
The argument is similar to the case of a wrinkled sphere with only a few modifications. As in the statement of the theorem, let $L_i$ denote the core disk of the Weinstein $n$-handle $h_i$ and $L_i'$ the wrinkled stabilization of $L_i$; see the diagram on the left side of \autoref{fig:wrinkled_core}. Note that $\partial L_i' = \partial L_i$.

To prove \autoref{thm: layer between N and N' general case}(1), we will decompose each wrinkled core disk $L_i'$ into three pieces, doing so in two steps. First, in analogy with \autoref{theorem: layer between N(L) and N(L')}(1), we identify two pieces: 
\begin{itemize}
    \item a Lagrangian disk $D_{1,i} \simeq D^n$, highlighted in red in the middle diagram of \autoref{fig:wrinkled_core}, and 
    \item a Lagrangian annulus $A_i \simeq S^{n-1}\times [0,1]$, where one boundary component $S^{n-1}\times \{0\} = \partial L_i'$ is attached along a neighborhood $W^{(n-1)}$ of the ($n-1$)-skeleton of $W$ and the other boundary component $S^{n-1}\times \{1\}$ is attached to $\D^*D_{1,i}$ along the stabilized Legendrian, which we denote by $\Lambda_i$, from the proof of \autoref{theorem: layer between N(L) and N(L')}(1). 
\end{itemize}
Next we decompose the annulus $A_i$ into two pieces:
\begin{itemize}
    \item a neighborhood of an arc $c_i = \{p\} \times [0,1] \subset S^{n-1} \times [0,1]$, drawn in purple in the diagram on the right side of \autoref{fig:wrinkled_core}, and 
    \item a Lagrangian disk $D_{2,i}$ whose boundary is drawn in light green in the same diagram. 
\end{itemize}

\begin{figure}[ht]
	\centering
	\begin{overpic}[scale=.45]{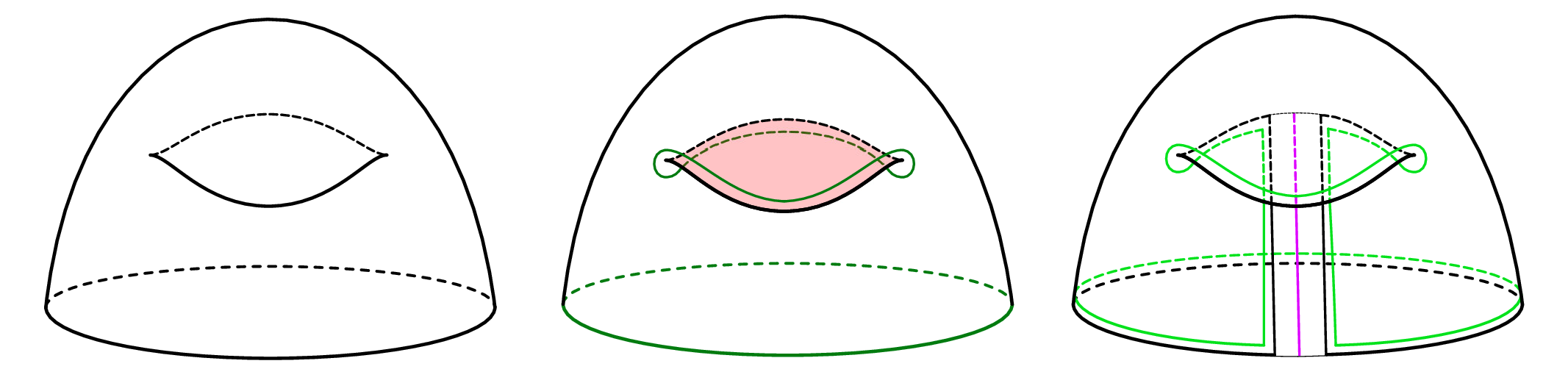}
    \put(9.5,-0.75){\small wrinkled core $L_i'$}
    \put(74.5,4){\small\cg $\partial D_{2,i}$}
    \put(82,0){\small\textcolor{Mulberry}{$c_i$}}
    \put(46.5,3){\tiny\textcolor{ForestGreen} {$S^{n-1}\times \{0\}$}}
    \put(44,17.5){\tiny\textcolor{ForestGreen} {$\Lambda_i = S^{n-1}\times \{1\}$}}
    \put(48.25,13.5){\small\textcolor{red} {$D_{1,i}$}}
	\end{overpic}
	\caption{In the middle diagram the wrinkled disk $L_i'$ is decomposed into two pieces: the red disk $D_{1,i}$ and the annulus $A_i$ with dark green boundary components. In the right diagram, we decompose the annulus $A_i$ into two more pieces: a neighborhood of the arc $c_i$ and a disk $D_{2,i}$ with the indicated light green boundary.}
	\label{fig:wrinkled_core}
\end{figure}

We can then describe the attachment of the wrinkled disk $L_i'$ as follows. First, the attachment of $c_i$ is a contact $1$-handle operation that glues the standard Weinstein ball $\D^*D_{1,i}$ to a neighborhood of $W^{(n-1)}$. Note that this can be viewed as birthing a canceling pair of Weinstein $0$- and $1$-handles. Finally, attaching the rest of $L_i'$ amounts to attaching $D_{2,i}$ along the connected sum $\Lambda_i \# \bdry L_i$, which is contact isotopic to the stabilization of $\bdry L_i$. Thus, the handles $h_i'$ in the statement of \autoref{thm: layer between N and N' general case}(1) are exactly neighborhoods of the disks $D_{2,i}$. This completes the proof of (1). 

The proof of \autoref{thm: layer between N and N' general case}(2) is identical to that of \autoref{theorem: layer between N(L) and N(L')}(2) and is given by \autoref{fig:wrinkled_core_bypass}. Up to the presence of the $1$-handle the bypass computation is the same as in \autoref{fig: Legendrians}.
\begin{figure}[ht]
	\centering
	\begin{overpic}[scale=.34]{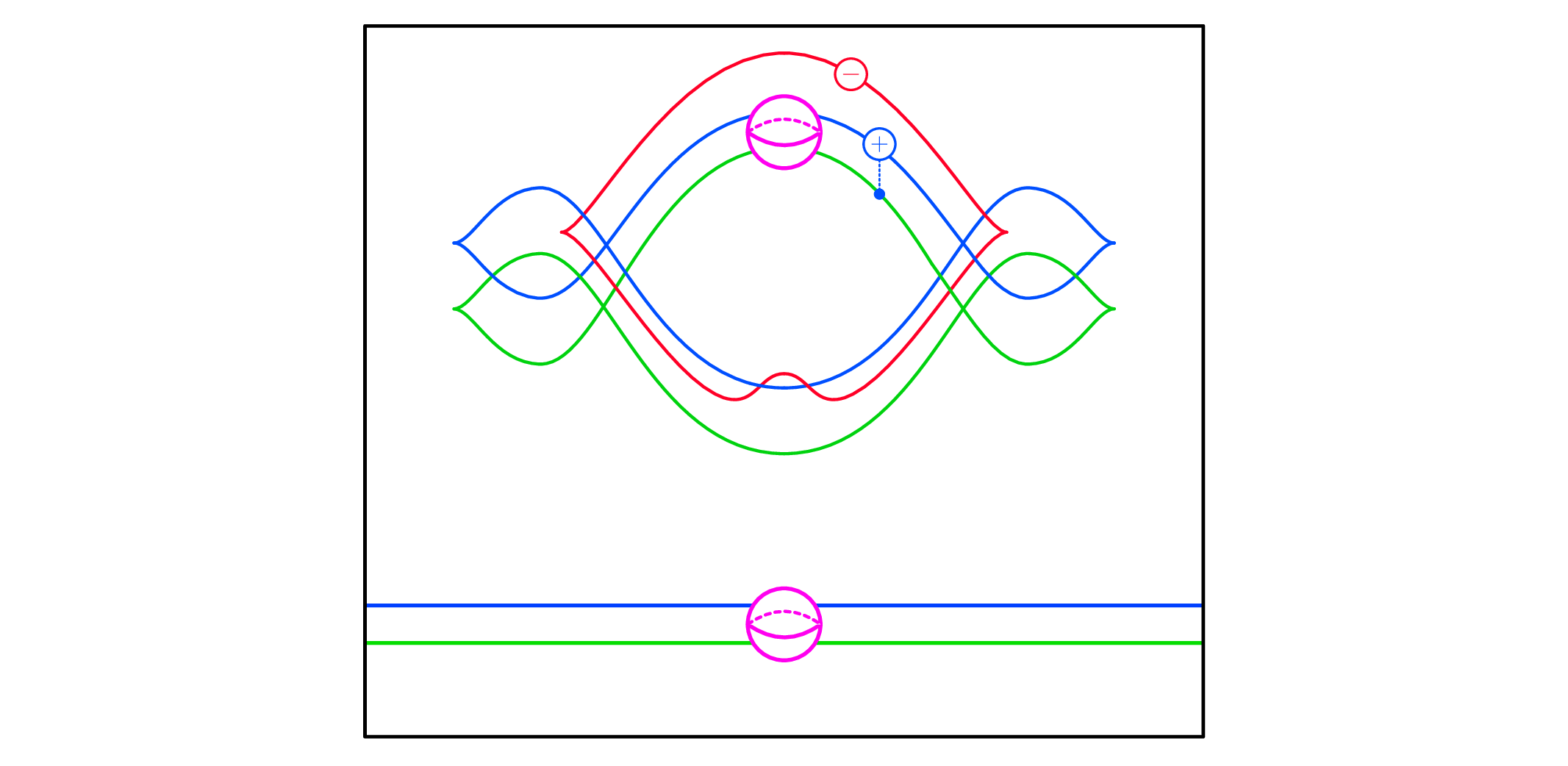}
    \put(71,20){\tiny \cg $(-1)$}
	\end{overpic}
	\vskip-0.15in
	\caption{The bypass data (blue and red) for \autoref{thm: layer between N and N' general case}(2); compare with \autoref{fig: Legendrians}. Note that the pair of purple balls represents a $1$-handle, not an ($n-1$)-handle.}
	\label{fig:wrinkled_core_bypass}
\end{figure}
\end{proof}

\section{Birth-death homotopies}\label{section: proof of theorem: birth-death}

In this section we prove \autoref{theorem: birth-death}. We remind the reader that we have identified buns $H_{i, t_0\pm \epsilon}$ for eventual $\theta$-decompositions $\Theta_{t_0\pm \epsilon}$ such that 
\[
H_{i,t_0 +\epsilon} = H_{i,t_0-\epsilon} \cup N(L_i)
\]
where $L_0$ is a Legendrian approximation of the core of the smooth $n$-handle and $L_1$ is a Legendrian approximation of the cocore of the canceling smooth ($n+1$)-handle; see again \autoref{fig:buns}. To prove \autoref{theorem: birth-death} it remains to exhibit bypass decompositions of the corresponding patties so that the resulting OBDs at $t=t_0 \pm \epsilon$ are stably equivalent.  

Writing $\Sigma_{\pm} := \Sigma_{t_0 \pm \epsilon} = \partial H_{0,t_{0}\pm \epsilon}$, we have the following lemma:

\begin{lemma}\label{lemma: before and after'}
There exist bypass decompositions $\Delta_{t_0\pm \epsilon}$ of the patties $\Sigma_{\pm} \times [0,1]$ such that: 
\begin{enumerate}
    \item[(1)] The contact handlebodies $\tilde{H}_{0,t_0 - \epsilon}$ and $\tilde{H}_{0,t_0 + \epsilon}$ obtained by attaching all of the $n$-handles of the bypasses from $\Delta_{t_0\pm \epsilon}$ to $H_{0,t_0\pm \epsilon}$ are the same. 
    \item[(2)] The contact handlebodies $\tilde{H}_{1,t_0 - \epsilon}$ and $\tilde{H}_{1,t_0 + \epsilon}$ obtained by attaching all of the ($n+1$)-handles of the bypasses (viewed as upside-down $n$-handles) from $\Delta_{t_0\pm \epsilon}$ to $H_{1,t_0\pm \epsilon}$ are the same. 
\end{enumerate}
\end{lemma}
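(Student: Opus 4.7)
The plan is to construct $\Delta_{t_0-\epsilon}$ by prepending a single extra bypass $B$ to $\Delta_{t_0+\epsilon}$, where $B$ precisely accounts for the difference between the buns on either side of the birth-death.

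The key structural claim is that the pair of contact handles $(N(L_0), N(L_1))$ arising from the Legendrian approximations of the smoothly canceling birth-death pair assembles into a single contact bypass $B$ attached to $\partial H_{0,t_0-\epsilon}$, in the sense of \autoref{theorem:bypass_attachment}. In this identification, $N(L_0)$ plays the role of the contact $n$-handle of $B$, while $N(L_1)$, viewed via its cocore in the positive region on the far side, corresponds to the contact $(n+1)$-handle; the bypass attachment data $(\Lambda_\pm; D_\pm)$ has $\Lambda_+\uplus\Lambda_-$ equal to the attaching sphere of $L_0$ in the dividing set of $\partial H_{0,t_0-\epsilon}$, and $D_-$ arises from the Lagrangian disk $L_1$.

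Granting this, the construction proceeds as follows. Apply \autoref{theorem:family genericity} to the patty $(\Sigma_+\times[0,1], \xi)$ to produce a bypass decomposition $\Delta_{t_0+\epsilon}$. Attaching $B$ to $\partial H_{0,t_0-\epsilon}$ produces a new convex hypersurface equal to $\partial H_{0,t_0+\epsilon}$, and the complementary region in $\Sigma_-\times[0,1]$ is contactomorphic to $\Sigma_+\times[0,1]$. Define $\Delta_{t_0-\epsilon}$ to be $B$ stacked at the bottom, followed by $\Delta_{t_0+\epsilon}$ inserted above. By construction the $n$-handles of $\Delta_{t_0-\epsilon}$ consist of $N(L_0)$ together with the $n$-handles of $\Delta_{t_0+\epsilon}$, whence
\[
\tilde H_{0,t_0-\epsilon} = H_{0,t_0-\epsilon}\cup N(L_0)\cup(\text{$n$-handles of }\Delta_{t_0+\epsilon}) = H_{0,t_0+\epsilon}\cup(\text{$n$-handles of }\Delta_{t_0+\epsilon}) = \tilde H_{0,t_0+\epsilon},
\]
and symmetrically $\tilde H_{1,t_0-\epsilon} = \tilde H_{1,t_0+\epsilon}$, verifying (1) and (2).

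The main obstacle is the structural claim that the smoothly canceling pair $(N(L_0), N(L_1))$ is genuinely realized by a single contact bypass rather than merely a smoothly canceling pair of contact handles. This is in essence a form of the bypass-bifurcation correspondence (\autoref{prop:bypass_bifurcation_correspondence}) applied to a birth-death, and it hinges on the choice of $L_0, L_1$ as sufficiently stabilized (loose when $n>1$) Legendrian approximations: looseness is precisely the flexibility required to arrange the attaching sphere of $L_0$ in the dividing set to match a Lagrangian disk filling coming from $L_1$, so that the smoothing of $N(L_0)\cup N(L_1)$ fits the bypass model of \autoref{theorem:bypass_attachment}. Once this identification is in hand, the bookkeeping above completes the proof.
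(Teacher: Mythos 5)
Your proposal rests on the structural claim that the pair $(N(L_0), N(L_1))$ itself assembles into a single contact bypass $B$ attached to $\partial H_{0,t_0-\epsilon}$, so that $\Delta_{t_0-\epsilon}$ is just $B$ prepended to $\Delta_{t_0+\epsilon}$. This cannot be right as stated, for a geometric reason: $N(L_0)$ is attached to the \emph{lower} bun at $\Sigma_-\times\{0\}$ while $N(L_1)$ is attached to the \emph{upper} bun at $\Sigma_-\times\{1\}$. A bypass attachment, in the sense of \autoref{theorem:bypass_attachment}, is a local operation in which the contact $(n+1)$-handle is attached in a small neighborhood of the result of attaching the contact $n$-handle, hence both in a collar of a single hypersurface. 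The full disks $L_0$ and $L_1$ sit at opposite ends of the patty, with the rest of the patty (and its entire intervening sequence of bypasses) between them, so they do not fit that local model. The only place $L_0$ and $L_1$ interact is along the isotropic arc $\gamma = D_0\cap D_1$ joining them, and it is precisely a \emph{small} piece of $N(L_0)$ near $\gamma(1)$ together with a \emph{small} piece of $N(L_1)$ near $\gamma(0)$ that form the bypass (this is \autoref{claim:geometric_bypass}) --- not $N(L_0)\cup N(L_1)$.

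There is a second gap in the $n=1$ case, which your proposal treats uniformly. When $n=1$ the isotropic arc $\gamma$ is Legendrian and its neighborhood is governed by \autoref{lemma:normal_form_low}: the strip $N'(\gamma)=D_0\cap N(\gamma)$ decomposes into $m$ convex disks $R_1,\dots,R_m$, each a bypass half-disk, and generically $m>1$. Thus the two bypass decompositions cannot differ by a single bypass; they differ by a sequence of $m$ bypasses near $\gamma$, attached in \emph{opposite} orders on the two sides, plus bookkeeping on how their handles distribute onto the buns. This rearrangement (together with the intermediate hypersurfaces $\Sigma_{\pm,1},\dots,\Sigma_{\pm,5}$ that sweep through $N(\mathring{L}_i)$, $N(D_i)\setminus N(\gamma)$, and $N(\gamma)$) is the actual content of the lemma. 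A correct argument needs the normalization of the neighborhood of $\gamma$ (Lemmas \ref{lemma:normal_form_low} and \ref{lemma: normalization for n greater than 1'}) and the localization of the difference between $\Delta_{t_0\pm\epsilon}$ to $N(\gamma)$, neither of which your proposal carries out.
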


\autoref{theorem: birth-death} follows immediately from the lemma, together with \autoref{theorem: sigma times interval} from Step 3 of \autoref{section: outline of proof of stabilization equivalence}. Indeed, using the lemma we can build $\theta$-decompositions (different from $\Theta_{t_0\pm \epsilon}$) on either side of $t=t_0$ with identical buns; the OBDs are then related by a homotopy of patties and are thus stably equivalent. The rest of the section is dedicated to proving \autoref{lemma: before and after'}. In \autoref{subsection:construction_of_bypass_decompositions} we construct the bypass decompositions $\Delta_{t_0 - \epsilon}$ and $\Delta_{t_0 + \epsilon}$, and then in \autoref{subsection:proof_of_before_and_after} we prove the lemma.  

\subsection{Construction of the bypass decompositions}\label{subsection:construction_of_bypass_decompositions} To construct the bypass decompositions $\Delta_{t_0 \pm \epsilon}$ we introduce a new cast of characters. Let $D_0$ be a smooth $(n+1)$-dimensional half-disk corresponding to the canceling $(n+1)$-handle such that $\bdry D_0= L_0 \cup L_0'$, where $L_0'\subset \bdry H_{0,t_0-\epsilon}$ is not necessarily Legendrian, and let $D_1$ be a smooth $(n+1)$-dimensional half-disk corresponding to the upside down $(n+1)$-handle such that $\bdry D_1= L_1\cup L_1'$, where $L_1'\subset \bdry H_{1,t_0-\epsilon}$ is also not necessarily Legendrian. After isotopy we may take the intersection $D_0\cap D_1$ to be an embedded arc $\gamma: [0,1]\to M$ such that $\gamma(0)\in L_1$, $\gamma(1)\in L_0$, $\gamma$ is isotropic, and $\gamma$ is a sufficiently stabilized Legendrian when $n=1$. See \autoref{fig: birth-death'}.

\begin{figure}[ht]
\vskip.15in
	\begin{overpic}[scale=.80]{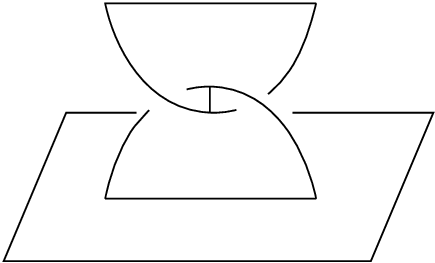}
		\put(19.5,22){\tiny $L_0$}
		\put(45,20){\tiny $D_0$}
		\put(22,44){\tiny $L_1$}
		\put(45,47){\tiny $D_1$}
		\put(49,36.5){\tiny $\gamma$}
		\put(93,10){\tiny $\bdry H_{0,t_0-\epsilon}$}
		\put(45,8){\tiny $L_0'$}
		\put(45,63){\tiny $L_1'$}
	\end{overpic}
	\caption{The ``linked'' Legendrian disks $L_0$ and $L_1$.  Here $\bdry H_{1,t_0-\epsilon}$ is not shown.}
	\label{fig: birth-death'}
\end{figure}

The point is that when constructing the bypass decompositions $\Delta_{t_0- \epsilon}$ and $\Delta_{t_0+ \epsilon}$ we can localize all of the differences between the two to a standard neighborhood of $\gamma$. 

\s\n 
{\em The bypass decomposition of $\Sigma_- \times [0,1]$.} 
\s

We describe a sequence of disjoint embedded hypersurfaces 
\[
\Sigma_{-,0},\dots,\Sigma_{-,6} \subset \Sigma_-\times[0,1]
\]
with $\Sigma_{-,0}=\Sigma_-\times \{0\}$ and $\Sigma_{-,6}=\Sigma_-\times\{1\}$.

For $i=0,1$, let $N(L_i)$ be a standard neighborhood of $L_i$, let $\mathring{L}_i$ be $L_i$ with a small disk centered at $L_i\cap \gamma$ removed, and let $N(\mathring{L}_i)= \pi^{-1}(\mathring{L}_i)$ where $\pi: N(L_i)\to L_i$ is the standard projection. Let $N(\gamma)$ be a standard neighborhood of $\gamma$ such that $\bdry N(\mathring{L}_i)$ and $\bdry N(\gamma)$ intersect only along their boundary and $N(\mathring{L}_i)\cup N(\gamma)$ forms a standard neighborhood of $L_i\cup \gamma$. Finally let $N(D_i)$ be a tubular neighborhood of $D_i$ whose radius is one order of magnitude smaller than those of $N(\gamma)$ and $N(L_i)$. Then:
\be
\item $\Sigma_{-,1}$ is the boundary of $H_{0,t_0-\epsilon}\cup N(\mathring{L}_0)$,
\item $\Sigma_{-,2}$ is the boundary of $H_{0,t_0-\epsilon}\cup N(\mathring{L}_0)\cup (N(D_0)\setminus N(\gamma))$, 
\item $\Sigma_{-,3}$ is the boundary of $H_{0,t_0-\epsilon}\cup N(L_0)\cup N(D_0)$,
\item $\Sigma_{-,4}$ is the (negative) boundary of $H_{1,t_0-\epsilon}\cup  N(\mathring{L}_1)\cup (N(D_1)\setminus N(\gamma))$,
\item $\Sigma_{-,5}$ is the (negative) boundary of $H_{1,t_0-\epsilon}\cup N(\mathring{L}_1)$,
\ee
which we assume have been made disjoint after pushing off in a straightforward manner; see the left side of \autoref{fig: Giroux1'}. 

\begin{figure}[ht]
    \vskip-.05in
	\begin{overpic}[scale=.42]{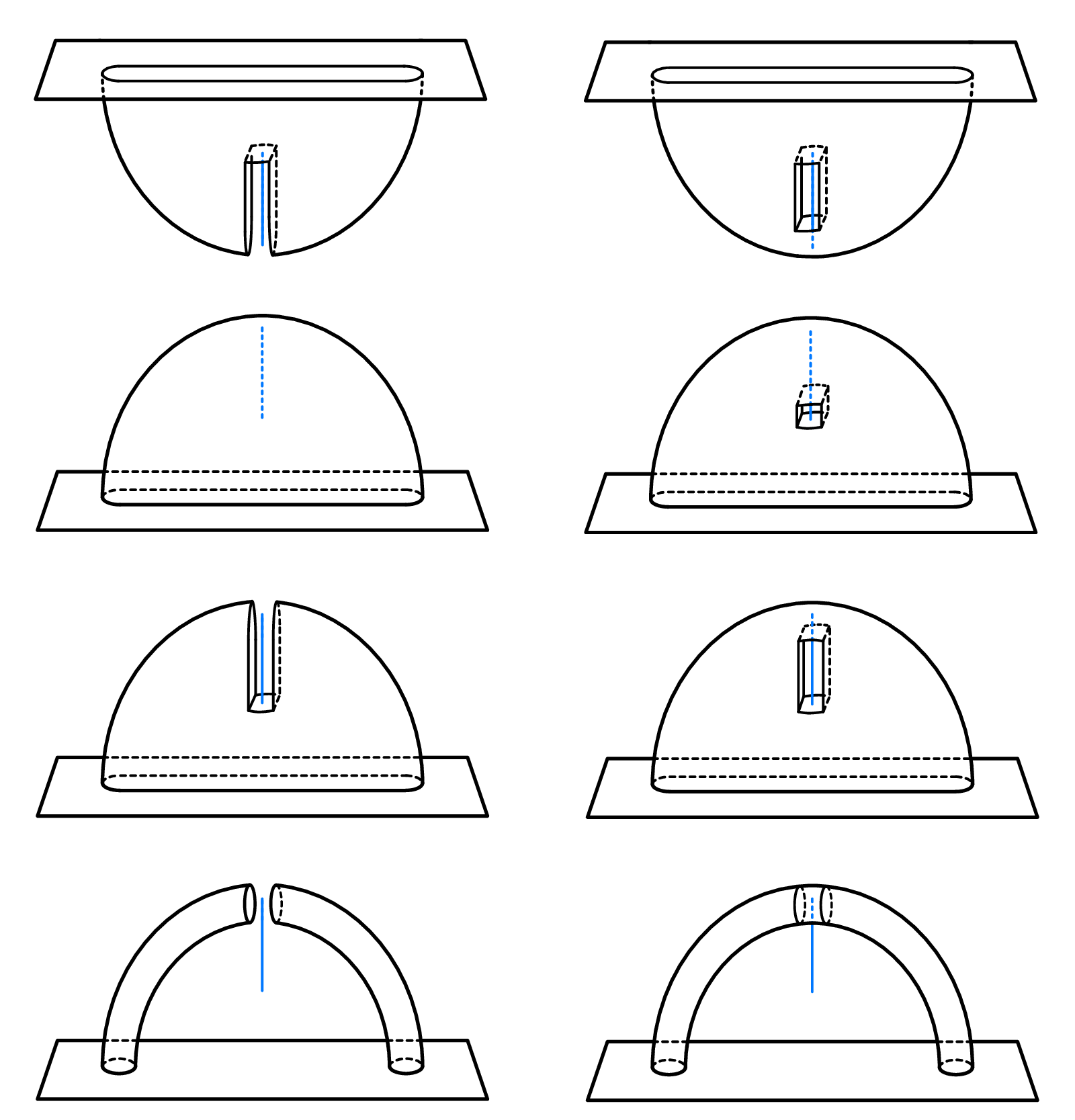}
	\put(-2,8){\small $\Sigma_{-,1}$}
	\put(-2,33.5){\small $\Sigma_{-,2}$}
	\put(-2,59){\small $\Sigma_{-,3}$}
	\put(-2,88.5){\small $\Sigma_{-,4}$}
	\put(91.5,8){\small $\Sigma_{+,1}$}
	\put(91.5,33.5){\small $\Sigma_{+,2}$}
	\put(91.5,59){\small $\Sigma_{+,3}$}
	\put(92.5,88.5){\small $\Sigma_{+,4}$}
	\end{overpic}
	\caption{Some of the hypersurfaces that sweep through $\Sigma_{-}\times [0,1]$ at $t=t_0 - \epsilon$ on the left and the corresponding hypersurfaces for $\Sigma_{+}\times [0,1]$ at $t=t_0 + \epsilon$ on the right. In all figures the light blue arc is $\gamma$.}
	\label{fig: Giroux1'}
\end{figure}

Then, construct a codimension one foliation of $\Sigma_-\times[0,1]$ by hypersurfaces isotopic to $\Sigma_-\times\{i\}$ such that the $\Sigma_{0,j}$, $j=0,\dots,6$, are leaves and consider the corresponding bypass decomposition $\Delta_{t_0-\epsilon}$. Some preliminary observations about this bypass decomposition: 
\begin{enumerate}
    \item[(I)] We may take the leaves of the foliation between $\Sigma_{-,0}$ and $\Sigma_{-,1}$ to all be convex, so that this layer contains no bypass attachments. Indeed, note that $N(\mathring{L}_0)$ can be interpreted as a contact $n$-handle with a small neighborhood of the cocore deleted. Thus, there is a contact vector field that extends in a nonvanishing way from $H_{0,t_0-\epsilon}$ to $H_{0,t_0-\epsilon}\cup N(\mathring{L}_0)$. 
    
    The same can be said of the region between $\Sigma_{-,5}$ and $\Sigma_{-,6}$.
    
    \item[(II)] The region between $\Sigma_{-,1}$ and $\Sigma_{-,2}$ (and likewise between $\Sigma_{-,4}$ and $\Sigma_{-,5}$) may involve bypass attachments, but we do not need to describe them explicitly. 
    
    \item[(III)] The region between $\Sigma_{-,2}$ and $\Sigma_{-,3}$, i.e., the neighborhood of $\gamma$, is given by a sequence of bypass attachments that we will describe explicitly in the next subsection. 
\end{enumerate}

\s\n 
{\em The bypass decomposition of $\Sigma_+ \times [0,1]$.} 
\s

We construct analogs $\Sigma_{+,j}$, $j=1,\dots,5$, of $\Sigma_{-,j}$, where $\Sigma_{+,1}:= \Sigma_+\times\{0\}$ and $\Sigma_{+,5}:=\Sigma_+\times\{1\}$, as follows. Let $N(L_i), N(D_i)$, and $N(\gamma)$ be as before.
\be
\item[(2)] $\Sigma_{+,2}$ is the boundary of $H_{0,t_0+\epsilon}\cup (N(D_0)\setminus N(\gamma))$,
\item[(3)] $\Sigma_{+,3}$ is the boundary of $H_{0,t_0+\epsilon}\cup (N(D_0)\setminus N(L_1))$,
\item[(4)] $\Sigma_{+,4}$ is the (negative) boundary of $H_{1,t_0+\epsilon}\cup (N(D_1)\setminus N(\gamma))$.
\ee
See the right side of \autoref{fig: Giroux1'}. 

\subsection{Proof of \autoref{lemma: before and after'}.}\label{subsection:proof_of_before_and_after} The hypersurfaces $\Sigma_{\pm} \times \{i\}$ described above localize everything to a neighborhood of the arc $\gamma$. Thus, we first identify a normal form for the neighborhood of $\gamma$. Here there is a difference in the low-dimensional ($n=1$) and high-dimensional ($n>1$) cases, since $\gamma$ is Legendrian in the former and subcritically isotropic in the latter. 

The low-dimensional case is standard. 

\begin{lemma}[Neighborhood of $\gamma$ when $n=1$.]\label{lemma:normal_form_low}
We may assume that $N(\gamma)$ is a standard neighborhood of $\gamma$ such that $N'(\gamma) := N(\gamma) \cap D_0$ is a convex strip with Legendrian boundary subdivided by Legendrian arcs $\delta_1, \dots, \delta_{m-1}$ into convex disks $R_1, \dots, R_m$ with $\mathrm{tb}(\partial R_i) = -1$, arranged in order so that $R_1$ is adjacent to $L_0$ and $R_m$ contains $L_1 \cap D_0$. See \autoref{fig:newstrip}.  Moreover, $\Sigma_{-,2}$ can be adjusted so that $\Sigma_{-,2}\cap N'(\gamma)=\bdry N'(\gamma)\setminus \op{int} L_0$ and the intersection is transverse.
\end{lemma}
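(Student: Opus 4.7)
The plan is a direct $n=1$ convex-surface argument. First, by the Legendrian neighborhood theorem we fix a contactomorphism identifying $N(\gamma)$ with a standard model of the neighborhood of a stabilized Legendrian arc in $(\R^3, \xi_{\mathrm{st}})$, with $\gamma$ represented by a zig-zag front having $m-1$ interior cusps; the integer $m$ is controlled by (and can be made as large as) the number of stabilizations imposed on $\gamma$ when it was arranged as the intersection $D_0 \cap D_1$.  Inside this model, we isotope $D_0$ rel $\gamma \cup L_0$ so that $N'(\gamma) = N(\gamma) \cap D_0$ is a strip whose four boundary pieces are all Legendrian: one piece is the arc $L_0 \cap N'(\gamma) \subset L_0$, while the opposite short end near $\gamma(0)$ and the two long sides lie on $\partial N(\gamma)$ and are chosen to follow the characteristic foliation there.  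A further $C^\infty$-small perturbation rel boundary makes $N'(\gamma)$ convex.

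Next, a direct computation in the standard model shows that the dividing set $\Gamma_{N'(\gamma)}$ consists of exactly $m$ boundary-parallel arcs: each cusp of $\gamma$ forces a half-twist of the contact framing against the surface framing of $\gamma$ inside $D_0$ and hence a boundary-parallel dividing arc, plus one additional arc from the short end near $\gamma(0)$.  We then apply the Legendrian realization principle to produce $m-1$ mutually disjoint, properly embedded Legendrian arcs $\delta_1, \dots, \delta_{m-1}$ in $N'(\gamma)$, each transverse to $\Gamma_{N'(\gamma)}$, cutting $N'(\gamma)$ into convex disks $R_1, \dots, R_m$ each containing exactly one arc of $\Gamma_{N'(\gamma)}$.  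Since a convex disk with a single dividing arc has Legendrian boundary with Thurston--Bennequin number $-1$, we obtain $\mathrm{tb}(\partial R_i) = -1$ for every $i$.  The $\delta_i$ are then relabelled so that $R_1$ is adjacent to $L_0$ and $R_m$ contains $L_1 \cap D_0$.

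For the final claim about $\Sigma_{-,2}$, recall that $N(D_0)$ is a tubular neighborhood of $D_0$ whose radius is much smaller than that of $N(\gamma)$.  Hence $\Sigma_{-,2} = \partial(H_{0,t_0-\epsilon} \cup N(\mathring{L}_0) \cup (N(D_0)\setminus N(\gamma)))$ meets $N(\gamma)$ along a collar of $\partial(N(D_0)\cap N(\gamma)) \subset \partial N(\gamma)$, which upon radial retraction onto $D_0$ coincides with $\partial N'(\gamma) \setminus \op{int} L_0$; transversality with $N'(\gamma)$ is then automatic from the radial structure of the two tubular neighborhoods.

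The hard part is the explicit count that $\Gamma_{N'(\gamma)}$ has exactly $m$ boundary-parallel arcs in the standard model, which amounts to checking that each cusp of the Legendrian front of $\gamma$ produces precisely one boundary-parallel dividing arc on the nearby convex strip $N'(\gamma)$.  This is a classical $3$-dimensional convex-surface computation in the zig-zag normal form, and once it is in place the Legendrian realization step and the transversality adjustment for $\Sigma_{-,2}$ are routine.
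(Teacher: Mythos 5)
The paper does not give a proof of this lemma; before the statement it simply says ``the low-dimensional case is standard.'' Your proof supplies the expected $3$-dimensional convex-surface argument, and the overall structure is sound: normalize $N(\gamma)$ via the Legendrian neighborhood theorem, make the strip $N'(\gamma)$ convex with Legendrian boundary, count the dividing arcs, and invoke Legendrian realization for the $\delta_i$.

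The step you yourself flag as the hard part --- the count of $\Gamma_{N'(\gamma)}$ --- is also where I would press for detail. ``Each cusp contributes one boundary-parallel arc, plus one more'' is a slogan rather than a computation; what you actually need is $\mathrm{tb}$ of the smoothed Legendrian unknot $\partial N'(\gamma)$, and this depends both on the cusps of $\gamma$ (which contribute to the front in halves) and on the discrepancy between the contact framing of $\gamma$ and the surface framing induced by the original choice of $D_0$. That discrepancy has no reason to vanish before stabilization; the purpose of the paper's stipulation that $\gamma$ is ``sufficiently stabilized when $n=1$'' is precisely to overwhelm any framing defect of the wrong sign and force $\mathrm{tb}(\partial N'(\gamma))<0$ of the desired magnitude, and a careful version of your count should make that role visible. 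A second, smaller point: once you know $\Gamma_{N'(\gamma)}$ has $m$ properly embedded arcs, the existence of non-isolating Legendrian arcs $\delta_1,\dots,\delta_{m-1}$ separating them one per $R_i$ holds for any boundary-parallel configuration (nested or side-by-side), so you do not actually need the zig-zag normal form to carry out that step; but realizing properly embedded arcs with endpoints on a Legendrian boundary is a slightly different instance of the Legendrian realization principle than the closed-curve case, and deserves a pointer rather than just ``routine.'' The tubular-neighborhood argument for the $\Sigma_{-,2}$ condition is fine.
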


\begin{figure}[ht]
	\begin{overpic}[scale=0.38]{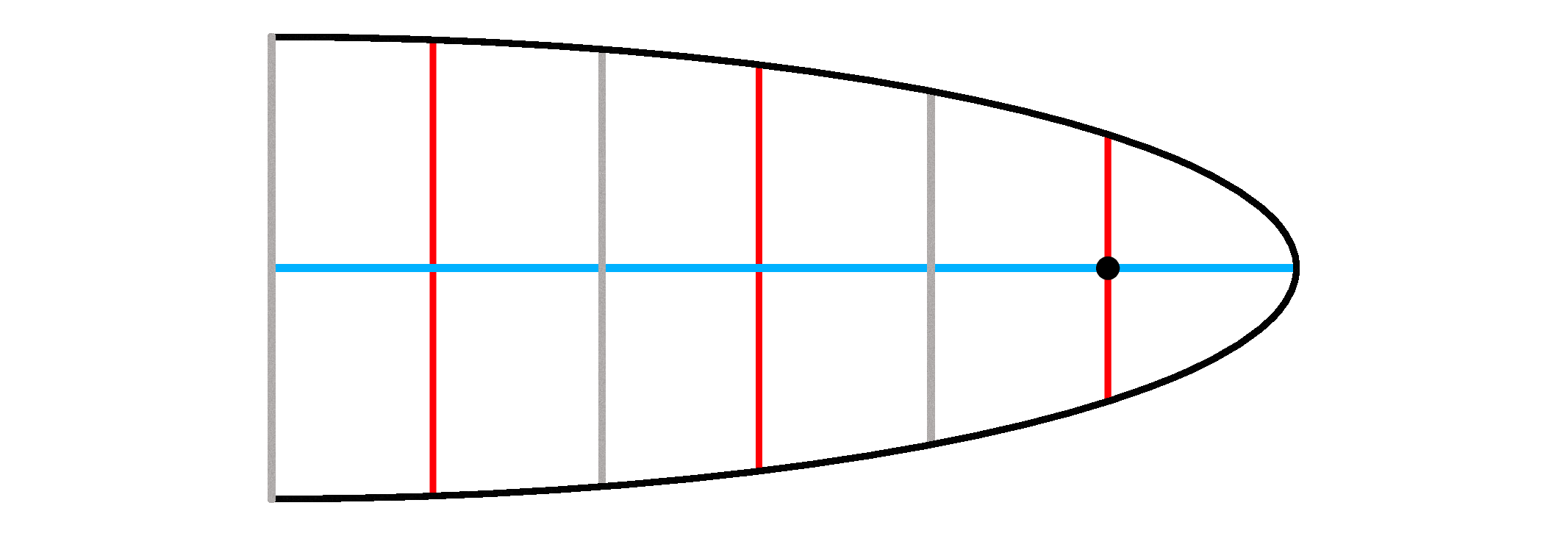}
		\put(30,19){\small \textcolor{Cerulean}{$\gamma$}}
		\put(13.5,25){\small \textcolor{gray}{$L_0$}}
		\put(72,14){\small $L_1$}
		\put(26,33){\small $R_1$}
		\put(46.5,31.5){\small $R_2$}
		\put(69.5,27){\small $R_3$}
		\put(35.5,8){\small \textcolor{gray}{$\delta_1$}}
		\put(56,10){\small \textcolor{gray}{$\delta_2$}}
	\end{overpic}
	\caption{The intersection $N'(\gamma)=D_0\cap N(\gamma)$ in the $3$-dimensional case with $m=3$. The red arcs are the dividing curves, the black dot is the intersection $L_1\cap D_0$, and the black arc is $\Sigma_{-,2}\cap N'(\gamma)$.}
	\label{fig:newstrip}
\end{figure}

To state the higher-dimensional version we introduce some notation. We use sub- and super-scripts  $\mathrm{dim}\, 3$, for example as in $N_{\mathrm{dim} \,3}(\gamma)$, $N'_{\mathrm{dim}\, 3}(\gamma)$, and $L_1^{\mathrm{dim} \,3}$, to indicate the low-dimensional objects described in the $n=1$ case by \autoref{lemma:normal_form_low}. Recall that the isotropic arc $\gamma:[0,1] \to M$ parametrizing $D_0\cap D_1$ satisfies $\gamma(0) \in L_1$ and $\gamma(1)\in L_0$. 

\begin{lemma}[Neighborhood of $\gamma$ when $n>1$.]\label{lemma: normalization for n greater than 1'}
After an isotopy of $D_0$ relative to $\bdry D_0$, there is a neighborhood $N(\gamma)\subset (M,\xi)$ of the form $N_{\mathrm{dim}\, 3}(\gamma)\times U \subset N_{\mathrm{dim}\, 3}(\gamma)\times T^*\R^{n-1}$,
where $U$ is a tubular neighborhood of the zero section $\R^{n-1}_0$ in $T^*\R^{n-1}$,
such that:
\be
\item[(i)] the contact structure on $N_{\mathrm{dim} \,3}(\gamma)\times T^*\R^{n-1}$ is $\op{ker}(\alpha+\beta)$, where $\alpha$ is a contact form on $N_{\mathrm{dim}\, 3}(\gamma)$ and $\beta$ is the standard Liouville form on $T^*\R^{n-1}$;
\item[(ii)] $N'(\gamma):=N(\gamma)\cap D_0=N'_{\mathrm{dim}\, 3}(\gamma)\times \R^{n-1}_0$;
\item[(iii)] $T_{\gamma(0)}L_1$ is $T_{\gamma(0)}L_1^{\mathrm{dim}\, 3}$ times a fiber of $T^*\R^{n-1}$;
 and 
\item[(iv)] $\op{tb}(\bdry N'_{\mathrm{dim} \,3}(\gamma))=-1$.
\ee
\end{lemma}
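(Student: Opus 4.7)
The plan is to apply the contact isotropic neighborhood theorem to $\gamma$, then use the remaining freedom in the trivialization together with a smooth isotopy of $D_0$ to arrange (i)--(iv) simultaneously. Conditions (i), (iii), and (iv) will be set up first using contact-topological normal forms, and then (ii) will be achieved by a purely smooth isotopy of $D_0$; the crux is that (ii) and (iii) constrain different geometric objects ($D_0$ and $L_1$) at the common point $\gamma(0)$ and decouple because $D_0$ carries no contact constraint.

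First, apply the isotropic neighborhood theorem to the subcritically isotropic arc $\gamma$. The conformal symplectic normal bundle $CSN(\gamma)$ is a rank-$(2n-2)$ symplectic vector bundle over $[0,1]$, hence trivializable. Any symplectic trivialization $CSN(\gamma)\cong [0,1]\times T^*\R^{n-1}$ yields a contactomorphism from a tubular neighborhood of $\gamma$ onto a neighborhood of $\gamma\times\{0\}$ in the product $J^1(\gamma)\times T^*\R^{n-1}$ with contact form of the stated shape $\alpha+\beta$, giving (i) upon identifying $J^1(\gamma)$ with $N_{\mathrm{dim}\,3}(\gamma)$. Shrinking the radius of $N_{\mathrm{dim}\,3}(\gamma)$ arranges (iv) as in the $n=1$ case.

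Second, use the gauge freedom at $\gamma(0)$ to secure (iii). Since $L_1$ is Legendrian, $T_{\gamma(0)}L_1\subset\xi_{\gamma(0)}$ is Lagrangian. Pick any Lagrangian line $\ell\subset T_{\gamma(0)}L_1$, let $V_1\subset\xi_{\gamma(0)}$ be a symplectic $2$-plane containing $\ell$, and set $V_2:=V_1^{\omega}$. From $T_{\gamma(0)}L_1\subset \ell^{\omega}=\ell\oplus V_2$ one checks that $T_{\gamma(0)}L_1\cap V_2$ is an $(n-1)$-dimensional Lagrangian in $V_2$ and $T_{\gamma(0)}L_1=\ell\oplus(T_{\gamma(0)}L_1\cap V_2)$. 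Choose the trivialization of $CSN(\gamma)$ so that its fiber at $\gamma(0)$ is $V_2$, pick Darboux coordinates on $V_2\cong T^*\R^{n-1}$ making $T_{\gamma(0)}L_1\cap V_2$ the vertical cotangent fiber, and take $L_1^{\mathrm{dim}\,3}$ to be any Legendrian arc in $N_{\mathrm{dim}\,3}(\gamma)$ through $\gamma(0)$ with tangent line $\ell$.

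Third, achieve (ii) by isotoping $D_0$ rel $\partial D_0$. With the product normal form fixed, $D_0\cap N(\gamma)$ is a smooth $(n+1)$-submanifold containing $\gamma$, and we wish to replace it by the model $N'_{\mathrm{dim}\,3}(\gamma)\times\R^{n-1}_0$. Since $D_0$ is only smooth, this is a purely smooth problem. A $C^1$-small isotopy fixing $\gamma$ pointwise first aligns the rank-$n$ normal bundle of $\gamma$ in $D_0$ with the direct sum of the one-dimensional normal of $\gamma$ in $N'_{\mathrm{dim}\,3}(\gamma)$ and the zero-section direction $T\R^{n-1}_0$; this is an open condition on the Gauss map of $D_0$ along $\gamma$. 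A straightening isotopy built using the product exponential map then flattens $D_0$ onto the model inside a possibly smaller $N(\gamma)$. Both isotopies are $C^0$-small and fix $\partial D_0$, and the hardest sub-step is verifying that the straightening can be arranged not to disturb the previously fixed normal form near $\gamma(0)\in L_1$; this reduces to the decoupling observation noted in the first paragraph.
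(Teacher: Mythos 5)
Your overall plan---a single global application of the isotropic neighborhood theorem followed by a purely smooth isotopy of $D_0$---is structurally different from the paper's, which instead constructs Pfaff--Darboux models near $\gamma(0)$ and $\gamma(1)$ separately (also isotoping $\gamma$ itself, not just $D_0$) and then interpolates along the interior of $\gamma$ with an explicit homotopy-theoretic check. The plan is plausible in outline, but as written there are two concrete gaps.

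The first is the ``decoupling'' claim, which is false near $\gamma(1)$. Recall that $\gamma(1)\in L_0\subset\partial D_0$ and $L_0$ is Legendrian; the isotopy is rel $\partial D_0$, so $L_0$ is frozen. For condition (ii) to hold in a neighborhood of $\gamma(1)$, one needs $L_0\cap N(\gamma)=L_0^{\mathrm{dim}\,3}\times\R^{n-1}_0$ in the product model, and since $L_0$ cannot be moved, this is a genuine constraint on the trivialization of the conformal symplectic normal bundle chosen in your Step 1. You normalize that trivialization only at $\gamma(0)$ (to match $T_{\gamma(0)}L_1$) and explicitly assert that $D_0$ carries no contact constraint, which overlooks the Legendrian boundary condition at $\gamma(1)$. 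The paper avoids this by building the model at $\gamma(1)$ compatibly with $L_0$ first, and by isotoping $\gamma$ (inside $D_0$) rather than $D_0$ there.

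The second gap is in the alignment of the Gauss map along the interior of $\gamma$. You claim the normal-bundle alignment is achievable because it is ``an open condition''; this is the wrong kind of argument. Aligning the rank-$n$ normal field of $\gamma$ in $D_0$ with the model field over $[0,1]$, rel endpoints, is a relative homotopy problem for sections of a Grassmannian bundle, obstructed by $\pi_1$ of the fiber, and $\pi_1(\Gr(n,2n))=\Z/2$ is nontrivial. The paper circumvents this by first perturbing $D_0$ so that the extra tangent direction $w_t$ lies in a prescribed $(2n-1)$-dimensional plane field, reducing the homotopy problem to a path in $\R^{2n-1}\setminus\{0\}\simeq S^{2n-2}$ rel endpoints; since this space is simply connected the obstruction vanishes. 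Your step skips this reduction entirely. Relatedly, the claim that both isotopies can be taken $C^1$- or $C^0$-small is unjustified: $D_0$ near $\gamma$ starts out as an arbitrary smooth half-disk and has no reason to be close to the product model before the normal bundles are homotoped into position.
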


\begin{proof}
We explain the $n=2$ case, leaving the $n>2$ case to the reader. We abuse notation and isotop $D_0$ and $\gamma$ relative to their boundaries in stages without changing their names. Let $v_t:= \dot\gamma(t)$. Also let $\perp$ denote the symplectic orthogonal complement with respect to the symplectic structure $\omega_\xi$ on $\xi$ and $J$ an almost complex structure on $\xi$ compatible with $\omega_\xi$. We also write $R$ for any tangent vector on $M$ transverse to $\xi$ (not just the Reeb vector field). 

Since every point in $(M,\xi)$ has a standard neighborhood by the Pfaff-Darboux theorem, there exist an isotopy of $\gamma$ rel boundary and a neighborhood of the resulting $\gamma([1-\epsilon,1])$ with $\epsilon>0$ small on which (i) and (ii) hold. 

Next we isotop $\gamma$ rel boundary so that $T_{\gamma(0)}L_1=\R\langle Jv_0, Jw_0\rangle$, where $v_0=\dot\gamma(0)$ and $w_0\in \R\langle v_0, Jv_0\rangle^\perp$. We further isotop $D_0$ rel boundary so that $T_{\gamma(0)}D_0=\R\langle R,v_0,w_0\rangle$; this is possible since the original $T_{\gamma(0)}D_0$ and $\R\langle R,v_0,w_0\rangle$ can be connected by a $1$-parameter family of $3$-planes transverse to $T_{\gamma(0)}L_1$ and containing $v_0$. We then take a neighborhood of $\gamma([0,\epsilon])$ on which (i)--(iii) hold; here $R,v_0,Jv_0$ span the $\R^3$-direction and $w_0,Jw_0$ span the $T^*\R^1$-direction.

Finally we consider the neighborhood of $\gamma([\epsilon,1-\epsilon])$. It is easy to perturb $D_0$ such that, for all $t\in [\epsilon, 1-\epsilon]$, $T_{\gamma(t)}D_0=\R\langle R,v_t, w_t\rangle$, $\xi(\gamma(t))=\R\langle v_t,Jv_t\rangle \oplus \R\langle v_t,Jv_t\rangle^\perp$, and $w_t\in \R\langle Jv_t\rangle \oplus \R\langle v_t,Jv_t\rangle^\perp$. We then homotop $w_t$, $t\in[\epsilon, 1-\epsilon]$, inside $(\R\langle Jv_t\rangle \oplus \R\langle v_t,Jv_t\rangle^\perp) \setminus \{0\}$ (which is homotopic to $S^2$) relative to its endpoints so that $w_t$ agrees with the desired model.  Once we have aligned $N(\gamma)$ with the desired model on the linear level, the lemma follows.
\end{proof}

When $n=1$, note that each convex disk $R_1, \dots, R_m$ in \autoref{lemma:normal_form_low} is a bypass half-disk. When $n>1$, (iv) in \autoref{lemma: normalization for n greater than 1'} means that we may treat the latter as an analogue of the former but always with $m=1$. In particular, we have: 

\begin{claim}\label{claim:geometric_bypass}
When $n>1$, $N(\gamma)$ can be interpreted as a single bypass.
\end{claim}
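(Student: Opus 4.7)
The plan is to invoke the product normal form of \autoref{lemma: normalization for n greater than 1'} to reduce to the classical $m=1$ case of \autoref{lemma:normal_form_low} in dimension three, and then extend via a spinning construction. First I would apply \autoref{lemma: normalization for n greater than 1'} to obtain, after an isotopy of $D_0$, the identification
\[
\bigl(N(\gamma), \xi\bigr) \;\cong\; \bigl(N_{\dim 3}(\gamma) \times U,\; \ker(\alpha + \beta)\bigr),
\]
with $U \subset T^*\R^{n-1}$ a tubular neighborhood of the zero section, such that $N'(\gamma) = N'_{\dim 3}(\gamma) \times \R^{n-1}_0$ and $\operatorname{tb}(\partial N'_{\dim 3}(\gamma)) = -1$.

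Next, I would treat the three-dimensional factor. The condition $\operatorname{tb}(\partial N'_{\dim 3}(\gamma)) = -1$ is precisely the $m=1$ case of \autoref{lemma:normal_form_low}, so $N'_{\dim 3}(\gamma)$ is a standard bypass half-disk and $N_{\dim 3}(\gamma)$ is contactomorphic to a single three-dimensional bypass attachment with classical bypass data $(\Lambda_\pm^{\dim 3}; D_\pm^{\dim 3})$. In particular, $N_{\dim 3}(\gamma)$ is modeled as a smoothly canceling pair of contact $1$- and $2$-handles.

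Finally, I would pass to higher dimensions by taking the product of this canceling pair with $U$. Since the contact form splits as $\alpha + \beta$, the product of a contact $k$-handle ($k=1,2$) in the three-dimensional factor with the standard neighborhood $U$ of the zero section is a standard contact $(k+n-1)$-handle in the higher-dimensional setting, with Legendrian attaching data obtained by spinning the three-dimensional data around $\R^{n-1}_0$ in the sense of Section 2.1. This yields a smoothly canceling pair of contact $n$- and $(n+1)$-handles modeling $N(\gamma)$, i.e., a single bypass attachment, with bypass data
\[
\Lambda_\pm \;=\; \Lambda_\pm^{\dim 3} \text{ spun around } \R^{n-1}_0, \qquad D_\pm \;=\; D_\pm^{\dim 3} \text{ spun around } \R^{n-1}_0,
\]
the required contact-transverse intersection point on the dividing set being furnished by (iii) of \autoref{lemma: normalization for n greater than 1'}. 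The main obstacle will be verifying cleanly that spinning the canceling pair yields a canceling pair in the higher-dimensional contact category with the correct Lagrangian filling data; I would streamline this via the bypass-bifurcation correspondence of \autoref{prop:bypass_bifurcation_correspondence}, since the product structure preserves the single nondegenerate retrogradient in the folded Weinstein picture that characterizes a single bypass.
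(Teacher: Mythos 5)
Your first two steps (invoke the product normal form of Lemma~6.3, then reduce to the $m=1$ case of Lemma~6.2 to see $N'_{\dim 3}(\gamma)$ as a classical bypass half-disk) match the paper. The last step is where your proposal breaks, and it breaks in a way that is precisely what the paper's actual argument is designed to handle.

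The claim ``the product of a contact $k$-handle ($k=1,2$) in the three-dimensional factor with $U\subset T^*\R^{n-1}$ is a contact $(k+n-1)$-handle'' is incorrect. A contact $k$-handle in a $3$-manifold has attaching region $S^{k-1}\times D^{3-k}$, so its product with $U\cong D^{2(n-1)}$ has attaching region $S^{k-1}\times D^{2n+1-k}$, which is still the topological type of a contact \emph{$k$}-handle in the $(2n+1)$-dimensional setting — not a $(k+n-1)$-handle. To go from index $1$ to index $n$ and from $2$ to $n+1$ you need the attaching sphere to become $S^{n-1}$, and the product does not do that: it has translational symmetry in $\R^{n-1}$, whereas a higher-dimensional bypass has rotational $S^{n-2}$-symmetry. ``Spinning around $\R^{n-1}_0$'' conflates these two geometries; spinning is done around a codimension-two vertical axis and is not the same thing as taking a cartesian product with a flat.

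What the paper actually does is take $N'(\gamma) = N'_{\dim 3}(\gamma)\times D^{n-1}$, whose front projection is a strip with two parallel $D^{n-1}$'s worth of cusps, and \emph{cap it off} to an extended disk $\tilde N'(\gamma)$ whose front is the standard $n$-dimensional Legendrian unknot with a single cusp edge $\cong S^{n-1}$, foliated by Legendrian $n$-disks interpolating between the two sheets. This rounding changes the boundary topology and is the step that produces a genuine single bypass; it is not automatic and requires a ``size issue'' to be managed (retract $N'_{\dim 3}(\gamma)$ to a thin disk while preserving the available room in the cotangent direction of $T^*D^{n-1}$, and choose $\Sigma_{-,2}$ with some care). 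Your appeal to the bypass-bifurcation correspondence at the end also does not come for free: before capping off, the critical points in the product model are Morse-Bott, not nondegenerate of index $n$, so the hypotheses of Proposition~2.20 are not met without first performing the rounding. Your proposal would need to replace the product-of-handles step by this capping-off argument.
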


We can see this as follows. First, consider the $n=1$ case with $m=1$, so that $N_{\op{dim}\, 3}'(\gamma) = R_m$. Observe that:
\be
\item $\bdry N_{\op{dim}\, 3}'(\gamma)$ is the union of Legendrian arcs $L':= L_0^{\op{dim}\, 3}\cap N_{\op{dim}\, 3}'(\gamma)$ and $L''\subset \bdry \Sigma_{-,2}^{\op{dim} \, 3}$, in gray and black respectively in \autoref{fig:geometric_bypass}, that intersect at two points, and $L''$ in turn is the union of two Legendrian arcs $a_{+1}$ and $a_{-1}$ that meet at a point $p$;
\item $\op{int}(N_{\op{dim}\, 3}'(\gamma))$ is foliated by a $1$-parameter family of Legendrian arcs $a_t$, $t\in[-1,+1]$, from $a_{-1}$ to $a_{+1}$ that start at $p$ and end on $L'$ such that $a_{0}$ intersects $L_1^{\op{dim}\, 3}$. 
\ee
We thus have that $N_{\op{dim}\, 3}'(\gamma)$ is a bypass half-disk with attaching arc $L'' = a_{-1}\cup a_{+1}$. The arcs $a_{\pm 1}$ are the positive and negative attaching data, respectively, and the gray arc $L'$ is the core of the ($n=1$)-handle for the bypass. A standard model for $N_{\op{dim}\, 3}'(\gamma)$ in the front projection is a standard $1$-dimensional Legendrian unknot with two cusps $p,q$, where:
\be
\item the cusp $q$ is at the center of the Legendrian arc $L'$ and 
\item the cusp $p$ is the initial point of the family $a_t$, $t\in[-1,+1]$.
\ee 
See the left side of \autoref{fig:geometric_bypass}. 

\begin{figure}[ht]
\vskip-0.45in
	\begin{overpic}[scale=0.43]{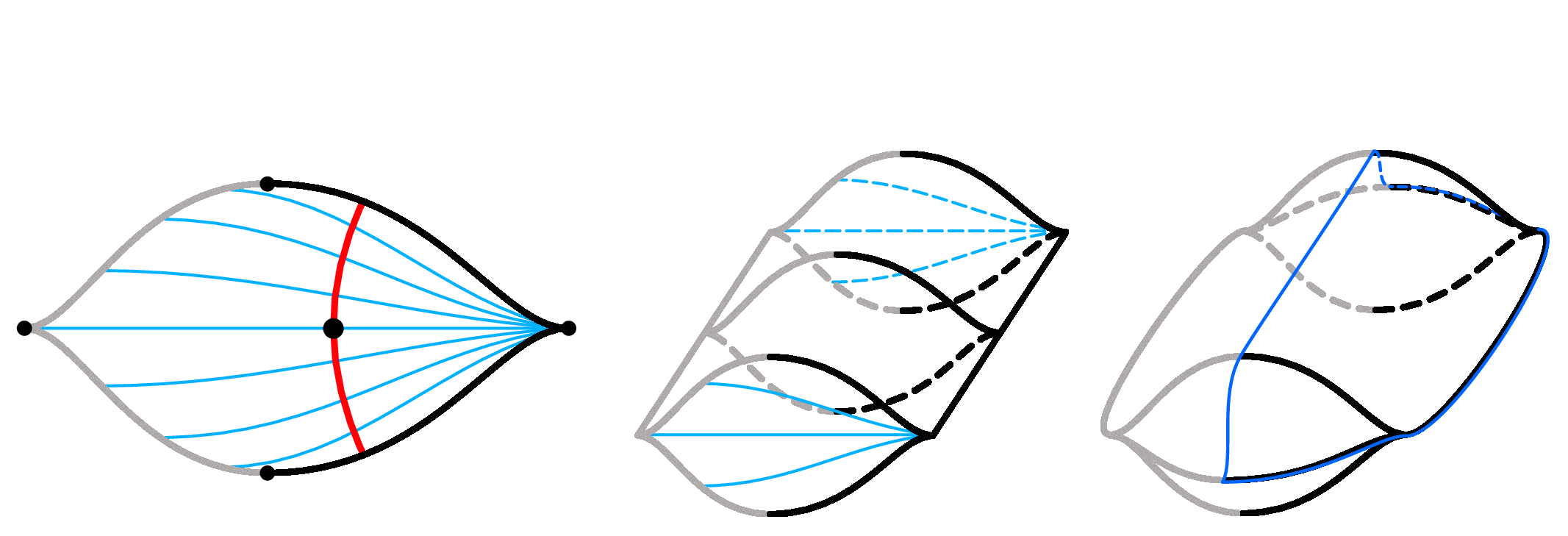}
		\put(-1,16){\small \textcolor{gray}{$L'$}}
		\put(98,14){\small \textcolor{blue}{$D_+$}}
		\put(35.5,11){\small $p$}
        \put(1,11){\small $q$}
        \put(27,20.5){\small $a_{+1}$}
        \put(27,5){\small $a_{-1}$}
        \put(22,14){\tiny $L_1^{\op{dim}\ 3}$}
        \put(13,24.75){\small $N_{\op{dim}\, 3}'(\gamma)$}
        \put(55.25,26.25){\small $N'(\gamma)$}
        \put(85.25,26.25){\small $\tilde{N}'(\gamma)$}
	\end{overpic}
	\caption{The left disk is a front projection model of a bypass half-disk when $n=1$; the colors agree with \autoref{fig:newstrip}. The middle and right diagrams are front projections when $n=2$. On the far right, the boundary of the positive data $D_+$ of the bypass is outlined in blue.}
	\label{fig:geometric_bypass}
\end{figure}

By 
\autoref{lemma: normalization for n greater than 1'}, when $n>1$, in the front projection $N'(\gamma) = N(\gamma)\cap D_0$ has the form 
\[
N_{\op{dim}\, 3}'(\gamma) \times D^{n-1}.
\]
See the middle of \autoref{fig:geometric_bypass}. The claim now is that there is an extension of $N'(\gamma)$ to $\tilde{N}'(\gamma)$ (i.e., a capping off) so that in the front projection $\tilde{N}'(\gamma)$ is the standard $n$-dimensional Legendrian unknot with one cusp edge $\simeq S^{n-1}$, together with a foliated (on $\op{int}(\tilde{N}'(\gamma))$) $1$-parameter family of Legendrian $n$-disks with boundary $S^{n-1}$ and going from a retraction $D_-$ of the bottom sheet to a retraction $D_+$ of the top sheet; see the right side of \autoref{fig:geometric_bypass}. This is not completely trivial as there is a ``size issue": we need enough space in the cotangent direction of $T^* D^{n-1}$ in order to damp out $N'(\gamma)$ to $\tilde{N}'(\gamma)$. However, $N_{\op{dim}\, 3}'(\gamma)$ can be made small by retracting to the disk $\cup_{t\in[-\epsilon,+\epsilon]} a_t$, while preserving the size in the cotangent direction of $T^*D^{n-1}$.  We can then interpret the retracted $\tilde{N}'(\gamma)$ as a bypass with positive and negative data $D_{\pm}$. Note that this involves initially choosing $\Sigma_{-,2}$ with some more care.

Now we prove our main lemma. 

\begin{proof}[Proof of \autoref{lemma: before and after'}.]
We will prove (1). By the symmetry of the bypass decompositions $\Delta_{t_0\pm \epsilon}$ of $\Sigma_{\pm}\times [0,1]$, (2) is given by the exact same (upside-down) argument. We write $\tilde{H}_{\pm, i}$ for the contact handlebodies obtained by attaching all of the $n$-handles of the bypasses below $\Sigma_{\pm, i}$ to $H_{0,t_0\pm \epsilon}$. Thus our goal is to show that the contact handlebodies $\tilde{H}_{-, 5}$ and $\tilde{H}_{+, 5}$ are the same. 

\s\n
{\em Step 1: The layers between $\Sigma_{\pm, 1}$ and $\Sigma_{\pm, 2}$.} 
\s

Note that $\Sigma_{-,1}$ and $\Sigma_{+,1}$ only differ in a small neighborhood $N(L_0) \cap N(\gamma)$. So given a sequence of bypasses inductively attached to $\Sigma_{-,1}$ (by which we mean that the first bypass $B_1$ is attached to $\Sigma_{-,1}$ but the next one $B_2$ is attached to the boundary of a thickening of $\Sigma_{-,1}\cup B_1$, not necessarily to $\Sigma_{-,1}$) in the layer from $\Sigma_{-,1}$ to $\Sigma_{-,2}$, we can inductively attach the exact same bypasses to $\Sigma_{+,1}$ to obtain $\Sigma_{+,2}$. 

\s\n
{\em Step 2: The layers between $\Sigma_{\pm, 2}$ and $\Sigma_{\pm, 3}$.}
\s 

In the case $n=1$ we refer to \autoref{lemma:normal_form_low}. As remarked above, each of the convex disks $R_1, \dots, R_m$ are bypass half-disks. Therefore the layer between $\Sigma_{-,2}$ and $\Sigma_{-,3}$ is given by attaching the bypasses corresponding to $R_m, R_{m-1}, \dots, R_1$, in that order. To the corresponding $\Sigma_{+, 2}$ we attach $R_1, \dots, R_{m-1}$ in that (reversed) order. The arcs $\delta_1,\dots,\delta_{m-1}$ appear in both $\tilde{H}_{-,3}$ and $\tilde{H}_{+,3}$ even though the order of attachment of the bypasses was reversed. 

The case $n>1$ is simpler because of \autoref{lemma: normalization for n greater than 1'} and \autoref{claim:geometric_bypass}. To go from $\Sigma_{-,2}$ to $\Sigma_{-,3}$, the claim tells us that we fill in the rest of $N(\gamma)$ with a single bypass attachment, which we also refer to as $R_m$ (with $m=1$) for consistency. In particular, the $n$-handle of the bypass attachment is a small standard neighborhood of $L_0$ near $\gamma(1)$ and the ($n+1$)-handle of the bypass attachment is a small standard neighborhood of $L_1$ near $\gamma(0)$. Likewise, the corresponding layer between $\Sigma_{+,2}$ and $\Sigma_{+,3}$ does not involve any bypasses because the aforementioned $n$- and ($n+1$)-handles are, by construction, already distributed onto the upper and lower handlebodies. 

\s\n
{\em Step 3: The handlebodies $\tilde{H}_{\pm,3}$.}
\s 

Before considering the rest of the layers we make an observation. At this point, the handlebodies $\tilde{H}_{-,3}$ and $\tilde{H}_{+,3}$, i.e., those obtained by attaching all of the $n$-handles from Step 1 and Step 2 above to $H_{0,t_0-\epsilon}$ and $H_{0,t_0+\epsilon}$, are the same. Indeed, all of the bypasses up through $\Sigma_{\pm, 2}$ and all of the bypasses $R_1, \dots, R_{m-1}$ are the same. Finally, the $n$-handle from the bypass $R_m$ in $\tilde{H}_{-,3}$, the only one as of yet not accounted for, is present in $\tilde{H}_{+,3}$ from the beginning by construction of the Legendrian arch that forms the lower bun of $\Theta_{t_0 + \epsilon}$. 

\s\n
{\em Step 4: The rest of the layers.} 
\s

By Step 3 it suffices to prove that the rest of the bypasses inductively attached to $\Sigma_{-,3}$ to reach $\Sigma_{-,5}$ can likewise be inductively attached to $\Sigma_{+,3}$ to reach $\Sigma_{+,5}$. These are just upside-down versions of Step 1 and Step 2. 

This completes the proof of the lemma. 
\end{proof}

\section{Weinstein Lefschetz Fibrations}\label{section: proofs of Lefschetz}

In this section we prove Corollaries~\ref{cor: proof of GP} and \ref{cor: conj of GP}, which we restate below as \autoref{theorem:new_WLF'} for the convenience of the reader. First we recall the relevant language from \autoref{sec:intro}. An abstract WLF, henceforth abbreviated AWLF, is the data 
\[
((W^{\flat}, \lambda^{\flat}, \phi^{\flat});\, \mathcal{L} = (L_1, \dots, L_m)),
\]
abbreviated $(W^{\flat}; \mathcal{L})$, where $(W^{\flat}, \lambda^{\flat}, \phi^{\flat})$ is a ($2n-2$)-dimensional Weinstein domain and $L_1, \dots, L_m \subset W^{\flat}$ are exact parametrized Lagrangian spheres. We say that two AWLFs are {\em move equivalent} if they are related to a common AWLF by sequences of the four moves (deformation, cyclic permutation, Hurwitz move, and stabilization) described in \autoref{sec:intro}. A consecutive sequence of stabilization moves will be called a {\em multistabilization}. We will work strictly with AWLFs and refer to \cite{GP17} for more details on how to pass between AWLFs and WLFs in the sense of \autoref{def: WLF}. 

Corollaries~\ref{cor: proof of GP} and \ref{cor: conj of GP} can be rephrased as follows:

\begin{theorem}[Weinstein-AWLF correspondence]\label{theorem:new_WLF'}
$\mbox{}$
    \begin{enumerate}
        \item Let $(W, \lambda, \phi)$ be a Weinstein domain. Then there is an AWLF $(W^{\flat}; \mathcal{L})$ such that the total space $|W^{\flat}; \mathcal{L}|$ is $1$-Weinstein homotopic to $(W, \lambda, \phi)$.
        
        \item Let $(W^{\flat}_i; \mathcal{L}_i)$, $i=0,1$, be AWLFs. Then the total spaces $|W^{\flat}_i; \mathcal{L}_i|$, $i=0,1$, are $1$-Weinstein homotopic if and only if $(W^{\flat}_i; \mathcal{L}_i)$, $i=0,1$, are move equivalent. 
    \end{enumerate}
    \end{theorem}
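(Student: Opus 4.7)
The plan is to deduce \autoref{theorem:new_WLF'} from \autoref{theorem: stabilization equivalence} by leveraging the correspondence between a WLF and its boundary OBD: a WLF $p:X\to D^2$ restricts on $\partial X$ to a strongly Weinstein OBD whose page is the distinguished fiber $W^\flat$ and whose monodromy is the product $\tau_{L_m}\circ \cdots \circ \tau_{L_1}$ of positive Dehn twists around the vanishing cycles. Under this correspondence the four AWLF moves become specific OBD operations, and \autoref{theorem: stabilization equivalence} provides the needed uniqueness on the OBD side.

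For Part (1), I would start from a Weinstein domain $(W,\lambda,\phi)$ and its contact boundary $(\partial W,\xi)$, which admits a supporting strongly Weinstein OBD by \autoref{theorem: Giroux Mohsen}(1). The task is to produce an AWLF whose total space is Weinstein homotopic to $W$. Concretely, I would fix a Weinstein handle decomposition $W=W_{n-1}\cup h_1\cup\cdots\cup h_m$, where $W_{n-1}$ is a neighborhood of the subcritical skeleton and each $h_i$ is a critical $n$-handle attached along a Legendrian sphere $\Lambda_i$. After Weinstein homotopy (possibly including positive stabilizations), $W_{n-1}$ can be presented as $W^\flat\times D^2$ for a lower-dimensional Weinstein domain $W^\flat$, and the $\Lambda_i$ can be arranged as the lifts of exact Lagrangian spheres $L_i\subset W^\flat$ near distinct points $e^{2\pi \mathbf{i} j/m}\in \partial D^2$. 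The resulting AWLF $(W^\flat;L_1,\dots,L_m)$ has total space Weinstein homotopic to $W$ by construction, and \autoref{theorem: stabilization equivalence} applied to the contactomorphic boundaries reconciles this construction with any arbitrarily chosen supporting OBD.

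For Part (2), the ``if'' direction follows because each of the four AWLF moves induces a $1$-Weinstein homotopy of total spaces: deformation by parametric interpolation, cyclic permutation by a change of basepoint, Hurwitz moves by a Legendrian isotopy in $W^\flat\times S^1$ realizing the braiding of vanishing paths in $\partial D^2$, and AWLF stabilization via the standard correspondence with positive OBD stabilization. For the ``only if'' direction, a $1$-Weinstein homotopy of the total spaces $|W^\flat_i;\mathcal L_i|$, $i=0,1$, restricts to a strong Weinstein homotopy of the boundary OBDs $(W^\flat_i,\tau_{L_m^i}\circ\cdots\circ\tau_{L_1^i})$. By \autoref{theorem: stabilization equivalence} these OBDs are strongly stably equivalent, i.e.\ related by a finite sequence of positive (de)stabilizations, conjugations, and strong Weinstein homotopies. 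I would then translate each OBD move into AWLF moves: positive OBD stabilization is AWLF stabilization by definition; conjugation of the factored monodromy by a symplectomorphism $\psi$ turns $\tau_{L_m}\circ\cdots\circ\tau_{L_1}$ into $\tau_{\psi(L_m)}\circ\cdots\circ\tau_{\psi(L_1)}$, realized by AWLF deformation together with cyclic permutations; and a strong Weinstein homotopy is tracked on the AWLF side by deformation at generic times and Hurwitz moves at the codimension-$1$ bifurcations in the space of Dehn-twist factorizations.

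The main obstacle will be the translation step in Part (2). OBD operations act on the raw monodromy, not on its ordered factorization as a product of Dehn twists, so the factorization must be tracked parametrically through each elementary deformation. The codimension-$1$ events in a generic $1$-parameter family of such factorizations --- two vanishing cycles crossing one another and forcing a handleslide --- must correspond precisely to Hurwitz moves, and analogous analyses are required for conjugations and for strong Weinstein homotopies that deform $W^\flat$. This parametric analysis is in the same spirit as the Cerf-theoretic $2$-parameter argument of \autoref{sec: sigma times interval}. A related technical point also arises in Part (1): producing the product presentation $W_{n-1}=W^\flat\times D^2$ and arranging the $\Lambda_i$ as product-form lifts requires a careful sequence of handleslides and possibly additional stabilizations.
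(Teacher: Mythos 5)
Your Part (1) sketch is in the right spirit but thins out at the key step: presenting a neighborhood of the subcritical skeleton as $W^\flat\times D^2$ is handled by \cite{Cie02}, but arranging the Legendrian attaching spheres of the $n$-handles onto a single page of the product OBD of $\partial(W^\flat\times D^2)$ takes both \cite[Corollary 1.3.3]{HH19} (which produces \emph{some} OBD containing them on a page) and \autoref{theorem: stabilization equivalence} (which yields a common stabilization of that OBD and the trivial one). The common stabilization is then read off as an AWLF multistabilization of $(W^\flat;\varnothing)$; you cannot achieve the arrangement by ``handleslides and possibly additional stabilizations'' alone without knowing which stabilizations to perform, and that is exactly what the stabilization equivalence theorem supplies.

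Your Part (2) has a genuine gap. You propose to pass through the boundary: a $1$-Weinstein homotopy of total spaces makes the boundary OBDs stably equivalent, and you then want to translate the sequence of OBD moves (stabilization, conjugation, strong Weinstein homotopy) into AWLF moves. The problem is that an OBD $(W^\flat,\phi)$ does not carry a factorization of $\phi$ into positive Dehn twists; the factorization is strictly additional data, and nothing in \autoref{theorem: stabilization equivalence} tracks it. A strong Weinstein homotopy of OBDs gives no family of factorizations, and conjugation replaces $\tau_{L_m}\cdots\tau_{L_1}$ by $\tau_{\psi L_m}\cdots\tau_{\psi L_1}$ for an arbitrary symplectomorphism $\psi$, not one realizable by deformation. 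You flag the difficulty yourself, but the proposed fix --- Cerf theory in a ``space of Dehn-twist factorizations'' --- is not what the parametric convex surface theory of \autoref{sec: sigma times interval} controls. Worse, if your translation worked it would show that stably equivalent boundary OBDs force move-equivalent AWLFs, which is strictly stronger than \autoref{cor: conj of GP} and would not use the hypothesis that the \emph{interior} Weinstein structures are homotopic. The paper sidesteps all of this via \emph{compatible} AWLFs (\autoref{def:CAWLF}): it slices the given $1$-Weinstein homotopy of total spaces into strict, handleslide, and birth-death pieces, produces a compatible AWLF at each slicing point (\autoref{lemma:CAWLF_existence}), and shows consecutive compatible AWLFs are move equivalent (\autoref{lemma:move_eq_strict'}, \autoref{lemma:move_eq_bd_hslide'}). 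Because that argument never discards the interior Weinstein structure in favor of the boundary OBD, it never encounters the factorization-tracking problem.
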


The notion of the total space of an AWLF, i.e., the association of a Weinstein structure to an AWLF, will be reviewed in \autoref{subsec:AWLFtoWeinstein'}. In \autoref{subsec:WeinsteintoAWLF'} we discuss the other direction of associating an AWLF to a Weinstein structure. In particular, we prove \autoref{theorem:new_WLF'}(1), i.e., \autoref{cor: proof of GP}, and discuss some subtleties. Finally in \autoref{subsec:move_eq'} we prove \autoref{theorem:new_WLF'}(2), i.e., \autoref{cor: conj of GP}.

\subsection{From AWLF to Weinstein}\label{subsec:AWLFtoWeinstein'}

Given an AWLF $(W^{\flat}; \mathcal{L})$, its {\em total space}, denoted $|W^{\flat}; \mathcal{L}|$, is a Weinstein domain obtained by:
\begin{itemize}
    \item[(1)] first forming the $2n$-dimensional Weinstein structure 
    \[
    (W^{\flat} \times D^2, \lambda^{\flat} + \lambda_{\mathrm{st}}, \phi^{\flat} + \phi_{\mathrm{st}}),
    \]
    where $(D^2, \lambda_{\mathrm{st}}, \phi_{\mathrm{st}})$ is the standard Weinstein structure on the disk with radial Liouville vector field, and then
    \item[(2)] attaching critical Weinstein handles along 
    \[
    \tilde{\mathcal{L}}: = \tilde{L}_1 \cup\cdots\cup \tilde{L}_m \subset W^{\flat} \times \bdry D^2,
    \]
    where $\tilde{L}_j$ is a Legendrian lift of $L_j$ that projects to a small interval centered at $\frac{2\pi j}{m}\in S^1 = \bdry D^2$.
\end{itemize}
By rescaling the Weinstein structure on $D^2$ if necessary we further require each of the small intervals in (2) to be pairwise disjoint.  Also, the precise locations $\tfrac{2\pi j}{m}\in S^1$ are not important, just their ordering around $S^1$.  We refer to \cite[Definition 6.3]{GP17} for a more detailed description of the total space of an AWLF. 

For convenience we adopt the following language:

\begin{defn}\label{def:weinstein_homotopies'}
    Let $(W^{2n}, \lambda_t, \phi_t)$, $t\in [0,1]$, be a $1$-Weinstein homotopy. We say that the homotopy is 
    \begin{itemize}
        \item {\em critically strict} if there are no handleslides of index $n$ handles and no birth-death type points of index $(n-1,n)$;
        
        \item a {\em critical handleslide homotopy} if at $t=\frac{1}{2}$ there is a single handleslide of $n$-handles, and is otherwise critically strict; and 
        \item a {\em critical birth-death homotopy} at $t=\frac{1}{2}$ if there is a single birth-death type point of index $(n-1,n)$, and is otherwise critically strict.
    \end{itemize}
    In this section we drop the ``critical'' adjective and will refer to homotopies of the above type as \textit{strict, handleslide}, and \textit{birth-death} homotopies, respectively. 
\end{defn}

\begin{remark}
Let $(W^{\flat}; \mathcal{L})$ be an AWLF. The total space $|W^{\flat}; \mathcal{L}|$ is well-defined up to strict Weinstein homotopy supported away from a neighborhood of the ($n-1$)-skeleton. That is, the total space is well-defined up to isotoping (but not handlesliding) the attaching spheres that lift the Lagrangian vanishing cycles.
\end{remark}

\subsection{From Weinstein to AWLF}\label{subsec:WeinsteintoAWLF'}

Here we prove \autoref{theorem:new_WLF'}(1), i.e., \autoref{cor: proof of GP}. Afterwards, we introduce a new definition (\autoref{def:CAWLF}) motivated by the proof and discuss some of its subtleties. We remind the reader that ``OBD'' always means ``strongly Weinstein OBD.''

\begin{proof}[Proof of \autoref{theorem:new_WLF'}(1).]
Let $(W^{2n}, \lambda, \phi)$ be a Weinstein domain. By \cite{Cie02}, there is a neighborhood $W^{(n-1)}$ of the ($n-1$)-skeleton of $(W, \lambda, \phi)$ such that $(W^{(n-1)}, \lambda, \phi)$ is $1$-Weinstein homotopic to 
\[
(W^{\flat} \times D^2, \lambda^{\flat} + \lambda^{\mathrm{st}}, \phi^{\flat} + \phi^{\mathrm{st}}),
\]
where $(W^{\flat}, \lambda^{\flat}, \phi^{\flat})$ is a Weinstein domain of dimension $2n-2$. Note that this $1$-Weinstein homotopy will in general not be strict when $n=3$; see \cite[p.\ 292]{CE12}. 

Next we identify three OBDs of $\bdry(W^{\flat} \times D^2)$.
\begin{itemize}
    \item First, $\bdry(W^{\flat} \times D^2)$ admits the trivial compatible OBD $(B, \pi)$ with page $W^{\flat}$ and identity monodromy. This is the OBD induced by the AWLF $(W^{\flat}; \varnothing)$.

    \item Secondly, let $\Lambda=\Lambda_1\cup\dots\cup \Lambda_m\subset \bdry W^{(n-1)}$ be the collection of attaching Legendrian $(n-1)$-spheres with respect to $\phi$ and let $\tilde{\Lambda} = \tilde{\Lambda}_1\cup \cdots \cup \tilde{\Lambda}_m \subset \bdry(W^{\flat} \times D^2)$ denote its image under the $1$-Weinstein homotopy furnished by \cite{Cie02}. By \cite[Corollary 1.3.3]{HH19}, there exists an OBD $(B',\pi')$ of $\bdry(W^{\flat} \times D^2)$ such that $\tilde{\Lambda}$ is contained in a page.

    \item Finally, by \autoref{theorem: stabilization equivalence}, there exist sequences of stabilizations applied to $(B,\pi)$ and $(B',\pi')$ such that they become strongly Weinstein homotopic to a common $(B'',\pi'')$. 
\end{itemize}
Observe that each OBD stabilization of the trivial OBD $(B,\pi)$ of $\bdry(W^{\flat} \times D^2)$ is induced by an AWLF stabilization of $(W^{\flat}; \varnothing)$. Thus there is an AWLF $(W^{\natural}; \mathcal{L}')$ such that 
\begin{itemize}
    \item $(W^{\natural}; \mathcal{L}')$ is an AWLF multistabilization of $(W^{\flat}; \varnothing)$ and hence $|W^{\natural}; \mathcal{L}'|$ is $1$-Weinstein homotopic to $W^{\flat} \times D^2$, and 
    \item the OBD of $\bdry(W^{\flat} \times D^2)$ induced by $(W^{\natural}; \mathcal{L}')$ is strongly Weinstein homotopic to $(B'', \pi'')$. 
\end{itemize}
Using this strong Weinstein homotopy of OBDs we may Legendrian isotop $\tilde{\Lambda}$ to
$$\tilde{\mathcal{L}}'' = \tilde{L}_1'' \cup \cdots \cup \tilde{L}_m''\subset W^{\flat}\times \bdry D^2$$ 
such that $\tilde{\mathcal{L}}''$ lies on a page of the OBD induced by $(W^{\natural}; \mathcal{L}')$. Then $\tilde{\mathcal{L}}''$ projects to a set of Lagrangian vanishing cycles $\mathcal{L}'' = (L_1'', \dots, L_m'') \subset W^{\natural}$ and the AWLF
\[
(W^{\natural}; \mathcal{L} := \mathcal{L}' \cup \mathcal{L}''),
\]
where $\mathcal{L}' \cup \mathcal{L}''$ is the ordered union of all vanishing cycles in $\mathcal{L}'$ and $\mathcal{L}''$, has a total space which is $1$-Weinstein homotopic to $(W, \lambda, \phi)$.
\end{proof}

As described in \autoref{subsec:AWLFtoWeinstein'}, given an AWLF one builds a Weinstein domain which is well-defined up to strict Weinstein homotopy. In the opposite direction, the association of an AWLF to a given Weinstein structure $(W, \lambda, \phi)$ is more delicate because the total space of the AWLF $(W^{\natural}; \mathcal{L})$ produced by the proof of \autoref{theorem:new_WLF'}(1) is in general not strictly Weinstein homotopic to $(W, \lambda, \phi)$. In order to navigate the proof of \autoref{theorem:new_WLF'}(2) carefully, we need to be more precise about the type of AWLF produced by the proof of \autoref{theorem:new_WLF'}(1) as it relates to the input Weinstein structure. This motivates \autoref{def:CAWLF}.

\begin{defn}\label{def:CAWLF}
Let $(W^{2n}, \lambda, \phi)$ be a Weinstein domain. An AWLF $(W^{\natural}; \mathcal{L})$ is {\em compatible with $(W, \lambda, \phi)$} if there is an ordered decomposition $\mathcal{L} = \mathcal{L}' \cup \mathcal{L}''$ such that:
\begin{enumerate}
    \item[(a)] the total space $|W^{\natural}; \mathcal{L}'|$ is $1$-Weinstein homotopic to $(W^{(n-1)}, \lambda, \phi)$, where $W^{(n-1)}$ is a neighborhood of the ($n-1$)-skeleton of $(W, \lambda, \phi)$; 
    \item[(b)] the $1$-Weinstein homotopy in (a) extends via a strict Weinstein homotopy to a $1$-Weinstein homotopy from $|W^{\natural}; \mathcal{L}|$ to $(W, \lambda, \phi)$; and 
    \item[(c)] under the homotopy in (b), the Legendrian lifts $\tilde{\mathcal{L}}'' \subset W^{\flat} \times \bdry D^2$ of the vanishing cycles $\mathcal{L}''$ are identified with the attaching spheres of the $n$-handles of $\phi$ on $\partial W^{(n-1)}$.  
\end{enumerate}
We use the notation $(W^{\natural}; \mathcal{L}', \mathcal{L}'')$ to indicate a preferred ordered decomposition $(W^{\natural}; \mathcal{L} = \mathcal{L}' \cup \mathcal{L}'')$, and we say that $(W^{\natural}; \mathcal{L}', \mathcal{L}'')$ is {\em compatible with $(W,\lambda,\phi)$} if $(W^{\natural}; \mathcal{L})$ is compatible with $(W,\lambda,\phi)$ via the indicated preferred decomposition. 
\end{defn}

Less precisely, an AWLF $(W^{\natural}; \mathcal{L}', \mathcal{L}'')$ is compatible with $(W, \lambda, \phi)$ if $|W^{\natural}; \mathcal{L}'|$ gives a (stabilized) neighborhood of the ($n-1$)-skeleton and if $\mathcal{L}''$ lifts to the $n$-handles of $\phi$. See \autoref{fig:CAWLF}.

\begin{figure}[ht]
	\begin{overpic}[scale=.3]{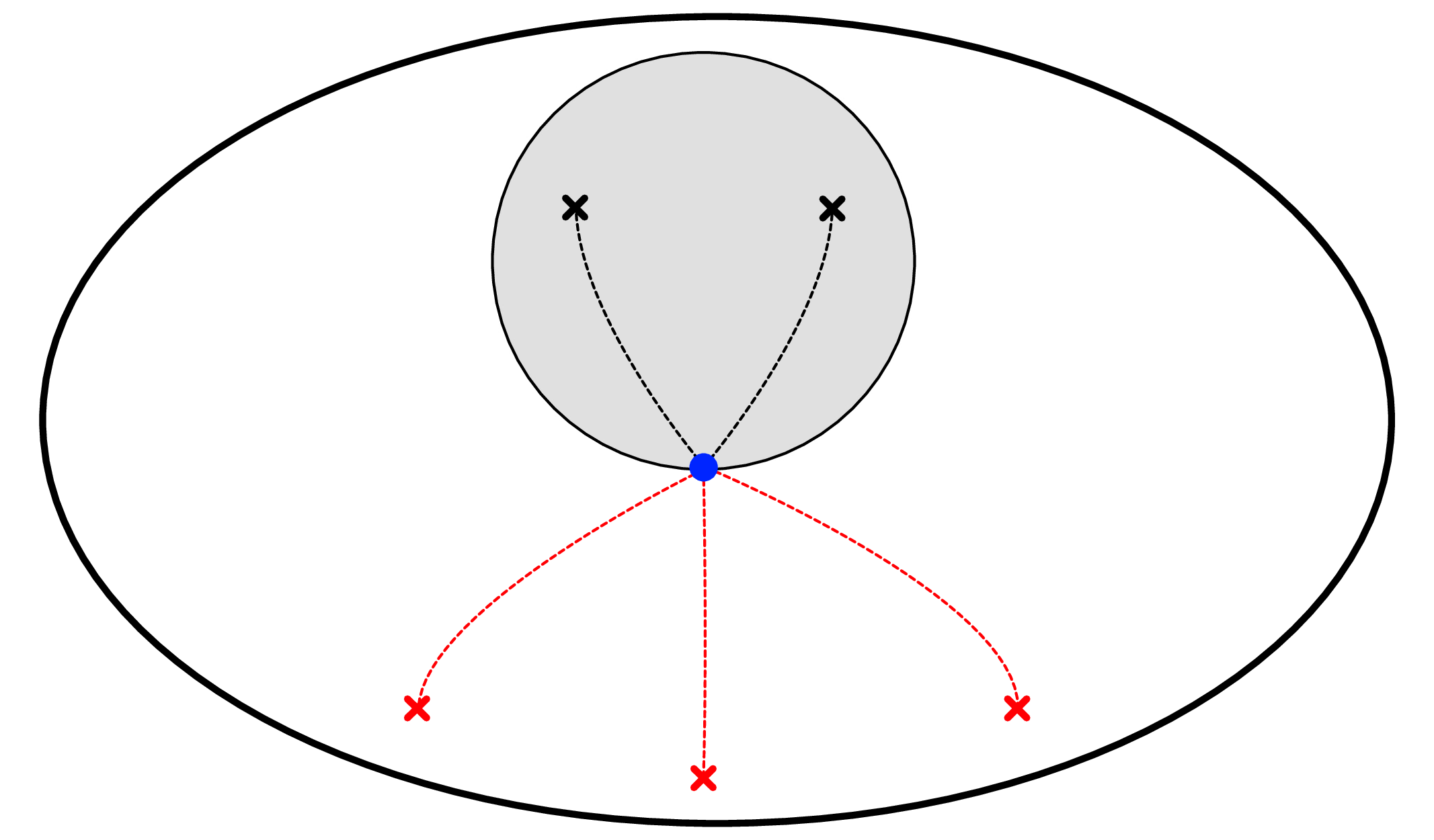}
	\put(55,32){\small $L_1'$}
    \put(44,35){\small $L_2'$}
    \put(36,22){\small {\cbu  $L_1''$}}
    \put(44.5,15){\small {\cbu  $L_2''$}}
    \put(57,16.5){\small {\cbu  $L_3''$}}
    \put(10,50){\small $D^2$}
	\end{overpic}
	\caption{A heuristic picture of an AWLF $\left(W^{\natural};\, \mathcal{L}' = (L_1', L_2'),\, \mathcal{L}'' = (L_1'', L_2'', L_3'')\right)$ compatible with a Weinstein structure $(W, \lambda, \phi)$ where $\phi$ has three $n$-handles. More precisely, the figure depicts a WLF $p:W \to D^2$ induced by the AWLF. The blue dot is the image of the distinguished fiber $W^{\natural}$, the black singular values and Lagrangian thimbles in the smaller gray disk correspond to the bi-stabilization $(W^{\natural}; \mathcal{L}')$, and the red singular values and thimbles correspond to the cycles $(L_1'', L_2'', L_3'')$ that lift to the attaching spheres of the three $n$-handles of $\phi$.}
	\label{fig:CAWLF}
\end{figure}

\begin{remark}\label{remark:CAWLFs}
Here we collect some observations about compatibility. 
\begin{enumerate}
    \item Let $(W^{\flat}; \mathcal{L})$ be an AWLF. Then $(W^{\flat}; \varnothing, \mathcal{L})$ is compatible with $|W^{\flat}; \mathcal{L}|$. 

    \item Let $(W^{\natural}; \mathcal{L})$ be an AWLF that admits an ordered decomposition $\mathcal{L} = \mathcal{L}' \cup \mathcal{L}''$ such that $(W^{\natural}; \mathcal{L}')$ is a multistabilization of $(W^{\flat}; \varnothing)$. If $\mathcal{L}'$ is nonempty, then $(W^{\natural}; \mathcal{L}', \mathcal{L}'')$ is \textit{not} compatible with $|W^{\natural}; \mathcal{L}|$, even though $(W^{\natural}; \mathcal{L}')$ is a multistabilization. Indeed, the $n$-handles of the total space $|W^{\natural}; \mathcal{L}|$, by definition, are attached along Legendrian lifts of all of the vanishing cycles in $\mathcal{L} = \mathcal{L}' \cup\mathcal{L}''$. However, compatibility with a prescribed Weinstein structure requires the $n$-handles of the latter to correspond to $\mathcal{L}''$. 
    
    \item The notion of compatibility via a fixed decomposition $\mathcal{L} = \mathcal{L}' \cup \mathcal{L}''$ is not necessarily preserved under $1$-Weinstein homotopy. Indeed, let $(W^{\natural}; \mathcal{L}', \mathcal{L}'')$ be compatible with $(W, \lambda, \phi)$. Let $(W, \lambda_t, \phi_t)$, $t\in [0,1]$, be either a handleslide or birth-death homotopy as in \autoref{def:weinstein_homotopies'} with $(\lambda_0, \phi_0) = (\lambda, \phi)$. Then $(W^{\natural}; \mathcal{L}', \mathcal{L}'')$ is \textit{not} compatible with  $(W, \lambda_1, \phi_1)$. However, the compatibility of $(W^{\natural}; \mathcal{L})$ may be preserved with a different choice of ordered decomposition of $\mathcal{L}$; see the proof of \autoref{lemma:move_eq_bd_hslide'} below. 
\end{enumerate}
\end{remark}

Given the Weinstein domain $(W, \lambda, \phi)$, the AWLF produced by the proof of \autoref{theorem:new_WLF'}(1) is compatible with $(W, \lambda, \phi)$ --- this is the key point of the definition of compatibility. In fact, the result of the proof of \autoref{theorem:new_WLF'}(1) may be rephrased and strengthened as follows: 
    
\begin{lemma}[Existence of compatible AWLFs]\label{lemma:CAWLF_existence}
Let $(W, \lambda, \phi)$ be a Weinstein domain and let $\Lambda \subset \partial W$ be a possibly disconnected Legendrian submanifold. Then there exists an AWLF $(W^{\natural}; \mathcal{L}= \mathcal{L}'\cup \mathcal{L}'')$ and a Lagrangian submanifold $L \subset W^{\natural}$ such that 
    \begin{itemize}
        \item $(W^{\natural}; \mathcal{L}', \mathcal{L}'')$ is compatible with $(W, \lambda, \phi)$, and 
        \item there is a Legendrian lift of $L$ in $W^{\natural}\times \partial D^2$ that lies on a single page and is identified with $\Lambda$ under the $1$-Weinstein homotopy from $|W^{\natural}; \mathcal{L}|$ to $(W, \lambda, \phi)$. 
    \end{itemize}
\end{lemma}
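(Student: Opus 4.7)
The plan is to extend the proof of \autoref{theorem:new_WLF'}(1) by carrying the additional Legendrian $\Lambda$ along throughout the construction. First, I would arrange, via a contact isotopy of $\partial W$, that $\Lambda$ is disjoint from small standard neighborhoods of the cocores of the critical Weinstein $n$-handles of $\phi$. Writing $W^{(n-1)}$ for a neighborhood of the $(n-1)$-skeleton of $(W,\lambda,\phi)$ and $\tilde\Lambda_{\mathrm{att}}\subset \partial W^{(n-1)}$ for the union of attaching Legendrian $(n-1)$-spheres of the $n$-handles of $\phi$, the isotoped $\Lambda$ lies in $\partial W^{(n-1)}\setminus N(\tilde\Lambda_{\mathrm{att}})$, where the contact structure inherited from $\partial W$ agrees with that inherited from $\partial W^{(n-1)}$. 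Via the $1$-Weinstein homotopy of \cite{Cie02} identifying $(W^{(n-1)},\lambda,\phi)$ with $(W^{\flat}\times D^2, \lambda^{\flat} + \lambda^{\mathrm{st}}, \phi^{\flat}+\phi^{\mathrm{st}})$, both $\Lambda$ and $\tilde\Lambda_{\mathrm{att}}$ become a disjoint union of Legendrians in $\partial(W^{\flat}\times D^2)$.

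Next, by \cite[Corollary 1.3.3]{HH19} applied to $\Lambda\cup\tilde\Lambda_{\mathrm{att}}$, there is a strongly Weinstein OBD $(B',\pi')$ of $\partial(W^{\flat}\times D^2)$ whose page contains $\Lambda\cup\tilde\Lambda_{\mathrm{att}}$. I would then run the argument from the proof of \autoref{theorem:new_WLF'}(1) essentially verbatim: the trivial OBD $(B,\pi)$ with page $W^{\flat}$ and the OBD $(B',\pi')$ are stably equivalent by \autoref{theorem: stabilization equivalence}, and since each positive stabilization of $(B,\pi)$ is induced by an AWLF stabilization of $(W^{\flat}; \varnothing)$, this yields an AWLF $(W^{\natural}; \mathcal{L}')$ that is a multistabilization of $(W^{\flat};\varnothing)$ whose induced OBD is strongly Weinstein homotopic to a common stabilization $(B'',\pi'')$ of the two OBDs.

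Using this strong Weinstein homotopy of OBDs, I would Legendrian isotope $\Lambda \cup \tilde\Lambda_{\mathrm{att}}$ so that it lies on a single page of the OBD induced by $(W^{\natural}; \mathcal{L}')$, which is identified with $W^{\natural}$. Let $\mathcal{L}''=(L_1'',\dots,L_m'')$ be the ordered collection of exact Lagrangian spheres in $W^{\natural}$ obtained by projecting $\tilde\Lambda_{\mathrm{att}}$ to this page, and let $L$ be the exact Lagrangian in $W^{\natural}$ obtained by projecting $\Lambda$. Because the $n$-handles of $\phi$ are attached along $\tilde\Lambda_{\mathrm{att}}$, attaching critical Weinstein $n$-handles along the Legendrian lifts $\tilde{\mathcal{L}}''\subset W^{\natural}\times\partial D^2$ (via their identification with $\tilde\Lambda_{\mathrm{att}}$) extends the $1$-Weinstein homotopy $|W^{\natural};\mathcal{L}'|\simeq W^{\flat}\times D^2\simeq W^{(n-1)}$ strictly to a $1$-Weinstein homotopy $|W^{\natural};\mathcal{L}|\simeq (W,\lambda,\phi)$. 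This verifies conditions (a), (b), (c) of \autoref{def:CAWLF}, so $(W^{\natural};\mathcal{L}',\mathcal{L}'')$ is compatible with $(W,\lambda,\phi)$, and the Legendrian lift of $L$ is by construction identified with $\Lambda$.

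The main obstacle I foresee is ensuring that the sequence of positive stabilizations and strong Weinstein homotopies produced by \autoref{theorem: stabilization equivalence} can be performed while keeping $\Lambda\cup\tilde\Lambda_{\mathrm{att}}$ pageable throughout. This should follow because positive OBD stabilizations attach $n$-handles along regular Lagrangian disks with Legendrian boundary, and these can be arranged to be disjoint from any fixed collection of pageable Legendrians; simultaneously, strong Weinstein homotopies of OBDs supply the ambient Legendrian isotopies needed to move $\Lambda\cup\tilde\Lambda_{\mathrm{att}}$ onto the resulting common page.
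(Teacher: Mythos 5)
Your proposal is correct and follows essentially the same route the paper intends; the paper gives no separate argument for this lemma, treating it as the proof of \autoref{theorem:new_WLF'}(1) with the extra Legendrian $\Lambda$ carried along, which is exactly what you do by applying \cite[Corollary~1.3.3]{HH19} to $\Lambda\cup\tilde\Lambda_{\mathrm{att}}$ after first isotoping $\Lambda$ off neighborhoods of the belt spheres so that it may be regarded as living on $\partial W^{(n-1)}$. One small inaccuracy in your closing paragraph: pageability of $\Lambda\cup\tilde\Lambda_{\mathrm{att}}$ is automatically preserved under positive stabilization because the new page contains the old page as a subdomain, so there is no need to arrange disjointness of the stabilizing handles from these Legendrians; the strong Weinstein homotopy then carries the Legendrians to a page of the target OBD as you say.
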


Informally, the lemma says that given a Weinstein domain $W$ we may produce a compatible AWLF and moreover given a Legendrian $\Lambda \subset \partial W$ we may further require $\Lambda$ to sit on a single page of the OBD induced by the AWLF, even if $\Lambda$ has multiple components.
   
\subsection{Proof of move equivalence}\label{subsec:move_eq'}

Here we prove \autoref{theorem:new_WLF'}(2). The ``if'' direction is clear and was already observed by \cite{GP17}, so it remains to prove the ``only if'' direction, i.e., \autoref{cor: conj of GP}. The strategy is identical to the proof of \autoref{theorem: stabilization equivalence} in that we slice up a generic $1$-Weinstein homotopy into the three types from \autoref{def:weinstein_homotopies'} and analyze the compatible AWLFs on each end. 

The key part of the proof consists of the following two lemmas: Lemmas~\ref{lemma:move_eq_strict'} and \ref{lemma:move_eq_bd_hslide'}.

\begin{lemma}[Move equivalence over strict homotopies]\label{lemma:move_eq_strict'}
Let $(W, \lambda_t, \phi_t)$, $t\in [0,1]$, be a strict Weinstein homotopy and suppose that $(W^{\natural}_i; \mathcal{L}_i', \mathcal{L}_i'')$, $i=0,1$, are AWLFs that are compatible with $(W, \lambda_i, \phi_i)$, $i=0,1$, respectively. Then $(W^{\natural}_0; \mathcal{L}_0', \mathcal{L}_0'')$ and $(W^{\natural}_1; \mathcal{L}_1', \mathcal{L}_1'')$ are move equivalent. In particular, any two AWLFs compatible with the same Weinstein structure are move equivalent. 
\end{lemma}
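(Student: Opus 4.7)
The plan is to pass to the strongly Weinstein OBDs induced on $\partial W$ and invoke \autoref{theorem: stabilization equivalence}. For $i=0,1$, compatibility endows the total-space WLF of $(W^\natural_i;\mathcal{L}_i',\mathcal{L}_i'')$ with a boundary OBD $\mathcal{O}_i$ of $(\partial W,\xi_i := \ker(\lambda_i|_{\partial W}))$, whose page is $W^\natural_i$ and whose monodromy is the product of positive Dehn twists $\tau_{L_{i,1}}\circ\cdots\circ\tau_{L_{i,m_i}}$ along the ordered cycles of $\mathcal{L}_i=\mathcal{L}_i'\cup\mathcal{L}_i''$. The strict Weinstein homotopy $(W,\lambda_t,\phi_t)$ restricts on the boundary to a $1$-parameter family of contact forms, and by Gray's theorem identifies $(\partial W,\xi_0)\cong(\partial W,\xi_1)$ as cooriented contact manifolds; thus $\mathcal{O}_0$ and $\mathcal{O}_1$ are strongly Weinstein OBDs of the same contact manifold, and \autoref{theorem: stabilization equivalence} produces a finite sequence of positive stabilizations, destabilizations, conjugations, and strong Weinstein homotopies relating them.

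The second step is to translate each OBD operation in this sequence into AWLF moves. An OBD positive stabilization along a regular Lagrangian disk $D_0\subset W^\natural$ with Legendrian boundary attaches a Weinstein $n$-handle with core $D_1$ to the page and composes the monodromy with $\tau_{D_0\cup D_1}$, which is precisely the AWLF stabilization move; OBD destabilization is the inverse. OBD conjugation by a symplectomorphism $\psi$ of the page acts on the factorization by $\tau_{L_j}\mapsto\tau_{\psi(L_j)}$ using $\psi\tau_L\psi^{-1}=\tau_{\psi(L)}$, which is absorbed into the AWLF deformation move (exact Lagrangian isotopy of $\mathcal{L}$), and the special case of cyclic conjugation by the first factor is exactly the cyclic permutation move. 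Finally, a strong Weinstein homotopy of OBDs combines a $2$-Weinstein deformation of the page and an exact Lagrangian isotopy of the cycles — itself an AWLF deformation — with isolated combinatorial reorganizations of the monodromy factorization when two cycles swap order in the product, and these reorganizations are precisely Hurwitz moves.

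Throughout this translation we must maintain compatibility of the intermediate AWLFs with some Weinstein structure along $(W,\lambda_t,\phi_t)$, preserving the decomposition $\mathcal{L}=\mathcal{L}'\cup\mathcal{L}''$. The strictness hypothesis is essential here: with no handleslides of index-$n$ handles and no $(n-1,n)$ birth-deaths of $\phi_t$, the attaching Legendrians of the $n$-handles remain in bijection throughout the homotopy and evolve only by Legendrian isotopy, so the cycles of $\mathcal{L}''$ can be carried along via AWLF deformation and any newly introduced vanishing cycles from OBD stabilizations are absorbed into $\mathcal{L}'$. The main obstacle lies in the translation of a strong Weinstein homotopy of OBDs into a finite sequence of AWLF deformations and Hurwitz moves: this requires a parametric analysis of a $1$-parameter family of $2$-Weinstein pages together with Dehn twist factorizations of their monodromies, handling the generic swallowtail singularities that appear, and is the AWLF-level counterpart of the parametric convex hypersurface analysis in \autoref{sec: sigma times interval} and the Cerf-theoretic analysis underlying \autoref{thm: no birth-death}. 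The in-particular clause — move equivalence of any two AWLFs compatible with the same Weinstein structure — follows by applying the argument to the constant homotopy.
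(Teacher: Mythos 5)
Your proposal takes a genuinely different route from the paper, and it has a gap that undermines the argument.

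The paper's proof is a short, direct, combinatorial argument that never leaves the interior Weinstein structure. From compatibility, each $(W^\natural_i;\mathcal{L}_i')$ is a multistabilization of some $(W^\flat_i;\varnothing)$, and strictness ensures $W^\flat_0$ and $W^\flat_1$ (and the cycles $\mathcal{L}_i''$) are related by a deformation move. The heart is then an elementary claim: any two multistabilizations of a common $(W^\flat;\varnothing)$ are move equivalent, proved by disjointing all the subcritical handles into a common superdomain $W^\sharp$ and shuffling with permutation moves. No bypass theory, no boundary OBDs, and in particular no appeal to \autoref{theorem: stabilization equivalence} — that theorem is used in \autoref{lemma:CAWLF_existence} to \emph{construct} compatible AWLFs, not to compare them once they exist.

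Your route passes to the boundary OBDs $\mathcal{O}_0,\mathcal{O}_1$ of $(\partial W,\xi)$ and invokes \autoref{theorem: stabilization equivalence}. The trouble is that this theorem says \emph{every} pair of strongly Weinstein OBDs supporting the same contact manifold is stably equivalent, independent of any filling data. So invoking it does not actually use the hypothesis that the total spaces are strictly Weinstein homotopic — a hypothesis the lemma genuinely needs. The burden is thus shifted entirely to your translation step, and that translation has its own problems. First, there is no a priori reason a given sequence of OBD stabilizations/destabilizations connecting $\mathcal{O}_0$ to $\mathcal{O}_1$ should lift to AWLF moves preserving the total space up to Weinstein homotopy; you would need to produce a sequence that does, which is precisely the content you are trying to establish. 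Second, your treatment of conjugation is wrong: conjugation by a page symplectomorphism $\psi$ replaces each $L_j$ by $\psi(L_j)$, but this is an exact Lagrangian isotopy (i.e., a deformation move) only when $\psi$ is Hamiltonian isotopic to the identity, which is not guaranteed. Third, the translation of a strong Weinstein homotopy of OBDs into Hurwitz moves is asserted to require a ``parametric analysis \ldots the AWLF-level counterpart of the parametric convex hypersurface analysis'' — this is new work not contained in the paper and not sketched by you, so the proof is incomplete at exactly the point where the lemma's content would live. The net effect is that you have converted a one-page bookkeeping lemma into an open-ended program; the paper's approach is both simpler and actually closes.
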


\begin{proof}
For each $i=0,1$, since $(W^{\natural}_i; \mathcal{L}_i', \mathcal{L}_i'')$ is compatible with $(W, \lambda_i, \phi_i)$ we have that $(W^{\natural}_i; \mathcal{L}_i')$ is a multistabilization of $(W^{\flat}_i, \varnothing)$ for some Weinstein domain $W^{\flat}_i$. Moreover, $|W^{\flat}_i; \varnothing| = W^{\flat}_i \times D^2$ is $1$-Weinstein homotopic to a neighborhood $W^{(n-1)}_i$ of the ($n-1$)-skeleton of $(W, \lambda_i, \phi_i)$. Since the homotopy $(W, \lambda_t, \phi_t)$, $t\in [0,1]$, is strict, there is a deformation move that takes $W_0^{\flat}$ to $W^{\flat}_1$. Thus, after this move we can assume that $(W^{\natural}_0; \mathcal{L}_0')$ and $(W^{\natural}_1; \mathcal{L}_1')$ are both multistabilizations of a common $(W^{\flat}, \varnothing)$. 

We then have the following claim:

\begin{claim}[Move equivalence of multistabilizations]\label{claim:move_eq_multistab'}
Let $(W_i^{\natural}; \mathcal{L}_i)$, $i=0,1$, be multistabilizations of $(W^{\flat}; \varnothing)$. Then $(W_i^{\natural}; \mathcal{L}_i)$, $i=0,1$, are move equivalent. 
\end{claim}

\begin{proof}[Proof of \autoref{claim:move_eq_multistab'}.]
    Write
    \[
    W_i^{\natural} = W^{\flat} \cup h_{i,1} \cup \cdots \cup h_{i,k_i},
    \]
    where $k_i$ is the number of vanishing cycles in $\mathcal{L}_i$ and $h_{i,j}$ are the ($n-1$)-handles used in the multistabilizations. Up to Weinstein homotopy of each $W_i^{\natural}$, we may assume that the attaching loci of both sets of handles together
    \[
    h_{0,1},\dots, h_{0,k_0}, h_{1,1}, \dots, h_{1,k_1}
    \]
    are pairwise disjoint and lie in $\partial W^{\flat}$. This means that the Weinstein domains
    \begin{align*}
        &W_0^{\natural} \cup h_{1,1} \cup \cdots \cup h_{1,k_1} \\
        &W_1^{\natural} \cup h_{0,1} \cup \cdots \cup h_{0,k_0}
    \end{align*}
    are the same Weinstein domain, which we name $W^{\sharp}$. 
    
    Then $(W^{\sharp}; \mathcal{L}_1, \mathcal{L}_0)$ is a multistabilization (after the initial deformation move) of $(W^{\flat}_0; \mathcal{L}_0)$, and likewise $(W^{\sharp}; \mathcal{L}_0, \mathcal{L}_1)$ is a multistabilization (and deformation) of $(W^{\flat}_1; \mathcal{L}_1)$. After permutation, it follows that both of the initial AWLFs are move equivalent to $(W^{\sharp}; \mathcal{L}_0, \mathcal{L}_1)$.
\end{proof}

We now finish the proof of \autoref{lemma:move_eq_strict'}. By \autoref{claim:move_eq_multistab'}, there is a common multistabilization (and deformation)
    \[
    (W^{\sharp}; \mathcal{L}_0', \mathcal{L}_1')
    \]
    of both $(W^{\flat}_0; \mathcal{L}_0')$ and $(W^{\flat}_1; \mathcal{L}_1')$. 
    
    Consider the following AWLF moves, where we use $\sim$ to denote move equivalence. First, applied to $(W^{\flat}_1; \mathcal{L}_1', \mathcal{L}_1'')$ we have 
    \[
    (W^{\flat}_1; \mathcal{L}_1', \mathcal{L}_1'') \sim (W^{\sharp}; \mathcal{L}_0', \mathcal{L}_1', \mathcal{L}_1'') 
    \]
    by the above multistabilization (and deformation). Next, applied to $(W^{\flat}_0; \mathcal{L}_0', \mathcal{L}_0'')$ we have 
    \begin{align*}
        (W^{\flat}_0; \mathcal{L}_0', \mathcal{L}_0'') &\sim (W^{\flat}_0; \mathcal{L}_0'', \mathcal{L}_0') \\
        &\sim (W^{\sharp}; \mathcal{L}_1', \mathcal{L}_0'', \mathcal{L}_0') \\
        &\sim (W^{\sharp};  \mathcal{L}_0', \mathcal{L}_1', \mathcal{L}_0'').
    \end{align*}
    The first and third lines are permutation moves, while the second line is the aforementioned multistabilization. Finally, since both initial AWLFs are compatible with strictly homotopic Weinstein structures, we can relate $\mathcal{L}_i''$, $i=0,1$, by a further deformation. This completes the proof.
\end{proof}

\begin{lemma}[Move equivalence over non-strict homotopies]\label{lemma:move_eq_bd_hslide'}
Let $(W, \lambda_t, \phi_t)$, $t\in [0,1]$, be either
\begin{enumerate}
    \item[(1)] a handleslide homotopy, or  
    \item[(2)] a birth-death homotopy.
\end{enumerate}
Suppose that $(W^{\natural}_i; \mathcal{L}_i', \mathcal{L}_i'')$, $i=0,1$, are AWLFs that are compatible with $(W, \lambda_i, \phi_i)$, $i=0,1$, respectively. Then $(W^{\natural}_0; \mathcal{L}_0', \mathcal{L}_0'')$ and $(W^{\natural}_1; \mathcal{L}_1', \mathcal{L}_1'')$ are move equivalent.
\end{lemma}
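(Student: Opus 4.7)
The plan is to reduce both cases to a single local manipulation by choosing specially adapted compatible AWLFs on either side of the non-strict moment $t=\tfrac{1}{2}$. Lemma \ref{lemma:move_eq_strict'} already asserts that any two AWLFs compatible with the same Weinstein structure are move equivalent, so it suffices to exhibit, for some small $\epsilon>0$, particular compatible AWLFs $(W_{\pm}^{\natural};\mathcal{L}_{\pm}',\mathcal{L}_{\pm}'')$ at $t=\tfrac{1}{2}\pm \epsilon$ that are manifestly related by a single Hurwitz move (in case (1)) or by a single stabilization move (in case (2)). Combined with Lemma \ref{lemma:move_eq_strict'} applied to the strict homotopies on the intervals $[0,\tfrac{1}{2}-\epsilon]$ and $[\tfrac{1}{2}+\epsilon,1]$, this yields the lemma.

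For case (1), the handleslide of $\phi_t$ at $t=\tfrac{1}{2}$ moves one $n$-handle attaching sphere $\Lambda_i$ over another $\Lambda_j$ in $\partial W^{(n-1)}$. A neighborhood of the $(n-1)$-skeleton is unchanged, so I would apply the enhanced existence result \autoref{lemma:CAWLF_existence} (to the Legendrian $\Lambda=\Lambda_1\cup\cdots\cup \Lambda_m$) to produce a single AWLF $(W^{\natural};\mathcal{L}')$ with the property that \emph{all} of the attaching spheres on either side of the handleslide lie on a common page. Then $(W^{\natural};\mathcal{L}',\mathcal{L}_-'')$ and $(W^{\natural};\mathcal{L}',\mathcal{L}_+'')$ are compatible with $(W,\lambda_{1/2\pm\epsilon},\phi_{1/2\pm\epsilon})$, and the two tuples $\mathcal{L}_\pm''$ agree except in one slot, where $L_i^+$ is obtained from $L_i^-$ by transporting it past $L_j$ along the monodromy; the standard Weinstein Picard--Lefschetz calculation identifies this as $\tau_{L_j}(L_i^-)$. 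After cyclic permutations to make $L_i$ and $L_j$ adjacent in the cyclic order (and reversing those permutations afterwards), this is precisely a Hurwitz move.

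For case (2), a birth at $t=\tfrac{1}{2}$ creates a canceling pair consisting of a subcritical $(n-1)$-handle $h_{n-1}$ and a critical $n$-handle $h_n$ with core $D_1$ bounded by the belt sphere $\partial D_0$ of $h_{n-1}$. Neighborhoods of the $(n-1)$-skeleta then satisfy $W^{(n-1)}_{1/2+\epsilon}=W^{(n-1)}_{1/2-\epsilon}\cup h_{n-1}$, and under \cite{Cie02} the handle $h_{n-1}$ corresponds to attaching a subcritical Weinstein $(n-1)$-handle $h'$ to the fiber, producing $W^{\flat}_+ = W^{\flat}_- \cup h'$. Applying \autoref{lemma:CAWLF_existence} on the $t=\tfrac{1}{2}-\epsilon$ side yields a compatible AWLF $(W^{\natural}_-;\mathcal{L}',\mathcal{L}_-'')$ built from $(W^{\flat}_-;\varnothing)$. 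I would then take the compatible AWLF on the other side to be $(W^{\natural}_- \cup h';\mathcal{L}',\mathcal{L}_-''\cup \{L_{\mathrm{new}}\})$, where $L_{\mathrm{new}}$ is the exact Lagrangian sphere formed by the union of $D_0$ (which sits in the new distinguished fiber because the $(n-1)$-handle is added to $W^\flat$) and the core of $h'$. This is exactly the output of an AWLF stabilization move applied to the ``minus" AWLF.

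The main obstacle will be the parametric identification in case (1): carefully checking that Legendrian handlesliding of the lift $\tilde{L}_i$ over $\tilde{L}_j$ inside $W^{\natural}\times \partial D^2$, as the base point traces the arc between the two critical values, descends to postcomposition with the Dehn twist $\tau_{L_j}$ on the level of Lagrangian vanishing cycles (rather than some other symplectomorphism). This requires comparing the parallel transport of $L_i$ around a loop encircling both critical values with the Picard--Lefschetz monodromy of $L_j$, and doing so in a way that respects the ordered decomposition $\mathcal{L}'\cup\mathcal{L}''$ demanded by compatibility. A secondary obstacle is showing that \autoref{lemma:CAWLF_existence} can be strengthened to place \emph{all} attaching spheres on a single page when they already lie on the boundary of a common Weinstein subdomain; this is a mild extension of the proof of \autoref{theorem:new_WLF'}(1) using \cite[Corollary 1.3.3]{HH19} applied to the full Legendrian link instead of a single Legendrian.
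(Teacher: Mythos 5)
Your strategy --- reduce to specially adapted compatible AWLFs on either side of $t=\tfrac{1}{2}$ and exhibit a single AWLF move --- is the same approach the paper takes, and your case (1) is essentially the paper's argument. The ``obstacles'' you flag there are not real ones: \autoref{lemma:CAWLF_existence} is already stated for a possibly disconnected Legendrian $\Lambda\subset\partial W$, so no strengthening is needed, and the identification of the Legendrian sum $\Lambda_-\uplus\Lambda_+$ with $\tau_{L_+}L_-$ at the Lagrangian level is the standard translation between Legendrian handleslide and symplectic Dehn twist, which the paper simply invokes (encoded in how \autoref{lemma:CAWLF_existence} produces the cycles $L_-,L_+,L_{\uplus}$ with $L_{\uplus}=\tau_{L_+}L_-$).

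In case (2) there is a genuine gap. You assert that the disk $D_0$ ``sits in the new distinguished fiber because the $(n-1)$-handle is added to $W^{\flat}$,'' but this is precisely what must be proven and is where all the work in the paper's proof lies. The attaching Legendrian $\Lambda$ of the born $n$-handle $h_n$ lives in $\partial(W_1^{\flat}\times D^2)$ and has only a piece $\Lambda_1$ (the core of the fiber's new $(n-1)$-handle) lying on a page; the complementary piece $\Lambda_0$ a priori wanders through $\partial(W_1^{\flat}\times D^2)$ and may intersect $W_0^{\flat}\times\{\theta=0\}$ in ways a generic perturbation cannot remove. The paper isotopes $\Lambda_0$ off the skeleton, flows it out through $\partial W_0^{\flat}\times\{\theta=0\}$ using a local contact vector field, decomposes it into a cylindrical piece and $\mathring{\Lambda}_0$, enlarges the fiber to $W_0^{\sharp}=(W_0^{\flat}\times\{\theta=0\})\cup N(\mathring{\Lambda}_0)$, and only then --- after invoking \cite[Corollary~1.3.3]{HH19} together with \autoref{theorem: stabilization equivalence} to realize $W_0^{\sharp}$ as a sublevel set of a page of a multistabilized OBD $(\tilde{W}_0^{\sharp};\tilde{\mathcal{L}})$ --- does the attaching sphere become a Legendrian lift of a Lagrangian vanishing cycle $L\subset\tilde{W}_1^{\sharp}$. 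Your proposal skips all of this, so the claimed AWLF $(W^{\natural}_-\cup h';\mathcal{L}',\mathcal{L}_-''\cup\{L_{\mathrm{new}}\})$ is not yet shown to be compatible with $(W,\lambda_1,\phi_1)$.

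A secondary issue is dimensional: you describe $D_1$ (the $n$-dimensional core of the ambient $h_n$, with boundary $S^{n-1}$) as ``bounded by the belt sphere $\partial D_0$ of $h_{n-1}$,'' but the belt sphere of a $2n$-dimensional $(n-1)$-handle is $S^n$, not $S^{n-1}$. The belt sphere of $h_{n-1}$ and the attaching sphere of $h_n$ meet transversally in one point; they are not equal. This conflation obscures what the stabilizing Lagrangian disk actually is: it is an $(n-1)$-dimensional Lagrangian disk in the $(2n-2)$-dimensional fiber, which is exactly the $\Lambda_0$-after-pushing that the paper constructs, not the ambient cocore of $h_{n-1}$.
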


\begin{proof}
Since all AWLFs compatible with the same structure are move equivalent by \autoref{lemma:move_eq_strict'}, we may discard the given AWLFs in the statement of the lemma and instead construct AWLFs, which we may also name $(W^{\flat}_i; \mathcal{L}_i', \mathcal{L}_i'')$, $i=0,1$, that are compatible with $(W, \lambda_i, \phi_i)$, $i=0,1$, respectively, and show that they are move equivalent. The statement of the lemma then follows. 

\s\n 
{\em Case (1): A handleslide homotopy.}
\s  

First we construct a compatible AWLF at $t=0$. Let $W^{(n-1)}$ denote a neighborhood of the ($n-1$)-skeleton of $(W, \lambda_0, \phi_0)$. We may assume without loss of generality that there are Legendrian spheres $\Lambda_-, \Lambda_+ \subset \partial W^{(n-1)}$ such that: 
    \begin{itemize}
        \item $\Lambda_-$ and $\Lambda_+$ intersect $\xi$-transversally at a single point, where $\xi$ is the induced contact structure on $\partial W^{(n-1)}$; 
        \item $\phi_0$ has two $n$-handles attached along Legendrians $\Lambda_-^{-\ve}$ and $\Lambda_+$, where $\Lambda_-^{-\ve}$ is a time $-\ve$ pushoff in the Reeb direction; 
        \item $\phi_1$ has two $n$-handles attached along $(\Lambda_- \uplus \Lambda_+)^{\ve}$ and $\Lambda_+$, i.e., we have slid the $n$-handle attached along $\Lambda_-^{-\ve}$ up over the $n$-handle attached along $\Lambda_+$.  
    \end{itemize}
By \autoref{lemma:CAWLF_existence}, there is a compatible AWLF $(W^{\natural}; \mathcal{L}', \varnothing)$ of $W^{(n-1)}$ and cycles $L_-, L_+, L_{\uplus}\subset W^{\natural}$ such that
\begin{itemize}
    \item $L_{-}$ lifts to $\Lambda_-^{-\ve}\subset \partial W^{(n-1)}$,
    \item $L_{+}$ lifts to $\Lambda_+\subset \partial W^{(n-1)}$,  
    \item $L_{\uplus}$ lifts to $(\Lambda_- \uplus \Lambda_+)^{\ve} \subset \partial W^{(n-1)}$,
    \item $L_-$ and $L_+$ intersect transversally in $W^{\natural}$ at a single point and $L_{\uplus} = \tau_{L_+}L_-$.
\end{itemize}
Then $(W^{\natural}; \mathcal{L}', (L_-, L_+))$ and $(W^{\natural}; \mathcal{L}', (L_+, \tau_{L_+}L_-))$ are AWLFs compatible with $(W, \lambda_i, \phi_i)$, $i=0,1$, respectively. They are move equivalent by a Hurwitz move. 

\s\n 
{\em Case (2): A birth-death homotopy.} 
\s 

Assume that the birth-death type point is a birth point (the case of a death point is identical), so that $(W, \lambda_1, \phi_1)$ is obtained from $(W, \lambda_0, \phi_0)$ by attaching a canceling pair of ($n-1$)- and $n$-handles. Let $(W^{\flat}_0; \mathcal{L})$ be compatible with $(W, \lambda_0, \phi_0)$. We will construct an AWLF of the form 
\[
\left(\tilde{W}_1^{\sharp};\, \tilde{\mathcal{L}} \cup \mathcal{L} \cup L\right)
\]
that is compatible with $(W, \lambda_1, \phi_1)$ and is move equivalent to $(W^{\flat}_0; \mathcal{L})$. This will be done in a number of stages and we refer to \autoref{fig:bdWLF} to keep track of the intermediary Weinstein domains. 

\begin{figure}[ht]
    \vskip-.05in
	\begin{overpic}[scale=.3]{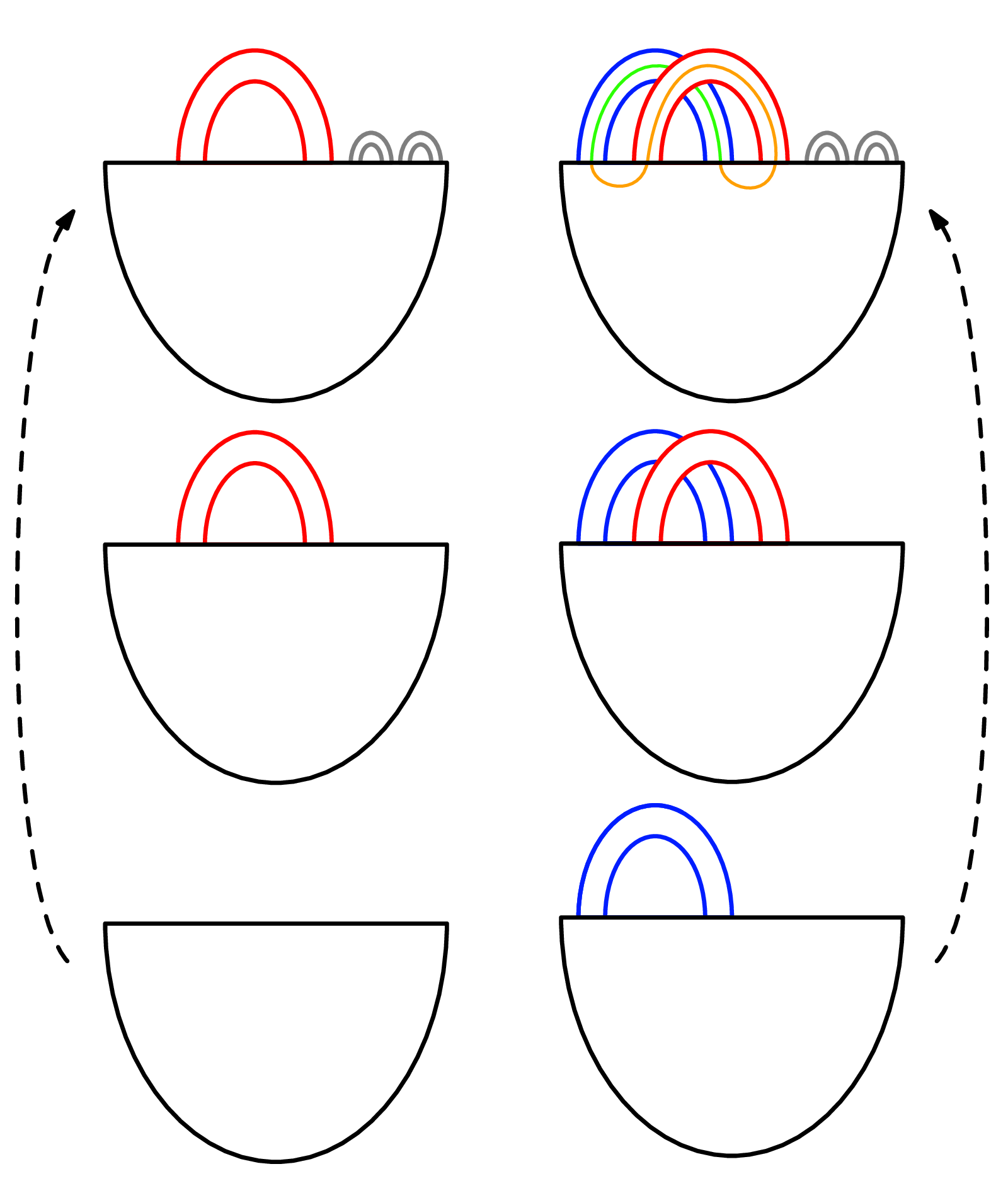}
	\put(4,8){\small $W_0^{\flat}$}
    \put(4,41){\small $W_0^{\sharp}$}
    \put(5,70){\small $\tilde{W}_0^{\sharp}$}
	\put(75.5,8){\small $W_1^{\flat}$}
    \put(75.5,41){\small $W_1^{\sharp}$}
    \put(74.5,70){\small $\tilde{W}_0^{\sharp}$}
    \put(27,61){\tiny {\cbu  $N(\mathring{\Lambda}_0)$}}
    \put(62,28){\tiny \textcolor{blue}{$h_{n-1}$}}
    \put(47,94){\tiny \textcolor{green}{$\Lambda_1$}}
    \put(58.5,83){\tiny \textcolor{Orange}{$\Lambda_0$}}
    \put(-14,55){\tiny multistabilize}
    \put(85,55){\tiny multistabilize}
	\end{overpic}
	\caption{The various Weinstein domains involved in the birth-death case of \autoref{lemma:move_eq_bd_hslide'}. The small gray handles represent stabilizations.}
	\label{fig:bdWLF}
\end{figure}

First, after strict Weinstein homotopy of $(W, \lambda_1, \phi_1)$, we may assume that 
\[
(W, \lambda_1, \phi_1) = |W_1^{\flat}:=W_0^{\flat} \cup h_{n-1};\, \mathcal{L}| \,\cup\, h_n
\]
where $h_{n-1}$ is a ($2n-2$)-dimensional ($n-1$)-handle and $h_n$ is a $2n$-dimensional $n$-handle attached along $\Lambda \subset \bdry (W_1^{\flat}\times D^2$). We may further assume that $\Lambda = \Lambda_0 \cup \Lambda_1$ with $\partial \Lambda_0 = \partial \Lambda_1$, where $\Lambda_1$ is the core of $h_{n-1}$ on the fiber $W_1^{\flat} \times \{\theta = 0\}$. See \autoref{fig:bdWLF}. 

Next, we wish to attach a neighborhood $N(\Lambda_0)$ of $\Lambda_0$ to $W_0^{\flat} \times \{\theta=0\}$ as a Weinstein handle, but $\op{int}(\Lambda_0)$ may intersect nontrivially with this fiber in a way that a generic perturbation cannot undo. However, it is possible to isotop intersections away as follows. Note that by transversality and a dimension count, via a generic isotopy rel $\bdry \Lambda_0$ we may assume that $\op{int}(\Lambda_0) \cap \op{Skel}(W_0^{\flat} \times \{\theta = 0\}) = \varnothing$. We then use the locally defined contact vector field $\theta\, \partial_{\theta} + X_{W_0^{\flat}}$ near $\{\theta = 0\}$, where $X_{W_0^{\flat}}$ is the Liouville vector field of $W_0^{\flat}$, to push the intersection $\Lambda_0 \cap (W_0^{\flat} \times \{\theta = 0\})$ out through $\partial W_0^{\flat} \times \{\theta = 0\}$. Then we can further adjust $W_0 \times \{\theta=0\}$ near $\partial \Lambda_0$ so that $\Lambda_0 = \tilde{\Lambda}_0 \cup \mathring{\Lambda}_0$, where $\mathring{\Lambda}_0$ is disjoint from $W_0^{\flat} \times \{\theta=0\}$ except along $\bdry \mathring{\Lambda}_0$,  $\partial \mathring{\Lambda}_0 \subset \bdry  W_0^{\flat} \times \{\theta=0\}$, and $\tilde{\Lambda}_0\simeq S^{n-2}\times [-1,1]$ is a cylindrical component that lies in $W_0^{\flat} \times \{\theta=0\}$ between $\partial \Lambda_1$ and $\partial \mathring{\Lambda}_0$; see \autoref{fig:lambda0}.

\begin{figure}[ht]
	\begin{overpic}[scale=.44]{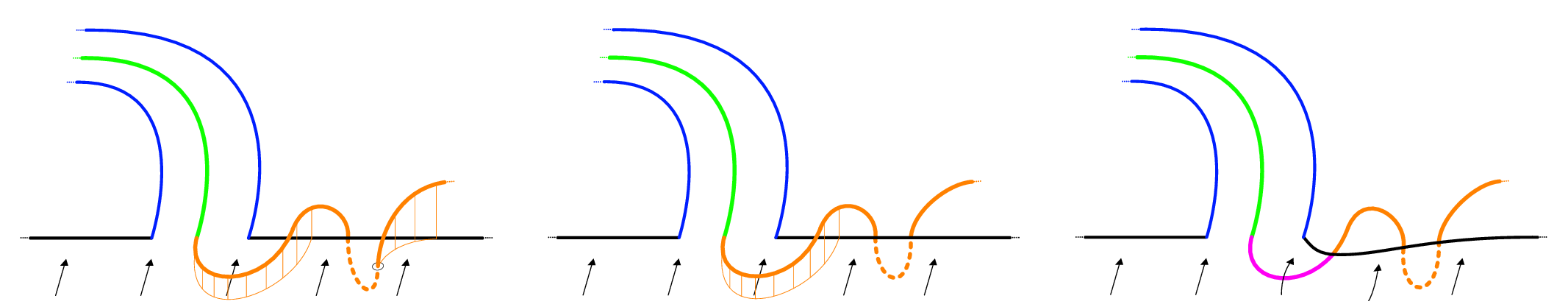}
	\put(-2,6){\small $\partial W_0^{\flat} \times \{0\}$}
    \put(1.25,15.5){\small\textcolor{green}{$\Lambda_1$}}
    \put(24,9){\small \textcolor{Orange}{$\Lambda_0$}}
    \put(90,8){\small \textcolor{Orange}{$\mathring{\Lambda}_0$}}
    \put(83,-0.5){\small \textcolor{Rhodamine}{$\tilde{\Lambda}_0$}}
	\end{overpic}
	\caption{Adjusting $\Lambda_0$ with respect to $W_0^{\flat} \times \{\theta = 0\}$. The black vector field is $X_{W_0^{\flat}}$. From the first to the second frame we push out the intersection $\Lambda_0 \cap (W_0^{\flat} \times \{\theta = 0\})$ and from the second to the third frame we adjust the Weinstein hypersurface so that it contains a cylindrical component (in pink) of $\Lambda_0$.}
	\label{fig:lambda0}
\end{figure}

Now, we define two embedded Weinstein domains in $\bdry (W_1^{\flat}\times D^2)$: 
\begin{align*}
    W_0^{\sharp} &:= (W_0^{\flat} \times \{\theta = 0\}) \cup N(\mathring{\Lambda}_0),   \\
    W_1^{\sharp} &:= (W_1^{\flat} \times \{\theta = 0\}) \cup N(\mathring{\Lambda}_0).
\end{align*}
See the second row of \autoref{fig:bdWLF}. 

By the proof of \cite[Corollary 1.3.3]{HH19}, there is a supporting OBD of $\bdry (W_0^{\flat} \times D^2)$ such that the Weinstein hypersurface $W_0^{\sharp}$ constructed above is a sublevel set of a page, the latter of which we denote $\tilde{W}_0^{\sharp}$. By the same argument using \autoref{theorem: stabilization equivalence} as in the proof of \autoref{lemma:CAWLF_existence}, we can witness this OBD by an AWLF multistabilization $(\tilde{W}_0^{\sharp}; \tilde{\mathcal{L}})$ of $(W_0^{\flat}; \varnothing)$. Finally, we let $\tilde{W}_1^{\sharp}:= \tilde{W}_0^{\sharp} \cup h_{n-1}$. See the top row of \autoref{fig:bdWLF}.

Now, let $L\subset \tilde{W}_1^{\sharp}$ be the Lagrangian corresponding to $\Lambda = \Lambda_0 \cup \Lambda_1$. We claim that the AWLF $(\tilde{W}_1^{\sharp};\, \tilde{\mathcal{L}} \cup \mathcal{L} \cup L)$ is compatible with $(W, \lambda_1, \phi_1)$ and is move equivalent to $(W^{\flat}; \mathcal{L})$. 

\begin{itemize}
    \item {\em Compatibility.} Let $\mathcal{L} = \mathcal{L}' \cup \mathcal{L}''$ be the decomposition so that $(W^{\flat}_0; \mathcal{L}', \mathcal{L}'')$ is compatible with $(W, \lambda_0, \phi_0)$. Then $(\tilde{W}_1^{\sharp}; \tilde{\mathcal{L}} \cup \mathcal{L}')$ is a multistabilization of $(W_1^{\flat}; \mathcal{L}')$ and hence gives a neighborhood of the ($n-1$)-skeleton of $(W, \lambda_1, \phi_1)$. The cycles $\mathcal{L}'' \cup L$ give the $n$-handles of $(W,\lambda_1, \phi_1)$. Thus, 
    \[
    \left(\tilde{W}_1^{\sharp}; \,\tilde{\mathcal{L}} \cup \mathcal{L}', \, \mathcal{L}'' \cup L \right) = \left(\tilde{W}_1^{\sharp};\, \tilde{\mathcal{L}} \cup \mathcal{L} \cup L\right)
    \]
    is compatible with $(W,\lambda_1, \phi_1)$.

    \item {\em Move equivalence.} Applying a permutation move gives $(\tilde{W}_1^{\sharp};\, L \cup \tilde{\mathcal{L}} \cup \mathcal{L})$. This is a stabilization of $(\tilde{W}_0^{\sharp};\, \tilde{\mathcal{L}} \cup \mathcal{L})$, which is a multistabilization of $(W^{\flat}; \mathcal{L})$. 
\end{itemize}
This completes the proof of \autoref{lemma:move_eq_bd_hslide'}. 
\end{proof}

Finally, we prove \autoref{theorem:new_WLF'}(2), which is more or less automatic from Lemmas~\ref{lemma:move_eq_strict'} and \ref{lemma:move_eq_bd_hslide'}.

\begin{proof}[Proof of \autoref{theorem:new_WLF'}(2).]
Let $(W^{\flat}_i; \mathcal{L}_i)$, $i=0,1$, be AWLFs such that the total spaces $|W^{\flat}_i; \mathcal{L}_i|$, $i=0,1$, are $1$-Weinstein homotopic. By \autoref{remark:CAWLFs}, $(W^{\flat}_i; \varnothing, \mathcal{L}_i)$, $i=0,1$, are compatible with $|W^{\flat}_i; \mathcal{L}_i|$, $i=0,1$, respectively.

By genericity, the $1$-Weinstein homotopy connecting $|W^{\flat}_i; \mathcal{L}_i|$, $i=0,1$, can be sliced into a concatenation of (rescaled) strict, handleslide, and birth-death homotopies. By \autoref{lemma:CAWLF_existence}, there are compatible AWLFs at each slicing point, and by Lemmas~\ref{lemma:move_eq_strict'} and \ref{lemma:move_eq_bd_hslide'} each AWLF is move equivalent to the next. Hence the original AWLFs are move equivalent, as desired. 
\end{proof}

\bibliography{mybib}
\bibliographystyle{amsalpha}

\end{document}